\documentclass[11pt,leqno]{amsart}
\usepackage{amssymb,amsmath,amsthm,dsfont,fourier}

\usepackage{mathtools}
\usepackage{array}
\usepackage[text={6.5in,8.5in},centering]{geometry}
\usepackage[dvipsnames]{xcolor}
\usepackage{enumitem}
\usepackage{tikz-cd}
\usetikzlibrary{decorations.pathmorphing}
\usepackage[colorlinks=true,citecolor=NavyBlue,linkcolor=NavyBlue,urlcolor=NavyBlue]{hyperref}

\theoremstyle{plain}
\newtheorem{theorem}{Theorem}[section]
\newtheorem{maintheorem}{Theorem}

\newtheorem{proposition}[theorem]{Proposition}
\newtheorem{lemma}[theorem]{Lemma}
\newtheorem{corollary}[theorem]{Corollary}
\theoremstyle{definition}
\newtheorem{definition}{Definition}[section]
\newtheorem{example}[theorem]{Example}
\newtheorem{hypothesis}[theorem]{Hypothesis}

\theoremstyle{remark}
\newtheorem{remark}[theorem]{Remark}
\numberwithin{equation}{section}

\DeclareMathOperator{\Id}{Id}

\DeclareMathOperator{\Max}{Max}
\DeclareMathOperator{\Maxk}{Max_k}
\DeclareMathOperator{\Spec}{Spec}
\DeclareMathOperator{\pt}{pt}
\DeclareMathOperator{\Cong}{Cong}
\DeclareMathOperator{\ZCong}{ZCong}
\DeclareMathOperator{\MaxC}{MaxCong}
\newcommand{\ZId}{\mathsf{ZId}}
\newcommand{\GCl}{\mathsf{Id}_{g}}
\newcommand{\KCSpec}{\mathsf{KCSpec}}
\newcommand{\GCSpec}{\mathsf{CSpec}_{g}}
\newcommand{\M}{\mathcal M}
\newcommand{\Mk}{\mathcal M_k}
\newcommand{\MC}{\mathcal M_{\mathcal C}}
\newcommand{\mfrak}{\mathfrak m}
\newcommand{\clz}{\mathfrak{cl}_{z}}
\newcommand{\clc}{\mathfrak{cl}^{c}_{z}}
\newcommand{\ideal}[1]{\langle #1\rangle}
\newcommand{\zero}[1]{0_{#1}}
\newcommand{\wpI}[1]{\wp_{#1}}

\newcommand{\two}{\mathbf 2}

\newcommand{\N}{\mathds N}
\newcommand{\doi}[1]{\href{https://doi.org/#1}{\nolinkurl{doi:#1}}}

\begin{document}

\title[Spectral theory of $z$-ideals in semirings]%
{Maximal-Hull $z$-Ideals, Congruence Closures,\\
and Coherent Frames of Commutative Semirings}

\author{Pubali Sengupta}
\address{Department of Mathematics, Jadavpur University,
188 Raja Subodh Chandra Mallick Road, Jadavpur,
Kolkata, West Bengal 700032, India}
\email{pubalis.math.rs@jadavpuruniversity.in}

\author{Amartya Goswami}
\address{Department of Mathematics and Applied Mathematics,
University of Johannesburg,
P.O.\ Box 524, Auckland Park 2006, South Africa;
National Institute for Theoretical and Computational Sciences
(NITheCS), South Africa}
\email{agoswami@uj.ac.za}

\author{Pronay Biswas}
\address{Department of Mathematics, Jadavpur University,
188 Raja Subodh Chandra Mallick Road, Jadavpur,
Kolkata, West Bengal 700032, India}
\email{pronayb.math.rs@jadavpuruniversity.in, pronaybiswas1729@gmail.com}

\author{Sujit Kumar Sardar$^\dagger$}
\thanks{$^\dagger$ Corresponding author: Sujit Kumar Sardar, \href{mailto:sujitk.sardar@jadavpuruniversity.in}{sujitk.sardar@jadavpuruniversity.in}.}
\address{Department of Mathematics, Jadavpur University,
188 Raja Subodh Chandra Mallick Road, Jadavpur,
Kolkata, West Bengal 700032, India}
\email{sujitk.sardar@jadavpuruniversity.in, sksardarjumath@gmail.com}

\subjclass[2020]{16Y60, 06D22, 13A15, 54F65}
\keywords{$z$-ideal, $z$-congruence, $g$-closed ideal,
  spectral space, coherent frame, semiring}

\begin{abstract}
We develop a spectral theory of $z$-ideals for commutative semirings.
The lattice $\ZId(S)$ of $z$-ideals is a \emph{coherent frame} for
every commutative semiring $S$---unconditionally, without
cancellativity, subtractivity, or Noetherian hypothesis---so the
prime spectrum $\Spec_z(S)$ is spectral.
Under an explicit finite-type hypothesis on the canonical
congruence-generated closure~$g$, the lattice $\GCl(S)$ of
$g$-closed ideals is likewise a coherent frame, and $\Spec_g(S)$ is
spectral and homeomorphic to the space of prime $g$-congruences.
These frame results are accompanied by a regularity criterion:
a semiring with all multiplicative idempotents complemented is von
Neumann regular if and only if every principal ideal is a $z$-ideal,
extending Mason's classical theorem from rings.
Separating the maximal-ideal-hull $z$-closure from the
maximal-congruence-hull $g$-closure---operations that coincide in
rings but diverge in semirings---is a central theme, confirmed by
explicit computations in $\N$ and power-set semirings.
Both constructions carry a complete functorial formulation.
\end{abstract}

\maketitle

\setcounter{tocdepth}{1}
\tableofcontents

\section{Introduction}

The theory of $z$-ideals is one of the places where algebra,
topology, and order theory converge most naturally.
In a ring of continuous functions, the guiding geometric intuition
is as follows: an ideal should contain a function whenever the
zero-set information already encoded in the ideal forces that
function's zero-set to be contained in every zero-set of the ideal's
members.
This viewpoint is classical in the theory of $C(X)$ and its
Stone--\v{C}ech compactification~\cite[Chapter~2]{GillmanBook}.
For general commutative rings with identity, Mason gave the maximal-hull
reformulation~\cite{Mason71/z-ideals/prime-ideals,Mason2}:
an ideal is a $z$-ideal if it is closed under replacement of any
of its elements by any other element lying in exactly the same set of
maximal ideals.
In this language, Mason's regularity theorem states that a commutative
ring is von Neumann regular precisely when every ideal is a $z$-ideal.
The present paper asks how far this maximal-hull philosophy can be
transported from rings to semirings without invoking additive inverses.

That last qualification is essential.
A commutative semiring has ideals, prime ideals, maximal ideals,
quotients, and localizations, but the absence of additive inverses
alters the formal behaviour of each of these objects in subtle ways.
Subtractive ideals ($k$-ideals) and $k$-congruences are not merely
technical decorations: they are the mechanisms for recovering
ring-like exactness in a setting where an ideal and the congruence
it generates no longer determine one another.
This departure from ring theory is not a deficiency but a feature,
since semirings arise naturally in tropical geometry,
formal language theory, optimization, and in spectra of rings
under partial quotient constructions---all settings where additive
inverses are genuinely absent.
General background on semiring algebra may be found in the
monographs~\cite{Hebisch,Golan/Book}; the ideal-theoretic and
spectral side has a well-established lineage:
scheme-theoretic spectra provided one of the original sources of
the modern language~\cite{Grothendieck}, prime spectra of rings
motivated the abstract notion of spectral spaces~\cite{Hochster},
and coherent frames and nuclei provide the pointfree formulation
used throughout this paper~\cite{Johnstone,Banaschewski88,spectral-book}.
Constructive treatments of commutative algebra are also relevant,
since they make the hypotheses of finite generation, radical operations,
and lattice-theoretic exactness explicit at precisely the points
where our semiring arguments require them~\cite{Lombardi/Book}.

The semiring literature on $z$-ideals is comparatively recent.
Classes of ideals in semirings and their applications were introduced
in~\cite{BiswasFilomat}; $z$-ideals and $z$-closure operations
for semirings were developed in~\cite{AG23}; the exchange theorem
identifying $z$-prime ideals with prime $z$-ideals was
proved in~\cite{AG25}.
On a parallel track, $z$-congruences for the positive cone $C^+(X)$
were studied in~\cite{BiswasPositivity};
functional representation methods for semirings provide an additional
geometric source~\cite{Chermnykh12};
and prime-congruence techniques appear in dimension theory for
polynomial semirings~\cite{Joo/Micheva/dimension}.
Lattice- and frame-theoretic aspects of $z$-ideals were studied
for $f$-rings in~\cite{DubeZ-ideals,DubeRadical},
for commutative rings in~\cite{Ighedo},
and for multiplicative lattices in~\cite{GoswamiDube}.
The point addressed here is the formulation of a coherent-frame
and spectral-space theory that keeps the ideal-theoretic and
congruence-theoretic closures \emph{separate} when they fail to coincide---which they do, already in~$\N$.

Three related difficulties guide the results of this paper.
First, the absence of subtraction means that the ordinary $z$-closure
and the congruence-generated $g$-closure, which agree in rings,
may differ substantially in semirings:
a maximal congruence can have a zero-class that is neither a maximal
ideal nor a $z$-ideal.
This divergence forces a careful separation between three closure
operations---the maximal-ideal-hull $z$-closure,
the maximal-$k$-ideal-hull $z_k$-closure,
and the maximal-congruence-hull $g$-closure---which coincide in
rings but separate already in $\N$.
Second, coherence of the $z$-ideal lattice, which in the ring case
follows from Noetherian-type conditions or from the algebra of
$C(X)$, must here be established from first principles in full
generality.
Third, the congruence-generated closure satisfies the requisite
finite-type and product-formula properties only under additional
structural hypotheses that we make fully explicit.

The first main result is a regularity criterion.
In rings, the Boolean behaviour of idempotents in von Neumann regular
contexts is automatic.
For semirings, the semiring analogue of von Neumann
regularity~\cite{Subramanian70/RegularSemirings} requires the explicit
hypothesis that every multiplicative idempotent is complemented.
Under this hypothesis we prove:

\begin{maintheorem}\label{main:regularity}
Let $S$ be a commutative semiring in which every multiplicative
idempotent is complemented.  Then the following are equivalent:
\begin{enumerate}[label=\upshape(\roman*)]
\item $S$ is von Neumann regular;
\item every ideal of $S$ is an intersection of maximal ideals
  containing it;
\item every ideal of $S$ is a $z$-ideal;
\item every principal ideal of $S$ is a $z$-ideal.
\end{enumerate}
\end{maintheorem}

The critical implication is (iv)$\Rightarrow$(i):
the hypothesis that every principal ideal is a $z$-ideal,
combined with the product formula $\M(a)=\M(a^2)$ valid in any
commutative semiring, forces the existence of the regular element
$x$ with $a=a^2x$.

The second and most broadly applicable result concerns the
pointfree structure of the $z$-ideal lattice.
By the coherent-frame spectrum theorem~\cite{Hochster,Johnstone},
a lattice is a coherent frame if and only if its prime spectrum
is a spectral space.
Unlike Theorem~A, the following theorem requires no hypothesis whatsoever
beyond the semiring axioms.

\begin{maintheorem}\label{main:zframe}
For every commutative semiring $S$, the ordered set $\ZId(S)$ of
$z$-ideals is a coherent frame.
Its compact elements are the finite joins of principal $z$-closures.
Consequently the space $\Spec_z(S)$ of prime $z$-ideals,
endowed with the hull-kernel topology, is spectral.
\end{maintheorem}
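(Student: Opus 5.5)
The approach is to realize $\ZId(S)$ as the lattice of fixed points of a finitary nucleus on a coherent frame, namely the frame of radical ideals (or of all ideals). Recall that $I$ is a $z$-ideal when $a\in I$ and $\M(b)\supseteq\M(a)$ imply $b\in I$. Here $\M(a)$ denotes the set of maximal ideals containing $a$, so that the $z$-closure is
\[
\clz(I)=\{\,b\in S : \M(b)\supseteq \M(a)\ \text{for some } a\in I\,\}.
\]

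The first step is to check that this set is an ideal. It is closed under multiplication because $\M(rb)\supseteq\M(b)$. For sums, take $b,b'$ witnessed by $a,a'\in I$. Since maximal ideals are prime, $\M(aa')=\M(a)\cup\M(a')$, and $aa'\in I$. Every maximal ideal in $\M(aa')$ contains $a$ or $a'$, hence contains $b$ and $b'$ and so $b+b'$. Thus $a'' = aa'$ witnesses $b+b'$.

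The set is also closed under the relation defining $z$-ideals, so $\clz$ is idempotent, and it is clearly monotone and inflationary. The key point is that it is \emph{finitary}: $b\in\clz(I)$ is witnessed by a single element $a\in I$, so $\clz(\bigcup_\lambda I_\lambda)=\bigcup_\lambda\clz(I_\lambda)$ for directed families.

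The next step is the nucleus property $\clz(I\cap J)=\clz(I)\cap\clz(J)$, equivalently $\clz(IJ)\supseteq\clz(I)\cap\clz(J)$. Given $b$ witnessed by $a\in I$ and $a'\in J$, the product $aa'\in IJ$ satisfies $\M(aa')=\M(a)\cup\M(a')\subseteq\M(b)$, using only primality of maximal ideals. This is exactly the product formula mentioned after Theorem~A, with $\M(a)=\M(a^2)$ as a special case.

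Consequently $\clz$ is a nucleus on the frame $\mathrm{Id}(S)$ of all ideals. This is a coherent frame for any commutative semiring: the compact elements are the finitely generated ideals, and they are closed under finite products, with $S=\ideal{1}$ compact. Meets in $\ZId(S)$ are intersections, and joins are $\clz$ of ideal sums. Standard pointfree theory \cite{Johnstone,Banaschewski88} then gives that the fixed-point set of a nucleus is a frame. If the nucleus preserves directed joins, the fixed-point set is coherent, and its compact elements are the images of compact elements.

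The step I expect to need the most care is coherence: showing that the compact elements of $\ZId(S)$ are closed under binary meets and that the top is compact. I would verify this directly. The images $\clz(\ideal{a_1,\dots,a_n})=\clz\ideal{a_1}\vee\dots\vee\clz\ideal{a_n}$ are compact by finitarity. Every $z$-ideal is the directed union of such images, and any compact $z$-ideal is one of them. For meets, the nucleus identity gives
\[
\clz(F)\cap\clz(G)=\clz(FG),
\]
and $FG$ is finitely generated whenever $F$ and $G$ are, so the meet of two compact elements is again compact. The top element is $\clz\ideal{1}=S$, which is compact.

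A subtlety is the edge case $\M(a)=\emptyset$, that is, when $a$ is contained in no maximal ideal. In a semiring with $1$, Zorn's lemma still applies to proper ideals, so such an $a$ generates $S$ and nothing degenerates.

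Finally, the spectral conclusion follows from Hochster/Stone duality for coherent frames: the space of prime elements of a coherent frame, with the hull-kernel topology, is spectral. It remains to identify the prime elements of $\ZId(S)$ with prime $z$-ideals, that is, $z$-ideals that are prime as ideals. A $z$-ideal $P$ is meet-prime in $\ZId(S)$ if and only if $\clz\ideal{a}\cap\clz\ideal{b}\subseteq P$ implies $a\in P$ or $b\in P$. Since $\clz\ideal{a}\cap\clz\ideal{b}=\clz\ideal{ab}$ and $P$ is $z$-closed, this condition is equivalent to: $ab\in P$ implies $a\in P$ or $b\in P$.
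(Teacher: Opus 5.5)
\paragraph{The gap: closure of your single-witness set under addition.}
The step that fails is the proof that $U(I)=\{b:\M(b)\supseteq\M(a)\text{ for some }a\in I\}$ is closed under addition. A maximal ideal containing $aa'$ contains $a$ \emph{or} $a'$, hence $b$ \emph{or} $b'$, but not necessarily both. So $aa'$ does not witness $b+b'$.

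Already in the ring $k[x,y]$ ($k$ a field) with $a=b=x$ and $a'=b'=y$, the maximal ideal $(x,y-1)$ contains $xy$ but not $x+y$. A uniform single witness would need some $a''\in I$ with $\M(a'')\subseteq\M(a)\cap\M(a')$, which is a sum formula for hulls that fails in general: in $k[x,y]$, every $c\in(x,y)$ has infinitely many maximal ideals in $\M(c)$. So your single-witness description of $\clz(I)$ is proved only for principal ideals, where it is just $\ideal{a}_z=\mfrak(a)$ (Lemma~\ref{app:maximal-hull-calculus}(3)). Both your finitarity argument and your proof of $\clz(I)\cap\clz(J)\subseteq\clz(IJ)$ choose single witnesses in $I$ and $J$, so they inherit this gap.

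There is a second, independent error: $\Id(S)$ is not a frame in general. In $k[x,y]$,
$(x)\cap\bigl((y)+(x+y)\bigr)=(x)$, while $(x)\cap(y)+(x)\cap(x+y)=(x^2,xy)$. Moreover, finitely generated ideals need not be closed under intersection. Closure of compact elements under products is not the coherence condition, which asks for closure under meets.

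\paragraph{How to repair it.}
Both points can be fixed without any witness formula.

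\emph{Finitarity.} A directed union of $z$-ideals is a $z$-ideal, because each defining condition involves only finitely many elements. Hence $\clz(I)=\bigcup\{\clz(F):F\subseteq I\text{ finitely generated}\}$.

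\emph{Nucleus inequality.} If $K$ is a $z$-ideal and $J$ is an ideal, then $(K:J)$ is a $z$-ideal: from $\M(x)=\M(y)$ one gets $\M(xj)=\M(yj)$, since $\M(uv)=\M(u)\cup\M(v)$. Take $K=\clz(IJ)$.
\begin{enumerate}
\item $(K:J)$ is a $z$-ideal containing $I$, so $\clz(I)J\subseteq K$.
\item Then $(K:\clz(I))$ is a $z$-ideal containing $J$, so $\clz(I)\clz(J)\subseteq K$.
\item Since $z$-ideals are radical ($\M(x^2)=\M(x)$), it follows that $\clz(I)\cap\clz(J)\subseteq K$.
\end{enumerate}

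\emph{Frame law.} This now follows in quantale form:
$A\cap\clz(\sum_\lambda B_\lambda)\subseteq\clz(\sum_\lambda AB_\lambda)\subseteq\clz(\sum_\lambda(A\cap B_\lambda))$.
No distributivity of $\Id(S)$ is needed.

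With these replacements the rest of your plan goes through: compactness of $\clz(F)$, the identity $\clz(F)\cap\clz(G)=\clz(FG)$, and the identification of prime elements with prime $z$-ideals. The repaired argument is more self-contained than the paper's. The paper takes finite type and the frame law from Theorem~\ref{lem:z-finite-type}, i.e.\ from the cited literature, and proves only $\ideal{a}_z=\mfrak(a)$ and $\mfrak(ab)=\mfrak(a)\cap\mfrak(b)$ itself.
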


The key inputs are the product formula $\M(ab)=\M(a)\cup\M(b)$,
which holds because maximal ideals in commutative semirings are prime,
and the resulting meet identity $\ideal{a}_z\cap\ideal{b}_z=\ideal{ab}_z$
for principal $z$-closures.
These give the closure of compact elements under finite meets,
and hence coherence, without any auxiliary hypothesis.

The third result concerns the congruence-generated closure.
For each ideal $I$, the associated $g$-closed ideal
$I^g$ is the zero-class of the intersection of all maximal
congruences whose zero-classes contain $I$.
The map $I\mapsto I^g$ is always a closure operation, but the
finite-type and product-formula properties of Theorem~B hold
for $I^g$ only when the maximal congruences satisfy an explicit
compatibility condition.

\begin{maintheorem}\label{main:gframe}
Let $S$ satisfy the finite-type $g$-closure hypothesis stated in
Hypothesis~\ref{hyp:g-closure}.  Then the ordered set $\GCl(S)$
of $g$-closed ideals is a coherent frame.
Consequently the corresponding space $\Spec_g(S)$ of $g$-prime ideals
is spectral.  Moreover, the space of canonical prime $g$-congruences
is homeomorphic to $\Spec_g(S)$.
\end{maintheorem}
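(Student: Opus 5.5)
The plan is to realize $\GCl(S)$ as the fixed-point lattice of a closure operator on the ideal lattice $\mathrm{Id}(S)$, and to show that this closure is a \emph{finitary nucleus} which preserves compactness of finite meets. Coherence then follows from the standard frame-theoretic criterion: if $j$ is a nucleus of finite type on a coherent frame $L$, and $j$ sends compact elements to elements whose finite meets remain $j$-compact, then $\mathrm{Fix}(j)$ is coherent. The prime spectrum of this coherent frame is then spectral by the Hochster--Johnstone correspondence~\cite{Hochster,Johnstone}, exactly as in the proof of Theorem~\ref{main:zframe}.

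First I note that $\mathrm{Id}(S)$ is a frame, with join given by the sum of ideals. The map $I\mapsto I^g$ is extensive, monotone and idempotent; this is already stated in the discussion preceding the theorem. The nucleus condition $(I\cap J)^g=I^g\cap J^g$ is where Hypothesis~\ref{hyp:g-closure} enters. I expect it supplies a product formula analogous to $\M(ab)=\M(a)\cup\M(b)$, namely that a maximal congruence whose zero-class contains $ab$ has zero-class containing $a$ or $b$ (canonical primeness of the relevant congruences). Granting this, the inclusion $\supseteq$ follows by writing $g$-closures as intersections over hulls of maximal congruences and using $IJ\subseteq I\cap J$. The finite-type part of the hypothesis should give $I^g=\bigcup\{F^g: F\subseteq I \text{ finitely generated}\}$. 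Hence $j=(-)^g$ preserves directed joins, so $\GCl(S)$ is a frame whose compact elements are the finite joins $\bigl(\ideal{a_1}+\dots+\ideal{a_n}\bigr)^g$, and these are compactly generated.

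Next I need closure of the compact elements under finite meets, including the top element. The top $S=\ideal{1}^g$ is compact. For meets, I aim for the identity $\ideal{a}^g\cap\ideal{b}^g=\ideal{ab}^g$, using the hypothesis's product formula in place of the primeness of maximal ideals. Distributivity in the frame then handles finite joins: $\bigl(\bigvee_i\ideal{a_i}^g\bigr)\wedge\bigl(\bigvee_j\ideal{b_j}^g\bigr)=\bigvee_{i,j}\ideal{a_ib_j}^g$. I expect this to be the main obstacle. The zero-class of a maximal congruence need not be prime, or even a $z$-ideal, so the argument from Theorem~\ref{main:zframe} does not transfer verbatim. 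I would first try to read off precisely this product property from Hypothesis~\ref{hyp:g-closure}; if it is not literally there, I would derive it via the finite-type condition applied to $\ideal{a}\cap\ideal{b}$.

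Finally, for the homeomorphism, I would send each $g$-prime ideal $P$ to the canonical congruence it determines: the intersection of the maximal congruences in its hull, or equivalently the $g$-congruence generated by $P$. I would send each prime $g$-congruence to its zero-class. The steps are:
\begin{enumerate}
\item check that the two maps are mutually inverse, using $g$-closedness of $P$ on one side and canonicity on the other;
\item check that they preserve primeness;
\item check that basic opens $D(a)$ correspond under the maps, which makes both continuous.
\end{enumerate}
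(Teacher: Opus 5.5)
Your overall architecture (coherence of $\GCl(S)$, then Stone duality, then the bijection $P\mapsto\wp_P$, $\rho\mapsto\zero\rho$) is the paper's. However, the frame step is aimed at the wrong hypothesis, and one essential lemma is missing.

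\emph{The frame step.} Hypothesis~\ref{hyp:g-closure} is not a primeness condition on zero-classes of maximal congruences. It directly asserts three things: the fixed points of $g$ form a frame (G2); the compact elements are the finite $g$-joins of principal closures (G3); and $\ideal{a}_g\cap\ideal{b}_g=\ideal{ab}_g$, with compacts closed under finite meets (G4). The paper's proof of coherence just reads these off, together with compactness of $S=\ideal{1}_g$.

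Your nucleus detour instead ``grants'' that every $\zero\theta$ is prime. That is not assumed; the paper uses it only as an explicit extra hypothesis in Proposition~\ref{prop:wp-meets}. Your fallback does not repair this. Finite type only reduces $I^g\cap J^g\subseteq(I\cap J)^g$ to finitely generated $F\subseteq I$ and $G\subseteq J$. The next step, $F^g\cap G^g=\bigvee^g_{i,j}\bigl(\ideal{a_i}_g\cap\ideal{b_j}_g\bigr)$, after which (G4) gives $(FG)^g$, is the distributive law of $\GCl(S)$, i.e.\ (G2) itself. So drop the nucleus argument and cite (G2)--(G4) directly.

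\emph{The missing lemma.} You need the $g$-analogue of the exchange theorem, which is the paper's Lemma~\ref{exchange-principal-g-prime}. Stone duality makes the space of prime elements of the frame $\GCl(S)$ spectral, and these are meet-prime elements. But $\Spec_g(S)$ is defined by the product condition $AB\subseteq P$, and primeness of a congruence is elementwise. ``Exactly as in Theorem~B'' does not cover this: there the identification came from the external Theorem~\ref{thm:zprime-exchange}, which has no ready-made $g$-version. The argument runs in three steps:
\begin{enumerate}
\item If $P$ is meet-prime and $ab\in P$, then $\ideal{a}_g\cap\ideal{b}_g=\ideal{ab}_g\subseteq P$ by (G4), so $a\in P$ or $b\in P$.
\item An ordinary prime $g$-closed $P$ is $g$-prime: if $A\not\subseteq P$ and $B\not\subseteq P$, choose $a\in A\setminus P$ and $b\in B\setminus P$, so $ab\in AB\setminus P$.
\item A $g$-prime ideal is meet-prime, since $AB\subseteq A\cap B$.
\end{enumerate}

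This lemma is exactly what your step~(2) needs. The condition $(ab,0)\in\wp_P$ means $ab\in P^g=P$, and concluding $(a,0)\in\wp_P$ or $(b,0)\in\wp_P$ requires $P$ to be elementwise prime. With the lemma in hand, your steps~(1) and~(3) are Proposition~\ref{prop:g-correspondence-basic} and Theorem~\ref{loc34}. The paper matches the closed sets $V_G(A)$ and $W_G(A)$ using $\zero{\wp_P}=P$, which is equivalent to your matching of basic opens.
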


The contrast between Theorems~B and~C is deliberate.
Theorem~B holds unconditionally because the ordinary $z$-closure
is controlled by the maximal-ideal spectrum, which is primitive
enough that the product formula holds in every commutative semiring.
Theorem~C is conditional because the congruence-generated closure
draws on the full maximal-congruence spectrum, whose interaction
with the ideal theory requires additional hypotheses.
The natural-number semiring $\N$ is the canonical example
separating these two regimes.

Beyond the three main theorems, the paper develops the theory of
$z$-ideals under quotients and localizations
(Sections~\ref{sec:quotient}--\ref{sec:spectral-maps}),
identifying the precise saturation and lifting conditions required
for the $z$-ideal property to be preserved and reflected.
Section~\ref{sec:functors} provides a complete functorial formulation:
$\ZId$ and $\GCl$ become coherent-frame-valued functors on appropriate
categories of semirings, and the canonical closure map
$\sigma\colon\ZId\Rightarrow\GCl$ is a natural transformation under
an explicit generator-compatibility condition.

\medskip

\textbf{Organisation.}
Section~\ref{sec:prelim} fixes notation and recalls semiring ideals,
congruences, spectral spaces, coherent frames, and nuclei.
Section~\ref{sec:general} develops the maximal-hull calculus,
proves Theorem~A, and compares ordinary $z$-ideals with their
$z_k$-variants.
Section~\ref{sec:cong} introduces $z$-congruences and $g$-closed ideals,
with counterexamples showing why zero-classes of maximal congruences
require separate treatment.
Sections~\ref{sec:quotient} and~\ref{sec:local} treat quotients and
localizations, recording the saturation or lifting hypotheses needed
at each step.
Section~\ref{sec:spectral-maps} identifies the resulting spectra
topologically.
Section~\ref{sec:frame} proves Theorems~B and~C.
Section~\ref{sec:examples} collects model computations and
boundary examples.
Section~\ref{sec:functors} gives the functorial formulation.
The appendix gathers auxiliary structural lemmas whose proofs
are needed in the body but whose detail would interrupt the flow
of the main arguments.

\section{Preliminaries}\label{sec:prelim}

\subsection{Semirings, ideals, and congruences}

Throughout the paper, a \emph{semiring} means a commutative semiring
with $0$ and $1$, and all homomorphisms preserve $0$ and $1$.
Thus $(S,+,0)$ and $(S,\cdot,1)$ are commutative monoids,
multiplication distributes over addition, and $0s=0$ for all $s\in S$.
An \emph{ideal} $I$ of $S$ is a non-empty subset closed under
addition and under multiplication by arbitrary elements of $S$.
We write $\Id(S)$ for the set of all ideals.
Unless explicitly stated otherwise, prime ideals and maximal ideals
are proper.

The first obstruction to importing ring-theoretic proofs into semiring
theory is the failure of subtraction.
The following definition isolates the substitute for subtractivity
that is needed whenever ideals interact with quotients or congruences.

\begin{definition}[\cite{Golan/Book}]
An ideal $I$ of $S$ is a \emph{$k$-ideal}, or
\emph{subtractive ideal}, if $a+b\in I$ and $b\in I$ imply $a\in I$
for all $a,b\in S$.  It is \emph{strong} if $a+b\in I$ implies
$a\in I$ and $b\in I$ for all $a,b\in S$.
\end{definition}

For an ideal $I$, its $k$-closure is
\[
\mathcal C_k(I)=\{x\in S\mid x+y\in I\text{ for some }y\in I\}.
\]
This is the smallest $k$-ideal containing $I$.
The $k$-radical of $I$ is
\[
\mathcal R_k(I)=\bigcap\{P\in\Spec_k(S)\mid I\subseteq P\},
\]
where $\Spec_k(S)$ denotes the set of $k$-prime ideals.
We use standard facts on these two closures from~\cite{AG24}.

We shall repeatedly pass from maximal ideals to prime-ideal arguments.
The elementary lemma below identifies the precise point at which
commutativity and the existence of $1$ enter the maximal-hull calculus.

\begin{lemma}\label{lem:max-prime}
Every maximal ideal of a commutative semiring is prime.
\end{lemma}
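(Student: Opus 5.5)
We argue by contradiction, using the standard ring-theoretic argument but replacing every use of subtraction by the construction of ideals generated by sums. Let $M$ be a maximal ideal of $S$, and suppose $a,b\in S$ satisfy $ab\in M$ while $a\notin M$ and $b\notin M$. Maximality is applied to the ideal generated by $M$ and $a$. In a commutative semiring with identity, this ideal is
\[
M+Sa=\{m+sa\mid m\in M,\ s\in S\}.
\]
One checks directly that this set is closed under addition and under multiplication by elements of $S$. It contains $M$ (take $s=0$) and contains $a$ (take $m=0$, $s=1$). Since $a\notin M$, it strictly contains $M$, so maximality forces $M+Sa=S$. In particular $1=m_1+s_1a$ for some $m_1\in M$ and $s_1\in S$. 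In the same way, $1=m_2+s_2b$ for some $m_2\in M$ and $s_2\in S$.

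Next we multiply the two expressions for $1$, which needs only distributivity and commutativity:
\[
1=(m_1+s_1a)(m_2+s_2b)=m_1m_2+m_1s_2b+s_1a\,m_2+s_1s_2\,ab.
\]
The first three summands lie in $M$ because each has a factor $m_1$ or $m_2$ and $M$ absorbs multiplication. The last summand lies in $M$ because $ab\in M$. Since $M$ is closed under addition, $1\in M$. Hence $M=S$, contradicting the convention that maximal ideals are proper. Therefore $a\in M$ or $b\in M$, so $M$ is prime.

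The only point needing care is the description of the ideal generated by $M\cup\{a\}$. Here the identity (so that $a=0+1\cdot a$) and commutativity (so that $Sa$ is a two-sided ideal) are exactly what is used. No subtractivity of $M$ is required, since the argument only ever adds elements of $M$ and never cancels them. This is precisely where the hypotheses of commutativity and of the existence of $1$ enter.
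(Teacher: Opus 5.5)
Your proof is correct and follows essentially the same route as the paper: both use maximality to write $1=m+sa$ with $m\in M$, $s\in S$, and then rely only on absorption and additive closure of $M$, with no subtraction. The paper's version is slightly more economical: it uses the single identity $1=m+sa$ and multiplies by $b$ to get $b=mb+sab\in M$ directly, rather than producing two expressions for $1$ and multiplying them to force $1\in M$.
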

\begin{proof}
Let $M$ be maximal and suppose $ab\in M$ with $a\notin M$.
Since the ideal generated by $M\cup\{a\}$ properly contains $M$,
it equals $S$.  Hence $1=m+sa$ for some $m\in M$ and $s\in S$.
Multiplying by $b$ gives $b=mb+sab\in M$.
\end{proof}

For rings, quotient objects are described entirely by ideals.
For semirings, quotients are governed by congruences, so we recall
the compatibility condition that replaces the ideal-kernel viewpoint.

\begin{definition}[\cite{Golan/Book}]
A \emph{congruence} $\Theta$ on a semiring $S$ is an equivalence
relation on $S$ compatible with both operations:
if $(a,b),(c,d)\in\Theta$, then
\[
(a+c,b+d)\in\Theta\qquad\text{and}\qquad(ac,bd)\in\Theta.
\]
\end{definition}

The set of all congruences on $S$ is $\Cong(S)$.
A congruence is \emph{proper} if it is not $S\times S$.
For an ideal $I$, the Bourne congruence associated to $I$ is
\[
k_I=\{(x,y)\in S\times S\mid x+a=y+b\text{ for some }a,b\in I\}.
\]
The \emph{zero-class} of a congruence $\Theta$ is
\[
\zero\Theta=\{a\in S\mid (a,0)\in\Theta\}.
\]
It is always a $k$-ideal.
A congruence is a \emph{$k$-congruence} if it equals $k_I$ for
some ideal $I$~\cite{Han-k-congruence}.

Prime congruences are the congruence-theoretic analogue of prime ideals,
formulated in terms of zero-class membership rather than elementwise
divisibility.

\begin{definition}[\cite{Han-k-congruence}]
A proper congruence $\Theta$ on $S$ is \emph{prime} if
$(xy,0)\in\Theta$ implies $(x,0)\in\Theta$ or $(y,0)\in\Theta$.
\end{definition}

The set of $k$-prime congruences is $\KCSpec(S)$.
We use the standard fact that maximal congruences are prime;
see~\cite[Theorem~5.5]{Han-k-congruence} for the corresponding
spectral statement for $k$-congruences.

\subsection{Spectral spaces and coherent frames}

A topological space $X$ is \emph{quasi-compact} if every open cover
has a finite subcover.
A non-empty closed subset $Y\subseteq X$ is \emph{irreducible} if
$Y=Y_1\cup Y_2$ with $Y_1,Y_2$ closed implies $Y=Y_1$ or $Y=Y_2$.
A point $y\in Y$ is a \emph{generic point} of $Y$ if
$Y=\overline{\{y\}}$.
The space $X$ is \emph{sober} if every irreducible closed subspace
has a unique generic point.

The topological target of our frame theorems is spectrality
in the following standard sense, which is the one most directly
connected with coherent frames and compact-open bases.

\begin{definition}[\cite{spectral-book,Hochster}]
A topological space $X$ is \emph{spectral} if it is quasi-compact
and sober and has a basis of quasi-compact open sets closed under
finite intersections.
\end{definition}

For a semiring $S$ and $a\in S$, put
\[
D(a)=\{P\in\Spec(S)\mid a\notin P\},\qquad
V(a)=\{P\in\Spec(S)\mid a\in P\}.
\]
Similarly define $D_k(a)$ and $V_k(a)$ on $\Spec_k(S)$.
The sets $D(a)$ form the standard Zariski basis on $\Spec(S)$.

The following theorem collects the spectral background already
available for semirings.
It provides the comparison point for the spectra of Theorems~B
and~C, which are built from $z$-closures rather than from
all prime ideals or all $k$-prime congruences.

\begin{theorem}[\cite{Pena,Han-k-congruence}]\label{thm:standard-spectral}
For a commutative semiring $S$ the following hold.
\begin{enumerate}
\item $\Spec(S)$, with the Zariski topology, is spectral.
\item $\Spec_k(S)$, with the Zariski topology, is spectral.
\item $\KCSpec(S)$ is spectral and homeomorphic to $\Spec_k(S)$.
\end{enumerate}
\end{theorem}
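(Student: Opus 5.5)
Although this theorem is attributed to \cite{Pena,Han-k-congruence}, I would give a uniform argument: exhibit each space as the spectrum of a coherent frame and then invoke the coherent-frame spectrum theorem \cite{Hochster,Johnstone}.

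For (1), I would consider the lattice $\mathcal R(S)$ of radical ideals, meaning intersections of primes. Arbitrary meets in $\mathcal R(S)$ are intersections, and joins are radicals of sums. I would argue that $\mathcal R(S)$ is a frame because the radical map $I\mapsto\sqrt I$ is a nucleus on $\Id(S)$. The inflation and idempotence conditions are clear. Preservation of binary meets holds because $\sqrt{I\cap J}=\sqrt{IJ}=\sqrt I\cap\sqrt J$, where the key fact is that a prime containing $IJ$ contains $I$ or $J$. The ideal lattice $\Id(S)$ is itself a coherent frame for multiplication: it is algebraic, its finitely generated ideals are compact and closed under products, and multiplication distributes over joins. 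It follows that the compact elements of $\mathcal R(S)$ are the radicals $\sqrt{\ideal{a_1,\dots,a_n}}$. These are closed under finite meets because
\[
\sqrt{\ideal{F}}\cap\sqrt{\ideal{G}}=\sqrt{\ideal{FG}},
\]
and the top element $\sqrt{\ideal{1}}$ is compact. Hence $\mathcal R(S)$ is coherent. Prime elements of $\mathcal R(S)$ are exactly the prime ideals, again using the product argument and the existence of a prime containing an ideal disjoint from a multiplicative set (Zorn). The hull-kernel topology on these points is the Zariski topology, with basic opens $D(a)$. The spectrum theorem then gives spectrality.

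For (2), I would repeat this argument with $\mathcal R_k$ and $k$-prime ideals. The radical is replaced by $\mathcal R_k$, which is a nucleus on the frame of $k$-ideals. The step I expect to be the main obstacle is the meet identity $\mathcal R_k(I)\cap\mathcal R_k(J)=\mathcal R_k(IJ)$. It needs two facts. First, a $k$-prime ideal containing $IJ$ contains $I$ or $J$; this is the prime property applied elementwise. Second, there are enough $k$-primes: any $k$-ideal disjoint from a multiplicative set extends to a $k$-prime. For the second fact I would take a maximal $k$-ideal disjoint from the set (Zorn's lemma, since unions of chains of $k$-ideals are $k$-ideals) and verify primeness by the usual colon argument, using that $\mathcal C_k$ of a sum still avoids the multiplicative set. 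The basic compacts are $\mathcal R_k(\mathcal C_k\ideal{F})$.

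For (3), I would use the maps $\Theta\mapsto\zero\Theta$ and $P\mapsto k_P$. The zero-class of a $k$-prime congruence is a $k$-prime ideal by the definition of prime congruence. For a $k$-ideal $P$ we have $\zero{k_P}=\mathcal C_k(P)=P$. Conversely, $k_{\zero\Theta}=\Theta$ because $\Theta$ is a $k$-congruence. So the two maps are mutually inverse bijections. The topology on $\KCSpec(S)$ has basic opens $\{\Theta\mid(a,0)\notin\Theta\}$, which correspond exactly to $D_k(a)$. Hence the bijection is a homeomorphism, and spectrality transfers from (2).
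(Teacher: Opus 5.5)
Your proof is correct in substance, and it takes a different route from the paper, which gives no argument here and simply imports (1) from Peña and (2)--(3) from Han. You instead derive all three uniformly from the coherent-frame/Stone-duality mechanism the paper itself uses later for $\ZId(S)$. That buys self-containment and shows exactly where Zorn's lemma (enough primes and $k$-primes) and finite type enter; the citation buys brevity. Three points need tightening.

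First, neither $\Id(S)$ nor the lattice of $k$-ideals is a frame in general; already for rings the ideal lattice is not distributive. So $\sqrt{\cdot}$ and $\mathcal R_k$ are not nuclei in the sense of Lemma~\ref{lem:prenucleus-fixed}. What you really use is that both are closure operators on the coherent quantale $(\Id(S),\cdot)$ satisfying $j(IJ)=j(I)\cap j(J)$. The frame law for the fixed points then follows from $A\cap j(\sum_\lambda B_\lambda)=j(\sum_\lambda AB_\lambda)\subseteq j(\sum_\lambda(A\cap B_\lambda))$.

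Second, identifying the compact elements requires both closures to be of finite type. This follows from $\sqrt I=\{x: x^n\in I\text{ for some }n\}$ and $\mathcal R_k(I)=\sqrt{\mathcal C_k(I)}$, and both descriptions come out of your Zorn step. For $k$-ideals, the clean primeness check is that $(P:a)$ is a $k$-ideal whenever $P$ is. Alternatively, (2) follows at once from (1): $\Spec_k(S)=\bigcap_{a,b}\bigl(D(b)\cup D(a+b)\cup V(a)\bigr)$ is patch-closed in $\Spec(S)$, and its induced topology is the Zariski one.

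Third, in (3) you silently take the closed sets of $\KCSpec(S)$ to be $\{\Theta\mid A\subseteq\zero\Theta\}$. This is the right convention. It coincides with indexing closed sets by $k$-congruences, since $k_I\subseteq k_P$ iff $I\subseteq P$ for a $k$-ideal $P$, and it matches the paper's $W_G(A)$. With it the homeomorphism is immediate. But you must state it: if the closed sets are $\{\Theta\mid\rho\subseteq\Theta\}$ for arbitrary congruences $\rho$, the zero-class bijection is only continuous. For example, in $\N[x]$ put $P_p=\{f: p\mid f(1)\}$. The closed set $\{\Theta\mid(x,1)\in\Theta\}$ contains every $k_{P_p}$, because $x+p=1+(x+p-1)$, but not the diagonal $k_{\{0\}}$. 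Its image is not closed in $\Spec_k(\N[x])$, since $\bigcap_pP_p=\{0\}$.
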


A \emph{frame} is a complete lattice $L$ in which finite meets
distribute over arbitrary joins:
$a\wedge\bigvee_{i\in\Lambda}b_i=\bigvee_{i\in\Lambda}(a\wedge b_i)$.
A frame homomorphism preserves arbitrary joins and finite meets.
An element $c\in L$ is \emph{compact} if $c\leqslant\bigvee A$ implies
$c\leqslant\bigvee A_0$ for some finite $A_0\subseteq A$.
A frame is \emph{coherent} if $1$ is compact, every element is a join
of compact elements, and compact elements are closed under finite meets.
The category of coherent frames with compact-element-preserving frame
homomorphisms is $\mathsf{CohFrm}$.

The frame proofs of Section~\ref{sec:frame} are most efficiently
expressed through closure operators on frames.
Prenuclei are the device that lets us pass from a manageable
preclosure to a full subframe.

\begin{definition}[\cite{Banaschewski88,Johnstone}]
A map $j_0:L\to L$ on a frame is a \emph{prenucleus} if,
for all $x,y\in L$,
\begin{enumerate}
\item $x\leqslant j_0(x)$;
\item $x\leqslant y$ implies $j_0(x)\leqslant j_0(y)$;
\item $j_0(x)\wedge y\leqslant j_0(x\wedge y)$.
\end{enumerate}
The \emph{nucleus} generated by $j_0$ is
\[
j(x)=\bigwedge\{y\in L\mid x\leqslant y=j_0(y)\}.
\]
\end{definition}

The following standard observation explains why our closure
constructions produce frames rather than merely complete lattices.

\begin{lemma}\label{lem:prenucleus-fixed}
The fixed sets of $j_0$ and of the nucleus $j$ generated by $j_0$
coincide.  This common fixed set is a frame.
\end{lemma}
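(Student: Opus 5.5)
The proof has two parts: first identify the fixed sets, then show the common fixed set is a frame. Write $\mathrm{Fix}(j_0)=\{y\in L\mid j_0(y)=y\}$ and $\mathrm{Fix}(j)=\{y\in L\mid j(y)=y\}$.

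\textbf{The fixed sets coincide.} If $y\in\mathrm{Fix}(j_0)$, then $y$ itself is one of the elements in the meet defining $j(y)$, so $j(y)\leqslant y$. Since $y\leqslant j(y)$ by construction, $j(y)=y$.

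For the converse, we first show that $\mathrm{Fix}(j_0)$ is closed under arbitrary meets. Let $Y\subseteq\mathrm{Fix}(j_0)$ and $m=\bigwedge Y$. Monotonicity gives $j_0(m)\leqslant j_0(y)=y$ for every $y\in Y$, so $j_0(m)\leqslant m$. Inflationarity gives the reverse inequality, hence $m\in\mathrm{Fix}(j_0)$. Consequently $j(x)\in\mathrm{Fix}(j_0)$ for every $x\in L$. Now if $j(x)=x$, then $x=j(x)\in\mathrm{Fix}(j_0)$. This proves the two fixed sets are equal.

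\textbf{The common fixed set is a frame.} Put $F=\mathrm{Fix}(j_0)$. The operator $j$ is inflationary and monotone, and it is idempotent because $j(x)\in F=\mathrm{Fix}(j)$. So $j$ is a closure operator with image $F$. Since $F$ is closed under arbitrary meets in $L$, it is a complete lattice. Meets in $F$ are computed as in $L$, and joins are $\bigvee^F y_i=j\bigl(\bigvee y_i\bigr)$.

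It remains to verify the distributive law
\[
a\wedge j\Bigl(\bigvee_i b_i\Bigr)\leqslant j\Bigl(\bigvee_i (a\wedge b_i)\Bigr)
\]
for $a,b_i\in F$; the reverse inequality is automatic. The standard route is to show that $j$ satisfies $j(x)\wedge y\leqslant j(x\wedge y)$ for all $x,y\in L$. Applying this with $x=\bigvee_i b_i$ and $y=a$, and using $L$'s own distributivity $a\wedge\bigvee_i b_i=\bigvee_i(a\wedge b_i)$, yields the required inequality.

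\textbf{Main obstacle: $j(x)\wedge y\leqslant j(x\wedge y)$.} Property (3) of a prenucleus gives this only for $j_0$, not for the meet-defined $j$. To transfer it, fix $y$ and set $z=j(x\wedge y)$. Consider the element
\[
w=(y\to z)=\bigvee\{t\in L\mid t\wedge y\leqslant z\},
\]
the Heyting implication in $L$. We have $x\leqslant w$, because $x\wedge y\leqslant z$. We claim $w\in F$. Indeed, property (3) gives $j_0(w)\wedge y\leqslant j_0(w\wedge y)$. Since $w\wedge y\leqslant z$ and $z\in F$, monotonicity gives $j_0(w\wedge y)\leqslant j_0(z)=z$. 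Hence $j_0(w)\wedge y\leqslant z$, i.e.\ $j_0(w)\leqslant w$, so $w$ is fixed by $j_0$.

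Since $w\in F$ and $x\leqslant w$, the definition of $j$ gives $j(x)\leqslant w$. Therefore $j(x)\wedge y\leqslant z$, which is the desired inequality. This Heyting-implication step is the only non-formal one. If it fails, the fallback is the transfinite iteration $j_0^{\alpha}$, whose limit is $j$, checking the inequality stage by stage.
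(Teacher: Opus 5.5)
Your proof is correct. The first half, the coincidence of the fixed sets, is essentially the paper's argument. The paper shows $j_0(x)\leqslant\bigwedge T=x$ for $T=\{y: x\leqslant y=j_0(y)\}$ by monotonicity, which is your meet-closure observation applied to that particular $T$.

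The second half is where you genuinely diverge. The paper handles the frame assertion by citing the standard fact that the fixed set of a nucleus is a frame. In doing so it takes for granted that the meet-defined map $j$ is a nucleus, which it never proves. Notice that prenucleus axiom (3) is never used in the paper's proof, so all the weight of (3) rests on that citation. Axiom (3) is indispensable: the fixed set of an arbitrary monotone inflationary closure operator on a frame can be non-distributive. For example, the closure system $\{\varnothing,\{1\},\{2\},\{3\},\{1,2,3\}\}$ in $\mathcal P(\{1,2,3\})$ is the lattice $M_3$.

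You instead prove the key inequality $j(x)\wedge y\leqslant j(x\wedge y)$ directly. You do this by showing that $y\to z$ is $j_0$-fixed whenever $z$ is, and this is exactly the point where (3) enters. This makes the lemma self-contained. It also supplies the missing verification that $j$ is a nucleus: applying your inequality twice, together with monotonicity and idempotence, gives $j(x)\wedge j(y)\leqslant j(x\wedge j(y))\leqslant j(j(x\wedge y))=j(x\wedge y)$.

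Your Heyting step is sound, since it only uses $(y\to z)\wedge y\leqslant z$, which holds in any frame. The closing ``fallback'' via transfinite iteration is therefore unnecessary and can be dropped.
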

\begin{proof}
If $x=j_0(x)$, then $x$ belongs to the defining meet; extensivity
gives $x\leqslant j(x)\leqslant x$, so $x=j(x)$.
Conversely, suppose $x=j(x)$.  Let $T=\{y:x\leqslant y=j_0(y)\}$.
For every $y\in T$, monotonicity gives $j_0(x)\leqslant j_0(y)=y$;
hence $j_0(x)\leqslant\bigwedge T=j(x)=x$, and extensivity gives
equality.  That the fixed points of a nucleus form a frame is
standard; see~\cite[Ch.~II, Lem.~2.2]{Johnstone}.
\end{proof}

The spectrum $\pt(L)$ of a frame $L$ is the set of frame homomorphisms
$L\to\two$; equivalently, the set of prime elements, with the
hull-kernel topology.
The coherent-frame spectrum theorem---the Stone duality that underlies
all spectral-space conclusions of this paper---states that coherent
frames and spectral spaces are contravariantly equivalent;
see~\cite[Ch.~II, Cor.~3.4]{Johnstone} and~\cite{Hochster}.

\section{General results on \texorpdfstring{$z$}{z}-ideals}
\label{sec:general}

Let $\Max(S)$ be the set of maximal ideals of $S$, and let
$\Maxk(S)$ be the set of maximal $k$-ideals.
For $a\in S$ define
\begin{equation}\label{eq:max-hulls}
\begin{array}{l@{\qquad}l}
\M(a)=\{M\in\Max(S)\mid a\in M\},&
\Mk(a)=\{M\in\Maxk(S)\mid a\in M\},\\[4pt]
\mfrak(a)=\displaystyle\bigcap_{M\in\M(a)}M,&
\mfrak_k(a)=\displaystyle\bigcap_{M\in\Mk(a)}M.
\end{array}
\end{equation}
The intersection over an empty family is $S$;
this convention is necessary because a unit belongs to no
maximal ideal.

\subsection{The maximal-hull calculus}

The $z$-ideal theory rests on a product formula and a
sum-inclusion for maximal hulls.
The product formula is the semiring analogue of the equality of
maximal vanishing sets, and it requires only that maximal ideals
be prime, which Lemma~\ref{lem:max-prime} guarantees.
The sum statement is one-sided because subtraction is unavailable.

\begin{lemma}\label{loc6}
For all $a,b\in S$,
\begin{enumerate}
\item $\M(ab)=\M(a)\cup\M(b)$;
\item $\M(a)\cap\M(b)\subseteq\M(a+b)$.
\end{enumerate}
\end{lemma}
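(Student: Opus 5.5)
Both parts reduce to elementwise membership checks against a fixed maximal ideal $M$, using only that $M$ is an ideal and, for the product formula, that $M$ is prime (Lemma~\ref{lem:max-prime}). The plan is to verify each inclusion by unwinding the definition $\M(x)=\{M\in\Max(S)\mid x\in M\}$.

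For (1), we prove the two inclusions separately. For $\supseteq$: if $M\in\M(a)$, then $a\in M$, and since $M$ is an ideal, $ab\in M$, so $M\in\M(ab)$. The case $M\in\M(b)$ is symmetric, using commutativity. For $\subseteq$: let $M\in\M(ab)$, so $ab\in M$. By Lemma~\ref{lem:max-prime}, $M$ is prime, so $a\in M$ or $b\in M$. Hence $M\in\M(a)\cup\M(b)$.

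For (2): if $M\in\M(a)\cap\M(b)$, then $a,b\in M$, and closure of the ideal $M$ under addition gives $a+b\in M$, i.e.\ $M\in\M(a+b)$.

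There is essentially no obstacle. The only substantive input is primeness of maximal ideals in the $\subseteq$ direction of (1), which is exactly what Lemma~\ref{lem:max-prime} supplies. For orientation, it is also worth noting why (2) is only an inclusion: the reverse would require $a+b\in M$ and $b\in M$ to imply $a\in M$, i.e.\ that $M$ be subtractive. This can fail in a semiring, so no reverse inclusion should be claimed.
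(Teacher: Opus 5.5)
Your proof is correct and matches the paper's argument: primeness of maximal ideals (Lemma~\ref{lem:max-prime}) gives the nontrivial inclusion in (1), and additive closure of $M$ gives (2). One correction to your closing aside: the reverse of (2) would need $a+b\in M\Rightarrow a,b\in M$, i.e.\ that $M$ be \emph{strong} (compare Proposition~\ref{prop:gelfand-strong-max}), not merely subtractive---in $\mathds{Z}$ every ideal is subtractive, yet $0=1+(-1)$ lies in every maximal ideal while $1$ and $-1$ lie in none.
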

\begin{proof}
The first assertion follows from Lemma~\ref{lem:max-prime}:
a maximal ideal contains $ab$ if and only if it contains $a$ or $b$.
The second is immediate from closure of ideals under addition.
\end{proof}

Intersections of maximal ideals over a given element are the
algebraic closures behind $z$-ideals.
The next proposition translates the hull identities
into formulas for these intersections.

\begin{proposition}\label{prop:mfrak-basic}
For all $a,b\in S$,
\begin{enumerate}
\item $\mfrak(ab)=\mfrak(a)\cap\mfrak(b)$;
\item $\mfrak(a+b)\subseteq\displaystyle
  \bigcap\{M\in\Max(S)\mid a,b\in M\}$.
\end{enumerate}
\end{proposition}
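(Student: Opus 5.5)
Both parts follow directly from Lemma~\ref{loc6} by translating statements about the index sets $\M(\cdot)$ into statements about the intersections $\mfrak(\cdot)$. The one general fact I will use is that intersection turns union of index families into intersection of the corresponding ideals, and reverses inclusion of index families.

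For (1), Lemma~\ref{loc6}(1) gives $\M(ab)=\M(a)\cup\M(b)$. Hence
\[
\mfrak(ab)=\bigcap_{M\in\M(a)\cup\M(b)}M=\Bigl(\bigcap_{M\in\M(a)}M\Bigr)\cap\Bigl(\bigcap_{M\in\M(b)}M\Bigr)=\mfrak(a)\cap\mfrak(b).
\]
The empty-family convention needs a quick check. If, say, $\M(a)=\emptyset$, then $\mfrak(a)=S$ and the identity reduces to $\mfrak(ab)=\mfrak(b)$, which holds because $\M(ab)=\M(b)$. If both hulls are empty, both sides equal $S$.

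For (2), note that $\{M\in\Max(S)\mid a,b\in M\}=\M(a)\cap\M(b)$. By Lemma~\ref{loc6}(2) this family is contained in $\M(a+b)$. Intersecting over a larger family yields a smaller set, so
\[
\mfrak(a+b)=\bigcap_{M\in\M(a+b)}M\subseteq\bigcap_{M\in\M(a)\cap\M(b)}M.
\]
If $\M(a)\cap\M(b)=\emptyset$, the right-hand side is $S$ and the inclusion is trivial.

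No step is a genuine obstacle. The only point requiring care is the bookkeeping with the convention that the empty intersection is $S$, and both parts remain valid in that degenerate case.
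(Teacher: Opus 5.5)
Your proposal is correct and follows essentially the same route as the paper: part (1) comes from intersecting over $\M(ab)=\M(a)\cup\M(b)$, and part (2) from Lemma~\ref{loc6}(2), using that intersecting over a larger family gives a smaller set. Your explicit check of the empty-family convention is a harmless extra that the paper leaves implicit.
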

\begin{proof}
By Lemma~\ref{loc6}, the maximal ideals containing $ab$ are exactly
those containing $a$ or $b$.  Intersecting over this union gives
\[
\mfrak(ab)=\bigcap_{M\in\M(a)\cup\M(b)}M
=\Bigl(\bigcap_{M\in\M(a)}M\Bigr)\cap\Bigl(\bigcap_{M\in\M(b)}M\Bigr).
\]
For (2), Lemma~\ref{loc6} gives $\M(a)\cap\M(b)\subseteq\M(a+b)$;
taking intersections reverses inclusions.
\end{proof}

\subsection{\texorpdfstring{$z$}{z}-ideals and their closure}

An ideal is $z$-closed if membership is stable under the
maximal-hull equivalence: once an element of the ideal and a second
element lie in exactly the same maximal ideals, the second element
must also belong to the ideal.

\begin{definition}[\cite{BiswasFilomat,AG23}]\label{def:zideal}
An ideal $I$ of $S$ is a \emph{$z$-ideal} if,
whenever $a\in I$ and $b\in S$ satisfy $\M(a)=\M(b)$, one has
$b\in I$.  The set of $z$-ideals of $S$ is $\ZId(S)$.
\end{definition}

For an ideal $I$ define its \emph{$z$-closure}
\[
\clz(I)=\bigcap\{J\in\ZId(S)\mid I\subseteq J\}.
\]
Then $I$ is a $z$-ideal precisely when $\clz(I)=I$.
We write $\ideal{a}_z$ for $\clz(\ideal{a})$,
the principal $z$-closure of $a$.
The canonical description $\ideal{a}_z=\mfrak(a)$---proved in
the appendix as Lemma~\ref{app:maximal-hull-calculus}---shows
that the principal $z$-closure is simply the intersection of
all maximal ideals containing $a$.

The natural-number semiring is the canonical test case:
its maximal-ideal space is minimal, making explicit computations
transparent, while the ideal structure exhibits the full range of
phenomena that arise for semirings but not for rings.

\begin{example}\label{ex:N-zideals}
In $S=\N$, the unique ordinary maximal ideal is $\N\setminus\{1\}$.
Hence every non-unit has the same maximal hull, and the only proper
$z$-ideal of $\N$ is $\N\setminus\{1\}$:
any proper $z$-ideal must contain every non-unit
(because $\M(0)=\M(n)$ for every $n\neq1$) and cannot contain $1$.
\end{example}

The following result explains when, in additively idempotent semirings,
$z$-ideals are automatically subtractive.
The characterization is phrased entirely in terms of maximal hulls
of sums, making explicit where lower-set behaviour enters.

\begin{proposition}\label{prop:idempotent-z-k}
Suppose $S$ is additively idempotent.  Then every $z$-ideal is
a $k$-ideal if and only if $\M(a+b)=\M(a)\cap\M(b)$ for all $a,b\in S$.
\end{proposition}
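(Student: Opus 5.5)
The plan is to prove the two implications separately. The forward direction uses that maximal ideals are $z$-ideals, together with additive idempotency. The reverse direction uses the product formula of Lemma~\ref{loc6}.

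\emph{Preliminary observation.} Every maximal ideal $M$ is a $z$-ideal. Indeed, if $a\in M$ and $\M(a)=\M(b)$, then $M\in\M(a)=\M(b)$, so $b\in M$.

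\emph{($\Rightarrow$).} Assume every $z$-ideal is a $k$-ideal. The inclusion $\M(a)\cap\M(b)\subseteq\M(a+b)$ holds by Lemma~\ref{loc6}(2), so only $\M(a+b)\subseteq\M(a)\cap\M(b)$ remains. Let $M\in\Max(S)$ contain $a+b$. By the observation and the hypothesis, $M$ is a $k$-ideal. Additive idempotency gives
\[
a+(a+b)=(a+a)+b=a+b\in M.
\]
Since $a+b\in M$ and $M$ is subtractive, this forces $a\in M$. The symmetric argument gives $b\in M$. Hence $M\in\M(a)\cap\M(b)$.

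\emph{($\Leftarrow$).} Assume $\M(a+b)=\M(a)\cap\M(b)$ for all $a,b$. Let $I$ be a $z$-ideal with $a+b\in I$ and $b\in I$; we must show $a\in I$. The key step is to find an element of $I$ with exactly the maximal hull of $a$. I would take $c=a(a+b)$, which lies in $I$ because $I$ is an ideal containing $a+b$. By Lemma~\ref{loc6}(1) and the hypothesis,
\[
\M(c)=\M(a)\cup\M(a+b)=\M(a)\cup\bigl(\M(a)\cap\M(b)\bigr)=\M(a).
\]
Since $I$ is a $z$-ideal, it follows that $a\in I$.

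This direction does not use $b\in I$, so it actually shows that every $z$-ideal is strong, which in particular makes it a $k$-ideal. The only real obstacle is spotting the element $a(a+b)$ in the reverse direction; once it is chosen, everything else is a one-line use of the hull calculus.
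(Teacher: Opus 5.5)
Your proof is correct and essentially the paper's argument. The forward direction is identical: maximal ideals are $z$-ideals, hence subtractive, and $a+(a+b)=a+b$ forces $a\in M$. In the reverse direction your element $a(a+b)$ is the paper's $c=ab$ (used for $b\leqslant a$, $a\in I$), applied to the pair $a\leqslant a+b$. Working directly from $a+b\in I$ instead of the lower-set reformulation gives the small bonus you note: the reverse implication does not need additive idempotency, and it shows every $z$-ideal is strong.
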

\begin{proof}
Assume first that every $z$-ideal is a $k$-ideal.
Every maximal ideal is a $z$-ideal: if $a\in M$ and $\M(a)=\M(b)$,
then $M\in\M(b)$ and hence $b\in M$.
Thus every maximal ideal is a $k$-ideal.
If $a+b\in M$, then in the natural order of the idempotent semiring,
$a\leqslant a+b$ and $b\leqslant a+b$;
the $k$-property gives $a,b\in M$.
Hence $\M(a+b)\subseteq\M(a)\cap\M(b)$,
and the reverse inclusion is Lemma~\ref{loc6}.

Conversely, assume the displayed equality.
In an additively idempotent semiring, a $k$-ideal is the same as a
lower set for the natural order.
Suppose $b\leqslant a$ and $a\in I$ for a $z$-ideal $I$.
Then $a+b=a$, so
\[
\M(a)=\M(a+b)=\M(a)\cap\M(b),
\]
whence $\M(a)\subseteq\M(b)$.
Put $c=ab$; Lemma~\ref{loc6} gives $\M(c)=\M(b)$.
Since $c\in I$ and $I$ is a $z$-ideal, $b\in I$.
\end{proof}

Colon ideals provide a compact way to test the stability of an ideal
under replacement of an element by its powers.
The following is the semiring version of the familiar fact that
$z$-ideals do not distinguish between an element and its powers
at the level of maximal hulls.

\begin{proposition}\label{prop:colon}
If $I$ is a $z$-ideal, then $(I:x)=(I:x^n)$
for every $x\in S$ and every integer $n\geqslant1$.
\end{proposition}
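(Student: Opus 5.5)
The plan is to prove the two inclusions separately. Recall that $(I:x)=\{s\in S\mid sx\in I\}$.

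For the inclusion $(I:x)\subseteq(I:x^n)$: if $sx\in I$, then $sx^n=(sx)x^{n-1}\in I$, because $I$ is an ideal. This direction needs nothing beyond the ideal property. When $n=1$ the two colons are trivially equal.

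For the reverse inclusion, let $s\in(I:x^n)$, so that $sx^n\in I$. The key step is to compare the maximal hulls of $sx^n$ and $sx$.

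1. By iterating Lemma~\ref{loc6}(1), $\M(x^n)=\M(x)\cup\cdots\cup\M(x)=\M(x)$. A short induction on $n$ handles this, writing $x^{n}=x^{n-1}\cdot x$.
2. Applying Lemma~\ref{loc6}(1) again gives
\[
\M(sx^n)=\M(s)\cup\M(x^n)=\M(s)\cup\M(x)=\M(sx).
\]
3. Since $sx^n\in I$, $\M(sx^n)=\M(sx)$, and $I$ is a $z$-ideal (Definition~\ref{def:zideal}), we conclude $sx\in I$, i.e.\ $s\in(I:x)$.

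There is no real obstacle here. The only point needing care is that the product formula of Lemma~\ref{loc6} rests on maximal ideals being prime (Lemma~\ref{lem:max-prime}). This is exactly what makes the hull equality $\M(sx^n)=\M(sx)$ available without subtraction or any further hypothesis on $S$.

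One might optionally note that the colon $(I:x)$ is itself an ideal. That fact is not needed: the statement is a set equality, and both inclusions above were proved elementwise.
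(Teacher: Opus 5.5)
Your proof is correct and matches the paper's argument: the forward inclusion comes from the ideal property, and the reverse one from the hull equality $\M(sx^n)=\M(sx)$ given by Lemma~\ref{loc6}(1) together with the $z$-ideal property. The only difference is that you spell out the induction giving $\M(x^n)=\M(x)$, which the paper leaves implicit.
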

\begin{proof}
The inclusion $(I:x)\subseteq(I:x^n)$ is clear.
If $b\in(I:x^n)$, then $x^nb\in I$.
By Lemma~\ref{loc6}, $\M(xb)=\M(x^n b)$.
Since $I$ is a $z$-ideal, $xb\in I$, hence $b\in(I:x)$.
\end{proof}

Prime objects in the $z$-ideal frame are tested against products
of $z$-ideals, not merely against products of elements.
The next definition fixes this convention and introduces the
corresponding $z$-radical.

\begin{definition}[\cite{AG23}]
A proper $z$-ideal $P$ is \emph{$z$-prime} if $AB\subseteq P$
implies $A\subseteq P$ or $B\subseteq P$ for all $A,B\in\ZId(S)$.
The \emph{$z$-radical} of a $z$-ideal $I$ is
\[
\sqrt[z]{I}=\bigcap\{P\in\Spec_z(S)\mid I\subseteq P\}.
\]
\end{definition}

The exchange theorem below is the bridge between the frame-theoretic
language of $z$-prime ideals and the more familiar ideal-theoretic
language of prime ideals.

\begin{theorem}[\cite{AG25}]\label{thm:zprime-exchange}
An ideal $P$ is $z$-prime if and only if $P$ is both prime and
a $z$-ideal.
\end{theorem}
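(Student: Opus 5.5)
We prove the two implications separately. The converse direction is the easy one.

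\emph{A prime $z$-ideal is $z$-prime.} Suppose $P$ is a proper prime ideal that is also a $z$-ideal, and let $A,B\in\ZId(S)$ with $AB\subseteq P$. If $A\not\subseteq P$, pick $a\in A\setminus P$. For every $b\in B$ we have $ab\in AB\subseteq P$, and primeness gives $b\in P$. Hence $B\subseteq P$, and $P$ is $z$-prime. This direction uses only that $A$ and $B$ are ideals.

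\emph{A $z$-prime ideal is prime.} Let $P$ be $z$-prime; by definition it is a proper $z$-ideal, so only primeness is at issue. Suppose $ab\in P$. The plan is to replace $a$ and $b$ by their principal $z$-closures $A=\ideal{a}_z=\mfrak(a)$ and $B=\ideal{b}_z=\mfrak(b)$, which are $z$-ideals (using Lemma~\ref{app:maximal-hull-calculus}). It then suffices to show $AB\subseteq P$, since $z$-primeness gives $A\subseteq P$ or $B\subseteq P$, hence $a\in P$ or $b\in P$.

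To show $AB\subseteq P$, take $x\in\mfrak(a)$ and $y\in\mfrak(b)$. Since $\mfrak(a)$ is the intersection of the maximal ideals containing $a$, we have $\M(a)\subseteq\M(x)$, and likewise $\M(b)\subseteq\M(y)$. By Lemma~\ref{loc6}(1),
\[
\M(ab)=\M(a)\cup\M(b)\subseteq\M(x)\cup\M(y)=\M(xy).
\]
The definition of a $z$-ideal only gives closure under \emph{equal} hulls, so we need the stronger property that $\M(c)\subseteq\M(d)$ and $c\in P$ imply $d\in P$. We get this by a product trick. Put $c=ab\in P$ and $d=xy$. Then $cd\in P$ because $P$ is an ideal, and
\[
\M(cd)=\M(c)\cup\M(d)=\M(d).
\]
Since $P$ is a $z$-ideal, $d\in P$. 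Thus every generator $xy$ of $AB$ lies in $P$. Because $AB$ consists of finite sums of such products and $P$ is closed under addition, $AB\subseteq P$, which completes the argument.

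\emph{Main obstacle.} The delicate point is the passage from elements to $z$-ideals: the hull-monotonicity property must be derived from the equality-based definition, which is exactly what the product trick does. We also rely on $\mfrak(a)$ being a $z$-ideal, i.e.\ on the appendix lemma. If that lemma were unavailable, we would check directly that $\mfrak(a)$ is an ideal and is hull-closed: each $M\in\M(a)$ is a $z$-ideal, as noted in the proof of Proposition~\ref{prop:idempotent-z-k}, and intersections of $z$-ideals are $z$-ideals. The empty-hull case, where $a$ is a unit and $\mfrak(a)=S$, is consistent with this and needs no separate treatment.
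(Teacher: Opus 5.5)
Your proof is correct. The paper gives no proof of this statement itself: it imports it from \cite{AG25} and then uses it downstream, e.g.\ in Corollary~\ref{cor:ordinary-prime} and Theorem~\ref{thm:quotient-z-spectrum-homeomorphism}. So your argument is an independent, self-contained route rather than a reconstruction of the paper's.

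What your route buys is that the exchange theorem needs only two inputs: the product formula $\M(ab)=\M(a)\cup\M(b)$ of Lemma~\ref{loc6}, and the fact that maximal ideals are $z$-ideals. In particular, it does not pass through the meet identity that Theorem~\ref{lem:z-finite-type}(4) derives partly from the exchange theorem itself, so it keeps that chain of dependencies non-circular.

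Your ``product trick'' is exactly the step in the paper's proof of Lemma~\ref{app:maximal-hull-calculus}(3). The core of your second direction therefore compresses to one line. Since $P$ is a $z$-ideal containing $ab$, Lemma~\ref{app:maximal-hull-calculus}(3) and Proposition~\ref{prop:mfrak-basic}(1) give
\[
P\supseteq\ideal{ab}_z=\mfrak(ab)=\mfrak(a)\cap\mfrak(b)\supseteq\mfrak(a)\,\mfrak(b).
\]

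Structurally, your argument is the $z$-ideal counterpart of the paper's proof of the $g$-analogue, Lemma~\ref{exchange-principal-g-prime}. There, hypothesis (G4) is assumed in order to supply $\ideal{a}_g\cap\ideal{b}_g=\ideal{ab}_g$. For the ordinary $z$-closure the corresponding identity comes for free from primeness of maximal ideals, which is why no extra hypothesis is needed here.
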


The next proposition shows that principal maximal hulls can also
be read from $z$-prime data.
This is the precise form in which the hull-kernel topology
re-emerges in the prime spectrum of the $z$-ideal frame.

\begin{proposition}\label{prop:principal-radical-hulls}
For $a,b\in S$,
\[
\M(a)=\M(b)\quad\Longleftrightarrow\quad
\sqrt[z]{\ideal{a}_z}=\sqrt[z]{\ideal{b}_z}.
\]
Equivalently, $\M(a)=\M(b)$ if and only if $a$ and $b$ belong to
exactly the same $z$-prime ideals.
\end{proposition}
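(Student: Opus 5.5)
The plan is to reduce both directions to one observation: for a $z$-prime $P$ and $a\in S$,
\[
a\in P\iff \ideal{a}_z\subseteq P\iff \sqrt[z]{\ideal{a}_z}\subseteq P .
\]
The first equivalence holds because $P$ is a $z$-ideal containing $\ideal{a}$ exactly when $a\in P$, and $\clz$ is the least $z$-ideal above $\ideal{a}$. For the second, if $\ideal{a}_z\subseteq P$ then $P$ is one of the $z$-primes in the intersection defining $\sqrt[z]{\ideal{a}_z}$, so the radical lies in $P$. Conversely, $a\in\ideal{a}_z\subseteq\sqrt[z]{\ideal{a}_z}$.

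\emph{Maximal ideals are $z$-prime.} I would first check that every maximal ideal $M$ is $z$-prime. It is prime by Lemma~\ref{lem:max-prime}. It is a $z$-ideal because $a\in M$ and $\M(a)=\M(b)$ give $M\in\M(b)$, as already noted in the proof of Proposition~\ref{prop:idempotent-z-k}. Theorem~\ref{thm:zprime-exchange} then makes $M$ $z$-prime.

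\emph{Forward direction.} Suppose $\M(a)=\M(b)$. Every $z$-prime $P$ is a $z$-ideal, so $a\in P$ forces $b\in P$, and the same holds with $a,b$ swapped. Thus $a$ and $b$ lie in exactly the same $z$-primes. By the displayed equivalence, the $z$-primes containing $\ideal{a}_z$ are exactly those containing $\ideal{b}_z$. The two radicals are therefore intersections over the same family, so they are equal.

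\emph{Converse.} Suppose $\sqrt[z]{\ideal{a}_z}=\sqrt[z]{\ideal{b}_z}$. Apply the displayed equivalence to each maximal ideal $M$, which is legitimate since $M$ is $z$-prime. This gives $a\in M\iff\sqrt[z]{\ideal{a}_z}\subseteq M\iff\sqrt[z]{\ideal{b}_z}\subseteq M\iff b\in M$, so $\M(a)=\M(b)$.

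\emph{The ``equivalently'' clause.} Equality of the radicals implies that $a$ and $b$ lie in the same $z$-primes, by the displayed equivalence applied to every $z$-prime. The converse is the argument in the forward direction.

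There is no real obstacle here. The one point needing care is that maximal ideals belong to $\Spec_z(S)$, which is exactly where Theorem~\ref{thm:zprime-exchange} is used. Without it, the radical would not see the maximal hulls. The empty-family convention causes no trouble: a unit lies in no $z$-prime, so its radical is $S$.
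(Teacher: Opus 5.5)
Your proof is correct and follows the paper's argument. The forward direction uses that every $z$-prime is a $z$-ideal, so $a$ and $b$ lie in the same $z$-primes. The converse tests the equal radicals against each maximal ideal, which is $z$-prime because it is a prime $z$-ideal. Your displayed equivalence $a\in P\iff\ideal{a}_z\subseteq P\iff\sqrt[z]{\ideal{a}_z}\subseteq P$ simply makes explicit the step the paper uses implicitly, and your separate treatment of the ``equivalently'' clause is a harmless addition.
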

\begin{proof}
Assume $\M(a)=\M(b)$.
If $P$ is a $z$-prime ideal containing $a$, then $P$ is a $z$-ideal,
so the equality of maximal hulls forces $b\in P$.
By symmetry, the $z$-prime ideals containing $a$ and $b$ coincide,
giving equal $z$-radicals.

Conversely, assume the $z$-radicals are equal.
Let $M\in\M(a)$.  Then $M$ is maximal, hence prime by
Lemma~\ref{lem:max-prime}; the argument in the proof of
Proposition~\ref{prop:idempotent-z-k} shows that $M$ is a $z$-ideal.
Thus $M$ is $z$-prime and contains $a$,
hence contains $\sqrt[z]{\ideal{a}_z}=\sqrt[z]{\ideal{b}_z}$,
and therefore contains $b$.
This gives $\M(a)\subseteq\M(b)$; the reverse inclusion is symmetric.
\end{proof}

\subsection{Zero-dimensionality, von Neumann regularity, and
\texorpdfstring{$z$}{z}-ideals}

The Krull dimension $\dim S$ is the supremum of the lengths of chains
of prime ideals.  A semiring is \emph{zero-dimensional} if $\dim S=0$.
Semifields are zero-dimensional, but the semiring situation is subtler
than the ring situation, as the following remark illustrates.

\begin{remark}
The semiring $\N$ has the chain
$\ideal{0}\subsetneq p\N\subsetneq\N\setminus\{1\}$,
so its prime-ideal structure differs markedly from that of $\mathds{Z}$.
A bounded distributive lattice $L$, viewed as an idempotent semiring
via $a+b=a\vee b$ and $ab=a\wedge b$, is von Neumann regular because
$a=a^2$ for all $a\in L$.
Yet zero-dimensionality of $L$ is equivalent to $L$ being Boolean;
see~\cite{Nachbin47}.
For instance, in the distributive lattice
\[
\begin{tikzcd}[row sep=small,column sep=large]
&1\arrow[d,dash]&\\
&c\arrow[dl,dash]\arrow[dr,dash]&\\
a\arrow[dr,dash]&&b\arrow[dl,dash]\\
&0&
\end{tikzcd}
\]
the ideal $\{0,a\}$ is prime but not maximal.
\end{remark}

The following definition is the standard semiring version of von
Neumann regularity, due to Subramanian~\cite{Subramanian70/RegularSemirings}.

\begin{definition}[\cite{Golan/Book,Subramanian70/RegularSemirings}]
A semiring $S$ is \emph{von Neumann regular} if for every $a\in S$
there exists $x\in S$ such that $a=a^2x$.
\end{definition}

Let $E(S)=\{e\in S\mid e^2=e\}$ be the set of multiplicative
idempotents.  An idempotent $e$ is \emph{complemented} if there is
an idempotent $f$ with $ef=0$ and $e+f=1$.
The set of complemented idempotents is $\mathrm{Comp}(S)$.

The phrase ``strong $z$-ideal'' has been used in more than one way
in the literature.  To avoid ambiguity, we reserve it here for ideals
completely determined by the maximal ideals above them---the semiring
analogue of the structure present in von Neumann regular rings.

\begin{definition}\label{def:strong-z}
An ideal $I$ is a \emph{strong $z$-ideal} in this paper if
\[
I=\bigcap\{M\in\Max(S)\mid I\subseteq M\}.
\]
\end{definition}

We can now prove the regularity theorem announced in the introduction.
The proof follows the maximal-hull strategy from the ring case~\cite{Mason71/z-ideals/prime-ideals},
but the complemented-idempotent hypothesis is what makes the
separation argument work without subtraction.

\begin{theorem}\label{thm:regularity}
Let $S$ be a semiring such that $E(S)=\mathrm{Comp}(S)$.
Then the following are equivalent.
\begin{enumerate}
\item $S$ is von Neumann regular.
\item Every ideal of $S$ is a strong $z$-ideal.
\item Every ideal of $S$ is a $z$-ideal.
\item Every principal ideal of $S$ is a $z$-ideal.
\end{enumerate}
\end{theorem}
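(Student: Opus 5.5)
The plan is to prove the cycle (1)$\Rightarrow$(2)$\Rightarrow$(3)$\Rightarrow$(4)$\Rightarrow$(1). The hypothesis $E(S)=\mathrm{Comp}(S)$ is needed only in the separation step (1)$\Rightarrow$(2), which I expect to be the main obstacle.

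\textbf{(1)$\Rightarrow$(2).} Let $I$ be an ideal and $b\notin I$. I will produce a maximal ideal $M\supseteq I$ with $b\notin M$.
\begin{enumerate}
\item By regularity choose $x$ with $b=b^2x$, and put $e=bx$. Then $e^2=b(bx^2b)\cdots$ reduces, via $b^2x=b$, to $e^2=bx=e$, so $e$ is idempotent. Also $be=b^2x=b$.
\item By hypothesis $e$ has a complement $f$, with $ef=0$ and $e+f=1$.
\item Consider the ideal $J=I+Sf$. I claim $e\notin J$. If $e=i+sf$ with $i\in I$, then multiplying by $e$ gives $e=ei+sef=ei\in I$. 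Hence $b=be\in I$, a contradiction.
\item In particular $1\notin J$, so $J$ is proper. Since $1$ is present, Zorn's lemma gives a maximal ideal $M\supseteq J$.
\item If $b\in M$, then $e=bx\in M$. Since also $f\in M$, this gives $1=e+f\in M$, which is impossible. So $b\notin M$.
\end{enumerate}
Hence $I=\bigcap\{M\in\Max(S)\mid I\subseteq M\}$, i.e.\ every ideal is a strong $z$-ideal.

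\textbf{(2)$\Rightarrow$(3).} Every maximal ideal is a $z$-ideal, as noted in the proof of Proposition~\ref{prop:idempotent-z-k}. An intersection of $z$-ideals is again a $z$-ideal: if $a$ lies in each member and $\M(a)=\M(b)$, then $b$ lies in each member. Thus every strong $z$-ideal is a $z$-ideal. (The empty intersection is $S$, which is trivially a $z$-ideal.)

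\textbf{(3)$\Rightarrow$(4).} This is immediate.

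\textbf{(4)$\Rightarrow$(1).} Fix $a\in S$. Since $S$ is commutative with $1$, the principal ideal is $\ideal{a^2}=Sa^2$. By Lemma~\ref{loc6}(1), $\M(a^2)=\M(a)\cup\M(a)=\M(a)$. Because $a^2\in\ideal{a^2}$ and $\ideal{a^2}$ is a $z$-ideal, it follows that $a\in Sa^2$. So $a=xa^2$ for some $x$, and $S$ is von Neumann regular. This direction does not use the idempotent hypothesis.
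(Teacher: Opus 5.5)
Your proof is correct and follows the paper's argument essentially step for step: for (1)$\Rightarrow$(2) you use the same idempotent $e=bx$, its complement $f$, the proper ideal $I+Sf$, and a maximal ideal above it that excludes $b$; for (4)$\Rightarrow$(1) you use the same identity $\M(a)=\M(a^2)$. The only blemish is the garbled computation of $e^2$, which should simply read $e^2=b^2x^2=(b^2x)x=bx=e$.
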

\begin{proof}
(1)$\Rightarrow$(2).
Let $I$ be an ideal and let $a\notin I$.
Choose $x\in S$ with $a=a^2x$ and put $e=ax$.
Then $e^2=e$ and $\ideal{a}=\ideal{e}$:
indeed $e=ax\in\ideal{a}$ and $a=ae\in\ideal{e}$.
Since $a\notin I$, also $e\notin I$.
By hypothesis, $e$ has a complement $f$
satisfying $ef=0$ and $e+f=1$.

Consider $J=I+\ideal{f}$.
If $1\in J$, write $1=i+sf$ for some $i\in I$ and $s\in S$,
and multiply by $e$ to get $e=ei\in I$, a contradiction.
Hence $J$ is proper, so some maximal ideal $M$ contains $J$.
Since $f\in M$ and $e+f=1$, we have $e\notin M$, hence $a\notin M$.
Thus every element outside $I$ is excluded by a maximal ideal
containing $I$, proving (2).

(2)$\Rightarrow$(3).
Let $I$ be a strong $z$-ideal, $a\in I$, and $\M(a)=\M(b)$.
Every maximal ideal $M$ containing $I$ contains $a$,
hence $M\in\M(a)=\M(b)$, so $b\in M$.
Intersecting over all such $M$ gives $b\in I$.

(3)$\Rightarrow$(4) is immediate.

(4)$\Rightarrow$(1).
Let $a\in S$.  Since maximal ideals are prime,
Lemma~\ref{loc6}(1) gives $\M(a)=\M(a^2)$.
The ideal $\ideal{a^2}$ is a $z$-ideal by hypothesis, and
$a^2\in\ideal{a^2}$; hence $a\in\ideal{a^2}$,
giving $a=a^2x$ for some $x$.
\end{proof}

\subsection{Variants of \texorpdfstring{$z$}{z}-ideals}

The ordinary maximal spectrum $\Max(S)$ and the maximal $k$-spectrum
$\Maxk(S)$ need not coincide.
The following variant records the parallel $z$-closure obtained
by testing against maximal $k$-ideals instead of all maximal ideals.

\begin{definition}\label{def:zk}
An ideal $I$ is a \emph{$z_k$-ideal} if $a\in I$ and $\Mk(a)=\Mk(b)$
imply $b\in I$.
\end{definition}

When the two maximal spectra agree as subsets of $\Id(S)$,
the two closure theories coincide.

\begin{proposition}\label{prop:zk-safe}
Suppose $\Max(S)=\Maxk(S)$ as subsets of $\Id(S)$.
Then the $z$-ideals and the $z_k$-ideals of $S$ coincide.
In particular, this holds in any class of semirings
for which all maximal ideals are $k$-ideals and every
maximal $k$-ideal is maximal as an ordinary ideal.
\end{proposition}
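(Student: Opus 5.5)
The plan is to prove directly that, under the hypothesis $\Max(S)=\Maxk(S)$, the two hull maps agree on every element. For each $a\in S$ we have
\[
\M(a)=\{M\in\Max(S)\mid a\in M\}=\{M\in\Maxk(S)\mid a\in M\}=\Mk(a).
\]
This holds because both sets are obtained by filtering the same family of ideals by the same membership condition. Consequently, for all $a,b\in S$, the conditions $\M(a)=\M(b)$ and $\Mk(a)=\Mk(b)$ are literally the same condition.

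With this in hand, compare Definition~\ref{def:zideal} with Definition~\ref{def:zk}. Both definitions require: whenever $a\in I$ and $b$ is related to $a$ by equality of hulls, then $b\in I$. Since the relations coincide, an ideal $I$ is a $z$-ideal exactly when it is a $z_k$-ideal. This proves the first assertion.

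For the ``in particular'' clause, it suffices to check that the stated hypotheses give $\Max(S)=\Maxk(S)$.

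\begin{itemize}
\item \emph{$\Max(S)\subseteq\Maxk(S)$.} Let $M$ be maximal. By assumption $M$ is a $k$-ideal, and it is proper. Any proper $k$-ideal containing $M$ is in particular a proper ideal containing $M$, so it equals $M$. Hence $M$ is a maximal $k$-ideal.
\item \emph{$\Maxk(S)\subseteq\Max(S)$.} This inclusion is exactly the second stated assumption.
\end{itemize}

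There is no real obstacle here. The only point needing care is the first inclusion: maximality among all ideals passes to maximality among $k$-ideals, since the latter form a subfamily of the former. The converse inclusion has to be assumed, as the statement does.
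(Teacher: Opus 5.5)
Your proof is correct and is the same as the paper's: under $\Max(S)=\Maxk(S)$ one has $\M(a)=\Mk(a)$ for every $a\in S$, so the two definitions coincide verbatim. You also check the ``in particular'' clause (a maximal ideal that is a $k$-ideal is automatically a maximal $k$-ideal), a step the paper leaves implicit.
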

\begin{proof}
Under the stated hypothesis $\M(a)=\Mk(a)$ for every $a\in S$,
so the two definitions are identical.
\end{proof}

The natural-number semiring again shows that this hypothesis is
not merely terminological: ordinary maximal ideals and maximal
$k$-ideals lead to genuinely different closure theories.

\begin{example}\label{ex:zk-N}
For $S=\N$, the unique proper $z$-ideal is $\N\setminus\{1\}$
(Example~\ref{ex:N-zideals}), whereas the maximal $k$-ideals are
$p\N$ for primes $p$.
Hence $n\N$ is a $z_k$-ideal whenever $n$ is square-free,
because membership in the maximal $k$-ideals records the prime
divisors of an element.
This shows that $z$-ideals and $z_k$-ideals can differ substantially.
\end{example}

Strong maximal ideals control additive decompositions within a single
maximal ideal.
The Gelfand condition packages this behaviour in a semiring-wide
property.

\begin{definition}[\cite{Golan/Book}]
A semiring $S$ is a \emph{Gelfand semiring} if $u+s$ is a unit
for every unit $u\in S$ and every $s\in S$.
\end{definition}

\begin{proposition}\label{prop:gelfand-strong-max}
A semiring $S$ is Gelfand if and only if every maximal ideal
of $S$ is strong.
\end{proposition}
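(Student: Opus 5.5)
We prove the two implications separately, using only the definition of a strong ideal and the fact that every proper ideal lies in a maximal ideal (Zorn's lemma, using $1$).

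\emph{Gelfand implies every maximal ideal is strong.} Let $S$ be Gelfand and $M$ a maximal ideal. Suppose $a+b\in M$ but, say, $a\notin M$. Then the ideal generated by $M\cup\{a\}$ is $S$, so $1=m+sa$ with $m\in M$ and $s\in S$. The plan is to produce a unit inside $M$, which is impossible since $M$ is proper. Adding $sb$ gives
\[
1+sb=m+s(a+b)\in M.
\]
Since $1$ is a unit, the Gelfand property makes $1+sb$ a unit. So $M$ contains a unit, a contradiction. By symmetry $b\in M$ as well, and $M$ is strong. This direction is routine.

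\emph{Every maximal ideal strong implies Gelfand.} Let $u$ be a unit and $s\in S$, and suppose $u+s$ is not a unit. The main point is to show that a non-unit lies in a proper ideal. In a commutative semiring with $1$, if $x$ is not a unit then $1\notin\ideal{x}=Sx$, so $\ideal{x}$ is proper. By Zorn's lemma, applied to proper ideals containing $x$ (proper means not containing $1$, which is preserved under unions of chains), $x$ lies in some maximal ideal $M$.

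Apply this with $x=u+s$. Strongness of $M$ gives $u\in M$. But a unit cannot lie in a proper ideal, since $1=u^{-1}u\in M$. This contradiction shows $u+s$ is a unit, so $S$ is Gelfand.

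Neither step is a real obstacle. The only points needing care are that maximal ideals exist over non-units, which needs $1\in S$, and that proper ideals contain no units.
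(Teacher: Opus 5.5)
Your proof is correct and follows the paper's argument essentially verbatim. The forward direction produces the unit $1+sb=m+s(a+b)$ inside $M$, and the converse uses strongness of a maximal ideal containing $u+s$ to force the unit $u$ into $M$; your one addition is an explicit Zorn's-lemma justification that a non-unit lies in some maximal ideal, a step the paper leaves implicit.
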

\begin{proof}
Assume $S$ is Gelfand and let $M$ be maximal.
If $a+b\in M$ and $a\notin M$, then $M+\ideal{a}=S$,
so $1=m+ra$ for some $m\in M$.
Then $ra+rb\in M$ and $1+rb=m+ra+rb\in M$.
But $1+rb$ is a unit, a contradiction, so $a\in M$,
and similarly $b\in M$.

Conversely, if every maximal ideal is strong and $u$ is a unit,
suppose $u+s$ lies in some maximal ideal $M$.
Strength of $M$ gives $u\in M$, which is impossible.
\end{proof}

Irreducibility and strong irreducibility organize the prime side of
the $z$-ideal theory.
We recall these notions in the $z$-closed lattice because the
quotient and localization results of the next two sections
use them to state spectral equivalences.

\begin{definition}[\cite{AG23}]
A $z$-ideal $I$ is \emph{$z$-irreducible} if $A\cap B=I$ with
$A,B\in\ZId(S)$ implies $A=I$ or $B=I$.
It is \emph{$z$-strongly irreducible} if $A\cap B\subseteq I$
implies $A\subseteq I$ or $B\subseteq I$ for all $A,B\in\ZId(S)$.
\end{definition}

The complement of a prime object is best described multiplicatively.
The $i_z$-systems below are the $z$-ideal analogue of the
multiplicatively closed sets used to detect prime ideals.

\begin{definition}\label{def:iz-system}
A subset $B\subseteq S$ is an \emph{$i_z$-system} if $a,b\in B$ imply
\[
\ideal{a}_z\cap\ideal{b}_z\cap B\neq\varnothing.
\]
\end{definition}

The following characterization translates $z$-strong irreducibility
into a statement about products inside an $i_z$-system,
which is the form needed for the separation arguments later.

\begin{proposition}\label{loc14}
For a $z$-ideal $I$ of $S$, the following are equivalent.
\begin{enumerate}
\item $I$ is $z$-strongly irreducible.
\item If $\ideal{a}_z\cap\ideal{b}_z\subseteq I$, then $a\in I$ or $b\in I$.
\item $S\setminus I$ is an $i_z$-system.
\end{enumerate}
\end{proposition}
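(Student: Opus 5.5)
Proposition~\ref{loc14} will follow from two facts about principal $z$-closures. First, $\ideal{a}_z\subseteq I$ whenever $a\in I$ and $I$ is a $z$-ideal; this is immediate because $\ideal{a}_z$ is the least $z$-ideal containing $\ideal{a}$. Second, $\ideal{a}_z\cap\ideal{b}_z=\ideal{ab}_z$. By the appendix description $\ideal{a}_z=\mfrak(a)$, this is exactly Proposition~\ref{prop:mfrak-basic}(1). It also gives $ab\in\ideal{a}_z\cap\ideal{b}_z$. With these two facts I will prove (1)$\Rightarrow$(2)$\Rightarrow$(3)$\Rightarrow$(1).

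\textbf{(1)$\Rightarrow$(2).} Suppose $\ideal{a}_z\cap\ideal{b}_z\subseteq I$. Apply $z$-strong irreducibility with $A=\ideal{a}_z$ and $B=\ideal{b}_z$, both of which lie in $\ZId(S)$. This gives $\ideal{a}_z\subseteq I$ or $\ideal{b}_z\subseteq I$, and hence $a\in I$ or $b\in I$.

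\textbf{(2)$\Rightarrow$(3).} Let $a,b\in S\setminus I$. By (2), the set $\ideal{a}_z\cap\ideal{b}_z$ is not contained in $I$. So it contains some element outside $I$, which means $\ideal{a}_z\cap\ideal{b}_z\cap(S\setminus I)\neq\varnothing$.

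\textbf{(3)$\Rightarrow$(1).} Let $A,B\in\ZId(S)$ with $A\cap B\subseteq I$, and suppose for contradiction that $A\not\subseteq I$ and $B\not\subseteq I$. Pick $a\in A\setminus I$ and $b\in B\setminus I$. By (3) there is some $c\in\ideal{a}_z\cap\ideal{b}_z$ with $c\notin I$. Since $A$ is a $z$-ideal containing $a$, the first fact gives $\ideal{a}_z\subseteq A$; likewise $\ideal{b}_z\subseteq B$. Then $c\in A\cap B\subseteq I$, a contradiction.

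Once the first fact is in place, the whole argument is formal. The only point needing care is the monotonicity $a\in A\Rightarrow\ideal{a}_z\subseteq A$ for $z$-ideals $A$, which holds by the definition of $\clz$ as an intersection. The meet identity $\ideal{a}_z\cap\ideal{b}_z=\ideal{ab}_z$ is not actually needed for the equivalence, so I expect no real obstacle.
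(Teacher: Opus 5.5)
Your proof is correct and matches the paper's argument: the same cycle (1)$\Rightarrow$(2)$\Rightarrow$(3)$\Rightarrow$(1), resting only on the facts that $\ideal{a}_z$ is a $z$-ideal containing $a$ and that $\ideal{a}_z\subseteq A$ for every $z$-ideal $A$ containing $a$. You are also right that the meet identity $\ideal{a}_z\cap\ideal{b}_z=\ideal{ab}_z$ is not needed; the paper's proof does not use it either.
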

\begin{proof}
(1)$\Rightarrow$(2) is immediate since $\ideal{a}_z$ and $\ideal{b}_z$
are $z$-ideals.
If (2) holds and $a,b\in S\setminus I$, then
$\ideal{a}_z\cap\ideal{b}_z\not\subseteq I$,
so it meets $S\setminus I$; hence (3).
Conversely, assume (3) and suppose $A\cap B\subseteq I$ for
$A,B\in\ZId(S)$ with $A\not\subseteq I$ and $B\not\subseteq I$.
Choose $a\in A\setminus I$ and $b\in B\setminus I$.
Since $A$ and $B$ are $z$-ideals,
$\ideal{a}_z\subseteq A$ and $\ideal{b}_z\subseteq B$.
The $i_z$-system condition gives an element of $(A\cap B)\setminus I$,
a contradiction.
\end{proof}

Strong irreducibility alone does not imply primality:
the radical condition below is what bridges the gap.

\begin{proposition}\label{prop:zstrong-prime-radical}
Let $I$ be a proper $z$-strongly irreducible ideal.
Then $I$ is $z$-prime if and only if $I=\sqrt[z]{I}$.
\end{proposition}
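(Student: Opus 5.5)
The forward direction is formal. If $I$ is $z$-prime, then $I$ is one of the $z$-prime ideals containing itself, so it appears in the intersection defining $\sqrt[z]{I}$. This gives $\sqrt[z]{I}\subseteq I$. The reverse inclusion $I\subseteq\sqrt[z]{I}$ holds by definition, since every member of the intersection contains $I$. Hence $I=\sqrt[z]{I}$.

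For the converse, assume $I$ is proper, $z$-strongly irreducible, and $I=\sqrt[z]{I}$. I will show $I$ is $z$-prime. By Theorem~\ref{thm:zprime-exchange} it suffices to show $I$ is prime, since $I$ is already a $z$-ideal. Suppose $ab\in I$. The key step is to reduce to Proposition~\ref{loc14}(2) by proving
\[
\ideal{a}_z\cap\ideal{b}_z\subseteq I.
\]
I will do this through the radical: show that every $z$-prime $P\supseteq I$ contains $\ideal{a}_z\cap\ideal{b}_z$. Such a $P$ contains $ab$, and $P$ is prime by Theorem~\ref{thm:zprime-exchange}, so $a\in P$ or $b\in P$. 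Since $P$ is a $z$-ideal, this gives $\ideal{a}_z\subseteq P$ or $\ideal{b}_z\subseteq P$, and in either case $\ideal{a}_z\cap\ideal{b}_z\subseteq P$. Intersecting over all such $P$ yields $\ideal{a}_z\cap\ideal{b}_z\subseteq\sqrt[z]{I}=I$. Proposition~\ref{loc14} then gives $a\in I$ or $b\in I$, so $I$ is prime.

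There is a shortcut available: if the canonical description $\ideal{a}_z=\mfrak(a)$ (Lemma~\ref{app:maximal-hull-calculus}) is used, Proposition~\ref{prop:mfrak-basic}(1) gives $\ideal{a}_z\cap\ideal{b}_z=\mfrak(ab)=\ideal{ab}_z\subseteq I$ directly, without the radical hypothesis. That would make the radical condition superfluous. I will therefore present the radical-based argument above, which relies only on Theorem~\ref{thm:zprime-exchange} and Proposition~\ref{loc14}, and remark on the shortcut separately.

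The main point needing care is an edge case in the radical argument: the family of $z$-primes containing $I$ might be empty. In that case $\sqrt[z]{I}=S$, which contradicts properness of $I$ together with $I=\sqrt[z]{I}$. So properness is used exactly to rule out the vacuous intersection, and otherwise the argument is routine.
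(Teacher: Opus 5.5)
Your proof is correct, but it takes a different route from the paper's.

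The paper works directly with $z$-ideals. For $A,B\in\ZId(S)$ with $AB\subseteq I$, it invokes the radical product formula $\sqrt[z]{A\cap B}=\sqrt[z]{AB}$, quoted from the literature, to get $A\cap B\subseteq\sqrt[z]{AB}\subseteq\sqrt[z]{I}=I$. Strong irreducibility then gives $z$-primality at once, without Theorem~\ref{thm:zprime-exchange}.

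You instead test ordinary primality on elements. You prove $\ideal{a}_z\cap\ideal{b}_z\subseteq\sqrt[z]{I}$ by hand, using the direction ``$z$-prime $\Rightarrow$ prime'' of the exchange theorem for each $z$-prime above $I$. You then apply Proposition~\ref{loc14} and return to $z$-primality via the easy direction of the exchange theorem. So you trade one imported result for another, and in effect re-derive only the fragment of the radical formula that is needed. The paper's version is shorter and stays inside the lattice $\ZId(S)$, though it tacitly applies $\sqrt[z]{\cdot}$ to $AB$, which need not be a $z$-ideal. Yours makes visible exactly where primeness of the $z$-primes is used.

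Your side remark is the sharper observation, and it is correct. Because $I$ is a $z$-ideal containing $ab$, Lemma~\ref{app:maximal-hull-calculus}(3) and Proposition~\ref{prop:mfrak-basic}(1) give $\ideal{a}_z\cap\ideal{b}_z=\mfrak(ab)=\ideal{ab}_z\subseteq I$. Hence every proper $z$-strongly irreducible ideal is already prime, and therefore $z$-prime and equal to its $z$-radical. The hypothesis $I=\sqrt[z]{I}$ is never needed, and both sides of the equivalence always hold. This matches the paper's later identification of the meet-prime elements of the frame $\ZId(S)$ with the $z$-prime ideals (Lemma~\ref{loc23}, Corollary~\ref{cor:zspec-spectral}).

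One minor point: the empty-family case needs no separate treatment. Properness is what you really need, so that $I$ can qualify as prime in the exchange theorem.
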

\begin{proof}
If $I$ is $z$-prime, it appears in its own radical and equality holds.
Conversely, assume $I=\sqrt[z]{I}$ and let $A,B\in\ZId(S)$ with
$AB\subseteq I$.
By the radical product formula for $z$-radicals~\cite{AG25},
$\sqrt[z]{A\cap B}=\sqrt[z]{AB}$, so
\[
A\cap B\subseteq\sqrt[z]{A\cap B}=\sqrt[z]{AB}\subseteq\sqrt[z]{I}=I.
\]
Strong irreducibility then gives $A\subseteq I$ or $B\subseteq I$.
\end{proof}

Combining the preceding proposition with Theorem~\ref{thm:zprime-exchange}
gives an ordinary ideal-theoretic criterion for $z$-primality.
This corollary is often the most direct route to recognizing
$z$-prime ideals in concrete examples.

\begin{corollary}\label{cor:ordinary-prime}
For a proper $z$-ideal $I$, the following are equivalent:
\begin{enumerate}
\item $I$ is an ordinary prime ideal;
\item $I$ is $z$-prime;
\item $AB\subseteq I$ implies $A\subseteq I$ or $B\subseteq I$
  for all $A,B\in\ZId(S)$.
\end{enumerate}
\end{corollary}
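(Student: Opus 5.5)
The plan is to show the cycle (1)$\Rightarrow$(2)$\Rightarrow$(3)$\Rightarrow$(1), and to use Theorem~\ref{thm:zprime-exchange} as the bridge between the ordinary and the $z$-theoretic notions of primality.

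For (1)$\Leftrightarrow$(2) nothing new is needed: Theorem~\ref{thm:zprime-exchange} says an ideal is $z$-prime exactly when it is prime and a $z$-ideal. Since $I$ is assumed to be a proper $z$-ideal, this reads: $I$ is $z$-prime if and only if $I$ is prime.

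The equivalence (2)$\Leftrightarrow$(3) holds by definition, because $z$-primality is defined as condition (3) for a proper $z$-ideal. Once both equivalences are recorded, the corollary is proved.

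If we want to avoid leaning on the exchange theorem for the direction (3)$\Rightarrow$(1), we can argue directly.
\begin{itemize}
\item Suppose $ab\in I$. The product $\ideal{a}_z\ideal{b}_z$ lies in $\ideal{a}_z\cap\ideal{b}_z$, which equals $\mfrak(a)\cap\mfrak(b)$ by the appendix description $\ideal{a}_z=\mfrak(a)$.
\item By Proposition~\ref{prop:mfrak-basic}(1), $\mfrak(a)\cap\mfrak(b)=\mfrak(ab)=\ideal{ab}_z$.
\item Since $I$ is a $z$-ideal containing $ab$, we have $\ideal{ab}_z\subseteq I$.
\item Hence $\ideal{a}_z\ideal{b}_z\subseteq I$, and (3) gives $a\in I$ or $b\in I$.
\end{itemize}

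For the converse (1)$\Rightarrow$(3), let $A,B\in\ZId(S)$ with $AB\subseteq I$ and $A\not\subseteq I$. Pick $a\in A\setminus I$. For every $b\in B$ we have $ab\in I$, and since $I$ is prime and $a\notin I$, this forces $b\in I$. So $B\subseteq I$.

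The only step that needs care is the meet identity $\ideal{a}_z\cap\ideal{b}_z=\ideal{ab}_z$, which relies on the appendix lemma; everything else is formal.
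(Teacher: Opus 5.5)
Your proof is correct and matches the paper's, which likewise gets (1)$\Leftrightarrow$(2) from Theorem~\ref{thm:zprime-exchange} and (2)$\Leftrightarrow$(3) directly from the definition of $z$-prime. Your supplementary direct argument for (1)$\Leftrightarrow$(3) is also sound. It uses only $\ideal{a}_z=\mfrak(a)$ and $\mfrak(ab)=\mfrak(a)\cap\mfrak(b)$, neither of which depends on this corollary, so it makes the result independent of the externally cited exchange theorem.
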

\begin{proof}
This is Theorem~\ref{thm:zprime-exchange} together with the
definition of $z$-prime.
\end{proof}

\section{\texorpdfstring{$z$}{z}-congruences on semirings}\label{sec:cong}

Since quotient objects in semiring theory are governed by congruences
rather than by ideals, it is natural to develop a congruence-theoretic
analogue of the $z$-ideal notion.
This section introduces $z$-congruences and the canonical
$g$-closure, and shows through explicit examples that the two
closures---ideal-theoretic and congruence-theoretic---diverge
already in the natural-number semiring.

Let $\MaxC(S)$ denote the set of maximal congruences on $S$.
For $(a,b)\in S\times S$ put
\[
\MC(a,b)=\{\theta\in\MaxC(S)\mid (a,b)\in\theta\}.
\]
For a congruence $\rho$ and an ideal $I$, define the
maximal-congruence hulls
\[
\MC(\rho)=\{\theta\in\MaxC(S)\mid \rho\subseteq\theta\},
\qquad
\MC(I)=\{\theta\in\MaxC(S)\mid I\subseteq\zero\theta\}.
\]
These two hulls must not be confused: $\MC(\rho)\subseteq\MC(\zero\rho)$
always, but equality may fail.

\subsection{\texorpdfstring{$z$}{z}-congruences}

The closure form in the following definition is intentional:
it is stable under arbitrary intersections,
whereas a formulation using only equality of maximal-congruence hulls
would not support the closure construction that follows.

\begin{definition}\label{def:zcong}
A congruence $\rho$ on $S$ is a \emph{$z$-congruence} if
\[
(a,b)\in\rho\quad\text{and}\quad\MC(a,b)\subseteq\MC(c,d)
\quad\Longrightarrow\quad(c,d)\in\rho.
\]
The set of all $z$-congruences is $\ZCong(S)$.
\end{definition}

The inclusion form implies the stronger equality formulation whenever
equality of maximal-congruence hulls is known,
and it is the form needed for the quotient arguments
of Section~\ref{sec:quotient}.
The definition has natural equivalent reformulations.

\begin{proposition}\label{prop:zcong-equivalences}
For a congruence $\rho$ on $S$, the following are equivalent.
\begin{enumerate}
\item $\rho$ is a $z$-congruence.
\item If $(a,b)\in\rho$ and $(c,d)\notin\rho$, then there exists
  $\theta\in\MC(a,b)$ with $\theta\notin\MC(c,d)$.
\item For every $(a,b)\in\rho$,
  $\displaystyle\bigcap_{\theta\in\MC(a,b)}\theta\subseteq\rho$.
\end{enumerate}
\end{proposition}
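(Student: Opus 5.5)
The plan is to prove the cycle (1)$\Rightarrow$(2)$\Rightarrow$(3)$\Rightarrow$(1). All three conditions are elementwise reformulations of the single statement ``$\rho$ is determined by the maximal congruences above each of its pairs.'' The only tools needed are the definitions of $\MC(a,b)$ and of Definition~\ref{def:zcong}, together with the observation that, for pairs $(a,b),(c,d)$,
\[
(c,d)\in\bigcap_{\theta\in\MC(a,b)}\theta
\quad\Longleftrightarrow\quad
\MC(a,b)\subseteq\MC(c,d).
\]
This holds because $(c,d)$ lies in every maximal congruence containing $(a,b)$ exactly when every such $\theta$ belongs to $\MC(c,d)$. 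The empty-intersection convention, as in~\eqref{eq:max-hulls}, is that an empty family intersects to $S\times S$. With this convention the displayed equivalence remains valid when $\MC(a,b)=\varnothing$, since then both sides are true.

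For (1)$\Rightarrow$(2) I argue by contraposition. Suppose $(a,b)\in\rho$ and $(c,d)\notin\rho$. If every $\theta\in\MC(a,b)$ also lay in $\MC(c,d)$, then $\MC(a,b)\subseteq\MC(c,d)$, and Definition~\ref{def:zcong} would force $(c,d)\in\rho$, a contradiction. So some $\theta\in\MC(a,b)$ lies outside $\MC(c,d)$.

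For (2)$\Rightarrow$(3), fix $(a,b)\in\rho$ and take $(c,d)$ in the intersection over $\MC(a,b)$. By the displayed equivalence, $\MC(a,b)\subseteq\MC(c,d)$, so no $\theta\in\MC(a,b)$ fails to be in $\MC(c,d)$. Condition (2) then rules out $(c,d)\notin\rho$, hence $(c,d)\in\rho$.

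For (3)$\Rightarrow$(1), suppose $(a,b)\in\rho$ and $\MC(a,b)\subseteq\MC(c,d)$. The displayed equivalence puts $(c,d)$ in the intersection over $\MC(a,b)$, and by (3) that intersection is contained in $\rho$.

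There is no real obstacle here. The one point needing care is the degenerate case $\MC(a,b)=\varnothing$. There (3) forces $\rho=S\times S$, and (1) and (2) force the same conclusion: (1) directly, and (2) because it can then never produce the required $\theta$. The empty-intersection convention stated above handles this uniformly, so I would state it explicitly at the start of the proof.
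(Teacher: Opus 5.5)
Your proof is correct and takes the same route as the paper: the cycle (1)$\Rightarrow$(2)$\Rightarrow$(3)$\Rightarrow$(1), with each step reducing to the observation that $(c,d)\in\bigcap_{\theta\in\MC(a,b)}\theta$ exactly when $\MC(a,b)\subseteq\MC(c,d)$. Your explicit handling of the empty-family convention ($S\times S$) and of the case $\MC(a,b)=\varnothing$ is a small addition that the paper leaves implicit.
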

\begin{proof}
(1)$\Rightarrow$(2) is the contrapositive of the definition.
If (2) holds and $(c,d)$ belongs to every maximal congruence
containing $(a,b)$, then no separating congruence as in (2) exists,
so $(c,d)\in\rho$; this gives (3).
If (3) holds and $\MC(a,b)\subseteq\MC(c,d)$ with $(a,b)\in\rho$,
then $(c,d)$ belongs to every maximal congruence containing $(a,b)$,
so $(c,d)\in\rho$.
\end{proof}

The reason for using the closure formulation is visible in the next
proposition: arbitrary intersections of $z$-congruences exist,
so every congruence has a smallest $z$-congruence above it.

\begin{proposition}\label{prop:zcong-intersections}
Arbitrary intersections of $z$-congruences are $z$-congruences.
In particular, every congruence $\rho$ has a smallest
$z$-congruence containing it, namely
\[
\clc(\rho)=\bigcap\{\sigma\in\ZCong(S)\mid\rho\subseteq\sigma\}.
\]
\end{proposition}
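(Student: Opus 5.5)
The plan is to verify directly that an intersection of $z$-congruences is a congruence satisfying the closure condition of Definition~\ref{def:zcong}. Let $\{\rho_i\}_{i\in\Lambda}$ be a family in $\ZCong(S)$ and put $\rho=\bigcap_{i}\rho_i$. First I will check that $\rho$ is a congruence. Reflexivity, symmetry and transitivity are inherited componentwise, and so is compatibility with $+$ and $\cdot$: if $(a,b),(c,d)\in\rho$, then both pairs lie in every $\rho_i$, so $(a+c,b+d)$ and $(ac,bd)$ lie in every $\rho_i$. For the empty family I take the convention that the intersection is $S\times S$. This is trivially a congruence, and it is also a $z$-congruence, since it contains every pair.

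Next comes the $z$-condition. Suppose $(a,b)\in\rho$ and $\MC(a,b)\subseteq\MC(c,d)$. For each $i$ we have $(a,b)\in\rho_i$, and $\rho_i$ is a $z$-congruence, so $(c,d)\in\rho_i$. Hence $(c,d)\in\rho$. The key point is that the hypothesis $\MC(a,b)\subseteq\MC(c,d)$ does not depend on $\rho$ at all. This is exactly why the definition is stated in closure form, as the paper remarks before Definition~\ref{def:zcong}. Alternatively, one can use the criterion in Proposition~\ref{prop:zcong-equivalences}(3): $\bigcap_{\theta\in\MC(a,b)}\theta\subseteq\rho_i$ for every $i$, hence this intersection is contained in $\rho$.

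For the second assertion, the family $\{\sigma\in\ZCong(S)\mid\rho\subseteq\sigma\}$ is nonempty, because $S\times S$ is a $z$-congruence containing $\rho$. By the first part its intersection $\clc(\rho)$ is a $z$-congruence. It contains $\rho$, and it is contained in every $z$-congruence containing $\rho$, so it is the smallest such.

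There is no real obstacle here. The only points needing care are the empty intersection convention and confirming that $S\times S$ satisfies Definition~\ref{def:zcong}; it does so vacuously, since the conclusion $(c,d)\in S\times S$ always holds.
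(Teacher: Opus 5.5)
Your proof is correct and follows the paper's argument: you check the $z$-condition componentwise, using that the hypothesis $\MC(a,b)\subseteq\MC(c,d)$ does not depend on the congruence. You also verify that the intersection is a congruence and handle the empty family via $S\times S$, details the paper leaves implicit.
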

\begin{proof}
Let $\{\rho_\lambda\}$ be a family of $z$-congruences and
$\rho=\bigcap\rho_\lambda$.
If $(a,b)\in\rho$ and $\MC(a,b)\subseteq\MC(c,d)$,
then $(a,b)\in\rho_\lambda$ for each $\lambda$,
hence $(c,d)\in\rho_\lambda$, and therefore $(c,d)\in\rho$.
\end{proof}

Maximal congruences provide the basic closed points of the
congruence picture.
The next result confirms that they are already $z$-congruences,
ensuring that the $g$-closure construction is compatible with the
maximal-congruence hull.

\begin{proposition}\label{prop:max-zcong}
Every maximal congruence is a $z$-congruence.  More generally,
every maximal proper $z$-congruence is meet-irreducible in
$\ZCong(S)$.
\end{proposition}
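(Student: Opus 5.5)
The plan is to verify the first assertion directly from Definition~\ref{def:zcong} and to obtain the second from the standard observation that maximal elements of a lattice are meet-irreducible.

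\textbf{Maximal congruences are $z$-congruences.} Let $\theta\in\MaxC(S)$, and suppose $(a,b)\in\theta$ and $\MC(a,b)\subseteq\MC(c,d)$. Since $\theta$ is maximal and contains $(a,b)$, we have $\theta\in\MC(a,b)$. The inclusion of hulls then gives $\theta\in\MC(c,d)$, that is, $(c,d)\in\theta$. This is exactly the condition in Definition~\ref{def:zcong}. One could equally phrase it through Proposition~\ref{prop:zcong-equivalences}(3): the intersection $\bigcap_{\theta'\in\MC(a,b)}\theta'$ has $\theta$ among its terms, so it is contained in $\theta$. No hypothesis beyond maximality is needed, exactly as for maximal ideals in the proof of Proposition~\ref{prop:idempotent-z-k}.

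\textbf{Meet-irreducibility.} Let $\rho$ be a maximal element among the proper $z$-congruences. By Proposition~\ref{prop:zcong-intersections}, $\ZCong(S)$ is closed under arbitrary intersections, so it is a complete lattice in which meet is intersection. Suppose $\rho=\sigma_1\cap\sigma_2$ with $\sigma_1,\sigma_2\in\ZCong(S)$. Then $\rho\subseteq\sigma_i$ for each $i$. By maximality of $\rho$ among proper $z$-congruences, each $\sigma_i$ is either $\rho$ or the improper congruence $S\times S$, which is trivially a $z$-congruence. If both were $S\times S$, their meet would be $S\times S\neq\rho$, contradicting properness. Hence some $\sigma_i=\rho$, which is meet-irreducibility in the usual sense (binary meets, with $\rho$ proper so it is not the empty meet).

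\textbf{Possible obstacle.} The only point needing care is interpretive: whether ``meet-irreducible'' is intended for finite meets or arbitrary meets. For finite meets the argument above suffices by induction. For arbitrary meets, the same argument works: if $\rho=\bigcap_\lambda\sigma_\lambda$ with every $\sigma_\lambda\neq\rho$, then every $\sigma_\lambda$ equals $S\times S$, so the meet is $S\times S$, again a contradiction. I would also remark that a maximal congruence need not be maximal among proper $z$-congruences a priori. The two claims are therefore stated separately, and the first does not rely on the second.
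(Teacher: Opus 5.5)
Your proof is correct and is essentially the paper's argument: $\theta\in\MC(a,b)\subseteq\MC(c,d)$ for the first claim, and $\rho\subseteq\sigma_i$ forcing $\sigma_i\in\{\rho,S\times S\}$ for the second, where you usefully make explicit that properness of $\rho$ rules out both factors being $S\times S$. One correction to your closing remark: by the first claim every maximal congruence is in fact maximal among proper $z$-congruences, since any $z$-congruence above it is a congruence above it; conversely every proper congruence lies in a maximal one because $(0,1)$ generates $S\times S$, so the two classes coincide.
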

\begin{proof}
If $\theta$ is maximal and $(a,b)\in\theta$ with
$\MC(a,b)\subseteq\MC(c,d)$, then $\theta\in\MC(a,b)\subseteq\MC(c,d)$,
so $(c,d)\in\theta$.
For the second assertion, let $\rho$ be maximal among proper
$z$-congruences and suppose $\rho=\rho_1\cap\rho_2$ in $\ZCong(S)$.
Then $\rho_i=\rho$ or $\rho_i=S\times S$, giving
$\rho=\rho_1$ or $\rho=\rho_2$.
\end{proof}

\subsection{Examples of \texorpdfstring{$z$}{z}-congruences}

The congruences of $\N$ give a transparent arithmetic model for
the preceding definitions.
They show that maximal-congruence hulls record the prime divisors
of moduli rather than just the ordinary maximal ideal of $\N$.

\begin{example}\label{ex:N-congruences}
Let $S=\N$.
For $d\geqslant2$, let $k_d$ be congruence modulo $d$:
$(a,b)\in k_d\Leftrightarrow a\equiv b\pmod{d}$.
There is also the Rees congruence
$\beta=(\N_{>0}\times\N_{>0})\cup\{(0,0)\}$,
which collapses all positive integers.
The maximal congruences on $\N$ are $\beta$ and the congruences
$k_p$ with $p$ prime:
cancellative congruences on $\N$ are exactly the $k_d$,
and maximality forces $d$ to be prime;
the remaining maximal congruence is $\beta$.

For positive $a,b$, the maximal-congruence hull is
\[
\MC(a,b)=\{\beta\}\cup\{k_p\mid p\text{ prime and }p\mid a-b\}.
\]
Consequently, $k_d$ is a $z$-congruence if and only if $d$ is
square-free.
For instance, $k_{12}$ is not a $z$-congruence:
$(15,27)\in k_{12}$ and
$\MC(15,27)=\{\beta,k_2,k_3\}=\MC(18,36)$,
but $(18,36)\notin k_{12}$.
\end{example}

Boolean power-set semirings are the contrasting model.
They illustrate a setting where congruences, ideals, and maximal-hull
arguments align particularly cleanly.

\begin{example}\label{ex:powerset}
Let $X$ be finite and consider the Boolean semiring
$(\mathcal{P}(X),\cup,\cap,\varnothing,X)$.
For $x\in X$ define $(A,B)\in\Theta_x\Leftrightarrow[x\in A
\text{ iff }x\in B]$.
The congruences $\Theta_x$ are the maximal congruences.
Every congruence on $\mathcal{P}(X)$ is determined by an ideal
of the Boolean algebra $\mathcal{P}(X)$ under symmetric difference,
and the maximal-congruence hull records the coordinates on which
two subsets differ.
Hence every congruence on a finite power-set semiring is a
$z$-congruence.
\end{example}

\subsection{The canonical \texorpdfstring{$g$}{g}-closure
and \texorpdfstring{$g$}{g}-closed ideals}

For an ideal $I$ define
\begin{equation}\label{eq:wpI}
\wpI{I}=\{(a,b)\in S\times S\mid\MC(I)\subseteq\MC(a,b)\}
       =\bigcap_{\theta\in\MC(I)}\theta,
\end{equation}
where the intersection over an empty family is $S\times S$.
The most natural congruence associated to an ideal is obtained by
intersecting the maximal congruences whose zero-classes contain
the ideal; the next proposition shows this intersection lands in
$\ZCong(S)$.

\begin{proposition}\label{prop:wpI}
For every ideal $I$ of $S$, the relation $\wpI{I}$ is a $z$-congruence.
\end{proposition}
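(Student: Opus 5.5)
Two things need checking: that $\wpI{I}$ is a congruence, and that it satisfies the $z$-condition of Definition~\ref{def:zcong}.

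\emph{Congruence.} By \eqref{eq:wpI}, $\wpI{I}$ is the intersection of the family $\MC(I)$ of maximal congruences. An intersection of congruences is again an equivalence relation, and it is compatible with $+$ and $\cdot$, because compatibility is checked one congruence at a time. If $\MC(I)=\varnothing$, the convention gives $\wpI{I}=S\times S$, which is a congruence. This case is in fact trivially a $z$-congruence as well, since the conclusion $(c,d)\in S\times S$ always holds.

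\emph{$z$-condition.} We could cite Proposition~\ref{prop:zcong-intersections} together with Proposition~\ref{prop:max-zcong}: each $\theta\in\MC(I)$ is a $z$-congruence, so their intersection is one too. For completeness we also argue directly from the first description in \eqref{eq:wpI}. Suppose $(a,b)\in\wpI{I}$ and $\MC(a,b)\subseteq\MC(c,d)$. Membership means $\MC(I)\subseteq\MC(a,b)$, and transitivity of inclusion then gives $\MC(I)\subseteq\MC(c,d)$. Hence $(c,d)\in\wpI{I}$.

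\emph{Two descriptions agree.} The only point needing care is that the two descriptions in \eqref{eq:wpI} coincide. The pair $(a,b)$ lies in every $\theta$ with $I\subseteq\zero\theta$ exactly when each such $\theta$ belongs to $\MC(a,b)$, which is the statement $\MC(I)\subseteq\MC(a,b)$. In the empty case, both descriptions give $S\times S$.

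No real obstacle arises. The proof reduces to unwinding definitions and using the fact that maximal congruences are $z$-congruences.
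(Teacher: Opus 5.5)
Your proof is correct and matches the paper's argument: $\wpI{I}$ is a congruence because it is an intersection of congruences, and the $z$-condition follows from $\MC(I)\subseteq\MC(a,b)\subseteq\MC(c,d)$. Your extra remarks — the empty case, the agreement of the two descriptions in \eqref{eq:wpI}, and the alternative route via Propositions~\ref{prop:zcong-intersections} and~\ref{prop:max-zcong} — are accurate but go beyond what the paper's short proof needs.
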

\begin{proof}
It is an intersection of congruences, hence a congruence.
If $(a,b)\in\wpI{I}$ and $\MC(a,b)\subseteq\MC(c,d)$,
then $\MC(I)\subseteq\MC(c,d)$, so $(c,d)\in\wpI{I}$.
\end{proof}

The zero-class of the associated congruence is the ideal-theoretic
shadow of the congruence construction.
We call its fixed points $g$-closed ideals to distinguish them
from ordinary $z$-ideals; this distinction is not merely nominal,
as the example below will make precise.

\begin{definition}\label{def:g-closed-ideals}
The \emph{canonical $g$-closure} of an ideal $I$ is
\[
I^g=\zero{\wpI{I}}=\{a\in S\mid\MC(I)\subseteq\MC(a,0)\}.
\]
An ideal $I$ is \emph{$g$-closed} if $I=I^g$.
We write $\GCl(S)$ for the set of all $g$-closed ideals of $S$.
\end{definition}

\begin{proposition}\label{prop:g-closure-operator}
The assignment $g:\Id(S)\to\Id(S)$, $g(I)=I^g$,
is an extensive, order-preserving, idempotent closure operation
whose fixed points are precisely the $g$-closed ideals.
\end{proposition}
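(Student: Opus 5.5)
We need to show four things about $g(I)=I^g$: that $I^g$ is an ideal, that $I\subseteq I^g$, that $I\subseteq J$ implies $I^g\subseteq J^g$, and that $(I^g)^g=I^g$. The fixed-point statement then holds by definition of $\GCl(S)$. Throughout we use the description $I^g=\zero{\wpI{I}}$. By Proposition~\ref{prop:wpI}, $\wpI{I}$ is a congruence, and the zero-class of any congruence is a ($k$-)ideal. So $I^g$ is an ideal, and no separate verification of closure under addition and multiplication is needed.

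\emph{Extensivity and monotonicity.} Let $a\in I$ and $\theta\in\MC(I)$. Then $I\subseteq\zero\theta$, so $(a,0)\in\theta$, i.e.\ $\theta\in\MC(a,0)$. Thus $\MC(I)\subseteq\MC(a,0)$ and $a\in I^g$. If there are no such $\theta$, the empty-intersection convention gives $I^g=S$, which is consistent. For monotonicity, $I\subseteq J$ implies $\MC(J)\subseteq\MC(I)$, since any $\theta$ whose zero-class contains $J$ also contains $I$. Hence if $\MC(I)\subseteq\MC(a,0)$, then also $\MC(J)\subseteq\MC(a,0)$, and $I^g\subseteq J^g$.

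\emph{Idempotence.} This is the only step needing care. Extensivity gives $I^g\subseteq(I^g)^g$, so it suffices to show $\MC(I^g)=\MC(I)$. The inclusion $\MC(I^g)\subseteq\MC(I)$ is monotonicity of hulls, using $I\subseteq I^g$. For the reverse, take $\theta\in\MC(I)$ and $a\in I^g$. By definition of $I^g$ we have $\theta\in\MC(I)\subseteq\MC(a,0)$, so $(a,0)\in\theta$ and $a\in\zero\theta$. Therefore $I^g\subseteq\zero\theta$, i.e.\ $\theta\in\MC(I^g)$. With $\MC(I^g)=\MC(I)$, the defining condition for $(I^g)^g$ is identical to that for $I^g$, so $(I^g)^g=I^g$. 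Equivalently, $\wpI{I^g}=\wpI{I}$.

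The main obstacle is keeping the two hulls $\MC(\rho)$ and $\MC(\zero\rho)$ apart, since the text warns these may differ. The argument above avoids this by working only with the ideal hull $\MC(I)$ and never with $\MC(\wpI{I})$, so no compatibility between maximal congruences and their zero-classes is required. Finally, the fixed points of $g$ are by definition the ideals with $I=I^g$, which is exactly $\GCl(S)$. Since $g$ is a closure operator, $\GCl(S)$ is closed under arbitrary intersections and forms a complete lattice; that observation is not required for the statement.
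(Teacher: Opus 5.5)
Your proof is correct and follows the paper's argument essentially step for step. Extensivity and monotonicity come straight from the definition of $\MC(I)$, and idempotence comes from showing $\MC(I^g)=\MC(I)$, so that $\wpI{I^g}=\wpI{I}$. Your additional check that $I^g$ is an ideal, because zero-classes of congruences are ($k$-)ideals, fills in a point the paper leaves implicit.
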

\begin{proof}
Extensiveness follows from $I\subseteq\zero{\wp_I}$.
If $I\subseteq J$, then $\MC(J)\subseteq\MC(I)$,
hence $\wp_I\subseteq\wp_J$ and $I^g\subseteq J^g$.
For idempotence, observe that $\MC(I^g)=\MC(I)$:
the inclusion $I\subseteq I^g$ gives $\MC(I^g)\subseteq\MC(I)$,
while for each $\theta\in\MC(I)$ one has $\wp_I\subseteq\theta$
and hence $I^g=\zero{\wp_I}\subseteq\zero\theta$,
giving $\theta\in\MC(I^g)$.
Therefore $\wp_{I^g}=\wp_I$ and $(I^g)^g=I^g$.
\end{proof}

The following example records one of the basic distinctions that
recurs throughout the paper.
It shows concretely that a maximal congruence can have a zero-class
that is neither a maximal ideal nor a $z$-ideal.

\begin{example}\label{ex:zero-class-distinction}
In $\N$, the maximal congruence $\beta$ has zero-class $\{0\}$.
This zero-class is not a $z$-ideal:
$\M(0)=\M(2)$ but $2\notin\{0\}$.
Nevertheless $\{0\}$ is $g$-closed:
since $\beta\in\MC(0)$ and $\zero\beta=\{0\}$,
no positive element lies in $0_{\wp_{\{0\}}}$.

On the other hand, the ordinary maximal ideal $\N\setminus\{1\}$
is not $g$-closed.
No maximal congruence has zero-class containing $\N\setminus\{1\}$,
so $\MC(\N\setminus\{1\})=\varnothing$,
whence $\wp_{\N\setminus\{1\}}=\N\times\N$
and $(\N\setminus\{1\})^g=\N$.
\end{example}

\begin{remark}\label{rem:zero-class-distinction}
Example~\ref{ex:zero-class-distinction} isolates two structural
limitations that recur throughout the paper:
zero-classes of maximal congruences need not be maximal ideals,
and $g$-closed ideals need not be ordinary $z$-ideals.
Subsequent quotient and spectral statements include the
hypotheses needed to navigate this separation.
\end{remark}

The next proposition records the congruence-theoretic analogues
of the maximal-ideal-hull identities from Section~\ref{sec:general}.
These are the properties that drive the proof of Theorem~C.

\begin{proposition}\label{loc24}
For ideals $I,J$ of $S$,
\begin{enumerate}
\item if $I\subseteq J$, then $\MC(J)\subseteq\MC(I)$;
\item $\MC(I+J)=\MC(I)\cap\MC(J)$;
\item if maximal congruences have prime zero-classes, then
$\MC(IJ)=\MC(I\cap J)=\MC(I)\cup\MC(J)$.
\end{enumerate}
\end{proposition}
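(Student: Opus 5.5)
The three parts are proved directly from the definition $\MC(I)=\{\theta\in\MaxC(S)\mid I\subseteq\zero\theta\}$, using only that each $\zero\theta$ is an ideal and, for (3), the primality hypothesis.

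For (1), if $I\subseteq J$ and $\theta\in\MC(J)$, then $I\subseteq J\subseteq\zero\theta$, so $\theta\in\MC(I)$.

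For (2), the inclusion $\MC(I+J)\subseteq\MC(I)\cap\MC(J)$ follows from (1) applied to $I\subseteq I+J$ and $J\subseteq I+J$. For the reverse inclusion, take $\theta$ with $I\subseteq\zero\theta$ and $J\subseteq\zero\theta$. Since $\zero\theta$ is an ideal (indeed a $k$-ideal), it is closed under addition, so it contains every sum $i+j$ with $i\in I$, $j\in J$. Hence $I+J\subseteq\zero\theta$.

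For (3), first note $IJ\subseteq I\cap J\subseteq I$ and likewise for $J$. By (1) this gives the chain
\[
\MC(I)\cup\MC(J)\subseteq\MC(I\cap J)\subseteq\MC(IJ).
\]
It therefore suffices to show $\MC(IJ)\subseteq\MC(I)\cup\MC(J)$, and this is where the hypothesis that $\zero\theta$ is prime enters. Let $IJ\subseteq\zero\theta$ and suppose $I\not\subseteq\zero\theta$; pick $a\in I\setminus\zero\theta$. For every $b\in J$ we have $ab\in IJ\subseteq\zero\theta$. Primality of $\zero\theta$, meaning $(ab,0)\in\theta$ implies $(a,0)\in\theta$ or $(b,0)\in\theta$, then gives $b\in\zero\theta$. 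Hence $J\subseteq\zero\theta$, so $\theta\in\MC(J)$.

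The only step needing care is this last element-wise use of primality. It works because the prime-congruence condition is stated on elements, and the elementwise version is exactly what is required. One point to flag: $\zero\theta$ could equal $S$ only if $\theta=S\times S$, which cannot happen since $\theta$ is maximal and hence proper. So "prime zero-class" is the proper-ideal notion, and no degenerate case arises.
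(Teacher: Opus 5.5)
Your proof is correct and follows essentially the same route as the paper: (1) and (2) come straight from the definition and closure of $\zero\theta$ under addition, and (3) applies primality of $\zero\theta$ to a product $ab$ with $a\in I$, $b\in J$. The only cosmetic difference is that you reduce (3) to the single inclusion $\MC(IJ)\subseteq\MC(I)\cup\MC(J)$ via the chain obtained from (1), whereas the paper runs the primality argument separately for $I\cap J$ and for $IJ$.
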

\begin{proof}
Parts (1) and (2) are immediate from the definition.
For (3), let $\theta$ be a maximal congruence with
$\zero\theta$ prime.
Containment of $I$ or $J$ in $\zero\theta$ implies containment
of both $IJ$ and $I\cap J$.
Conversely, if $I\cap J\subseteq\zero\theta$ and neither $I$ nor $J$
is contained in $\zero\theta$, choose $a\in I\setminus\zero\theta$
and $b\in J\setminus\zero\theta$;
then $ab\in IJ\subseteq I\cap J\subseteq\zero\theta$,
contradicting primeness.
The same argument applies with $IJ$ in place of $I\cap J$.
\end{proof}

Finite meets of canonical associated congruences require
a prime-zero-class hypothesis.
The following proposition states the exact condition under which
products of ideals control intersections of associated congruences.

\begin{proposition}\label{prop:wp-meets}
Assume maximal congruences have prime zero-classes.
Then for all ideals $I,J$,
\[
\wp_{I\cap J}=\wp_{IJ}=\wp_I\cap\wp_J,
\]
and moreover $\wp_I\cup\wp_J\subseteq\wp_{I+J}$.
\end{proposition}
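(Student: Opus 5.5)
The plan is to reduce everything to statements about the hulls $\MC(\cdot)$, using the description $\wp_I=\bigcap_{\theta\in\MC(I)}\theta$ from \eqref{eq:wpI}. Since the hull map $\MC(\cdot)$ controls $\wp$, it suffices to compare hulls.

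First, I would invoke Proposition~\ref{loc24}(3). Under the standing assumption that maximal congruences have prime zero-classes, it gives
\[
\MC(I\cap J)=\MC(IJ)=\MC(I)\cup\MC(J).
\]
Equal hulls yield equal intersections, so $\wp_{I\cap J}=\wp_{IJ}$ holds immediately.

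Second, I would use the fact that an intersection indexed by a union of families splits as an intersection of intersections:
\[
\bigcap_{\theta\in\MC(I)\cup\MC(J)}\theta
=\Bigl(\bigcap_{\theta\in\MC(I)}\theta\Bigr)\cap\Bigl(\bigcap_{\theta\in\MC(J)}\theta\Bigr)
=\wp_I\cap\wp_J .
\]
The convention that an empty intersection equals $S\times S$ makes this formula valid even when one or both hulls are empty, so I would check that degenerate case explicitly. The same computation can be phrased elementwise: $(a,b)\in\wp_{I\cap J}$ if and only if $\MC(I)\cup\MC(J)\subseteq\MC(a,b)$, which holds if and only if both $\MC(I)\subseteq\MC(a,b)$ and $\MC(J)\subseteq\MC(a,b)$.

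Finally, for the join statement I would use Proposition~\ref{loc24}(1). Since $I\subseteq I+J$, we get $\MC(I+J)\subseteq\MC(I)$. Intersecting over a smaller family gives a larger congruence, so $\wp_I\subseteq\wp_{I+J}$; by symmetry $\wp_J\subseteq\wp_{I+J}$, and hence $\wp_I\cup\wp_J\subseteq\wp_{I+J}$. This last step needs no primeness hypothesis.

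There is no real obstacle. The only care needed is with the empty-family convention, and with noting that the primeness hypothesis enters solely through Proposition~\ref{loc24}(3).
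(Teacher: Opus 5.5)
Your proposal is correct and follows essentially the same route as the paper. You use Proposition~\ref{loc24}(3) to get the hull identity, split the intersection over $\MC(I)\cup\MC(J)$ for the meet formula, and use antitonicity of $\MC$ for the join inclusion; the paper phrases that last step elementwise, and your care with the empty-family convention is a harmless extra.
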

\begin{proof}
By Proposition~\ref{loc24}(3),
$\MC(I\cap J)=\MC(IJ)=\MC(I)\cup\MC(J)$.
Intersecting the maximal congruences in this union gives
$\wp_I\cap\wp_J$.
For the inclusion, if $(a,b)\in\wp_I$, then every maximal congruence
whose zero-class contains $I+J$ has zero-class containing $I$,
hence contains $(a,b)$, giving $(a,b)\in\wp_{I+J}$;
the same argument applies starting from $\wp_J$.
\end{proof}

The ideal and congruence sides will be compared only for canonical
objects, since the zero-class map has a well-defined inverse only
on that restricted class.

\begin{definition}\label{def:g-cong}
A congruence $\rho$ is a \emph{canonical $g$-congruence} if
$\rho=\wp_I$ for some $g$-closed ideal $I$.
Equivalently, $\rho=\wp_{\zero\rho}$ and $\zero\rho$ is $g$-closed.
\end{definition}

The following proposition gives the order correspondence between
$g$-closed ideals and their associated congruences;
it is the algebraic foundation for the homeomorphism in Theorem~C.

\begin{proposition}\label{prop:g-correspondence-basic}
The assignments $I\mapsto\wp_I$ and $\rho\mapsto\zero\rho$ restrict
to mutually inverse order-preserving bijections between
$g$-closed ideals and canonical $g$-congruences.
\end{proposition}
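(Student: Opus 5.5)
We prove the statement by verifying the two composites directly from the definitions of $\wp_I$ and $I^g$, together with the identity $\MC(I^g)=\MC(I)$ established inside the proof of Proposition~\ref{prop:g-closure-operator}.

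\textbf{Well-definedness.} Let $I$ be $g$-closed. Then $\wp_I$ is a canonical $g$-congruence by Definition~\ref{def:g-cong}. Conversely, if $\rho$ is a canonical $g$-congruence, say $\rho=\wp_J$ with $J$ $g$-closed, then
\[
\zero\rho=\zero{\wp_J}=J^g=J,
\]
so $\zero\rho$ is $g$-closed. This computation also shows that the two descriptions of canonical $g$-congruences in Definition~\ref{def:g-cong} agree. Indeed, if $\rho=\wp_J$ with $J$ $g$-closed, then $\wp_{\zero\rho}=\wp_J=\rho$. The reverse implication is immediate.

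\textbf{The composites are identities.}
\begin{enumerate}
\item For a $g$-closed ideal $I$ we have $\zero{\wp_I}=I^g=I$, directly from Definition~\ref{def:g-closed-ideals}.
\item For a canonical $\rho=\wp_J$ with $J$ $g$-closed, the previous paragraph gives $\zero\rho=J$. Hence $\wp_{\zero\rho}=\wp_J=\rho$.
\end{enumerate}

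\textbf{Monotonicity.} If $I\subseteq J$, then $\MC(J)\subseteq\MC(I)$ by Proposition~\ref{loc24}(1). Intersecting over the smaller family gives the larger relation, so $\wp_I\subseteq\wp_J$. If $\rho\subseteq\sigma$, then clearly $\zero\rho\subseteq\zero\sigma$. Since the two maps are mutually inverse and both monotone, they are order isomorphisms.

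The only delicate point is making sure that a canonical $g$-congruence determines its generating ideal. This is handled by the identity $\zero{\wp_J}=J^g$, which holds by definition, so no hypothesis on maximal congruences (such as prime zero-classes) is needed.
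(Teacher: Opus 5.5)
Your proof is correct and is essentially the paper's argument: $\zero{\wp_I}=I^g=I$ for $g$-closed $I$, $\wp_{\zero\rho}=\wp_J=\rho$ when $\rho=\wp_J$ with $J$ $g$-closed, and monotonicity from $\MC(J)\subseteq\MC(I)$ whenever $I\subseteq J$. You simply spell out the well-definedness and monotonicity checks that the paper calls direct, and the identity $\MC(I^g)=\MC(I)$ you announce at the start is never actually used.
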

\begin{proof}
If $I$ is $g$-closed, then $\zero{\wp_I}=I$ by definition.
If $\rho=\wp_I$ is canonical, then $\wp_{\zero\rho}=\wp_I=\rho$.
Monotonicity is direct.
\end{proof}

\section{Quotient semirings with respect to associated
\texorpdfstring{$z$}{z}-congruences}\label{sec:quotient}

Let $I$ be an ideal of $S$, and let
\[
\pi_I:S\longrightarrow S_I:=S/\wp_I
\]
be the quotient homomorphism.
We write $[a]_I$ for the class of $a$ modulo $\wp_I$.
This notation deliberately avoids identifying the zero-class of
the quotient with $I$; in general that zero-class is $I^g$,
and it equals $I$ if and only if $I$ is $g$-closed.

Quotients by associated $z$-congruences must be treated through
congruence classes rather than through ordinary quotient ideals.
The following proposition records the elementary facts needed before
discussing ideal correspondences.

\begin{proposition}\label{prop:quotient-basic}
For every ideal $I$ of $S$:
\begin{enumerate}
\item $[a]_I=[0]_I$ if and only if $a\in I^g$;
\item in particular, if $a\in I$ then $[a]_I=[0]_I$;
\item $[a]_I=[0]_I$ implies $a\in I$ for all $a$ if and only if
  $I$ is $g$-closed;
\item for every ideal $J$ of $S$, the image $\pi_I(J)$ is an ideal
  of $S_I$.
\end{enumerate}
\end{proposition}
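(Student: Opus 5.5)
Each part is proved directly from the definition of $\wp_I$ in \eqref{eq:wpI} and of $I^g$ in Definition~\ref{def:g-closed-ideals}, using only that $\pi_I$ is a surjective semiring homomorphism.

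For (1), I unwind the definitions. By construction $[a]_I=[0]_I$ means $(a,0)\in\wp_I$. By \eqref{eq:wpI} this holds exactly when every $\theta\in\MC(I)$ contains $(a,0)$, that is, when $\MC(I)\subseteq\MC(a,0)$. By Definition~\ref{def:g-closed-ideals} that is precisely the condition $a\in I^g=\zero{\wp_I}$.

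Part (2) then follows at once from extensiveness of $g$ (Proposition~\ref{prop:g-closure-operator}). Since $I\subseteq I^g$, every $a\in I$ has $[a]_I=[0]_I$ by (1).

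For (3), by (1) the stated implication says exactly that $I^g\subseteq I$. Combined with $I\subseteq I^g$, this is equivalent to $I=I^g$, i.e.\ to $I$ being $g$-closed. Both directions are therefore immediate.

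For (4), I take an ideal $J$ of $S$ and check the ideal axioms for $\pi_I(J)$ in $S_I$.
\begin{enumerate}
\item Non-emptiness: $[0]_I=\pi_I(0)\in\pi_I(J)$, since $0\in J$ (an ideal contains $0j=0$).
\item Closure under addition: $[x]_I+[y]_I=[x+y]_I$ with $x+y\in J$, because $\pi_I$ is a homomorphism.
\item Absorption: every element of $S_I$ has the form $[s]_I$ by surjectivity of $\pi_I$, and $[s]_I[x]_I=[sx]_I$ with $sx\in J$.
\end{enumerate}
Surjectivity of $\pi_I$ is the one point needed here; for a non-surjective homomorphism the image of an ideal need not be an ideal. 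Since $\pi_I$ is a quotient map, it is surjective by definition.

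None of these steps is a real obstacle. The only point needing care is (1), where the intersection over an empty family must be handled correctly. If $\MC(I)=\varnothing$, then $\wp_I=S\times S$ and $I^g=S$. Both sides of (1) then hold for every $a$, so the equivalence remains valid in this degenerate case.
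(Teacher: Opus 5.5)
Your proposal is correct and matches the paper's proof: (1) is read off the definition of the zero-class of $\wp_I$, (2) and (3) follow from extensiveness $I\subseteq I^g$ and the fixed-point condition $I=I^g$, and (4) holds because images of ideals under the quotient map are ideals. Your remark that surjectivity of $\pi_I$ is what makes (4) work is a useful precision; the paper's phrase ``homomorphic images of ideals are ideals'' leaves it implicit.
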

\begin{proof}
The first assertion is the definition of the zero-class of $\wp_I$.
The second follows from extensivity $I\subseteq I^g$.
The third is the fixed-point condition $I=I^g$.
Homomorphic images of ideals are ideals.
\end{proof}

Not every ideal of $S$ descends faithfully to the quotient by $\wp_I$.
Saturation is the condition that rules out dependence on the choice
of congruence-class representatives.

\begin{definition}\label{def:I-saturated}
An ideal $J$ of $S$ is \emph{$I$-saturated} if $I^g\subseteq J$ and
$(a,b)\in\wp_I$ together with $a\in J$ imply $b\in J$.
Equivalently, $J=\pi_I^{-1}(\pi_I(J))$.
\end{definition}

The ideal-correspondence theorem for the quotient takes the
following form.
Saturation is what replaces the automatic correspondence theorem
available in rings.

\begin{proposition}\label{prop:quotient-ideal-correspondence}
The assignments $J\mapsto\pi_I(J)$ and $K\mapsto\pi_I^{-1}(K)$
are mutually inverse order-preserving bijections between
$I$-saturated ideals of $S$ and ideals of $S_I$.
\end{proposition}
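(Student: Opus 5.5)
We verify the two composites are identities and that both maps stay within the stated classes, using that $\pi_I$ is a surjective semiring homomorphism.

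\emph{Well-definedness.} For any ideal $J$ of $S$, $\pi_I(J)$ is an ideal of $S_I$ by Proposition~\ref{prop:quotient-basic}(4). For an ideal $K$ of $S_I$, the preimage $\pi_I^{-1}(K)$ is an ideal of $S$: it contains $0$, and is closed under addition and under multiplication by elements of $S$ because $\pi_I$ is a homomorphism. We check that it is $I$-saturated. Since $[0]_I\in K$, Proposition~\ref{prop:quotient-basic}(1) gives $I^g\subseteq\pi_I^{-1}(K)$. If $(a,b)\in\wp_I$ and $\pi_I(a)\in K$, then $\pi_I(b)=\pi_I(a)\in K$.

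\emph{Composites.} Surjectivity of $\pi_I$ gives $\pi_I(\pi_I^{-1}(K))=K$ for every subset $K$ of $S_I$. Conversely, let $J$ be $I$-saturated. Clearly $J\subseteq\pi_I^{-1}(\pi_I(J))$. If $\pi_I(b)\in\pi_I(J)$, then $\pi_I(b)=\pi_I(a)$ for some $a\in J$, so $(a,b)\in\wp_I$, and saturation gives $b\in J$. Hence $J=\pi_I^{-1}(\pi_I(J))$, which also justifies the "equivalently" clause in Definition~\ref{def:I-saturated}.

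\emph{Order.} Both images and preimages preserve inclusion, so the two maps are mutually inverse order-isomorphisms.

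No step here is a genuine obstacle. The only point needing care is that saturation must be stated in the direction "$a\in J$ and $(a,b)\in\wp_I$ imply $b\in J$". Because $\wp_I$ is symmetric, this is exactly the condition that $J$ is a union of $\wp_I$-classes. The condition $I^g\subseteq J$ then follows automatically, since $J$ contains $0$ and $I^g$ is the $\wp_I$-class of $0$.
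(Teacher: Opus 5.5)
Your proof is correct and takes the same route as the paper. The paper simply appeals to the standard ideal correspondence for a quotient by a congruence, noting that pullbacks of ideals of $S_I$ contain $I^g$ and are $\wp_I$-saturated; you have written out the routine checks it leaves implicit (surjectivity of $\pi_I$ for $\pi_I(\pi_I^{-1}(K))=K$, saturation for $\pi_I^{-1}(\pi_I(J))=J$), and your remark that $I^g\subseteq J$ is automatic once $J$ is a union of $\wp_I$-classes is also correct.
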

\begin{proof}
This is the standard ideal correspondence for a quotient by a
congruence, noting that an ideal in the quotient pulls back to an
ideal containing $I^g$ and saturated with respect to $\wp_I$.
\end{proof}

Before transporting maximal-hull definitions to the quotient,
we need to know how maximal congruences behave under the quotient map.

\begin{proposition}\label{prop:quotient-congruence-max}
The maximal congruences of $S_I$ are the congruences
$\theta/\wp_I$ with $\theta\in\MC(I)$.
\end{proposition}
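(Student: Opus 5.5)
The plan is to use the standard correspondence (third isomorphism theorem) between congruences on $S_I=S/\wp_I$ and congruences on $S$ containing $\wp_I$. Given a congruence $\rho\supseteq\wp_I$, the relation $\rho/\wp_I=\{([a]_I,[b]_I)\mid(a,b)\in\rho\}$ is a well-defined congruence on $S_I$. This uses only that $\rho$ is an equivalence relation containing $\wp_I$ and compatible with the operations. Conversely, every congruence $\bar\rho$ on $S_I$ pulls back to $(\pi_I\times\pi_I)^{-1}(\bar\rho)$, which is a congruence on $S$ containing $\wp_I$. These assignments are mutually inverse, order-preserving bijections, and they send $S\times S$ to the full relation on $S_I$. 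Hence they restrict to a bijection between congruences on $S$ maximal among proper congruences containing $\wp_I$ and maximal congruences of $S_I$.

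The remaining task is to identify the maximal proper congruences of $S$ that contain $\wp_I$ with the elements of $\MC(I)$. I will prove two inclusions.

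\emph{First inclusion.} Let $\theta\in\MC(I)$. By \eqref{eq:wpI}, $\wp_I=\bigcap_{\theta'\in\MC(I)}\theta'\subseteq\theta$. Since $\theta$ is maximal in $\Cong(S)$, it is a fortiori maximal among proper congruences containing $\wp_I$. So $\theta/\wp_I$ is a maximal congruence of $S_I$.

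\emph{Second inclusion.} Let $\bar\theta$ be maximal in $\Cong(S_I)$, and let $\theta$ be its pullback. Then $\theta$ is proper and contains $\wp_I$. Any proper congruence $\sigma\supseteq\theta$ also contains $\wp_I$, so it corresponds to a proper congruence $\sigma/\wp_I\supseteq\bar\theta$ on $S_I$. Maximality of $\bar\theta$ forces $\sigma=\theta$, so $\theta\in\MaxC(S)$. Finally, $I\subseteq I^g=\zero{\wp_I}\subseteq\zero\theta$ by Proposition~\ref{prop:quotient-basic}(1) and extensivity, so $\theta\in\MC(I)$.

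The main obstacle is the degenerate case $\MC(I)=\varnothing$. Then $\wp_I=S\times S$ and $S_I$ is the trivial semiring, which has no proper congruences, hence no maximal ones. This matches the empty family $\{\theta/\wp_I\mid\theta\in\MC(I)\}$, so the statement holds vacuously provided "maximal congruence" means maximal among proper congruences, as in the paper's convention. I will state this case explicitly.

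Apart from that, the only point needing care is the well-definedness of $\rho/\wp_I$ and the bijectivity of the correspondence. I will verify these briefly rather than cite them, since the semiring setting has no ideal-kernel shortcut.
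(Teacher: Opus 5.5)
Your proposal is correct and follows essentially the same route as the paper: the correspondence between congruences on $S_I$ and congruences on $S$ containing $\wp_I$. For $\theta\in\MC(I)$ the paper likewise uses $\wp_I\subseteq\theta$, and for the converse it pulls back a maximal congruence, with $I\subseteq I^g=\zero{\wp_I}\subseteq\zero\theta$. You also justify steps the paper only asserts: that the pullback is maximal in all of $\Cong(S)$, why its zero-class contains $I$, and the degenerate case $\MC(I)=\varnothing$.
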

\begin{proof}
Congruences on $S_I$ correspond to congruences on $S$ containing
$\wp_I$.
Since $\wp_I=\bigcap_{\theta\in\MC(I)}\theta$, each $\theta\in\MC(I)$
contains $\wp_I$ and gives a maximal congruence on the quotient.
Conversely, the inverse image of a maximal congruence on the quotient
is a maximal congruence on $S$ containing $\wp_I$;
its zero-class contains $I$, so it belongs to $\MC(I)$.
\end{proof}

The quotient theory of $z$-ideals requires one more controlled
lifting condition, which we isolate as a hypothesis so the subsequent
theorems display their hypotheses explicitly.

\begin{hypothesis}\label{hyp:quotient-lifting}
Let $I$ be a $g$-closed ideal.
We say $I$ satisfies the \emph{maximal-ideal lifting hypothesis}
if every maximal ideal of $S_I$ is of the form $\pi_I(M)$ for a
unique maximal ideal $M\in\mathcal{V}_{\max}(I)$, where
\[
\mathcal{V}_{\max}(I)=\{M\in\Max(S)\mid I\subseteq M\},
\]
and membership is detected by
$[a]_I\in\pi_I(M)\Leftrightarrow a\in M$.
\end{hypothesis}

The quotient remembers only the maximal ideals above $I$,
so the $z$-condition must be relativised accordingly.

\begin{definition}\label{def:I-relative}
An ideal $J$ of $S$ is an \emph{$I$-relative $z$-ideal} if
$\M(a)\cap\mathcal{V}_{\max}(I)=\M(b)\cap\mathcal{V}_{\max}(I)$
and $a\in J$ imply $b\in J$.
\end{definition}

The following theorem shows that, under the lifting condition,
$I$-relative $z$-ideals descend precisely to $z$-ideals in $S_I$.

\begin{theorem}\label{loc11}
Assume $I$ is $g$-closed and satisfies
Hypothesis~\ref{hyp:quotient-lifting}.
Let $J$ be an $I$-saturated ideal.
Then $\pi_I(J)$ is a $z$-ideal of $S_I$ if and only if
$J$ is an $I$-relative $z$-ideal of $S$.
\end{theorem}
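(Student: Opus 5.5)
The plan is to transport the maximal-hull condition through $\pi_I$ using Hypothesis~\ref{hyp:quotient-lifting}, reducing both directions to one hull identity. The identity to establish first is
\[
\M_{S_I}([a]_I)=\{\pi_I(M)\mid M\in\M(a)\cap\mathcal V_{\max}(I)\}.
\]
By the hypothesis, every maximal ideal of $S_I$ is $\pi_I(M)$ for a unique $M\in\mathcal V_{\max}(I)$. Moreover $[a]_I\in\pi_I(M)$ holds exactly when $a\in M$, so the identity follows. Uniqueness of $M$ makes $M\mapsto\pi_I(M)$ a bijection from $\mathcal V_{\max}(I)$ onto $\Max(S_I)$. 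Consequently, for $a,b\in S$,
\[
\M_{S_I}([a]_I)=\M_{S_I}([b]_I)\iff \M(a)\cap\mathcal V_{\max}(I)=\M(b)\cap\mathcal V_{\max}(I).
\]
I have to make sure injectivity really gives the forward direction of this equivalence. Suppose $M\in\M(a)\cap\mathcal V_{\max}(I)$. Then $\pi_I(M)\in\M_{S_I}([b]_I)$, so $[b]_I\in\pi_I(M)$, and the detection clause gives $b\in M$.

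Next I use saturation, recorded via Proposition~\ref{prop:quotient-ideal-correspondence}: since $J$ is $I$-saturated, $J=\pi_I^{-1}(\pi_I(J))$. In particular, $a\in J$ if and only if $[a]_I\in\pi_I(J)$. Every element of $S_I$ has the form $[b]_I$ because $\pi_I$ is surjective.

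For ($\Leftarrow$), assume $J$ is $I$-relative $z$ and take $[a]_I\in\pi_I(J)$ with $\M_{S_I}([a]_I)=\M_{S_I}([b]_I)$. Saturation gives $a\in J$. The displayed equivalence turns the quotient hull equality into equality of relative hulls, so $b\in J$ by Definition~\ref{def:I-relative}, and therefore $[b]_I\in\pi_I(J)$. For ($\Rightarrow$), assume $\pi_I(J)$ is a $z$-ideal and that $a\in J$ with equal relative hulls for $a$ and $b$. The equivalence gives $\M_{S_I}([a]_I)=\M_{S_I}([b]_I)$, so $[b]_I\in\pi_I(J)$, and saturation yields $b\in J$.

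The main obstacle is the hull identity, specifically the well-definedness of the detection clause $[a]_I\in\pi_I(M)\Leftrightarrow a\in M$. This clause implicitly requires $M$ to be a union of $\wp_I$-classes. I will take it as given by Hypothesis~\ref{hyp:quotient-lifting} rather than derive it. The $g$-closedness of $I$ is used only so that the hypothesis applies and the zero-class of $S_I$ is $I$. The empty-hull case, where $[a]_I$ is a unit, needs no separate treatment, since both sides of the equivalence are then empty.
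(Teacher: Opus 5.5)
Your proposal is correct and follows the paper's route. Hypothesis~\ref{hyp:quotient-lifting} turns equality of $\M(a)\cap\mathcal V_{\max}(I)$ and $\M(b)\cap\mathcal V_{\max}(I)$ into equality of the maximal hulls of $[a]_I$ and $[b]_I$ in $S_I$, and $I$-saturation moves membership between $J$ and $\pi_I(J)$; you simply write out the hull identity that the paper invokes in one line.

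One small correction. Your forward check needs $\pi_I(M)$ to be a maximal ideal of $S_I$ for every $M\in\mathcal V_{\max}(I)$. That comes from the detection clause, not from uniqueness, which only gives injectivity. Read the detection clause, as you and the paper both implicitly do, as applying to every such $M$: then $\pi_I^{-1}(\pi_I(M))=M$, so any strictly larger ideal of $S_I$ pulls back to an ideal strictly containing $M$, hence to $S$.
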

\begin{proof}
Suppose first that $\pi_I(J)$ is a $z$-ideal.
Let $a\in J$ and assume
$\M(a)\cap\mathcal{V}_{\max}(I)=\M(b)\cap\mathcal{V}_{\max}(I)$.
Hypothesis~\ref{hyp:quotient-lifting} identifies this with equality
of maximal hulls of $[a]_I$ and $[b]_I$ in $S_I$.
Since $[a]_I\in\pi_I(J)$ and $\pi_I(J)$ is a $z$-ideal,
$[b]_I\in\pi_I(J)$.
Saturation gives $b\in J$.

Conversely, if $J$ is $I$-relative and $[a]_I\in\pi_I(J)$,
choose a representative $a\in J$ using saturation.
If $[a]_I$ and $[b]_I$ have the same maximal hull in $S_I$,
Hypothesis~\ref{hyp:quotient-lifting} gives the relative equality
in $S$; the $I$-relative property then gives $b\in J$ and
$[b]_I\in\pi_I(J)$.
\end{proof}

Intersections in the quotient are controlled by the following lemma,
which is needed for the irreducibility results.

\begin{lemma}\label{loc10}
Let $J,K,L$ be $I$-saturated ideals.  Then
$\pi_I(J)\cap\pi_I(K)=\pi_I(L)$ if and only if $J\cap K=L$.
\end{lemma}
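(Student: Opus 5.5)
The plan is to reduce the lemma to Proposition~\ref{prop:quotient-ideal-correspondence}. That result says $\pi_I$ and $\pi_I^{-1}$ are mutually inverse order-preserving bijections between $I$-saturated ideals of $S$ and ideals of $S_I$. The key step is to show that $\pi_I^{-1}$ turns intersections into intersections, and that the intersection of two $I$-saturated ideals is again $I$-saturated.

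First, I will verify that $J\cap K$ is $I$-saturated whenever $J$ and $K$ are. It contains $I^g$, since both $J$ and $K$ do. If $(a,b)\in\wp_I$ and $a\in J\cap K$, then saturation of each ideal gives $b\in J$ and $b\in K$. Next, I will check that $\pi_I^{-1}(A\cap B)=\pi_I^{-1}(A)\cap\pi_I^{-1}(B)$ for subsets $A,B$ of $S_I$; this holds for preimages under any map. Combined with $J=\pi_I^{-1}(\pi_I(J))$, the equivalent form of saturation in Definition~\ref{def:I-saturated}, this gives
\[
\pi_I^{-1}\bigl(\pi_I(J)\cap\pi_I(K)\bigr)=J\cap K .
\]

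For the forward direction, assume $\pi_I(J)\cap\pi_I(K)=\pi_I(L)$ and apply $\pi_I^{-1}$. The left side becomes $J\cap K$ and the right side becomes $L$, because $L$ is saturated. For the converse, assume $J\cap K=L$. Then $\pi_I(L)=\pi_I(J\cap K)$, and it remains to see that $\pi_I(J\cap K)=\pi_I(J)\cap\pi_I(K)$. The inclusion $\subseteq$ is trivial. For $\supseteq$, take $[x]_I$ in both images. Saturation of $J$ gives $x\in J$, since $x\in\pi_I^{-1}(\pi_I(J))=J$, and likewise $x\in K$, so $[x]_I\in\pi_I(J\cap K)$. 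Alternatively, $\pi_I(J)\cap\pi_I(K)$ is an ideal of $S_I$ whose preimage is $J\cap K$, so by the correspondence it equals $\pi_I(J\cap K)$.

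I do not expect a real obstacle here. The only point needing care is that the image of an intersection equals the intersection of images, which fails for general maps. Here it is rescued by saturation, which guarantees that any representative of a class in $\pi_I(J)$ already lies in $J$. I will state explicitly that this is where saturation of $J$ and $K$ is used, while saturation of $L$ is used only in the forward direction.
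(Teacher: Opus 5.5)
Your proof is correct and follows the same route as the paper: both reduce to the saturation correspondence of Proposition~\ref{prop:quotient-ideal-correspondence}, using that pulling back $\pi_I(J)\cap\pi_I(K)$ gives $J\cap K$ and pulling back $\pi_I(L)$ gives $L$. The paper's proof leaves the converse direction implicit in the injectivity of that correspondence. You spell it out, showing via saturation that $\pi_I(J\cap K)=\pi_I(J)\cap\pi_I(K)$.
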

\begin{proof}
Apply the bijection of Proposition~\ref{prop:quotient-ideal-correspondence}
to both sides.
Pulling back $\pi_I(J)\cap\pi_I(K)$ gives $J\cap K$,
and pulling back $\pi_I(L)$ gives $L$.
\end{proof}

Strong irreducibility descends to the quotient under the saturation
assumptions just established.

\begin{proposition}\label{prop:quotient-strong-irred}
Assume $I$ is $g$-closed and satisfies
Hypothesis~\ref{hyp:quotient-lifting}.
Let $J$ be an $I$-saturated $I$-relative $z$-ideal of $S$.
If $J$ is strongly irreducible as an ideal of $S$,
then $\pi_I(J)$ is $z$-strongly irreducible in $S_I$.
\end{proposition}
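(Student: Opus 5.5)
Since $\pi_I(J)$ is already a $z$-ideal of $S_I$ by Theorem~\ref{loc11}, it remains to verify the criterion of Proposition~\ref{loc14}, applied in $S_I$: if $\ideal{\alpha}_z\cap\ideal{\beta}_z\subseteq\pi_I(J)$ for $\alpha,\beta\in S_I$, then $\alpha\in\pi_I(J)$ or $\beta\in\pi_I(J)$. Equivalently, and this is the form I would actually prove, if $A,B$ are $z$-ideals of $S_I$ with $A\cap B\subseteq\pi_I(J)$, then $A\subseteq\pi_I(J)$ or $B\subseteq\pi_I(J)$. The strategy is to pull everything back along $\pi_I$, use strong irreducibility of $J$ in $S$, and push forward again.

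The steps are as follows.

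\begin{enumerate}
\item Let $A,B$ be $z$-ideals of $S_I$ with $A\cap B\subseteq\pi_I(J)$. By Proposition~\ref{prop:quotient-ideal-correspondence}, $A'=\pi_I^{-1}(A)$ and $B'=\pi_I^{-1}(B)$ are $I$-saturated ideals of $S$ with $\pi_I(A')=A$ and $\pi_I(B')=B$.
\item Preimages commute with intersections, so
\[
A'\cap B'=\pi_I^{-1}(A\cap B)\subseteq\pi_I^{-1}(\pi_I(J)).
\]
Since $J$ is $I$-saturated, the right-hand side is $J$ (Definition~\ref{def:I-saturated}). Lemma~\ref{loc10} expresses the same bookkeeping.
\item Strong irreducibility of $J$ as an ideal of $S$ (for arbitrary ideals $A',B'$ with $A'\cap B'\subseteq J$) gives $A'\subseteq J$ or $B'\subseteq J$.
\item Applying $\pi_I$ and using $\pi_I(A')=A$ yields $A\subseteq\pi_I(J)$ or $B\subseteq\pi_I(J)$.
\end{enumerate}

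This argument does not need $A'$ and $B'$ to be $z$-ideals in $S$. If one wants them to be, Theorem~\ref{loc11} run backwards shows they are $I$-relative $z$-ideals.

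The main point needing care is step 2. It requires $\pi_I^{-1}(\pi_I(J))=J$, which is exactly $I$-saturation. It also requires that every ideal of $S_I$ be the image of its preimage, which holds because $\pi_I$ is surjective. Once these two facts are checked against the correspondence, the remaining steps are formal. One should also confirm that $\pi_I(J)$ is proper: $1\notin J$ and saturation prevent $[1]_I\in\pi_I(J)$.
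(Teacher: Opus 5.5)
Your proof is correct and matches the paper's argument: pull $A,B$ back along $\pi_I$, use $\pi_I^{-1}(\pi_I(J))=J$ from $I$-saturation (the paper cites Lemma~\ref{loc10} for this step), apply strong irreducibility of $J$, and push forward using surjectivity. You are also right that the pullbacks need not be $z$-ideals: the paper remarks that they are $I$-relative $z$-ideals, but the argument never uses this.
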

\begin{proof}
By Theorem~\ref{loc11}, $\pi_I(J)$ is a $z$-ideal.
Let $A,B\in\ZId(S_I)$ with $A\cap B\subseteq\pi_I(J)$.
Put $K=\pi_I^{-1}(A)$ and $L=\pi_I^{-1}(B)$.
Then $K$ and $L$ are $I$-saturated $I$-relative $z$-ideals,
and Lemma~\ref{loc10} gives $K\cap L\subseteq J$.
Strong irreducibility of $J$ gives $K\subseteq J$ or $L\subseteq J$,
hence $A\subseteq\pi_I(J)$ or $B\subseteq\pi_I(J)$.
\end{proof}

The converse direction identifies when $z$-irreducibility in the
quotient reflects back to the source semiring.

\begin{proposition}\label{prop:quotient-irred-reflect}
Assume $I$ is $g$-closed and satisfies
Hypothesis~\ref{hyp:quotient-lifting}.
The ideal $\pi_I(I)$ is $z$-irreducible in $S_I$ if and only if
$I$ is irreducible in the lattice of $I$-saturated $I$-relative
$z$-ideals of $S$.
\end{proposition}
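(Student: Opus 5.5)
The plan is to transport the question through the order bijection of Proposition~\ref{prop:quotient-ideal-correspondence}, restricted to the relevant subclasses on each side. First I will identify the two lattices being compared. On the quotient side we have $\ZId(S_I)$. On the source side we have the poset $\mathcal L_I$ of $I$-saturated $I$-relative $z$-ideals of $S$.

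By Proposition~\ref{prop:quotient-ideal-correspondence}, $J\mapsto\pi_I(J)$ is an order isomorphism from $I$-saturated ideals onto $\Id(S_I)$, with inverse $K\mapsto\pi_I^{-1}(K)$. Theorem~\ref{loc11} (which uses that $I$ is $g$-closed and Hypothesis~\ref{hyp:quotient-lifting}) says this isomorphism carries $\mathcal L_I$ exactly onto $\ZId(S_I)$. For surjectivity, given $A\in\ZId(S_I)$, the pullback $\pi_I^{-1}(A)$ is $I$-saturated with image $A$, so the theorem makes it $I$-relative. Hence $\pi_I$ restricts to an order isomorphism $\mathcal L_I\cong\ZId(S_I)$.

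Next I check that $I$ itself lies in $\mathcal L_I$, so that the statement makes sense.
\begin{itemize}
\item $I$ is $I$-saturated: $I^g=I$, and if $(a,b)\in\wp_I$ with $a\in I$, then $[b]_I=[a]_I=[0]_I$, so $b\in I^g=I$ by Proposition~\ref{prop:quotient-basic}.
\item $I$ is $I$-relative: since $I=\pi_I^{-1}(\pi_I(I))=\pi_I^{-1}(\{[0]_I\})$, it suffices by Theorem~\ref{loc11} that the zero ideal $\{[0]_I\}$ is a $z$-ideal of $S_I$. This is the one point needing care. If it fails, then the ``if and only if'' is trivially consistent only once both sides are read within the respective lattices. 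I will therefore interpret $z$-irreducibility of $\pi_I(I)$ as requiring $\pi_I(I)\in\ZId(S_I)$, and irreducibility of $I$ as requiring $I\in\mathcal L_I$. Under that reading, both membership conditions are equivalent by the isomorphism, so the statement holds whether or not they are satisfied.
\end{itemize}

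Finally, irreducibility is a purely order-theoretic property: $x$ is meet-irreducible if $x=y\wedge z$ forces $y=x$ or $z=x$. It is therefore preserved by order isomorphisms, provided meets correspond. Here I invoke Lemma~\ref{loc10}: for $I$-saturated $J,K,L$, we have $\pi_I(J)\cap\pi_I(K)=\pi_I(L)$ if and only if $J\cap K=L$. So a decomposition $\pi_I(I)=A\cap B$ with $A,B\in\ZId(S_I)$ pulls back to $I=\pi_I^{-1}(A)\cap\pi_I^{-1}(B)$ with both factors in $\mathcal L_I$, and conversely any decomposition $I=J\cap K$ in $\mathcal L_I$ pushes forward. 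Moreover, $\pi_I^{-1}(A)=I$ if and only if $A=\pi_I(I)$, by injectivity of the bijection. Both directions of the equivalence follow.

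The main obstacle is the bookkeeping around whether $I$ lies in $\mathcal L_I$, and making the bijection land exactly in $\ZId(S_I)$. Both reduce to Theorem~\ref{loc11} together with the pullback description of saturated ideals.
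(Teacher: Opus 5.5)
Your argument is correct and is essentially the paper's proof. You push forward or pull back a meet-decomposition through the saturated-ideal correspondence, with Theorem~\ref{loc11} matching $I$-saturated $I$-relative $z$-ideals to $\ZId(S_I)$ and Lemma~\ref{loc10} matching intersections.

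Your extra bookkeeping is worth keeping: $I$ is automatically $I$-saturated, and it is $I$-relative exactly when $\pi_I(I)=\{[0]_I\}$ is a $z$-ideal of $S_I$. The paper's two-line proof passes over this, and it is a genuine point, since for $I=\{0\}$ in $\N$ neither side holds. Reading both sides as presupposing membership in the respective lattice, as you do, is therefore what makes the equivalence exact.
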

\begin{proof}
If $A\cap B=\pi_I(I)$ for $z$-ideals of $S_I$, the pullbacks
$\pi_I^{-1}(A)$ and $\pi_I^{-1}(B)$ are $I$-saturated $I$-relative
$z$-ideals and Lemma~\ref{loc10} gives their intersection equals $I$.
The converse is the same argument applied in the other direction.
\end{proof}

\section{Under localization}\label{sec:local}

Let $T\subseteq S$ be a multiplicatively closed set containing $1$,
and let $f:S\to T^{-1}S$ be the localization map.
For an ideal $J\subseteq S$ write $T^{-1}J$ for its extension;
for an ideal $K\subseteq T^{-1}S$ write $K^c=f^{-1}(K)$
for its contraction.

Localization preserves the $z$-structure only for that part of the
maximal spectrum that avoids the multiplicative set.
The following hypothesis records the maximal-disjointness condition
needed at each point in the localization arguments.

\begin{hypothesis}\label{hyp:local}
Let $I$ be an ideal of $S$.
The triple $(S,T,I)$ satisfies the \emph{maximal-disjointness
hypothesis} if extension and contraction give a bijection
\[
\mathcal{V}_{\max}(I)\longleftrightarrow\Max(T^{-1}S),
\qquad M\longmapsto T^{-1}M,
\]
and for every $M\in\mathcal{V}_{\max}(I)$ one has $M\cap T=\varnothing$
and $a/s\in T^{-1}M\Leftrightarrow a\in M$ for all $a\in S$,
$s\in T$.
\end{hypothesis}

The following invariance principle shows that the $z$-ideal condition
is preserved by isomorphisms, allowing us to identify isomorphic
copies freely in localization arguments.

\begin{lemma}\label{loc13}
If $\varphi:S\to S'$ is a semiring isomorphism and $J$ is a
$z$-ideal of $S'$, then $\varphi^{-1}(J)$ is a $z$-ideal of $S$.
\end{lemma}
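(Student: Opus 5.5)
The plan is to transport the maximal-hull data of $S'$ back to $S$ along $\varphi$ and then read off the $z$-condition directly from Definition~\ref{def:zideal}. The key observation is that a semiring isomorphism induces a bijection $\Max(S)\to\Max(S')$, $M\mapsto\varphi(M)$, with inverse $N\mapsto\varphi^{-1}(N)$. Images and preimages of ideals under an isomorphism are ideals. Both maps preserve inclusion and properness, the latter because $\varphi(1)=1$. Hence they also preserve maximality.

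From this bijection I will derive the hull identity
\[
\M_{S'}(\varphi(a))=\{\varphi(M)\mid M\in\M_S(a)\}\qquad(a\in S).
\]
It holds because $a\in M$ if and only if $\varphi(a)\in\varphi(M)$, using injectivity of $\varphi$. Consequently, $\M_S(a)=\M_S(b)$ if and only if $\M_{S'}(\varphi(a))=\M_{S'}(\varphi(b))$.

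With this in hand, the main argument is short. First, $\varphi^{-1}(J)$ is an ideal of $S$, being the preimage of an ideal under a homomorphism preserving $0$. Next, let $a\in\varphi^{-1}(J)$ and $b\in S$ with $\M_S(a)=\M_S(b)$. The displayed identity gives $\M_{S'}(\varphi(a))=\M_{S'}(\varphi(b))$. Since $\varphi(a)\in J$ and $J$ is a $z$-ideal, we get $\varphi(b)\in J$, that is, $b\in\varphi^{-1}(J)$.

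There is no real obstacle. The only point needing care is the verification that $\varphi$ restricts to a bijection of maximal spectra. This uses surjectivity: the image of an ideal is an ideal, and every ideal of $S'$ containing $\varphi(M)$ pulls back to an ideal containing $M$. That step is what licenses the hull identity. Note also that the empty-hull convention (units) is respected automatically, since $\varphi$ maps units to units.
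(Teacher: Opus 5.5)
Your proof is correct and follows the same route as the paper: the isomorphism induces a bijection $\Max(S)\to\Max(S')$, so $\M_S(a)=\M_S(b)$ holds exactly when $\M_{S'}(\varphi(a))=\M_{S'}(\varphi(b))$, and the $z$-condition for $\varphi^{-1}(J)$ then follows immediately. You also write out the routine checks that the paper's one-line proof leaves implicit, namely that properness and maximality are preserved and that the hull identity holds.
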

\begin{proof}
An isomorphism induces a bijection between maximal ideals,
so $\M_S(x)=\M_S(y)$ if and only if
$\M_{S'}(\varphi(x))=\M_{S'}(\varphi(y))$.
\end{proof}

The main localization theorem identifies what survives after inverting
$T$; the relative hypothesis ensures the relevant maximal ideals
remain visible in the localized semiring.

\begin{theorem}\label{loc15}
Assume $(S,T,I)$ satisfies Hypothesis~\ref{hyp:local}.
\begin{enumerate}
\item If $J$ is an $I$-relative $z$-ideal of $S$,
  then $T^{-1}J$ is a $z$-ideal of $T^{-1}S$.
\item If $K$ is a $z$-ideal of $T^{-1}S$,
  then $K^c$ is a $z$-ideal of $S$.
\end{enumerate}
\end{theorem}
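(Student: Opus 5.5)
The plan is to translate maximal hulls in $T^{-1}S$ into relative maximal hulls in $S$ via Hypothesis~\ref{hyp:local}, and then apply the appropriate $z$-condition on each side. The basic identity I would establish first is the following. For $a\in S$ and $s\in T$,
\[
\M_{T^{-1}S}(a/s)=\{T^{-1}M\mid M\in\M(a)\cap\mathcal V_{\max}(I)\}.
\]
This holds because every maximal ideal of $T^{-1}S$ has the form $T^{-1}M$ with $M\in\mathcal V_{\max}(I)$, and membership is detected by $a/s\in T^{-1}M\Leftrightarrow a\in M$. Since $M\mapsto T^{-1}M$ is a bijection, we get
\[
\M(a/s)=\M(b/t)\iff \M(a)\cap\mathcal V_{\max}(I)=\M(b)\cap\mathcal V_{\max}(I).
\]
In particular the hull of $a/s$ does not depend on $s$, so $\M(a/s)=\M(a/1)$.

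For (1), let $J$ be $I$-relative and let $x\in T^{-1}J$. I would write $x=j/s$ with $j\in J$; this is possible because extension of an ideal consists of such fractions, since sums of fractions with numerators in $J$ have numerators in $J$ after clearing denominators. Now take $y=b/t$ with $\M(x)=\M(y)$. The identity gives $\M(j)\cap\mathcal V_{\max}(I)=\M(b)\cap\mathcal V_{\max}(I)$. The $I$-relative property then yields $b\in J$, so $y=b/t\in T^{-1}J$.

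For (2), let $K$ be a $z$-ideal of $T^{-1}S$ and take $a\in K^c$, so that $a/1\in K$. Suppose $b\in S$ satisfies $\M(a)=\M(b)$ in $S$. Intersecting both sides with $\mathcal V_{\max}(I)$ gives equal relative hulls, and the identity gives $\M(a/1)=\M(b/1)$. Since $K$ is a $z$-ideal, $b/1\in K$, so $b\in K^c$. One must also check that $K^c$ is an ideal, but this is standard because contraction of an ideal along a homomorphism is an ideal.

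The main obstacle is the representation step in (1): in a semiring, is every element of $T^{-1}J$ really of the form $j/s$ with $j\in J$? Equality of fractions in semiring localization is $a/s=b/t$ iff $uat=ubs$ for some $u\in T$, so a given fraction may have other representatives with numerators outside $J$. This causes no difficulty, since the hull identity is independent of the representative: it depends only on the class, via the membership criterion in Hypothesis~\ref{hyp:local}. It therefore suffices to pick one representative with numerator in $J$ and to conclude membership from a representative $b/t$ with $b\in J$. Finally, to use the identity we need the bijection to be exhaustive onto $\Max(T^{-1}S)$, which is exactly what Hypothesis~\ref{hyp:local} provides.
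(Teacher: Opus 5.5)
Your proof is correct and follows the paper's argument. In (1) you turn equality of hulls in $T^{-1}S$ into equality of $I$-relative hulls in $S$ and apply the $I$-relative property; in (2) you run the translation the other way and use that $K$ is a $z$-ideal. The one difference is cosmetic: the paper starts from an arbitrary representative $x/s$, chooses $u\in T$ with $ux\in J$, and uses primeness of $M$ together with $u\notin M$ to pass from $ux\in M$ to $x\in M$, whereas you pick a representative with numerator in $J$ at the outset, so the membership criterion of Hypothesis~\ref{hyp:local} alone suffices.
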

\begin{proof}
(1) Let $x/s\in T^{-1}J$ and suppose
$\M_{T^{-1}S}(x/s)=\M_{T^{-1}S}(y/t)$.
Choose $u\in T$ with $ux\in J$.
Let $M\in\M(ux)\cap\mathcal{V}_{\max}(I)$.
By Hypothesis~\ref{hyp:local}, $u\notin M$;
since $M$ is prime and $ux\in M$, we get $x\in M$.
Hence $x/s\in T^{-1}M$ and therefore $y/t\in T^{-1}M$.
Hypothesis~\ref{hyp:local} gives $y\in M$.
The reverse inclusion is analogous, giving
$\M(ux)\cap\mathcal{V}_{\max}(I)=\M(y)\cap\mathcal{V}_{\max}(I)$.
Since $J$ is $I$-relative and $ux\in J$, it follows that $y\in J$.

(2) Let $x\in K^c$ and $\M_S(x)=\M_S(y)$.
For any maximal ideal $N=T^{-1}M$ of $T^{-1}S$ containing $x/1$:
Hypothesis~\ref{hyp:local} gives $x\in M$, hence $y\in M$,
and therefore $y/1\in N$.
Thus $\M_{T^{-1}S}(x/1)\subseteq\M_{T^{-1}S}(y/1)$;
the reverse inclusion is symmetric.
Since $K$ is a $z$-ideal and $x/1\in K$, we get $y\in K^c$.
\end{proof}

Extension and contraction encode the passage between source and
target ideals.
The formula below identifies the contraction of an extended ideal
in terms of colon ideals.

\begin{proposition}\label{prop:extension-contraction}
$(T^{-1}J)^c=\bigcup_{t\in T}(J:t)$.
Consequently $T^{-1}J=T^{-1}S$ if and only if $J\cap T\neq\varnothing$.
\end{proposition}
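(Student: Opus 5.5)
The plan is to work directly from the description of the localization $T^{-1}S$ as the set of fractions $a/s$ with $a\in S$, $s\in T$. Here $a/s=b/t$ if and only if $uta=usb$ for some $u\in T$; this is the cancellation-free form of equality appropriate to semirings. I would first record that the extension $T^{-1}J$ equals the set $\{j/s\mid j\in J,\ s\in T\}$. This set contains the images $j/1$. It is closed under addition, since $j/s+j'/s'=(js'+j's)/(ss')$ with $js'+j's\in J$. It is closed under multiplication by arbitrary fractions, since $(a/t)(j/s)=(aj)/(ts)$. So it is the smallest ideal containing $f(J)$.

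For the inclusion $(T^{-1}J)^c\subseteq\bigcup_{t\in T}(J:t)$, take $x\in S$ with $x/1\in T^{-1}J$, so $x/1=j/s$ with $j\in J$ and $s\in T$. The equality criterion gives $u\in T$ with $usx=uj$. Put $t=us\in T$, using that $T$ is multiplicatively closed. Then $tx=uj\in J$, so $x\in(J:t)$. For the reverse inclusion, if $tx\in J$ with $t\in T$, then $x/1=(tx)/t$; this holds with witness $u=1$, since $t\cdot x=1\cdot(tx)$. Hence $x/1\in T^{-1}J$ and $x\in(T^{-1}J)^c$.

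For the consequence, I would use that an ideal of $T^{-1}S$ is the whole semiring exactly when it contains $1/1$. This is equivalent to $1\in(T^{-1}J)^c$. By the formula just proved, it is in turn equivalent to $t=t\cdot1\in J$ for some $t\in T$, that is, $J\cap T\neq\varnothing$.

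The only delicate point is the first step, namely that the extension is literally the set of fractions $j/s$ rather than a larger ideal generated by them. Alongside it, one must use the correct semiring equality of fractions, with the extra factor $u$. This is needed because subtraction, and hence cancellation, is unavailable. Once the extra factor is absorbed into the denominator $t=us$, the rest is routine.
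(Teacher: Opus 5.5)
Your argument is correct and is the same as the paper's, which compresses it to the single chain $x\in(T^{-1}J)^c\Leftrightarrow x/1\in T^{-1}J\Leftrightarrow tx\in J$ for some $t\in T$. You spell out what the paper leaves implicit: that $T^{-1}J$ consists exactly of the fractions $j/s$; that the witness $u$ in the semiring equality of fractions is absorbed into the denominator $t=us$; and that the statement about $T^{-1}J=T^{-1}S$ follows by testing $1/1$.
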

\begin{proof}
$x\in(T^{-1}J)^c\Leftrightarrow x/1\in T^{-1}J\Leftrightarrow
tx\in J$ for some $t\in T$.
\end{proof}

The preceding formula suggests the appropriate saturation notion
for localization: a $T$-saturated ideal is exactly one that is
unchanged after extending and contracting back.

\begin{definition}\label{def:T-saturated}
An ideal $J$ of $S$ is \emph{$T$-saturated} if
$J=\bigcup_{t\in T}(J:t)$,
equivalently if $tx\in J$ for some $t\in T$ implies $x\in J$.
\end{definition}

\begin{corollary}\label{cor:local-contraction-extension}
If $J$ is $T$-saturated, then $(T^{-1}J)^c=J$.
If $K$ is an ideal of $T^{-1}S$, then $T^{-1}(K^c)=K$.
\end{corollary}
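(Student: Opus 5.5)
The plan is to reduce both assertions to Proposition~\ref{prop:extension-contraction}, which gives $(T^{-1}J)^c=\bigcup_{t\in T}(J:t)$, together with the standard description of elements of $T^{-1}S$ as fractions $x/s$. Here two fractions $x/s$ and $y/t$ are equal precisely when $uxt=uys$ for some $u\in T$.

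For the first assertion, suppose $J$ is $T$-saturated. The inclusion $J\subseteq(T^{-1}J)^c$ is immediate, since $x\in J$ gives $x/1\in T^{-1}J$. For the reverse inclusion, Proposition~\ref{prop:extension-contraction} writes $(T^{-1}J)^c$ as $\bigcup_{t\in T}(J:t)$. Definition~\ref{def:T-saturated} says exactly that this union equals $J$. So the first assertion is essentially a restatement of the definition.

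For the second assertion, let $K$ be an ideal of $T^{-1}S$; we show both inclusions.
\begin{enumerate}
\item $T^{-1}(K^c)\subseteq K$. An element of $T^{-1}(K^c)$ has the form $x/s$ with $x\in K^c$, so $x/1\in K$. Then $x/s=(1/s)(x/1)$ lies in $K$, because $K$ is an ideal.
\item $K\subseteq T^{-1}(K^c)$. Given $x/s\in K$, multiply by $s/1$ to get $x/1=(s/1)(x/s)\in K$. Hence $x\in K^c$, and $x/s\in T^{-1}(K^c)$.
\end{enumerate}

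The only point needing care is the meaning of $T^{-1}J$ for a semiring. I take $T^{-1}J$ to be the set $\{x/s : x\in J,\ s\in T\}$. This set is already an ideal: sums $x/s+y/t=(xt+ys)/st$ and multiples $(r/u)(x/s)=(rx)/(us)$ stay in it, so no further generation is needed. I also need this membership description to be independent of representatives. If $x/s=y/t$ with $y\in J$, I only know that $x/s\in T^{-1}J$ via some other representative. That is exactly why Proposition~\ref{prop:extension-contraction} uses the colon union rather than $J$ itself. No additive inverses or cancellativity are used anywhere; only the ideal axioms and the fraction equivalence enter. I therefore expect no genuine obstacle beyond this representative bookkeeping.
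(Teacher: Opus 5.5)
Your proof is correct and follows the paper's argument. The first assertion is Proposition~\ref{prop:extension-contraction} combined with the definition of $T$-saturation, and the inclusion $K\subseteq T^{-1}(K^c)$ comes from multiplying $x/s\in K$ by the unit $s/1$; you also verify the reverse inclusion $T^{-1}(K^c)\subseteq K$ via $x/s=(1/s)(x/1)$, which the paper leaves implicit.
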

\begin{proof}
The first assertion is Proposition~\ref{prop:extension-contraction}.
For the second, every element $a/t\in K$ has $a/1\in K$
after multiplying by the unit $t/1$, so $a\in K^c$.
\end{proof}

Combining saturation with the maximal-disjointness hypothesis
gives the bijective form of localization for relative $z$-ideals.

\begin{proposition}\label{loc19}
Assume Hypothesis~\ref{hyp:local}.  Extension and contraction give
a bijection between $T$-saturated $I$-relative $z$-ideals of $S$
and $z$-ideals of $T^{-1}S$.
Under this bijection, strong irreducibility is preserved and
reflected, testing against the appropriate saturated relative lattice
on the source side.
\end{proposition}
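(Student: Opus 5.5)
The plan is to build the bijection from Theorem~\ref{loc15} and Corollary~\ref{cor:local-contraction-extension}, and then to transfer strong irreducibility along it in the style of Proposition~\ref{prop:quotient-strong-irred}.

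\textbf{Extension side.} Let $J$ be a $T$-saturated $I$-relative $z$-ideal. Theorem~\ref{loc15}(1) shows that $T^{-1}J$ is a $z$-ideal of $T^{-1}S$, and Corollary~\ref{cor:local-contraction-extension} gives $(T^{-1}J)^c=J$.

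\textbf{Contraction side.} Let $K$ be a $z$-ideal of $T^{-1}S$. Corollary~\ref{cor:local-contraction-extension} gives $T^{-1}(K^c)=K$. Saturation of $K^c$ is immediate: if $tx\in K^c$ with $t\in T$, then $x/1=(t/1)^{-1}(tx/1)\in K$, so $x\in K^c$. Theorem~\ref{loc15}(2) only tells us that $K^c$ is a $z$-ideal of $S$; what the bijection needs is the $I$-relative property, which is stronger, so I will prove that directly.

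\textbf{Key step: $K^c$ is $I$-relative.} Suppose $x\in K^c$ and
\[
\M(x)\cap\mathcal V_{\max}(I)=\M(y)\cap\mathcal V_{\max}(I).
\]
By Hypothesis~\ref{hyp:local}, every maximal ideal of $T^{-1}S$ has the form $T^{-1}M$ with $M\in\mathcal V_{\max}(I)$, and $x/1\in T^{-1}M$ if and only if $x\in M$. Hence
\[
\M_{T^{-1}S}(x/1)=\{T^{-1}M\mid M\in\M(x)\cap\mathcal V_{\max}(I)\},
\]
and the same formula holds for $y$. The two relative hulls are equal, so $\M_{T^{-1}S}(x/1)=\M_{T^{-1}S}(y/1)$. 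Since $K$ is a $z$-ideal, $y/1\in K$, that is, $y\in K^c$.

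This step is the main obstacle, because Theorem~\ref{loc15}(2) as stated does not give it. The argument above handles it using only the bijection and the membership clause of Hypothesis~\ref{hyp:local}. Both assignments are order-preserving, so they are mutually inverse lattice isomorphisms. In particular they preserve intersections, since contraction commutes with $\cap$ and extension is its inverse on these lattices.

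\textbf{Strong irreducibility.} Suppose $J$ is strongly irreducible in the lattice of $T$-saturated $I$-relative $z$-ideals. Let $A,B\in\ZId(T^{-1}S)$ with $A\cap B\subseteq T^{-1}J$. Contracting gives $A^c\cap B^c\subseteq J$, where $A^c$ and $B^c$ are saturated relative $z$-ideals by the key step. Then $A^c\subseteq J$ or $B^c\subseteq J$, and extending gives $A\subseteq T^{-1}J$ or $B\subseteq T^{-1}J$.

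For the converse, suppose $T^{-1}J$ is $z$-strongly irreducible. Let $J_1,J_2$ be saturated relative $z$-ideals with $J_1\cap J_2\subseteq J$. Then $T^{-1}J_1\cap T^{-1}J_2=T^{-1}(J_1\cap J_2)\subseteq T^{-1}J$, where the equality holds because the bijection preserves meets. Hence $T^{-1}J_i\subseteq T^{-1}J$ for some $i$, and contracting gives $J_i\subseteq J$.
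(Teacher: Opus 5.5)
Your proposal is correct and follows the paper's proof. The bijection comes from Corollary~\ref{cor:local-contraction-extension} together with Theorem~\ref{loc15}(1), the $I$-relativity of $K^c$ comes from rerunning the hull argument of Theorem~\ref{loc15}(2) over $\mathcal{V}_{\max}(I)$, and strong irreducibility is transferred by contracting and re-extending. You additionally make explicit two points the paper only indicates: the localized-hull computation showing $K^c$ is $I$-relative, and the converse direction via $T^{-1}(J_1\cap J_2)=T^{-1}J_1\cap T^{-1}J_2$ on the saturated relative lattice, which the paper dismisses as ``symmetric''.
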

\begin{proof}
The bijection is Corollary~\ref{cor:local-contraction-extension}.
Theorem~\ref{loc15}(1) sends $T$-saturated $I$-relative $z$-ideals
to $z$-ideals after extension.
If $K$ is a $z$-ideal of $T^{-1}S$, then $K^c$ is $T$-saturated;
the proof of Theorem~\ref{loc15}(2), restricted to $\mathcal{V}_{\max}(I)$,
shows $K^c$ is $I$-relative.

For strong irreducibility, suppose $J$ is strongly irreducible in
the source lattice and $A\cap B\subseteq T^{-1}J$ in $\ZId(T^{-1}S)$.
Contracting gives $A^c\cap B^c\subseteq J$;
strong irreducibility yields $A^c\subseteq J$ or $B^c\subseteq J$,
hence $A\subseteq T^{-1}J$ or $B\subseteq T^{-1}J$.
The converse is symmetric.
\end{proof}

Primary ideals test whether localization preserves not only primality
but also radical-controlled factorization.
The following definition provides the $z$-ideal version used here.

\begin{definition}\label{def:zprimary}
A proper $z$-ideal $Q$ is \emph{$z$-primary} if $xy\in Q$ and
$x\notin Q$ imply $y\in\sqrt[z]{Q}$.
\end{definition}

\begin{proposition}\label{loc20}
Assume Hypothesis~\ref{hyp:local}.
Let $Q$ be a proper $T$-saturated ideal that is both an
$I$-relative $z$-ideal and a $z$-primary ideal of $S$,
and assume $\sqrt[z]{Q}\cap T=\varnothing$.
Then $T^{-1}Q$ is a $z$-primary ideal of $T^{-1}S$,
$(T^{-1}Q)^c=Q$, and $T^{-1}K\subseteq T^{-1}Q$ implies
$K\subseteq Q$ for every $z$-ideal $K$ of $S$.
\end{proposition}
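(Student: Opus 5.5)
The three assertions are handled in increasing order of difficulty, with $(T^{-1}Q)^c=Q$ proved first because the other two use it.

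\emph{Step 1: contraction.} By Definition~\ref{def:T-saturated}, $Q$ is $T$-saturated, so Corollary~\ref{cor:local-contraction-extension} gives $(T^{-1}Q)^c=Q$ directly. For properness, note that $Q\subseteq\sqrt[z]{Q}$, since every $z$-prime containing $Q$ contains $Q$. Hence $Q\cap T\subseteq\sqrt[z]{Q}\cap T=\varnothing$, and Proposition~\ref{prop:extension-contraction} gives $T^{-1}Q\neq T^{-1}S$.

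\emph{Step 2: the last assertion.} Let $K$ be a $z$-ideal of $S$ with $T^{-1}K\subseteq T^{-1}Q$. Since always $K\subseteq(T^{-1}K)^c$ and contraction is monotone, Step~1 gives
\[
K\subseteq (T^{-1}K)^c\subseteq (T^{-1}Q)^c=Q .
\]

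\emph{Step 3: $T^{-1}Q$ is $z$-primary.} First, $T^{-1}Q$ is a $z$-ideal of $T^{-1}S$ by Theorem~\ref{loc15}(1), because $Q$ is $I$-relative; it is proper by Step~1.

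Now suppose $(x/s)(y/t)\in T^{-1}Q$ and $x/s\notin T^{-1}Q$.
\begin{itemize}
\item The first condition means $uxy\in Q$ for some $u\in T$, and $T$-saturation gives $xy\in Q$.
\item From $x/s\notin T^{-1}Q$ we get $x\notin Q$.
\item Since $Q$ is $z$-primary, $y\in\sqrt[z]{Q}$.
\end{itemize}
It remains to show $y/t\in\sqrt[z]{T^{-1}Q}$, i.e.\ that $y/t$ lies in every $z$-prime ideal $P'$ of $T^{-1}S$ containing $T^{-1}Q$. If there is no such $P'$, the radical is all of $T^{-1}S$ and there is nothing to prove.

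Otherwise, fix such a $P'$. Its contraction $P'^c$ has the following properties.
\begin{itemize}
\item It is a $z$-ideal of $S$, by Theorem~\ref{loc15}(2).
\item It is prime: if $ab\in P'^c$ then $(a/1)(b/1)\in P'$, and $P'$ is prime by Theorem~\ref{thm:zprime-exchange}.
\item It is proper, because $1\notin P'^c$.
\end{itemize}
By Theorem~\ref{thm:zprime-exchange}, $P'^c$ is therefore $z$-prime. It contains $(T^{-1}Q)^c=Q$, so it contains $\sqrt[z]{Q}$, and hence $y\in P'^c$. Then $y/1\in P'$, and so $y/t=(y/1)(1/t)\in P'$.

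The main obstacle is Step~3, specifically transporting the $z$-radical across localization. The natural route is the one above: pull back the $z$-primes of $T^{-1}S$ and certify that each pullback is $z$-prime, using the exchange theorem to reduce to \emph{prime plus $z$-ideal}. The hypothesis $\sqrt[z]{Q}\cap T=\varnothing$ is used only for properness in Step~1. If a stronger statement were wanted, such as $\sqrt[z]{T^{-1}Q}=T^{-1}\sqrt[z]{Q}$, that hypothesis would be needed more seriously, but it is not required for the proposition as stated.
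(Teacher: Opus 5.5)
Your proof is correct and follows the paper's argument: $T$-saturation gives $(T^{-1}Q)^c=Q$ and the final assertion, and the $z$-primary property is transported by contracting each $z$-prime of $T^{-1}S$ above $T^{-1}Q$ and invoking the exchange theorem, exactly as the paper does. You also check properness and the case of an empty family of $z$-primes, which the paper leaves implicit; incidentally, properness of $T^{-1}Q$ already follows from $T$-saturation and $1\notin Q$, so even your one use of $\sqrt[z]{Q}\cap T=\varnothing$ can be dropped.
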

\begin{proof}
The ideal $T^{-1}Q$ is a $z$-ideal by Theorem~\ref{loc15}(1).
Suppose $(x/s)(y/t)\in T^{-1}Q$ and $x/s\notin T^{-1}Q$.
Then $uxy\in Q$ for some $u\in T$.
$T$-saturation gives $xy\in Q$, and $x\notin Q$ gives
$y\in\sqrt[z]{Q}$.
The containment $T^{-1}\sqrt[z]{Q}\subseteq\sqrt[z]{T^{-1}Q}$
follows by contracting each $z$-prime ideal of $T^{-1}S$
above $T^{-1}Q$; hence $y/t\in\sqrt[z]{T^{-1}Q}$.
The equality $(T^{-1}Q)^c=Q$ is $T$-saturation.
The final assertion follows since $T^{-1}K\subseteq T^{-1}Q$
and $x\in K$ give $x/1\in(T^{-1}Q)^c=Q$.
\end{proof}

Strong irreducibility of the primary ideal can be transported along
localization.
Since localization sees only the saturated relative lattice,
the reflected statement is phrased accordingly.

\begin{corollary}\label{cor:local-primary-strong}
Under the hypotheses of Proposition~\ref{loc20},
$T^{-1}Q$ is $z$-strongly irreducible if and only if $Q$ is strongly
irreducible in the lattice of $T$-saturated $I$-relative $z$-ideals
of $S$.
In particular, if every $z$-ideal containing $Q$ is $T$-saturated
and $I$-relative, then strong irreducibility of $Q$ and of $T^{-1}Q$
are equivalent.
\end{corollary}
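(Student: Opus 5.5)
The plan is to use the bijection of Proposition~\ref{loc19} between $T$-saturated $I$-relative $z$-ideals of $S$ and $z$-ideals of $T^{-1}S$, which is given by extension and contraction. Call this source lattice $\mathcal L$; we are under Hypothesis~\ref{hyp:local}. First, $Q\in\mathcal L$ by assumption. Proposition~\ref{loc20} gives $(T^{-1}Q)^c=Q$, so $Q$ corresponds to $T^{-1}Q$. The key point is that the bijection is an order isomorphism, since both extension and contraction are monotone. It also carries intersections in $\mathcal L$ to intersections in $\ZId(T^{-1}S)$. I would check this directly: contraction preserves intersections, $(A\cap B)^c=A^c\cap B^c$, and the inverse map is contraction. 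It follows that $A^c\cap B^c$ is again $T$-saturated and $I$-relative, so it lies in $\mathcal L$. The saturated and relative properties are both stable under intersection, so this is consistent.

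For the forward direction, assume $T^{-1}Q$ is $z$-strongly irreducible and take $J,K\in\mathcal L$ with $J\cap K\subseteq Q$. By Theorem~\ref{loc15}(1), $T^{-1}J$ and $T^{-1}K$ are $z$-ideals of $T^{-1}S$. Extending gives $T^{-1}J\cap T^{-1}K\subseteq T^{-1}Q$. To see this, take $x/s$ in the left side. Then $T^{-1}J=(T^{-1}J)^{c}$ extended, and contracting shows $x/1$ lies in $(T^{-1}J)^c\cap(T^{-1}K)^c=J\cap K\subseteq Q$, using $T$-saturation via Corollary~\ref{cor:local-contraction-extension}. Strong irreducibility then gives, say, $T^{-1}J\subseteq T^{-1}Q$. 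Contracting yields $J\subseteq Q$; here the last clause of Proposition~\ref{loc20} applies, or one uses $J=(T^{-1}J)^c$ directly.

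For the converse, take $A,B\in\ZId(T^{-1}S)$ with $A\cap B\subseteq T^{-1}Q$. Contract to get $A^c\cap B^c\subseteq Q$. Here $A^c,B^c\in\mathcal L$ by the contraction half of Proposition~\ref{loc19}. Strong irreducibility of $Q$ in $\mathcal L$ gives $A^c\subseteq Q$, and then $A=T^{-1}(A^c)\subseteq T^{-1}Q$ by Corollary~\ref{cor:local-contraction-extension}.

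For the ``in particular'' statement, assume every $z$-ideal containing $Q$ lies in $\mathcal L$. Strong irreducibility of $Q$ in $\ZId(S)$ tested against $\mathcal L$ agrees with testing against all $z$-ideals. Given arbitrary $z$-ideals $A,B$ with $A\cap B\subseteq Q$, replace them by $A+Q$ and $B+Q$, or better by their $z$-closures $\clz(A+Q)$ and $\clz(B+Q)$, which contain $Q$ and hence lie in $\mathcal L$.

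This reduction is the main obstacle, since the intersection of the enlarged ideals need not stay inside $Q$. I would try the following. $(A+Q)\cap(B+Q)\subseteq Q$ would follow from the distributive law in the frame $\ZId(S)$, which is Theorem~B proved later; alternatively I would use a direct elementwise argument with $ab\in A\cap B$ and $z$-primeness. If that is unavailable, I would read ``strongly irreducible'' in the final clause as relative to $z$-ideals containing $Q$, where the equivalence is then immediate from the main statement.
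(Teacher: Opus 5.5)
Your proof is correct and follows the paper's route. The main equivalence is exactly the strong-irreducibility transfer through the extension--contraction order isomorphism of Proposition~\ref{loc19}; the paper simply cites it, and you re-derive it.

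For the final clause, the paper only asserts that testing against all $z$-ideals reduces to testing against those containing $Q$. Your frame-distributivity step $(A\vee_z Q)\cap(B\vee_z Q)=(A\cap B)\vee_z Q=Q$, via Theorem~\ref{loc22} (which does not depend on localization), is precisely the justification that assertion needs. So commit to it, phrased with the $z$-joins $\clz(A+Q)$ and $\clz(B+Q)$, and drop the fallback reinterpretation.
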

\begin{proof}
This is the strong-irreducibility content of Proposition~\ref{loc19}
applied to $Q$.
Under the additional assumption, testing against all $z$-ideals
containing $Q$ coincides with testing inside the saturated
relative lattice.
\end{proof}

\section{Spectral maps attached to quotients and
localizations}\label{sec:spectral-maps}

The quotient and localization results of the preceding two sections
can be rephrased topologically: the algebraic correspondences
become homeomorphisms between spectral spaces.
This section identifies those homeomorphisms and records their
key properties.
The distinction from the classical ring-theoretic statements is that
the words ``saturated'', ``relative'', and ``disjoint from the
multiplicative set'' encode genuine structural conditions that
are automatic in rings but genuinely additional for semirings.

\subsection{The quotient spectrum}

Keep the notation of Section~\ref{sec:quotient}.
The relative spectrum of the quotient consists of the prime $z$-ideals
of $S$ that are visible from the quotient by $\wp_I$.

\begin{definition}\label{def:relative-z-spectrum}
Assume $I$ is $g$-closed and satisfies
Hypothesis~\ref{hyp:quotient-lifting}.
Let $\Spec_z^I(S)$ be the set of ideals $P$ of $S$ such that:
\begin{enumerate}[label=\textup{(Q\arabic*)}]
\item $P$ is $I$-saturated;
\item $P$ is an $I$-relative $z$-ideal;
\item $P$ is an ordinary prime ideal.
\end{enumerate}
We topologize $\Spec_z^I(S)$ by the closed subbasis
$V_I(A)=\{P\in\Spec_z^I(S):A\subseteq P\}$ for $A\subseteq S$.
\end{definition}

The following lemma gives the prime-ideal correspondence at the
level of points; it is the algebraic core of the quotient homeomorphism.

\begin{lemma}\label{lem:quotient-prime-correspondence}
Assume $I$ is $g$-closed and satisfies
Hypothesis~\ref{hyp:quotient-lifting}.
Let $P$ be an $I$-saturated ideal of $S$.
Then $P$ is prime if and only if $\pi_I(P)$ is prime in $S_I$.
\end{lemma}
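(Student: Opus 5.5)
The plan is to prove both directions by the standard correspondence argument for quotients by a congruence. The key input is $I$-saturation of $P$: by Definition~\ref{def:I-saturated}, $P=\pi_I^{-1}(\pi_I(P))$. Equivalently, $[a]_I\in\pi_I(P)$ holds if and only if $a\in P$. I will check this membership criterion first, since it is immediate from saturation.

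If $a\in P$, then $[a]_I\in\pi_I(P)$ trivially. Conversely, suppose $[a]_I=[b]_I$ with $b\in P$. Then $(b,a)\in\wp_I$, and saturation gives $a\in P$.

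Properness transfers along the same criterion. We have $\pi_I(P)=S_I$ if and only if $[1]_I\in\pi_I(P)$, which holds if and only if $1\in P$. So $P$ is proper exactly when $\pi_I(P)$ is.

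For primality, note that every element of $S_I$ has the form $[x]_I$, and $\pi_I$ is a surjective homomorphism, so $[x]_I[y]_I=[xy]_I$.

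Assume first that $P$ is prime. If $[x]_I[y]_I\in\pi_I(P)$, then $[xy]_I\in\pi_I(P)$, so $xy\in P$ by the criterion. Hence $x\in P$ or $y\in P$, and therefore $[x]_I\in\pi_I(P)$ or $[y]_I\in\pi_I(P)$.

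Conversely, assume $\pi_I(P)$ is prime and $xy\in P$. Then $[x]_I[y]_I=[xy]_I\in\pi_I(P)$, so one factor lies in $\pi_I(P)$. The criterion pulls this back to $x\in P$ or $y\in P$.

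The only genuine obstacle is that $\pi_I(P)$ is an ideal of $S_I$ (Proposition~\ref{prop:quotient-basic}(4)) and that membership in it is detected on representatives. Without saturation, the implication $[xy]_I\in\pi_I(P)\Rightarrow xy\in P$ would fail, because $\wp_I$ can identify elements of $P$ with elements outside it. This is exactly what the $I$-saturation hypothesis rules out.

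Neither $g$-closedness of $I$ nor Hypothesis~\ref{hyp:quotient-lifting} is needed for this pointwise statement. They are carried along only to match the setting of Definition~\ref{def:relative-z-spectrum}. Together with Theorem~\ref{loc11}, the lemma then identifies $\Spec_z^I(S)$ with the prime $z$-ideals of $S_I$.
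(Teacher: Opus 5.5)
Your proposal is correct and follows essentially the same argument as the paper: saturation gives $[a]_I\in\pi_I(P)\Leftrightarrow a\in P$, and primality transfers in both directions via $[a]_I[b]_I=[ab]_I$. You also check explicitly that properness transfers (via $[1]_I\in\pi_I(P)\Leftrightarrow 1\in P$), a point the paper leaves implicit, and you are right that only $I$-saturation is actually used.
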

\begin{proof}
Suppose $P$ is prime and $[a]_I[b]_I=[p]_I\in\pi_I(P)$ with $p\in P$.
Then $(ab,p)\in\wp_I$; saturation gives $ab\in P$,
hence $a\in P$ or $b\in P$.

Conversely, if $\pi_I(P)$ is prime and $ab\in P$,
then $[a]_I[b]_I=[ab]_I\in\pi_I(P)$,
so $[a]_I\in\pi_I(P)$ or $[b]_I\in\pi_I(P)$,
and saturation gives $a\in P$ or $b\in P$.
\end{proof}

\begin{theorem}\label{thm:quotient-z-spectrum-homeomorphism}
Assume $I$ is $g$-closed and satisfies
Hypothesis~\ref{hyp:quotient-lifting}.
The maps $P\mapsto\pi_I(P)$ and $Q\mapsto\pi_I^{-1}(Q)$ are
mutually inverse homeomorphisms $\Spec_z^I(S)\cong\Spec_z(S_I)$.
\end{theorem}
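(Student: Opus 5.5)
The plan is to assemble the homeomorphism from three correspondences already established: the ideal correspondence (Proposition~\ref{prop:quotient-ideal-correspondence}), the transfer of the $z$-property (Theorem~\ref{loc11}), and the transfer of primality (Lemma~\ref{lem:quotient-prime-correspondence}). Throughout, $\Spec_z(S_I)$ is read via Theorem~\ref{thm:zprime-exchange}, that is, as the set of ideals of $S_I$ that are both prime and $z$-ideals.

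\textbf{Step 1: the maps are well defined and inverse on points.} Take $P\in\Spec_z^I(S)$. By (Q1), $P$ is $I$-saturated. Theorem~\ref{loc11} together with (Q2) then shows that $\pi_I(P)$ is a $z$-ideal of $S_I$, and Lemma~\ref{lem:quotient-prime-correspondence} together with (Q3) shows that $\pi_I(P)$ is prime. For properness, note that $1\in\pi_I(P)$ would force $1\in P$ by saturation.

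Conversely, let $Q$ be a prime $z$-ideal of $S_I$. Then $\pi_I^{-1}(Q)$ is an ideal containing $I^g$ and is $\wp_I$-saturated, so it is $I$-saturated. Since $\pi_I(\pi_I^{-1}(Q))=Q$ by surjectivity, Theorem~\ref{loc11} gives (Q2) and Lemma~\ref{lem:quotient-prime-correspondence} gives (Q3). Proposition~\ref{prop:quotient-ideal-correspondence} shows the two assignments are mutually inverse, so they give a bijection $\Spec_z^I(S)\to\Spec_z(S_I)$.

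\textbf{Step 2: the topologies match.} I take the topology on $\Spec_z(S_I)$ to be the hull-kernel topology, with closed subbasis $V(B)=\{Q:B\subseteq Q\}$ for $B\subseteq S_I$. For $A\subseteq S$, saturation gives
\[
A\subseteq P\iff\pi_I(A)\subseteq\pi_I(P).
\]
The forward implication is trivial. For the backward one, if $[a]_I=[p]_I$ with $p\in P$, then $(p,a)\in\wp_I$ and saturation forces $a\in P$. Hence the bijection sends $V_I(A)$ onto $V(\pi_I(A))$.

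In the other direction, every $B\subseteq S_I$ equals $\pi_I(A)$ for $A=\pi_I^{-1}(B)$, by surjectivity of $\pi_I$. So the preimage of $V(B)$ is $V_I(A)$. Since closed subbasic sets correspond on both sides, both maps are continuous, and the bijection is a homeomorphism.

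\textbf{Main obstacle.} The only delicate point is in Step 1, and it has two parts.
\begin{itemize}
\item The $z$-prime ideals of $S_I$ must be genuinely proper, prime $z$-ideals. This is exactly what Theorem~\ref{thm:zprime-exchange}, applied to the semiring $S_I$, provides.
\item Theorem~\ref{loc11} must apply to $\pi_I^{-1}(Q)$, which requires $I$-saturation. That holds because preimages of ideals under $\pi_I$ contain the zero-class $I^g$ and are unions of $\wp_I$-classes.
\end{itemize}
Everything else is formal bookkeeping with saturation.
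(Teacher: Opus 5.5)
Your proposal is correct and follows essentially the same route as the paper: the ideal correspondence of Proposition~\ref{prop:quotient-ideal-correspondence}, Theorem~\ref{loc11} for the $z$-property, Lemma~\ref{lem:quotient-prime-correspondence} together with Theorem~\ref{thm:zprime-exchange} for primality, and the matching of subbasic closed sets $V_I(A)\leftrightarrow V(\pi_I(A))$ and $V(B)\leftrightarrow V_I(\pi_I^{-1}(B))$. You also spell out saturation checks that the paper leaves implicit, namely properness of $\pi_I(P)$, $I$-saturation of $\pi_I^{-1}(Q)$, and the equivalence $A\subseteq P\iff\pi_I(A)\subseteq\pi_I(P)$.
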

\begin{proof}
The ideal correspondence of Proposition~\ref{prop:quotient-ideal-correspondence}
gives bijections between $I$-saturated ideals and ideals of $S_I$.
Theorem~\ref{loc11} identifies the $I$-relative $z$-ideals among
the former with the $z$-ideals among the latter.
Lemma~\ref{lem:quotient-prime-correspondence} and
Theorem~\ref{thm:zprime-exchange} identify the prime $z$-ideals.
For the topology, if $B\subseteq S_I$ and $A=\pi_I^{-1}(B)$,
then $\Phi^{-1}(V_z(B))=V_I(A)$;
and the image of $V_I(A)$ is $V_z(\pi_I(A))$.
\end{proof}

\begin{corollary}\label{cor:quotient-z-spectrum-spectral}
Under the hypotheses of Theorem~\ref{thm:quotient-z-spectrum-homeomorphism},
the space $\Spec_z^I(S)$ is spectral.
\end{corollary}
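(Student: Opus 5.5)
The plan is to transport spectrality across the homeomorphism of Theorem~\ref{thm:quotient-z-spectrum-homeomorphism}. Spectrality is a topological property: quasi-compactness, sobriety, and the existence of a basis of quasi-compact opens closed under finite intersections are all preserved by homeomorphisms. So it suffices to show that $\Spec_z(S_I)$ is spectral.

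For this I would apply Theorem~B, i.e.\ the statement that $\ZId(T)$ is a coherent frame for every commutative semiring $T$. That theorem holds with no hypotheses beyond the semiring axioms, so it applies to $T=S_I$. The first step is to check that $S_I$ is a commutative semiring with $0$ and $1$ in the sense of the paper. This holds because $\wp_I$ is a congruence and the quotient of a commutative semiring by a congruence inherits both operations and the constants $[0]_I,[1]_I$. If $\MC(I)=\varnothing$, then $\wp_I=S\times S$ and $S_I$ is trivial; in that case $\Spec_z(S_I)=\varnothing$, which is spectral, so this degenerate case is harmless.

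Theorem~B then shows that $\ZId(S_I)$ is a coherent frame. By the coherent-frame spectrum theorem (\cite[Ch.~II, Cor.~3.4]{Johnstone}, \cite{Hochster}), the point space $\pt(\ZId(S_I))$ is spectral. The remaining identification is that $\pt(\ZId(S_I))$, i.e.\ the space of prime elements of the frame with the hull-kernel topology, coincides with $\Spec_z(S_I)$ with its closed sets $V_z(B)$.

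This identification is the step I expect to need the most care. There are two parts.
\begin{enumerate}
\item \emph{Points.} The prime elements of $\ZId(S_I)$ are the proper $z$-ideals $P$ with $A\cap B\subseteq P\Rightarrow A\subseteq P$ or $B\subseteq P$. I would first observe that $A\cap B$ and the $z$-closure of $AB$ have the same $z$-radical, by the radical product formula used in Proposition~\ref{prop:zstrong-prime-radical}. Then Corollary~\ref{cor:ordinary-prime} and Theorem~\ref{thm:zprime-exchange} identify these prime elements with the $z$-prime ideals, equivalently the prime $z$-ideals.
\item \emph{Topology.} The hull-kernel closed sets $\{P : A\subseteq P\}$, for $A\in\ZId(S_I)$, match the sets $V_z(B)$ after replacing $B$ by $\clz(B)$. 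This works because a $z$-ideal contains $B$ iff it contains $\clz(B)$.
\end{enumerate}

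With this identification, $\Spec_z(S_I)$ is spectral, and the homeomorphism of Theorem~\ref{thm:quotient-z-spectrum-homeomorphism} carries spectrality to $\Spec_z^I(S)$.
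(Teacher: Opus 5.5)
Your proposal is correct and is essentially the paper's proof: transport spectrality along the homeomorphism of Theorem~\ref{thm:quotient-z-spectrum-homeomorphism} and use that $\Spec_z(S_I)$ is spectral, which the paper gets in one line by applying Corollary~\ref{cor:zspec-spectral} to the semiring $S_I$ rather than re-deriving it from Theorem~B. If you keep the re-derivation, note that your radical step is circular: equal $z$-radicals for $A\cap B$ and $\clz(AB)$ only give $A\cap B\subseteq\sqrt[z]{P}$, which presupposes $P=\sqrt[z]{P}$ for a meet-prime $P$. The clean step is that $ab\in P$ implies $\ideal{a}_z\cap\ideal{b}_z=\ideal{ab}_z\subseteq P$ by Lemma~\ref{loc23}, so meet-primality makes $P$ an ordinary prime and Theorem~\ref{thm:zprime-exchange} finishes.
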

\begin{proof}
It is homeomorphic to $\Spec_z(S_I)$, which is spectral by
Corollary~\ref{cor:zspec-spectral}.
\end{proof}

\begin{remark}\label{rem:quotient-topological-hypotheses}
The theorem is stated for $I$-saturated ideals precisely because
without saturation the implication $[a]_I\in\pi_I(P)\Rightarrow a\in P$
fails, and the proof of Lemma~\ref{lem:quotient-prime-correspondence}
breaks down.
This is the exact point at which the absence of subtraction in
a semiring forces an additional hypothesis.
\end{remark}

\subsection{The localization spectrum}

Keep the notation of Section~\ref{sec:local}, with
Hypothesis~\ref{hyp:local} in force for the chosen ideal $I$.
The visible prime $z$-ideals after localization are those that
avoid $T$ and satisfy the relative $z$-condition.

\begin{definition}\label{def:localized-relative-spectrum}
Let $\Spec_z^{T,I}(S)$ be the set of ideals $P$ of $S$ such that:
\begin{enumerate}[label=\textup{(L\arabic*)}]
\item $P$ is an $I$-relative $z$-ideal;
\item $P$ is an ordinary prime ideal;
\item $P\cap T=\varnothing$.
\end{enumerate}
It is topologized by
$V_{T,I}(A)=\{P\in\Spec_z^{T,I}(S):A\subseteq P\}$ for $A\subseteq S$.
\end{definition}

The following lemma is the point-level localization correspondence
for prime $z$-ideals.

\begin{lemma}\label{lem:local-prime-correspondence}
Assume Hypothesis~\ref{hyp:local}.
If $P\in\Spec_z^{T,I}(S)$, then $T^{-1}P$ is a prime $z$-ideal
of $T^{-1}S$.
If $Q$ is a prime $z$-ideal of $T^{-1}S$, then $Q^c\in\Spec_z^{T,I}(S)$.
\end{lemma}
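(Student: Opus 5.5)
We treat the two assertions separately, using Theorem~\ref{loc15} for the $z$-property and an ordinary localization argument for primality. By Theorem~\ref{thm:zprime-exchange}, being a prime $z$-ideal is the same as being $z$-prime, so it suffices to check ``prime'' and ``$z$-ideal'' on each side.

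For the first assertion, let $P\in\Spec_z^{T,I}(S)$. By Theorem~\ref{loc15}(1), condition (L1) already makes $T^{-1}P$ a $z$-ideal of $T^{-1}S$. Properness comes from Proposition~\ref{prop:extension-contraction}: since $P\cap T=\varnothing$ by (L3), we get $T^{-1}P\neq T^{-1}S$.

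For primality, take $(x/s)(y/t)\in T^{-1}P$. We must first pin down the membership test in $T^{-1}P$. As a set, $T^{-1}P$ consists of fractions $a/u$ with $a\in P$. An equality $xy/st=a/u$ in the semiring localization means $vuxy=vsta$ for some $v\in T$. The right-hand side lies in $P$, so $vuxy\in P$. Since $P$ is prime and $vu\in T$ avoids $P$, this forces $xy\in P$, hence $x\in P$ or $y\in P$. The same computation shows that $P$ is $T$-saturated, which we will need below.

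For the second assertion, let $Q$ be a prime $z$-ideal of $T^{-1}S$. Contraction of a prime ideal along a homomorphism is prime, which gives (L2). If some $t\in T$ lay in $Q^c$, then $t/1$ would be a unit in $Q$, contradicting properness; this gives (L3).

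For (L1) we take $x\in Q^c$ with $\M(x)\cap\mathcal V_{\max}(I)=\M(y)\cap\mathcal V_{\max}(I)$ and aim for $y\in Q^c$. By Hypothesis~\ref{hyp:local}, every maximal ideal of $T^{-1}S$ has the form $T^{-1}M$ with $M\in\mathcal V_{\max}(I)$. Moreover $x/1\in T^{-1}M$ holds iff $x\in M$. Hence the relative hull equality transfers exactly to $\M_{T^{-1}S}(x/1)=\M_{T^{-1}S}(y/1)$. Since $Q$ is a $z$-ideal, $y/1\in Q$, that is, $y\in Q^c$. This is the restricted form of the proof of Theorem~\ref{loc15}(2) already invoked in Proposition~\ref{loc19}.

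The main obstacle is the membership step in the primality argument. In a semiring localization, equality of fractions is witnessed only by a multiplier from $T$, and cancellation is unavailable. I would handle it exactly as above: push the witness $v$ into $P$ via the equation $vuxy=vsta$, and remove it using primality together with $P\cap T=\varnothing$. This avoids any appeal to subtraction.
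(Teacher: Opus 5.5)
Your proof is correct and follows the paper's argument. On the extension side you get properness from $P\cap T=\varnothing$, the $z$-property from Theorem~\ref{loc15}(1), and primality by pushing the localization witness into $P$ and removing it by primeness; on the contraction side you do the standard primality and disjointness checks. The only difference is cosmetic: you verify the $I$-relative property of $Q^c$ directly from Hypothesis~\ref{hyp:local} instead of citing Proposition~\ref{loc19}, which is the same argument (and your aside on $T$-saturation of $P$ is never used).
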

\begin{proof}
Let $P\in\Spec_z^{T,I}(S)$.
Since $P\cap T=\varnothing$, the extension $T^{-1}P$ is proper.
It is a $z$-ideal by Theorem~\ref{loc15}(1).
If $(a/s)(b/t)\in T^{-1}P$, then $uab\in P$ for some $u\in T$;
primeness of $P$ and $u\notin P$ give $a\in P$ or $b\in P$.

Conversely, $Q^c$ is a $z$-ideal by Theorem~\ref{loc15}(2)
and $I$-relative by Proposition~\ref{loc19}.
It is prime (since $ab\in Q^c$ gives $(a/1)(b/1)\in Q$)
and disjoint from $T$ (since units lie in no proper prime).
\end{proof}

\begin{theorem}\label{thm:local-z-spectrum-homeomorphism}
Assume Hypothesis~\ref{hyp:local}.
Extension and contraction give mutually inverse homeomorphisms
$\Spec_z^{T,I}(S)\cong\Spec_z(T^{-1}S)$.
\end{theorem}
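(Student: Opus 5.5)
The proof mirrors Theorem~\ref{thm:quotient-z-spectrum-homeomorphism}, with Lemma~\ref{lem:local-prime-correspondence} supplying the point-level maps. Write $\Phi(P)=T^{-1}P$ for $P\in\Spec_z^{T,I}(S)$ and $\Psi(Q)=Q^c$ for $Q\in\Spec_z(T^{-1}S)$. By Lemma~\ref{lem:local-prime-correspondence} together with Theorem~\ref{thm:zprime-exchange}, $\Phi$ lands in the prime $z$-ideals of $T^{-1}S$, i.e.\ in $\Spec_z(T^{-1}S)$, and $\Psi$ lands in $\Spec_z^{T,I}(S)$.

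The plan is then to show that $\Phi$ and $\Psi$ are mutually inverse, and afterwards that both are continuous.

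\textbf{Inverse bijection.} $\Psi\Phi=\mathrm{id}$ needs $(T^{-1}P)^c=P$. By Corollary~\ref{cor:local-contraction-extension} it suffices that $P$ is $T$-saturated. This follows from primality and $P\cap T=\varnothing$: if $tx\in P$ with $t\in T$, then $t\notin P$ forces $x\in P$. Conversely, $\Phi\Psi=\mathrm{id}$ is the second assertion of Corollary~\ref{cor:local-contraction-extension}, $T^{-1}(Q^c)=Q$, valid for every ideal $Q$ of $T^{-1}S$.

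\textbf{Homeomorphism.} Both topologies are generated by closed sets of the form ``contains $A$''. For $B\subseteq T^{-1}S$, set $A=f^{-1}(B')$, where $B'$ is the set of numerators obtained by multiplying elements of $B$ by units, i.e.\ $B'=\{a/1 : a/s\in B\}$. For $P=\Psi(Q)$, the condition $B\subseteq Q$ is equivalent to $A\subseteq Q^c$, since $a/s\in Q$ iff $a/1\in Q$. Hence
\[
\Phi^{-1}(V_z(B))=V_{T,I}(A).
\]
In the other direction, for $A\subseteq S$ and $P=Q^c$, we have $A\subseteq Q^c$ iff $f(A)\subseteq Q$. Hence
\[
\Phi(V_{T,I}(A))=V_z(f(A)).
\]
So both maps pull closed subbasic sets back to closed subbasic sets, and both are continuous.

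\textbf{Main obstacle.} The substantive point is the inverse property: that every point of $\Spec_z^{T,I}(S)$ is $T$-saturated, so that extension followed by contraction recovers it. As shown above, this comes from primality, and the remainder of the argument is bookkeeping. One place needs care. Lemma~\ref{lem:local-prime-correspondence} obtains the $I$-relativity of $Q^c$ from Proposition~\ref{loc19}. I would verify this directly: the argument of Theorem~\ref{loc15}(2), restricted to $\mathcal V_{\max}(I)$ and combined with the bijection $\mathcal V_{\max}(I)\leftrightarrow\Max(T^{-1}S)$ of Hypothesis~\ref{hyp:local}, shows that equality of relative hulls of $x$ and $y$ gives equality of maximal hulls of $x/1$ and $y/1$. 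Then $x\in Q^c$ implies $y\in Q^c$.
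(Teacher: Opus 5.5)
Your proposal is correct and follows essentially the paper's own proof. You get the inverse bijections from $T$-saturation of primes disjoint from $T$ together with Corollary~\ref{cor:local-contraction-extension}, the point-level correspondence from Lemma~\ref{lem:local-prime-correspondence}, and continuity from the closed-set identities $\Phi^{-1}(V_z(B))=V_{T,I}(A)$ and $\Phi(V_{T,I}(A))=V_z(f(A))$. The differences are cosmetic: you use $f(A)$ and the numerator set $B'$ where the paper uses the generated ideals $f^{-1}(\langle B\rangle)$ and $T^{-1}\langle A\rangle$, and you verify the $I$-relativity of $Q^c$ directly instead of citing Proposition~\ref{loc19}.
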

\begin{proof}
For a prime $P$ disjoint from $T$, primeness gives $T$-saturation,
so Corollary~\ref{cor:local-contraction-extension} gives
$(T^{-1}P)^c=P$.
Lemma~\ref{lem:local-prime-correspondence} restricts the
extension-contraction bijections to the stated sets of $z$-prime ideals.
For closed sets, $B\subseteq T^{-1}P$ is equivalent to
$f^{-1}(\langle B\rangle)\subseteq P$, giving
$\{P:B\subseteq T^{-1}P\}=V_{T,I}(f^{-1}(\langle B\rangle))$;
and the image of $V_{T,I}(A)$ is $V_z(T^{-1}\langle A\rangle)$.
\end{proof}

\begin{corollary}\label{cor:local-relative-spectrum-spectral}
Under Hypothesis~\ref{hyp:local}, $\Spec_z^{T,I}(S)$ is spectral.
\end{corollary}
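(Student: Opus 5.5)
The plan mirrors Corollary~\ref{cor:quotient-z-spectrum-spectral}: transport spectrality across the homeomorphism of Theorem~\ref{thm:local-z-spectrum-homeomorphism}. Under Hypothesis~\ref{hyp:local}, extension and contraction give mutually inverse homeomorphisms $\Spec_z^{T,I}(S)\cong\Spec_z(T^{-1}S)$. Spectrality is defined by quasi-compactness, sobriety, and a basis of quasi-compact opens closed under finite intersections, and each of these is a topological invariant. So it suffices to show that $\Spec_z(T^{-1}S)$ is spectral.

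For that step I will apply Theorem~B, stated in the introduction and recorded later as Corollary~\ref{cor:zspec-spectral}, to the semiring $T^{-1}S$. I need to check that $T^{-1}S$ is a commutative semiring with $0$ and $1$ in the sense fixed in the preliminaries. It is: addition and multiplication of fractions are defined by the usual formulas, the localization equivalence absorbs any failure of cancellation, and $0/1$ and $1/1$ are the identities. Since Theorem~B needs no hypothesis beyond the semiring axioms, $\ZId(T^{-1}S)$ is a coherent frame. Coherent-frame Stone duality then makes its space of points, with the hull-kernel topology, spectral.

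The one point needing care is that the topologies match. The space $\Spec_z(T^{-1}S)$ appearing in Theorem~\ref{thm:local-z-spectrum-homeomorphism} carries the closed sets $V_z(B)$, and this must be the same topology as the hull-kernel topology on $\pt(\ZId(T^{-1}S))$. To see this, I identify points with prime elements of the frame. By Theorem~\ref{thm:zprime-exchange} and Corollary~\ref{cor:ordinary-prime}, these prime elements are exactly the prime $z$-ideals. For any $B$, the hull of the frame element $\clz(\langle B\rangle)$ is $V_z(B)$, because a $z$-ideal contains $B$ if and only if it contains $\clz(\langle B\rangle)$. Hence the two families of closed sets coincide.

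With the topologies identified, the corollary follows by pulling spectrality back along the homeomorphism. I expect no real obstacle. The only substantive content is the dependence on Theorem~B, which is forward-referenced and assumed here.
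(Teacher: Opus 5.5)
Your proposal is correct and takes the same route as the paper, which also transports spectrality along the homeomorphism of Theorem~\ref{thm:local-z-spectrum-homeomorphism} and applies Corollary~\ref{cor:zspec-spectral} to $T^{-1}S$. Your extra check, that the closed sets $V_z(B)$ coincide with the hull-kernel closed sets coming from $\ZId(T^{-1}S)$, is sound; the paper leaves this point implicit.
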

\begin{proof}
By Theorem~\ref{thm:local-z-spectrum-homeomorphism} it is homeomorphic
to $\Spec_z(T^{-1}S)$, which is spectral by Corollary~\ref{cor:zspec-spectral}.
\end{proof}

\begin{remark}\label{rem:local-topological-hypotheses}
The disjointness condition $P\cap T=\varnothing$ is not optional:
if $P$ meets $T$, then $T^{-1}P=T^{-1}S$ and no prime is obtained.
Similarly, the $I$-relative condition is what makes
Theorem~\ref{loc15}(1) applicable; an arbitrary $z$-ideal of $S$
need not have $z$-closed extension after localization if maximal
ideals above relevant elements disappear.
\end{remark}

\section{The frame of \texorpdfstring{$z$}{z}-ideals}
\label{sec:frame}

For $I,J\in\ZId(S)$ define
$I\vee_z J=\clz(I+J)$ and $I\wedge J=I\cap J$,
extending joins to arbitrary families by applying $\clz$ to the sum.
The bottom element is $\clz(0)$ and the top element is $S$.

\subsection{The coherent frame of
\texorpdfstring{$z$}{z}-ideals}

The following theorem collects the finite-type properties of the
ordinary $z$-closure from~\cite{AG23,AG25}.
These are precisely the inputs needed in the coherent-frame argument,
and they are gathered here to make the logical dependencies explicit.

\begin{theorem}[Finite-type $z$-frame calculus,
\cite{AG23,AG25}]\label{lem:z-finite-type}
For every commutative semiring $S$:
\begin{enumerate}
\item $\clz$ is a finite-type closure operation on ideals;
\item with meet $I\cap J$ and joins
  $\bigvee^z I_\lambda=\clz(\sum_\lambda I_\lambda)$,
  the ordered set $\ZId(S)$ is a frame;
\item every $z$-ideal is the join of the principal $z$-closures
  $\ideal{a}_z$ with $a\in I$, and the compact elements are
  precisely the finite joins of such closures;
\item for all $a,b\in S$,
  $\ideal{a}_z\cap\ideal{b}_z=\ideal{ab}_z$,
  and compact elements are closed under finite meets.
\end{enumerate}
\end{theorem}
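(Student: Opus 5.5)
I would prove the four parts in the order (4), (1), (3), (2). The starting point is the canonical description $\ideal{a}_z=\mfrak(a)$ from Lemma~\ref{app:maximal-hull-calculus}. The basic membership rule is that a $z$-ideal containing $a$ contains every $b$ with $\M(a)\subseteq\M(b)$: since $\M(ab)=\M(a)\cup\M(b)=\M(a)$ by Lemma~\ref{loc6}(1) and $ab\in I$, the $z$-property gives $b\in I$.

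For (4), Proposition~\ref{prop:mfrak-basic}(1) gives $\mfrak(ab)=\mfrak(a)\cap\mfrak(b)$, and this is exactly $\ideal{ab}_z=\ideal{a}_z\cap\ideal{b}_z$. For (1), I would first describe $\clz(I)$ concretely. Let $\widehat I$ be the set of $b$ with $\M(a)\subseteq\M(b)$ for some $a\in I$. Two facts are needed.
\begin{enumerate}
\item $\widehat I$ is an ideal. It is closed under multiples because $\M(a)\subseteq\M(sb)$. It is closed under sums because, for $a,a'\in I$, Lemma~\ref{loc6} gives $\M(a+a')\subseteq\M(a)\cap\M(a')\subseteq\M(b)\cap\M(b')\subseteq\M(b+b')$.
\item $\widehat I$ is a $z$-ideal, and it lies in every $z$-ideal containing $I$ by the membership rule.
\end{enumerate}
Hence $\clz(I)=\widehat I=\bigcup_{a\in I}\mfrak(a)$. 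Since each witness $a$ is a single element of $I$, hence of a finitely generated subideal, $\clz$ is of finite type: $\clz(I)=\bigcup\{\clz(F)\mid F\subseteq I\text{ finitely generated}\}$.

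For (3), the description $\clz(I)=\bigcup_{a\in I}\ideal{a}_z$ gives $I=\bigvee^z_{a\in I}\ideal{a}_z$ for every $z$-ideal $I$. Next, $\ideal{a}_z$ is compact. If $a\in\clz(\sum I_\lambda)$, then some $c\in\sum_\lambda I_\lambda$ satisfies $\M(c)\subseteq\M(a)$. This $c$ lies in a finite subsum, so $a$, and with it $\ideal{a}_z$, lies in a finite subjoin. Finite joins of compacts are compact. Conversely, a compact $z$-ideal $I$ is covered by the $\ideal{a}_z$ with $a\in I$, so compactness makes it a finite subjoin. Compactness of the top element $S=\ideal{1}_z$ is the special case $a=1$.

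Finite meets of compacts are handled by distributing $\bigl(\bigvee_i\ideal{a_i}_z\bigr)\cap\bigl(\bigvee_j\ideal{b_j}_z\bigr)$ into $\bigvee_{i,j}\ideal{a_ib_j}_z$ using (4). That step needs the frame law, so (2) must be proved first.

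For (2), $\ZId(S)$ is a complete lattice because intersections of $z$-ideals are $z$-ideals. The main obstacle is the infinite distributive law $I\cap\bigvee^z J_\lambda\subseteq\bigvee^z(I\cap J_\lambda)$, since joins involve a closure and are not plain unions. Take $x\in I\cap\clz(\sum J_\lambda)$, and choose $c=c_1+\dots+c_n$ with $c_k\in J_{\lambda_k}$ and $\M(c)\subseteq\M(x)$. Put $d_k=xc_k$, which lies in $I\cap J_{\lambda_k}$. Then $\sum_k d_k=xc$, and $\M(xc)=\M(x)\cup\M(c)=\M(x)$. So $x$ satisfies the membership rule with the witness $xc\in\sum_\lambda(I\cap J_\lambda)$, giving $x\in\clz\bigl(\sum_\lambda(I\cap J_\lambda)\bigr)$. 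Alternatively, one could phrase the same computation as showing that $\clz$ restricted to the frame of $k$-free ideals is a prenucleus and then invoke Lemma~\ref{lem:prenucleus-fixed}; the direct computation above avoids having to decide which ideal frame to use as ambient.
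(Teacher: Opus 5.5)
\textbf{The gap: closure of $\widehat I$ under addition.} The step that fails is your proof that $\widehat I$ is closed under addition. You use $\M(a+a')\subseteq\M(a)\cap\M(a')$ and attribute it to Lemma~\ref{loc6}. That lemma gives only the reverse inclusion $\M(a)\cap\M(a')\subseteq\M(a+a')$, and your version is false in general. In $\N$, $\M(1+1)=\M(2)=\{\N\setminus\{1\}\}$ while $\M(1)=\varnothing$; in any nonzero ring, $1+(-1)=0$ gives the same failure. Your inclusion holds for all $a,a'$ exactly when every maximal ideal is strong, i.e.\ when $S$ is Gelfand (Proposition~\ref{prop:gelfand-strong-max}). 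So as written the argument covers only Gelfand semirings.

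Nor is there an obvious common witness for $b+b'$. One would want $c\in I$ with $\M(c)\subseteq\M(a)\cap\M(a')$, but such a $c$ need not exist. In $k[x,y]$ with $k$ algebraically closed, no $c\in(x,y)$ has $\M(c)=\{(x,y)\}=\M(x)\cap\M(y)$. In that ring $\widehat I$ is still an ideal, by the Nullstellensatz, but for reasons your argument does not supply. Consequently the description $\clz(I)=\bigcup_{a\in I}\mfrak(a)$ is not established. Everything you build on it inherits the gap: finite type, compactness of $\ideal{a}_z$ via a single witness, and your distributivity computation, which begins by choosing one $c\in\sum J_\lambda$ with $\M(c)\subseteq\M(x)$. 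A minor slip as well: in your membership rule, $\M(ab)=\M(a)\cup\M(b)=\M(b)$, not $\M(a)$.

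\textbf{A repair that needs no single witness.} The statements can be proved without any single-witness description.
\begin{enumerate}
\item \emph{Finite type.} The $z$-condition involves one element at a time, so a directed union of $z$-ideals is a $z$-ideal. Hence $\bigcup\{\clz(F)\mid F\subseteq I\text{ finitely generated}\}$ is a $z$-ideal containing $I$, which gives finite type.
\item \emph{Compactness of $\ideal{a}_z$.} A finitely generated subideal of $\sum_\lambda I_\lambda$ lies in a finite subsum, so each $\ideal{a}_z$ is compact.
\item \emph{Frame law.} Use two facts. First, for $z$-ideals $A,B$ one has $A\cap B=\clz(AB)$: if $x\in A\cap B$ then $x^2\in AB$ and $\M(x^2)=\M(x)$. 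Second, $(J:x)$ is a $z$-ideal whenever $J$ is, because $\M(xy)=\M(x)\cup\M(y)$. The second fact gives $x\,\clz(K)\subseteq\clz(IK)$ for $x\in I$, hence
\[
I\cap\clz\Bigl(\sum_\lambda J_\lambda\Bigr)=\clz\Bigl(I\,\clz\Bigl(\sum_\lambda J_\lambda\Bigr)\Bigr)\subseteq\clz\Bigl(\sum_\lambda IJ_\lambda\Bigr)\subseteq\clz\Bigl(\sum_\lambda (I\cap J_\lambda)\Bigr).
\]
\end{enumerate}
With these in place, the rest of your plan goes through: compact elements are finite joins, and finite meets of compacts follow from distributivity and (4). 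The paper itself simply defers these points to \cite{AG23,AG25}.
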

\begin{proof}
The finite-type property and the principal-generation statement
are the finite-type $z$-closure theorem of~\cite{AG23}.
The exchange theorem and product formula of~\cite{AG25}
give the meet identity.
The frame law and compact-element description follow from
finite type and the product formula.
\end{proof}

Before establishing coherence, we first confirm the frame structure
on which the compactness argument rests.

\begin{theorem}\label{loc22}
For every commutative semiring $S$, $\ZId(S)$ is a frame.
\end{theorem}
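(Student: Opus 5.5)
We prove Theorem~\ref{loc22} by exhibiting $\clz$ as a prenucleus on the frame $\Id(S)$ and then applying Lemma~\ref{lem:prenucleus-fixed}. First, $\Id(S)$, ordered by inclusion with meet $\cap$ and join the ideal sum, is a complete lattice. We do not try to show directly that it is a frame, since distributivity of $\Id(S)$ can fail for semirings. Instead we work with $\ZId(S)$ as the complete lattice of fixed points of the closure operator $\clz$. Arbitrary intersections of $z$-ideals are $z$-ideals, directly from Definition~\ref{def:zideal}, so $\ZId(S)$ is complete with meets $\bigcap$ and joins $\clz(\sum_\lambda I_\lambda)$.

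The core step is the infinite distributive law in $\ZId(S)$:
\[
A\cap\clz\Bigl(\sum_\lambda B_\lambda\Bigr)\subseteq\clz\Bigl(\sum_\lambda(A\cap B_\lambda)\Bigr)
\]
for $A,B_\lambda\in\ZId(S)$; the reverse inclusion is automatic. We will argue elementwise, using the finite-type property of Theorem~\ref{lem:z-finite-type}(1). Take $x\in A\cap\clz(\sum_\lambda B_\lambda)$. By finite type, $x\in\clz(B_{\lambda_1}+\dots+B_{\lambda_n})$, and by Theorem~\ref{lem:z-finite-type}(3) we may also assume $x\in\clz(\ideal{b_1}+\dots+\ideal{b_n})$ with $b_i\in B_{\lambda_i}$. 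Since $x\in A$ and $A$ is a $z$-ideal, $\ideal{x}_z\subseteq A$.

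We will then use the meet identity $\ideal{a}_z\cap\ideal{b}_z=\ideal{ab}_z$ of Theorem~\ref{lem:z-finite-type}(4), which rests on $\mfrak(ab)=\mfrak(a)\cap\mfrak(b)$ from Proposition~\ref{prop:mfrak-basic}. This reduces the problem to a finite distributivity statement for principal closures:
\[
\ideal{x}_z\cap\clz\bigl(\ideal{b_1}+\dots+\ideal{b_n}\bigr)\subseteq\clz\bigl(\ideal{xb_1}+\dots+\ideal{xb_n}\bigr).
\]
The right-hand side lies in $\clz(\sum_\lambda(A\cap B_\lambda))$, because $xb_i\in A\cap B_{\lambda_i}$.

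To prove the displayed finite inclusion, we will show that the map $I\mapsto\clz(I)$ satisfies the prenucleus condition $\clz(I)\cap J\subseteq\clz(I\cap J)$ for $J\in\ZId(S)$. Equivalently, for $y\in\clz(I)\cap\ideal{x}_z$ we need $y\in\clz(xI)$. The approach is to realize $\clz(xI)$ concretely: each $xi$ with $i\in I$ lies in $\clz(xI)$, and $\M(y\cdot x)=\M(y)\cup\M(x)$. Because $y\in\ideal{x}_z=\mfrak(x)$, we have $\M(x)\subseteq\M(y)$, so $\M(xy)=\M(y)$. It therefore suffices to show $xy\in\clz(xI)$, and this follows because multiplication by $x$ carries the closure $\clz(I)$ into $\clz(xI)$: the preimage $(\clz(xI):x)$ is a $z$-ideal (colon of a $z$-ideal, using $\M(xa)=\M(x)\cup\M(a)$) and contains $I$. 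Once the prenucleus condition holds, Lemma~\ref{lem:prenucleus-fixed} gives that the fixed set $\ZId(S)$ is a frame, and its joins agree with those described above.

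The main obstacle is the colon step, namely checking that $(K:x)$ is a $z$-ideal whenever $K$ is a $z$-ideal. If $xa\in K$ and $\M(a)=\M(b)$, then $\M(xa)=\M(x)\cup\M(a)=\M(xb)$ by Lemma~\ref{loc6}(1), so $xb\in K$. This works cleanly, so the real care lies in a second point. Lemma~\ref{lem:prenucleus-fixed} presupposes a frame as ambient, and $\Id(S)$ may not be one. To handle this, we will verify the distributive law directly on $\ZId(S)$ via the elementwise argument above, rather than relying on $\Id(S)$ being a frame.
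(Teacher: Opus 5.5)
Your argument is correct, but it takes a different route from the paper, which gives no argument of its own here. Theorem~\ref{loc22} is proved by pointing to Theorem~\ref{lem:z-finite-type}(2). That theorem's proof defers to the earlier $z$-ideal papers, with the remark that the frame law ``follows from finite type and the product formula.''

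You instead verify the infinite distributive law in $\ZId(S)$ directly. The decisive step is sound: by Lemma~\ref{loc6}(1), $(K:x)$ is a $z$-ideal whenever $K$ is, hence $x\,\clz(I)\subseteq\clz(xI)$. What your route buys is a self-contained proof that isolates exactly what the frame law needs, namely only the product formula $\M(ab)=\M(a)\cup\M(b)$.

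In fact the finite-type reduction and the meet identity $\ideal{a}_z\cap\ideal{b}_z=\ideal{ab}_z$ are superfluous. Take $I=\sum_\lambda B_\lambda$ directly. Any $x\in A\cap\clz(I)$ satisfies $x^2\in x\,\clz(I)\subseteq\clz(xI)$, and $\M(x^2)=\M(x)$. Hence $x\in\clz(xI)\subseteq\clz\bigl(\sum_\lambda(A\cap B_\lambda)\bigr)$, because $xB_\lambda\subseteq A\cap B_\lambda$. So finite type matters for coherence and compactness, not for the frame law itself. Dropping it also removes your awkward reliance on items (1), (3), (4) of the very theorem whose item (2) is the statement being proved.

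Your caution about Lemma~\ref{lem:prenucleus-fixed} is well placed. $\Id(S)$ need not be distributive (already for the ring $k[x,y]$), so that lemma cannot be applied with $\Id(S)$ as the ambient frame. For that reason, drop your opening sentence announcing a prenucleus argument on ``the frame $\Id(S)$.'' The elementwise distributivity check is what actually carries your proof.
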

\begin{proof}
Theorem~\ref{lem:z-finite-type}(2).
\end{proof}

The compact elements of the frame are the finitely generated
$z$-ideals; the following lemma makes this identification explicit.

\begin{lemma}\label{loc25}
The compact elements of $\ZId(S)$ are the finite joins
$\ideal{a_1}_z\vee_z\cdots\vee_z\ideal{a_n}_z$.
\end{lemma}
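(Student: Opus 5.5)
We prove the two inclusions separately, using the finite-type calculus of Theorem~\ref{lem:z-finite-type} together with the description $\ideal{a}_z=\mfrak(a)$ from Lemma~\ref{app:maximal-hull-calculus}.

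First we show that each $\ideal{a}_z$ is compact. Suppose $\ideal{a}_z\subseteq\bigvee^z_\lambda I_\lambda=\clz\bigl(\sum_\lambda I_\lambda\bigr)$. Then $a\in\clz\bigl(\sum_\lambda I_\lambda\bigr)$. By Theorem~\ref{lem:z-finite-type}(1), $\clz$ is of finite type, so $\clz(J)=\bigcup\{\clz(J_0)\}$ with $J_0$ ranging over finitely generated subideals of $J$. Every finitely generated subideal of $\sum_\lambda I_\lambda$ lies in $I_{\lambda_1}+\cdots+I_{\lambda_n}$ for some finite set of indices. Hence
\[
a\in\clz(I_{\lambda_1}+\cdots+I_{\lambda_n})=I_{\lambda_1}\vee_z\cdots\vee_z I_{\lambda_n},
\]
and since the right-hand side is a $z$-ideal it contains $\ideal{a}_z$. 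A finite join of compact elements is compact, because the finite subcovers for each summand can be combined. Therefore every finite join $\ideal{a_1}_z\vee_z\cdots\vee_z\ideal{a_n}_z$ is compact.

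Conversely, let $C\in\ZId(S)$ be compact. By Theorem~\ref{lem:z-finite-type}(3), $C=\bigvee^z_{a\in C}\ideal{a}_z$, since each $\ideal{a}_z\subseteq C$ and the sum of the principal ideals $\ideal{a}$ is already $C$. Compactness applied to $C\leqslant\bigvee^z_{a\in C}\ideal{a}_z$ yields $a_1,\dots,a_n\in C$ with $C\subseteq\ideal{a_1}_z\vee_z\cdots\vee_z\ideal{a_n}_z$. The reverse inclusion holds because each $\ideal{a_i}_z\subseteq C$. So $C$ is a finite join of the required form.

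The main obstacle is the finite-type property used in the first step: $a\in\clz(J)$ should already follow from finitely many elements of $J$. If one does not simply cite Theorem~\ref{lem:z-finite-type}(1), this needs a direct argument. The natural candidate is that $\clz(J)$ equals the set of $b$ with $\M(b)\supseteq\M(x)$ for some $x\in J$. Under that description, a single witness $x=x_1+\cdots+x_m$ with each $x_i\in I_{\lambda_i}$ suffices. Verifying the candidate requires checking that this set is an ideal. Closure under addition follows from Lemma~\ref{loc6}(2), applied together with $\M(x+x')\supseteq\M(x)\cap\M(x')$. However, that gives only $\M(b+b')\supseteq\M(x)\cap\M(x')$, and $\M(x)\cap\M(x')$ need not be the hull of a single element of $J$. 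This is precisely why we defer to the cited finite-type theorem rather than attempt to re-derive it here.
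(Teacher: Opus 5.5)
Your proof is correct and is essentially the paper's argument, spelled out in Proposition~\ref{app:compact-expanded}: each $\ideal{a}_z$ is compact by the finite-type property of $\clz$, finite joins of compact elements are compact, and a compact $C$ is a finite subjoin of $C=\bigvee^z_{a\in C}\ideal{a}_z$. The finite-type input you defer can also be checked directly, which avoids the single-witness description you were worried about: $\clz(J)=\bigcup_{n\geqslant 0}\Phi^n(J)$ with $\Phi(J)=\sum_{a\in J}\mfrak(a)$, and every element of $\Phi(J)$ involves only finitely many $a\in J$.
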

\begin{proof}
Theorem~\ref{lem:z-finite-type}(3).
\end{proof}

Coherence requires compact elements to be closed under finite meets.
The product formula below provides the maximal-hull reason this holds.

\begin{lemma}\label{loc23}
For all $a,b\in S$, $\ideal{a}_z\cap\ideal{b}_z=\ideal{ab}_z$.
Consequently, compact elements of $\ZId(S)$ are closed under finite
meets.
\end{lemma}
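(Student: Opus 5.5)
The plan is to work throughout with the canonical description $\ideal{a}_z=\mfrak(a)$ from Lemma~\ref{app:maximal-hull-calculus}. With that in hand, the meet identity reduces to Proposition~\ref{prop:mfrak-basic}(1): by Lemma~\ref{loc6}(1) we have $\M(ab)=\M(a)\cup\M(b)$, and intersecting over this union gives
\[
\ideal{ab}_z=\mfrak(ab)=\mfrak(a)\cap\mfrak(b)=\ideal{a}_z\cap\ideal{b}_z .
\]
If one wants to avoid the appendix, a direct argument also works. For $\supseteq$: every $z$-ideal containing $ab$ contains $\ideal{ab}_z$. Moreover, $x\in\ideal{a}_z\cap\ideal{b}_z$ lies in every maximal ideal containing $a$ and in every one containing $b$, since maximal ideals are $z$-ideals (as noted in the proof of Proposition~\ref{prop:idempotent-z-k}). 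Hence $\M(x)\supseteq\M(a)\cup\M(b)=\M(ab)$. This only yields an inclusion of hulls, so one still needs $\mfrak(ab)$ to be a $z$-ideal. I would therefore rely on the appendix identification rather than reprove it.

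For the second assertion, let $c=\bigvee^z_i\ideal{a_i}_z$ and $d=\bigvee^z_j\ideal{b_j}_z$ be compact, which is legitimate by Lemma~\ref{loc25}. The frame distributive law of Theorem~\ref{loc22} gives
\[
c\cap d=\bigvee\nolimits^z_{i,j}\bigl(\ideal{a_i}_z\cap\ideal{b_j}_z\bigr)=\bigvee\nolimits^z_{i,j}\ideal{a_ib_j}_z .
\]
Here the second equality is the meet identity just proved. The right-hand side is a finite join of principal $z$-closures, so it is compact by Lemma~\ref{loc25}.

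For the empty meet, the top element $S=\ideal{1}_z$ is principal and hence compact. This covers the nullary case of closure under finite meets.

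The only real obstacle is the fact that $\mfrak(a)$ is itself the smallest $z$-ideal containing $a$. It is a $z$-ideal because, whenever $\M(x)=\M(y)$ and $x\in\mfrak(a)$, every $M\in\M(a)$ contains $x$ and hence $y$. Every $z$-ideal containing $a$ contains $\mfrak(a)$; this is the content of the appendix lemma, which I take as given. Everything else is routine frame bookkeeping.
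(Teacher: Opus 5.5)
Your proposal is correct and follows the paper's own route (Lemma~\ref{app:coherent-expanded} and Proposition~\ref{app:compact-expanded}): identify $\ideal{a}_z=\mfrak(a)$, apply the product formula $\M(ab)=\M(a)\cup\M(b)$, and then use frame distributivity on finite joins of principal closures. One small correction to your abandoned direct argument: the missing step there is not that $\mfrak(ab)$ is a $z$-ideal, but that every $z$-ideal containing $ab$ contains each $x$ with $\M(ab)\subseteq\M(x)$; this holds because $abx$ lies in that ideal and $\M(abx)=\M(x)$.
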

\begin{proof}
Theorem~\ref{lem:z-finite-type}(4).
\end{proof}

We now combine the frame structure, compact generation,
and the finite-meet formula to prove Theorem~B.

\begin{theorem}\label{ZId(S)-is-coherent}
For every commutative semiring $S$, $\ZId(S)$ is a coherent frame.
\end{theorem}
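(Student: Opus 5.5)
We verify the three clauses in the definition of a coherent frame, using the results just established. By Theorem~\ref{loc22}, $\ZId(S)$ is a frame. What remains is that the top is compact, that every element is a join of compacts, and that compacts are closed under finite meets.

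\emph{Top compact.} The top element is $S=\ideal{1}=\ideal{1}_z$, since $S$ is trivially a $z$-ideal. By Lemma~\ref{loc25} it is compact. To argue it directly, suppose $S=\clz(\sum_\lambda I_\lambda)$. We want $1\in\sum_\lambda I_\lambda$ already. Indeed $\M(1)=\varnothing$, and any $b$ with $\M(b)=\varnothing$ is a unit. So a proper ideal contains no such element, and its $z$-closure is contained in any maximal ideal containing it. Hence $\clz(J)$ is proper whenever $J$ is proper. It follows that $1$ lies in the sum, and therefore in a finite subsum.

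\emph{Every element a join of compacts.} By Theorem~\ref{lem:z-finite-type}(3), each $I\in\ZId(S)$ satisfies $I=\bigvee^z_{a\in I}\ideal{a}_z$. Concretely, $\sum_{a\in I}\ideal{a}_z\subseteq I$ because $I$ is a $z$-ideal, and it contains $I$, so its $z$-closure equals $I$. Each $\ideal{a}_z$ is compact by Lemma~\ref{loc25}.

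\emph{Finite meets.} Let $c=\bigvee^z_i\ideal{a_i}_z$ and $d=\bigvee^z_j\ideal{b_j}_z$ be compact. The frame distributive law from Theorem~\ref{loc22} gives
\[
c\cap d=\bigvee^z_{i,j}\bigl(\ideal{a_i}_z\cap\ideal{b_j}_z\bigr)=\bigvee^z_{i,j}\ideal{a_ib_j}_z
\]
by Lemma~\ref{loc23}. This is a finite join of principal closures, hence compact. The empty meet is the top, which is compact by the first step.

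The main obstacle is the finite-meet step. It rests on the identity $\ideal{a}_z\cap\ideal{b}_z=\ideal{ab}_z$, which via $\ideal{a}_z=\mfrak(a)$ (Lemma~\ref{app:maximal-hull-calculus}) is Proposition~\ref{prop:mfrak-basic}(1). Once that identity and distributivity are available, the rest is formal. The spectral conclusion then follows from the coherent-frame spectrum theorem together with Theorem~\ref{thm:zprime-exchange}, which identifies the prime elements with $\Spec_z(S)$.
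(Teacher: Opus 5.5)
Your proof is correct and follows the paper's argument: the paper likewise verifies the three coherence clauses, namely compactness of the top $S=\ideal{1}_z$, compact generation via Lemma~\ref{loc25}, and closure of compacts under finite meets via Lemma~\ref{loc23} on top of the frame structure from Theorem~\ref{loc22}. Your additions only make explicit what the paper asserts or leaves to its appendix: the direct argument that $\clz(J)$ is proper whenever $J$ is proper (since $J$ lies in a maximal ideal, which is a $z$-ideal), and the distributivity expansion $c\cap d=\bigvee^z_{i,j}\ideal{a_ib_j}_z$.
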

\begin{proof}
The top element $S=\ideal{1}_z$ is compact.
By Lemma~\ref{loc25}, every element is a join of compact elements.
By Lemma~\ref{loc23}, compact elements are closed under finite meets.
Hence $\ZId(S)$ is coherent.
\end{proof}

The passage from a coherent frame to a spectral prime spectrum is
standard Stone duality.
The corollary below spells out the topological consequences,
including the explicit form of the basic quasi-compact open sets.

\begin{corollary}\label{cor:zspec-spectral}
The space $\Spec_z(S)$ of prime $z$-ideals is spectral.
The basic opens
\[
D_z(a)=\{P\in\Spec_z(S)\mid a\notin P\}
\]
are quasi-compact and satisfy $D_z(a)\cap D_z(b)=D_z(ab)$.
\end{corollary}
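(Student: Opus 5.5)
The plan is to obtain spectrality from Theorem~\ref{ZId(S)-is-coherent} by the coherent-frame spectrum theorem \cite[Ch.~II, Cor.~3.4]{Johnstone}: the space $\pt(\ZId(S))$ of prime elements of a coherent frame, with the hull-kernel topology, is spectral. The work lies in identifying $\pt(\ZId(S))$ with $\Spec_z(S)$ as topological spaces, and in describing the basic compact opens explicitly.

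First, I will identify the points. A prime element of the frame $\ZId(S)$ is a proper $z$-ideal $P$ with $A\cap B\subseteq P$ implying $A\subseteq P$ or $B\subseteq P$; this is exactly $z$-strong irreducibility. I claim these coincide with the prime $z$-ideals. If $P$ is prime (equivalently $z$-prime, by Theorem~\ref{thm:zprime-exchange}) and $A\cap B\subseteq P$ with $A,B\in\ZId(S)$, then $AB\subseteq A\cap B\subseteq P$, so $A\subseteq P$ or $B\subseteq P$. Conversely, suppose $P$ is a meet-prime $z$-ideal and $ab\in P$. Lemma~\ref{loc23} gives $\ideal{a}_z\cap\ideal{b}_z=\ideal{ab}_z\subseteq P$, because $P$ is a $z$-ideal containing $ab$. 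Hence $\ideal{a}_z\subseteq P$ or $\ideal{b}_z\subseteq P$, so $P$ is an ordinary prime ideal. Proposition~\ref{loc14}(2) packages the same argument.

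Second, I will compare topologies. The hull-kernel opens of $\pt(L)$ are $\{P: x\not\leqslant P\}$ for $x\in\ZId(S)$. Since every $z$-ideal $x$ is the join of the $\ideal{a}_z$ with $a\in x$ (Theorem~\ref{lem:z-finite-type}(3)), and a $z$-ideal $P$ contains $\ideal{a}_z$ iff $a\in P$, these opens are exactly unions of the sets $D_z(a)$. So the frame topology is the hull-kernel topology on $\Spec_z(S)$, and that space is spectral.

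Finally, $D_z(a)$ corresponds to the compact element $\ideal{a}_z$. Under Stone duality, compact elements correspond to quasi-compact opens, so $D_z(a)$ is quasi-compact. For the intersection formula, $P\in D_z(ab)$ iff $ab\notin P$, which by primeness of $P$ holds iff $a\notin P$ and $b\notin P$; that is, $D_z(ab)=D_z(a)\cap D_z(b)$. I expect the main obstacle to be the point identification: meet-primeness must be matched with elementwise primeness of an ideal that is a priori only $z$-closed. This is precisely where the meet identity of Lemma~\ref{loc23} is needed.
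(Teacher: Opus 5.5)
Your proof is correct and follows the paper's route. You apply Stone duality to the coherent frame of Theorem~\ref{ZId(S)-is-coherent}, identify its prime elements with the prime $z$-ideals, and read off the basic opens from the compact generators $\ideal{a}_z$. The one difference is that you explicitly match meet-primeness in $\ZId(S)$ with ordinary primeness via $\ideal{a}_z\cap\ideal{b}_z=\ideal{ab}_z$ (Lemma~\ref{loc23}), whereas the paper just cites Corollary~\ref{cor:ordinary-prime}, which is phrased for products $AB$ rather than meets $A\cap B$; your version fills in that small step.
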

\begin{proof}
Prime elements of the coherent frame $\ZId(S)$ are the $z$-prime
ideals by Corollary~\ref{cor:ordinary-prime}.
The spectrum of a coherent frame is spectral.
The formula for basic opens is dual to $\ideal{a}_z\cap\ideal{b}_z
=\ideal{ab}_z$.
\end{proof}

\subsection{The coherent frame of \texorpdfstring{$g$}{g}-closed
ideals}

The analogous theorem for $g$-closed ideals depends on hypotheses
that are automatic in the classical ring setting but genuinely
additional for semirings.
Example~\ref{ex:zero-class-distinction} is one reason the
congruence-generated closure must be kept separate from the ordinary
maximal-ideal $z$-closure;
we therefore state the required conditions as an explicit hypothesis.

\begin{hypothesis}\label{hyp:g-closure}
A semiring $S$ satisfies the \emph{finite-type $g$-closure
hypothesis} if:
\begin{enumerate}[label=\upshape(G\arabic*)]
\item the closure $g:\Id(S)\to\Id(S)$ is of finite type;
\item the fixed points of $g$ form a frame with meet given by
  intersection and join given by $g$ applied to sums;
\item the compact elements of this fixed-point frame are exactly
  the finite $g$-joins of principal $g$-closures
  $\ideal{a_1}_g\vee_g\cdots\vee_g\ideal{a_n}_g$,
  where $\ideal{a}_g=(\ideal{a})^g$;
\item principal compact generators satisfy
  $\ideal{a}_g\cap\ideal{b}_g=\ideal{ab}_g$ for all $a,b\in S$,
  and hence finite $g$-joins of principal $g$-closures are closed
  under finite meets.
\end{enumerate}
\end{hypothesis}

\begin{remark}
Hypothesis~\ref{hyp:g-closure} is automatic in several standard
classes---for example, when the maximal-congruence hull construction
is a finite-type nucleus on the relevant radical-ideal frame.
It is not part of the definition of the canonical $g$-closure itself.
Example~\ref{ex:zero-class-distinction} shows why the
congruence-generated closure must be kept separate from the
ordinary maximal-ideal $z$-closure.
\end{remark}

Under Hypothesis~\ref{hyp:g-closure}, the compact-generation
argument of Theorem~B can be repeated for $g$-closed ideals,
giving the congruence-generated counterpart.

\begin{theorem}\label{GCl-coherent-frame}
If $S$ satisfies Hypothesis~\ref{hyp:g-closure}, then $\GCl(S)$
is a coherent frame.
Its compact elements are the finite $g$-joins
$\ideal{a_1}_g\vee_g\cdots\vee_g\ideal{a_n}_g$.
\end{theorem}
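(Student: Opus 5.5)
The argument repeats the proof of Theorem~\ref{ZId(S)-is-coherent}, with Hypothesis~\ref{hyp:g-closure} supplying each input that Theorem~\ref{lem:z-finite-type} supplied there. I take $\GCl(S)$ with meet $I\cap J$ and join $\bigvee^g I_\lambda=(\sum_\lambda I_\lambda)^g$. By Proposition~\ref{prop:g-closure-operator}, $g$ is a closure operation, so arbitrary intersections of $g$-closed ideals are $g$-closed and $\GCl(S)$ is a complete lattice. The frame law is then exactly (G2).

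There are three coherence conditions to verify, in this order.

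\textbf{The top is compact.} The top element is $S=\ideal{1}_g$, since $\ideal{1}=S$ and $g$ is extensive. It is a principal $g$-closure, so it is compact by (G3). I would also check this directly from (G1): if $1\in(\sum I_\lambda)^g$, finite type gives a finitely generated subideal $F$ of $\sum I_\lambda$ with $1\in F^g$. Since $F$ is contained in a finite subsum, $S$ lies below a finite subjoin.

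\textbf{Compact generation.} For $I\in\GCl(S)$ I have $I=\sum_{a\in I}\ideal{a}$. Each $\ideal{a}_g\subseteq I^g=I$ by monotonicity. Hence
\[
I\subseteq\Bigl(\sum_{a\in I}\ideal{a}_g\Bigr)^g=\bigvee^g_{a\in I}\ideal{a}_g\subseteq I^g=I,
\]
so every element is a join of principal $g$-closures, and these are compact by (G3). The same display shows the compact elements are at least the finite $g$-joins. The reverse inclusion, that every compact element is such a finite join, is the standard argument: a compact $c$ lies below a finite subjoin of its own generating family, and that subjoin lies below $c$. This gives the stated description of the compact elements.

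\textbf{Compact elements are closed under finite meets.} This is the only step needing care. By (G4), $\ideal{a}_g\cap\ideal{b}_g=\ideal{ab}_g$ for principal generators. For general compacts $c=\bigvee^g_i\ideal{a_i}_g$ and $d=\bigvee^g_j\ideal{b_j}_g$, I distribute using the frame law (G2):
\[
c\cap d=\bigvee^g_{i,j}\bigl(\ideal{a_i}_g\cap\ideal{b_j}_g\bigr)=\bigvee^g_{i,j}\ideal{a_ib_j}_g .
\]
This is again a finite $g$-join of principal $g$-closures, hence compact by (G3). Meets of more than two compacts follow by induction, and the empty meet is the top, already handled.

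I expect no genuine obstacle, since (G1)--(G4) are tailored to these three checks. The one point to verify is that the join used in (G2) coincides with the join of the complete lattice $\GCl(S)$. This holds because any $g$-closed ideal containing each $I_\lambda$ contains $\sum_\lambda I_\lambda$, hence contains its $g$-closure. With that settled, the distributivity step above is legitimate.
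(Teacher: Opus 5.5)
Your proposal is correct and takes the same route as the paper's proof: the frame law comes from (G2), the compact elements from (G3), closure under finite meets from (G4), and compactness of the top from $S=\ideal{1}_g$. Your version also makes explicit two points the paper leaves implicit: that every $g$-closed ideal $I$ equals $\bigvee^g_{a\in I}\ideal{a}_g$ (so $\GCl(S)$ is compactly generated, which (G3) by itself does not literally state), and the distributivity computation $c\cap d=\bigvee^g_{i,j}\ideal{a_ib_j}_g$ behind the ``hence'' in (G4).
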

\begin{proof}
Hypothesis (G2) gives the frame structure.
Hypothesis (G3) identifies the compact elements,
and (G4) says they are closed under finite meets.
The top element $S=\ideal{1}_g$ is compact.
\end{proof}

Prime points of the $g$-closed frame are tested against products
of $g$-closed ideals.

\begin{definition}\label{def:g-prime}
A proper $g$-closed ideal $P$ is \emph{$g$-prime} if
$AB\subseteq P$ implies $A\subseteq P$ or $B\subseteq P$
for all $A,B\in\GCl(S)$.
The set of such ideals is $\Spec_g(S)$.
\end{definition}

The following lemma is the $g$-closed analogue of the exchange
principle for $z$-prime ideals.
It shows that frame-primality, $g$-primality, and ordinary
prime-ideal primality are equivalent under
Hypothesis~\ref{hyp:g-closure}.

\begin{lemma}\label{exchange-principal-g-prime}
Assume Hypothesis~\ref{hyp:g-closure}.
For a proper $g$-closed ideal $P$, the following are equivalent:
\begin{enumerate}[label=\upshape(\roman*)]
\item $P$ is a prime element of the frame $\GCl(S)$;
\item $P$ is $g$-prime;
\item $P$ is an ordinary prime ideal of $S$.
\end{enumerate}
\end{lemma}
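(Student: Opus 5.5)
The plan is to prove the cycle (iii)$\Rightarrow$(ii)$\Rightarrow$(i)$\Rightarrow$(iii). Throughout, ``prime element of the frame $\GCl(S)$'' means meet-prime: $A\cap B\subseteq P$ with $A,B\in\GCl(S)$ forces $A\subseteq P$ or $B\subseteq P$. This reading is legitimate because meets in $\GCl(S)$ are intersections by (G2). The only hypothesis-specific input will be the principal meet identity (G4), $\ideal{a}_g\cap\ideal{b}_g=\ideal{ab}_g$. I will combine it with two facts: every element lies in its own principal $g$-closure, and a $g$-closed ideal $P$ contains $\ideal{c}_g$ as soon as $c\in P$. The latter holds by monotonicity of $g$ (Proposition~\ref{prop:g-closure-operator}), since $\ideal{c}\subseteq P$ gives $\ideal{c}_g\subseteq P^g=P$.

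(iii)$\Rightarrow$(ii) is the usual elementwise argument. Suppose $AB\subseteq P$ with $A\not\subseteq P$ and $B\not\subseteq P$. Choose $a\in A\setminus P$ and $b\in B\setminus P$. Then $ab\in AB\subseteq P$, which contradicts ordinary primality of $P$.

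(ii)$\Rightarrow$(i) is the step that needs care. The inclusion $AB\subseteq A\cap B$ goes the wrong way, so we must show that $A\cap B\subseteq P$ forces $AB\subseteq P$ up to $g$-closure, and this is where (G4) enters. Assume $A\cap B\subseteq P$ and let $c\in A\cap B$. It suffices to know $A\cap B\subseteq P$ directly, but to apply $g$-primality we need the product. For $a\in A$ and $b\in B$ we have $ab\in A\cap B\subseteq P$, hence $AB\subseteq P$. Then $g$-primality gives $A\subseteq P$ or $B\subseteq P$. So this direction is in fact immediate once we note that $AB\subseteq A\cap B$ always holds for ideals.

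The converse bookkeeping, that meet-primality recovers product-primality, would use (G4) as follows. If $c\in A\cap B$ and $AB\subseteq P$, then $c^2\in AB\subseteq P$. Hence
\[
c\in\ideal{c}_g=\ideal{c}_g\cap\ideal{c}_g=\ideal{c^2}_g\subseteq P,
\]
so $A\cap B\subseteq P$. I record this to make the equivalence of (i) and (ii) symmetric.

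(i)$\Rightarrow$(iii) is where the hypothesis is genuinely used. Let $ab\in P$. By (G4) and $g$-closedness of $P$ we have
\[
\ideal{a}_g\cap\ideal{b}_g=\ideal{ab}_g\subseteq P.
\]
Both $\ideal{a}_g$ and $\ideal{b}_g$ lie in $\GCl(S)$ by idempotence of $g$. Meet-primality therefore gives $\ideal{a}_g\subseteq P$ or $\ideal{b}_g\subseteq P$, and by extensivity $a\in P$ or $b\in P$. Properness of $P$ is assumed throughout, so $P$ is an ordinary prime ideal.

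I expect the main obstacle to be only the identification of ``prime element'' with meet-primality in the fixed-point frame, which requires (G2) to guarantee that frame meets are set intersections. The substantive algebraic content is concentrated in the single use of (G4) in (i)$\Rightarrow$(iii). That identity is the congruence-side substitute for the product formula $\M(ab)=\M(a)\cup\M(b)$; without it the argument fails, as Example~\ref{ex:zero-class-distinction} warns.
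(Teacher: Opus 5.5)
Your proof is correct and follows the paper's argument: the same cycle (iii)$\Rightarrow$(ii)$\Rightarrow$(i)$\Rightarrow$(iii), with (G4) and the $g$-closedness of $P$ used only in (i)$\Rightarrow$(iii), and the inclusion $AB\subseteq A\cap B$ giving (ii)$\Rightarrow$(i). Two points to tidy: the claim in your (ii)$\Rightarrow$(i) paragraph that $AB\subseteq A\cap B$ ``goes the wrong way'' is a misstatement, since it is exactly the inclusion that step needs; and your (G4)-based ``converse bookkeeping'' is valid but redundant, because the cycle already closes.
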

\begin{proof}
Assume $P$ is a prime element of $\GCl(S)$ and $ab\in P$.
Since $P$ is $g$-closed, $\ideal{ab}_g\subseteq P$.
Hypothesis (G4) gives
$\ideal{a}_g\cap\ideal{b}_g=\ideal{ab}_g\subseteq P$;
frame-primality gives $\ideal{a}_g\subseteq P$ or
$\ideal{b}_g\subseteq P$, hence $a\in P$ or $b\in P$.

If $P$ is ordinary prime and $AB\subseteq P$ with $A,B\in\GCl(S)$,
choose $a\in A\setminus P$ and $b\in B\setminus P$ to get
$ab\notin P$, contradicting $ab\in AB\subseteq P$.
Hence $A\subseteq P$ or $B\subseteq P$, and $P$ is $g$-prime.

If $P$ is $g$-prime and $A\cap B\subseteq P$, then
$AB\subseteq A\cap B\subseteq P$, so $A\subseteq P$ or $B\subseteq P$.
This is frame-primality.
\end{proof}

For $A\subseteq S$, define $V_G(A)=\{P\in\Spec_g(S)\mid A\subseteq P\}$.
Closed sets in the $g$-spectrum satisfy the identities one expects
from an ideal-theoretic hull-kernel topology.

\begin{lemma}\label{loc31}
Assume Hypothesis~\ref{hyp:g-closure}.  The sets $V_G(A)$ satisfy:
\begin{enumerate}
\item $V_G(0)=\Spec_g(S)$ and $V_G(S)=\varnothing$;
\item $A\subseteq B$ implies $V_G(B)\subseteq V_G(A)$;
\item for ideals $I,J$,
  $V_G(I)\cup V_G(J)=V_G(IJ)=V_G(I\cap J)$;
\item $\bigcap_\lambda V_G(J_\lambda)=V_G(\sum_\lambda J_\lambda)$.
\end{enumerate}
\end{lemma}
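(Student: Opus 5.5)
Parts (1) and (2) are direct from the definitions, so the plan is to settle them first and spend the effort on (3) and (4).

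For (1), every ideal contains $0$, so every $P\in\Spec_g(S)$ lies in $V_G(0)$. No $g$-prime ideal is the whole of $S$ (they are proper by Definition~\ref{def:g-prime}), so $V_G(S)=\varnothing$. For (2), if $A\subseteq B\subseteq P$ then $A\subseteq P$.

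For (3), I will use Lemma~\ref{exchange-principal-g-prime}, which says every $P\in\Spec_g(S)$ is an ordinary prime ideal. Since $IJ\subseteq I\cap J\subseteq I$ and $IJ\subseteq J$, part (2) gives
\[
V_G(I)\cup V_G(J)\subseteq V_G(I\cap J)\subseteq V_G(IJ).
\]
To close the chain, let $IJ\subseteq P$ and suppose $I\not\subseteq P$. Pick $a\in I\setminus P$. For every $b\in J$ we have $ab\in IJ\subseteq P$, and primeness of $P$ forces $b\in P$. Hence $J\subseteq P$, which gives $V_G(IJ)\subseteq V_G(I)\cup V_G(J)$.

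This works directly with the ordinary ideals $I,J$, which need not be $g$-closed. It avoids invoking (G4) for arbitrary ideals, and I expect that to be the main point needing care. An alternative route would replace $I$ by $I^g$, using that $P$ is $g$-closed so that $I\subseteq P$ iff $I^g\subseteq P$ (monotonicity and idempotence from Proposition~\ref{prop:g-closure-operator}). The only subtlety is confirming that the exchange lemma applies to every element of $\Spec_g(S)$. It does: $g$-prime implies ordinary prime via the chain (ii)$\Rightarrow$(i)$\Rightarrow$(iii) in that lemma.

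For (4), $J_\lambda\subseteq\sum_\lambda J_\lambda$ gives $V_G(\sum_\lambda J_\lambda)\subseteq\bigcap_\lambda V_G(J_\lambda)$ by (2). Conversely, if each $J_\lambda\subseteq P$, then $P$ is an ideal and so is closed under finite sums. Every element of $\sum_\lambda J_\lambda$ is a finite sum of elements from the $J_\lambda$, so it lies in $P$. No closure argument is needed; if one wants the $g$-join form instead, it follows from $\sum_\lambda J_\lambda\subseteq P=P^g$ and monotonicity of $g$.
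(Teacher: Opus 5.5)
Your proof is correct and follows the paper's route: (1), (2) and (4) are formal, and (3) rests on Lemma~\ref{exchange-principal-g-prime} to know that every $g$-prime ideal is an ordinary prime ideal, after which the standard prime-ideal argument together with $IJ\subseteq I\cap J$ finishes. Your explicit check that the implication (ii)$\Rightarrow$(iii) of that lemma is available through the chain (ii)$\Rightarrow$(i)$\Rightarrow$(iii) makes precise a step the paper leaves implicit.
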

\begin{proof}
Parts (1), (2), and (4) are formal.
Part (3) follows from Lemma~\ref{exchange-principal-g-prime}:
a prime ideal contains $IJ$ if and only if it contains $I$ or $J$,
and $IJ\subseteq I\cap J$ gives the equality with $V_G(I\cap J)$.
\end{proof}

The spectrality of $\Spec_g(S)$ now follows from the
coherent-frame theorem via Stone duality.

\begin{theorem}\label{Spec_g-is-spectral}
If $S$ satisfies Hypothesis~\ref{hyp:g-closure}, then $\Spec_g(S)$,
with the topology whose closed sets are the $V_G(A)$, is spectral.
\end{theorem}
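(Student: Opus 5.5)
The plan is to deduce the spectrality of $\Spec_g(S)$ from Theorem~\ref{GCl-coherent-frame} via the coherent-frame spectrum theorem, exactly as Corollary~\ref{cor:zspec-spectral} was deduced from Theorem~\ref{ZId(S)-is-coherent}. By Theorem~\ref{GCl-coherent-frame}, $\GCl(S)$ is a coherent frame, so its space of prime elements $\pt(\GCl(S))$, with the hull-kernel topology, is spectral by \cite[Ch.~II, Cor.~3.4]{Johnstone} and \cite{Hochster}. What remains is to identify this space with $\Spec_g(S)$ carrying the topology whose closed sets are the $V_G(A)$.

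\textbf{Points.} Lemma~\ref{exchange-principal-g-prime} gives that the proper $g$-closed ideals that are prime elements of $\GCl(S)$ are exactly the $g$-prime ideals. The prime elements of a frame are by convention proper, and in the frame $\GCl(S)$ the top element is $S$. Hence the underlying set of $\pt(\GCl(S))$ is precisely $\Spec_g(S)$.

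\textbf{Topology.} The hull-kernel closed sets of $\pt(\GCl(S))$ are $\{P : J\subseteq P\}$ for $J\in\GCl(S)$. For an arbitrary subset $A\subseteq S$, I show that $V_G(A)=V_G((\ideal{A})^g)$.
\begin{enumerate}
\item Every $P\in\Spec_g(S)$ is an ideal, so $A\subseteq P$ if and only if $\ideal{A}\subseteq P$.
\item Every such $P$ is $g$-closed, and $g$ is a closure operation by Proposition~\ref{prop:g-closure-operator}. Hence $\ideal{A}\subseteq P$ implies $(\ideal{A})^g\subseteq P^g=P$.
\item The reverse inclusion $V_G((\ideal{A})^g)\subseteq V_G(A)$ follows from extensivity.
\end{enumerate}
Thus the family $\{V_G(A)\}$ is exactly the family of hull-kernel closed sets. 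Lemma~\ref{loc31} confirms that these sets are closed under finite unions and arbitrary intersections, so they form the closed sets of a topology, with no separate subbasis needed.

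\textbf{Basis and main obstacle.} For completeness I will identify the quasi-compact open basis as $D_G(a)=\Spec_g(S)\setminus V_G(a)$. By Lemma~\ref{loc31}(3) together with (G4), $D_G(a)\cap D_G(b)=D_G(ab)$, mirroring Corollary~\ref{cor:zspec-spectral}. The only real obstacle is the bookkeeping in the point identification, namely confirming that frame-primality is taken with respect to meets in $\GCl(S)$, which are intersections by (G2). Once that is settled, Stone duality transfers spectrality directly, and the proof reduces to citing Theorem~\ref{GCl-coherent-frame} and Lemma~\ref{exchange-principal-g-prime}.
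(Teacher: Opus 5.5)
Your proposal is correct and follows the paper's own route. You obtain coherence of $\GCl(S)$ from Theorem~\ref{GCl-coherent-frame}, identify its prime elements with the $g$-prime ideals via Lemma~\ref{exchange-principal-g-prime}, and then apply coherent-frame Stone duality; your check that $V_G(A)=V_G((\ideal{A})^g)$ fills in the step the paper only asserts, namely that the sets $V_G(A)$ are exactly the hull-kernel closed sets.
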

\begin{proof}
Theorem~\ref{GCl-coherent-frame} gives a coherent frame $\GCl(S)$.
Lemma~\ref{exchange-principal-g-prime} identifies its prime elements
with the $g$-prime ideals, and the closed sets $V_G(A)$ are the
hull-kernel closed sets.
The spectrum of a coherent frame is spectral.
\end{proof}

\subsection{Canonical \texorpdfstring{$g$}{g}-congruence spectrum}

A canonical $g$-congruence $\rho$ is \emph{prime} if
$(ab,0)\in\rho$ implies $(a,0)\in\rho$ or $(b,0)\in\rho$.
Let $\GCSpec(S)$ denote the set of canonical prime $g$-congruences.
For $A\subseteq S$ define
$W_G(A)=\{\rho\in\GCSpec(S)\mid A\subseteq\zero\rho\}$.

To pass from ideals to congruences, we need the prime objects
to correspond under the zero-class and associated-congruence maps.

\begin{lemma}\label{loc30}
Assume Hypothesis~\ref{hyp:g-closure}.  The assignments
$P\mapsto\wp_P$ and $\rho\mapsto\zero\rho$ are mutually inverse
bijections between $\Spec_g(S)$ and $\GCSpec(S)$.
\end{lemma}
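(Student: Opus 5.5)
The plan is to build on Proposition~\ref{prop:g-correspondence-basic}, which already gives mutually inverse order-preserving bijections $I\mapsto\wp_I$ and $\rho\mapsto\zero\rho$ between $g$-closed ideals and canonical $g$-congruences. It therefore suffices to show that the two maps preserve primeness in both directions, so that the bijection restricts to $\Spec_g(S)\leftrightarrow\GCSpec(S)$. No new bijectivity argument will be needed: injectivity and the inverse relations are inherited from the restriction.

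First, for $P\in\Spec_g(S)$: by Definition~\ref{def:g-cong}, $\wp_P$ is a canonical $g$-congruence, since $P$ is $g$-closed. Its zero-class is $\zero{\wp_P}=P^g=P$, so properness of $P$ gives $(1,0)\notin\wp_P$. Hence $\wp_P\neq S\times S$.

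For primeness, suppose $(ab,0)\in\wp_P$. Then $ab\in\zero{\wp_P}=P$. Lemma~\ref{exchange-principal-g-prime}, direction (ii)$\Rightarrow$(iii), applies under Hypothesis~\ref{hyp:g-closure} and shows that $P$ is an ordinary prime ideal. So $a\in P$ or $b\in P$, that is, $(a,0)\in\wp_P$ or $(b,0)\in\wp_P$. Thus $\wp_P\in\GCSpec(S)$.

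Conversely, let $\rho\in\GCSpec(S)$, say $\rho=\wp_I$ with $I$ $g$-closed. Then $\zero\rho=I$ is $g$-closed by Proposition~\ref{prop:g-correspondence-basic}.

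The main subtle point is properness of $\zero\rho$. A proper congruence may still have zero-class equal to $S$, since $(1,0)\in\rho$ need not force $\rho=S\times S$ in general. For canonical congruences, however, this does force it. If $1\in\zero\rho=I$, then $\MC(I)=\varnothing$, because no maximal congruence contains $(1,0)$: such a congruence would contain $(s,0)$ for every $s$ and hence be all of $S\times S$. Therefore $\rho=\wp_I=S\times S$ by the empty-intersection convention in~\eqref{eq:wpI}, contradicting properness. So $\zero\rho$ is proper.

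Primeness of $\zero\rho$ as an ordinary ideal is exactly the defining condition of a prime congruence: $ab\in\zero\rho$ means $(ab,0)\in\rho$, which gives $(a,0)\in\rho$ or $(b,0)\in\rho$. Lemma~\ref{exchange-principal-g-prime}, direction (iii)$\Rightarrow$(ii), then shows that $\zero\rho$ is $g$-prime, so $\zero\rho\in\Spec_g(S)$.

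Finally, since both maps land in the right sets and are mutually inverse on the larger sets by Proposition~\ref{prop:g-correspondence-basic}, they are mutually inverse bijections $\Spec_g(S)\leftrightarrow\GCSpec(S)$. The only step I expect to need care is the properness transfer just described; everything else is a direct unwinding of the definitions together with the exchange lemma.
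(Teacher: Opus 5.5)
Your proof is correct and follows the paper's argument: restrict the bijection of Proposition~\ref{prop:g-correspondence-basic} and transfer primeness in each direction via Lemma~\ref{exchange-principal-g-prime}, adding properness checks that the paper leaves implicit. One aside is false, though harmless: for any congruence, $(1,0)\in\rho$ already gives $(s,0)=(s\cdot 1,s\cdot 0)\in\rho$ for every $s$ and hence $\rho=S\times S$, so a proper congruence never has zero-class $S$ and properness of $\zero\rho$ is immediate; your detour through $\MC(I)=\varnothing$ is nonetheless valid.
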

\begin{proof}
The assignments are mutually inverse on $g$-closed ideals and
canonical $g$-congruences by Proposition~\ref{prop:g-correspondence-basic}.
If $P$ is $g$-prime and $(ab,0)\in\wp_P$, then $ab\in P$,
so $a\in P$ or $b\in P$, giving $(a,0)\in\wp_P$ or $(b,0)\in\wp_P$.
Conversely, if $\rho$ is prime, then $\zero\rho$ is an ordinary prime
ideal, hence $g$-prime by Lemma~\ref{exchange-principal-g-prime}.
\end{proof}

The closed-set calculus for canonical prime $g$-congruences mirrors
that for $g$-closed ideals.

\begin{lemma}\label{loc38}
Assume Hypothesis~\ref{hyp:g-closure}.  The sets $W_G(A)$ satisfy
the same closed-set identities as the sets $V_G(A)$ in
Lemma~\ref{loc31}.
\end{lemma}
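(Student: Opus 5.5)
The plan is to transport Lemma~\ref{loc31} along the bijection of Lemma~\ref{loc30}. For $A\subseteq S$ the first step is the identity
\[
W_G(A)=\{\wp_P\mid P\in V_G(A)\},
\]
which holds because $\zero{\wp_P}=P$ for every $g$-closed $P$. Consequently, if $\Phi\colon\Spec_g(S)\to\GCSpec(S)$, $P\mapsto\wp_P$, denotes the bijection of Lemma~\ref{loc30}, we have $W_G(A)=\Phi(V_G(A))$. Since $\Phi$ is a bijection, it commutes with finite unions, with arbitrary intersections, and with inclusion of images. Every identity of Lemma~\ref{loc31} is a statement built from unions, intersections and inclusions of sets of the form $V_G(\cdot)$, so each one transfers verbatim to the corresponding $W_G(\cdot)$.

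Item by item, this gives the following.
\begin{enumerate}
\item $W_G(0)=\Phi(\Spec_g(S))=\GCSpec(S)$. Also $W_G(S)=\varnothing$, because a canonical prime $g$-congruence is proper, so its zero-class is a proper ideal: if $1\in\zero\rho$ then $(x,0)=(x\cdot1,x\cdot0)\in\rho$ for all $x$, forcing $\rho=S\times S$.
\item Antitonicity is immediate from the definition of $W_G$.
\item $W_G(I)\cup W_G(J)=\Phi(V_G(I)\cup V_G(J))=\Phi(V_G(IJ))=W_G(IJ)$, and the same chain gives $W_G(I\cap J)$. Alternatively, one can argue directly: $\zero\rho$ is an ordinary prime ideal, since primality of $\rho$ says exactly that $ab\in\zero\rho$ implies $a\in\zero\rho$ or $b\in\zero\rho$. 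Hence $IJ\subseteq\zero\rho$ forces $I\subseteq\zero\rho$ or $J\subseteq\zero\rho$, and $IJ\subseteq I\cap J$ closes the chain.
\item $\bigcap_\lambda W_G(J_\lambda)=W_G(\sum_\lambda J_\lambda)$ holds because $\zero\rho$ is an ideal, so it contains every $J_\lambda$ exactly when it contains their sum.
\end{enumerate}

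The only point needing care is that the direct argument in (3) uses primality of $\zero\rho$, and this is exactly the definition of a prime canonical $g$-congruence. Hypothesis~\ref{hyp:g-closure} enters only through Lemma~\ref{loc30}, which guarantees that $\Phi$ is onto $\GCSpec(S)$; this surjectivity is what is needed for the identity $W_G(0)=\GCSpec(S)$. I expect no real obstacle, the main work being the careful bookkeeping of the identity $W_G(A)=\Phi(V_G(A))$.
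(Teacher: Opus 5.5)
Your proposal is correct and follows the paper's proof, which transports Lemma~\ref{loc31} through the bijection of Lemma~\ref{loc30}; you use the inverse map $P\mapsto\wp_P$, which is the same thing. Your supplementary direct checks are also valid and show the $W_G$ identities need no hypothesis at all, because primality of $\rho$ is defined elementwise on $\zero\rho$. One remark is off, though: $W_G(0)=\GCSpec(S)$ holds trivially because $0\in\zero\rho$, so it does not depend on surjectivity of $\Phi$. The hypothesis is really what makes $g$-prime ideals ordinary prime, i.e.\ what makes $\Phi$ land in $\GCSpec(S)$.
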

\begin{proof}
Transport Lemma~\ref{loc31} through the bijection
$\rho\mapsto\zero\rho$ of Lemma~\ref{loc30}.
\end{proof}

The following theorem gives the homeomorphism between the ideal
spectrum and the congruence spectrum, completing the proof of
Theorem~C.

\begin{theorem}\label{loc34}
Assume Hypothesis~\ref{hyp:g-closure}.  The spaces $\Spec_g(S)$
and $\GCSpec(S)$ are homeomorphic via $P\mapsto\wp_P$ and
$\rho\mapsto\zero\rho$.
\end{theorem}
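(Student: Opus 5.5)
The plan is to verify three things: that the maps are mutually inverse bijections, that they are continuous, and that they are closed (equivalently, that each pulls back closed sets to closed sets). Let $\Phi\colon\Spec_g(S)\to\GCSpec(S)$ be $\Phi(P)=\wp_P$, and let $\Psi(\rho)=\zero\rho$ be the reverse map. Lemma~\ref{loc30} already says that $\Phi$ and $\Psi$ are well defined and mutually inverse bijections. The only remaining content is therefore topological. Both spaces carry the topologies whose closed sets are generated by the families $V_G(A)$ and $W_G(A)$, for $A\subseteq S$. By Lemmas~\ref{loc31} and~\ref{loc38}, these families are closed under finite unions and arbitrary intersections (passing to generated ideals where needed). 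So each family is exactly the family of closed sets, not merely a subbasis.

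The key step is the set-theoretic identity
\[
\Phi\bigl(V_G(A)\bigr)=W_G(A)\qquad\text{for every }A\subseteq S .
\]
To prove it, take $P\in\Spec_g(S)$. Since $P$ is $g$-closed, we have $\zero{\wp_P}=P^g=P$ by Definition~\ref{def:g-closed-ideals}. Hence $A\subseteq P$ if and only if $A\subseteq\zero{\Phi(P)}$; that is, $P\in V_G(A)$ if and only if $\Phi(P)\in W_G(A)$. Surjectivity of $\Phi$ onto $\GCSpec(S)$ (Lemma~\ref{loc30}) then upgrades this pointwise equivalence to equality of sets. Equivalently, $\Psi^{-1}(V_G(A))=W_G(A)$ and $\Phi^{-1}(W_G(A))=V_G(A)$.

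Once the identity holds, continuity of $\Phi$ follows from $\Phi^{-1}(W_G(A))=V_G(A)$, and continuity of $\Psi$ follows from $\Psi^{-1}(V_G(A))=W_G(A)$. Since $\Phi$ and $\Psi$ are inverse bijections, $\Phi$ is a homeomorphism.

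The one point that needs care, and the likeliest obstacle, is the interpretation of the topology on $\GCSpec(S)$. The proof needs $W_G(A)$ to be defined through zero-classes, as in the paper, and not through full containment of congruences. The identity $\zero{\wp_P}=P$ uses $g$-closedness of $P$ essentially. For a non-canonical congruence, the zero-class would not determine the congruence, and the correspondence of closed sets could fail. This is exactly why $\GCSpec(S)$ is restricted to canonical congruences.

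As a consequence, Theorem~\ref{Spec_g-is-spectral} transfers: $\GCSpec(S)$ is spectral, which completes Theorem~C.
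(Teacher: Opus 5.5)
Your proposal is correct and takes the same route as the paper. The bijection is Lemma~\ref{loc30}, and continuity in both directions comes from the preimage identities $\Phi^{-1}(W_G(A))=V_G(A)$ and $\Psi^{-1}(V_G(A))=W_G(A)$, which rest on $\zero{\wp_P}=P$ for $g$-closed $P$; your extra check that the families $V_G(A)$ and $W_G(A)$ are already closed under finite unions and arbitrary intersections is harmless but not needed.
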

\begin{proof}
Lemma~\ref{loc30} gives inverse bijections $z(P)=\wp_P$ and
$c(\rho)=\zero\rho$.  Moreover,
$z^{-1}(W_G(A))=V_G(A)$ and $c^{-1}(V_G(A))=W_G(A)$,
so both maps are continuous.
\end{proof}

\begin{corollary}\label{cor:gcong-spectral}
If $S$ satisfies Hypothesis~\ref{hyp:g-closure},
then $\GCSpec(S)$ is spectral.
\end{corollary}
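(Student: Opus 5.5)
The corollary follows by transporting spectrality along the homeomorphism already established. Under Hypothesis~\ref{hyp:g-closure}, Theorem~\ref{Spec_g-is-spectral} shows that $\Spec_g(S)$, with closed sets $V_G(A)$, is spectral. Theorem~\ref{loc34} shows that $P\mapsto\wp_P$ and $\rho\mapsto\zero\rho$ are mutually inverse homeomorphisms between $\Spec_g(S)$ and $\GCSpec(S)$, where $\GCSpec(S)$ carries the topology with closed sets $W_G(A)$. Spectrality is defined by quasi-compactness, sobriety, and a basis of quasi-compact opens closed under finite intersections, all of which are purely topological. So it suffices to note that each property is invariant under homeomorphism.

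The steps, in order, are as follows. First, I would check that the family $\{W_G(A)\}$ really is the family of closed sets of a topology on $\GCSpec(S)$. This is Lemma~\ref{loc38}, transported from Lemma~\ref{loc31}: it gives closure under arbitrary intersections and finite unions, and shows the empty set and the whole space are closed. Second, I would invoke Theorem~\ref{loc34}, which shows that $z^{-1}(W_G(A))=V_G(A)$ and $c^{-1}(V_G(A))=W_G(A)$. Hence closed sets correspond exactly, and the bijection $z$ is a homeomorphism. Third, I would pull back each defining property through this homeomorphism. Quasi-compact open sets of $\Spec_g(S)$ map to quasi-compact open sets of $\GCSpec(S)$, finite intersections are preserved by the bijection, irreducible closed sets correspond, and generic points correspond.

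The only point needing care is the first step. One must make sure the topology on $\GCSpec(S)$ in the corollary is the one generated by the $W_G(A)$, rather than some other natural topology on congruences, such as one defined through the pairs $(a,b)$ themselves. I take the topology to be the $W_G$ one, since that is how $W_G(A)$ was introduced. I would also note that the finite-union identity uses the primality established in Lemma~\ref{loc30}. Once these points are checked, no genuine obstacle remains, and the corollary follows immediately from Theorems~\ref{Spec_g-is-spectral} and~\ref{loc34}.
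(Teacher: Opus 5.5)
Your proposal is correct and matches the paper's proof, which simply combines Theorem~\ref{Spec_g-is-spectral} (spectrality of $\Spec_g(S)$) with the homeomorphism $\Spec_g(S)\cong\GCSpec(S)$ of Theorem~\ref{loc34}, spectrality being a topological invariant. Your extra checks are sound but only make explicit what the paper leaves implicit: that the sets $W_G(A)$ form the closed sets of a topology (Lemma~\ref{loc38}), and that this is the intended topology.
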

\begin{proof}
Combine Theorem~\ref{Spec_g-is-spectral} and Theorem~\ref{loc34}.
\end{proof}

\section{Model examples and necessity of the
hypotheses}\label{sec:examples}

The preceding sections separate unconditional results from those
requiring compatibility between maximal ideals and maximal congruences.
The computations in this section make that separation explicit.
In particular, they show that the three closures---ordinary $z$-closure,
$z_k$-closure, and $g$-closure---are genuinely distinct operations.

\subsection{The natural-number semiring}

We continue to write $k_d$ for congruence modulo $d$ on $\N$
and $\beta=(\N_{>0}\times\N_{>0})\cup\{(0,0)\}$
for the Rees congruence.
The maximal congruences of $\N$ are $\beta$ and the $k_p$ with $p$
prime; their quotients are the Boolean semifield and $\mathds F_p$
respectively.

The following proposition makes explicit which modular congruences
are $z$-congruences.

\begin{proposition}\label{prop:N-zcong-squarefree}
For $d\geqslant2$, the congruence $k_d$ on $\N$ is a $z$-congruence
if and only if $d$ is square-free.
\end{proposition}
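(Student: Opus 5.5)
We argue directly from Definition~\ref{def:zcong}, using the classification of maximal congruences on $\N$ recorded in Example~\ref{ex:N-congruences}: they are $\beta$ and the $k_p$ with $p$ prime. The first step is to fix an exact description of $\MC(a,b)$ for arbitrary $a,b\in\N$, including pairs in which one entry is $0$.
\begin{itemize}
\item We have $k_p\in\MC(a,b)$ if and only if $p\mid a-b$. This holds for every prime $p$ when $a=b$.
\item We have $\beta\in\MC(a,b)$ if and only if $a,b$ are both positive or $a=b=0$.
\end{itemize}
We also need one further fact. Since a nonzero integer has only finitely many prime divisors, $\bigcap_p k_p$ is the identity relation. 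Hence any pair whose hull contains every $k_p$ is a diagonal pair.

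For the direction \emph{square-free $\Rightarrow$ $z$-congruence}, let $d$ be square-free. Suppose $(a,b)\in k_d$ and $\MC(a,b)\subseteq\MC(c,e)$; we must show $d\mid c-e$.
\begin{itemize}
\item If $a=b$, then every $k_p$ lies in $\MC(a,b)\subseteq\MC(c,e)$. By the remark above, $c=e$, and so $(c,e)\in k_d$.
\item If $a\neq b$, let $p$ be any prime divisor of $d$. Then $p\mid a-b$, so $k_p\in\MC(a,b)\subseteq\MC(c,e)$, which gives $p\mid c-e$. Since $d$ is square-free, it is the product of its distinct prime divisors, so $d\mid c-e$.
\end{itemize}
The presence or absence of $\beta$ plays no role in this direction.

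For the converse, suppose $d$ is not square-free, so $p^2\mid d$ for some prime $p$. Let $r$ be the product of the distinct primes dividing $d$ (the radical of $d$). We use the two pairs $(0,d)$ and $(0,r)$.
\begin{itemize}
\item Clearly $(0,d)\in k_d$.
\item Both pairs have exactly one zero entry, so $\beta$ lies in neither hull.
\item The primes dividing $d$ and $r$ are the same, so $\MC(0,d)=\{k_q\mid q\mid d\}=\MC(0,r)$.
\item Since $p^2\nmid r$, we have $d\nmid r$, so $(0,r)\notin k_d$.
\end{itemize}
Thus $k_d$ violates Definition~\ref{def:zcong}. This is the same phenomenon as the $(15,27)$ versus $(18,36)$ example for $k_{12}$.

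The main point requiring care is the hull computation, specifically the bookkeeping of $\beta$ for pairs involving $0$ and the diagonal case $a=b$. The formula quoted in Example~\ref{ex:N-congruences} covers only positive entries. I would therefore re-derive it from the definitions of $\beta$ and $k_p$ before using it. The rest of the argument is elementary divisibility.
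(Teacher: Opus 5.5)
Your proof is correct and follows essentially the same route as the paper: the hull description ($k_p\in\MC(a,b)$ exactly when $p\mid a-b$) gives the square-free direction, and the pairs $(0,d)$ and $(0,\mathrm{rad}(d))$ give the counterexample. Your separate treatment of the diagonal case $a=b$, via $\bigcap_p k_p$ being the identity, is valid but unnecessary, because $p\mid 0$ means the general argument already covers it.
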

\begin{proof}
The maximal congruences containing $(a,b)\in\N\times\N$ are:
$\beta$ when $a,b$ are both positive or both zero,
and $k_p$ when $p\mid a-b$.
If $d$ is square-free and $(a,b)\in k_d$, every prime $p\mid d$
gives $k_p\in\MC(a,b)$;
if $\MC(a,b)\subseteq\MC(c,e)$, every such $p$ divides $c-e$,
and square-freeness gives $d\mid c-e$.

If $d$ is not square-free, let $r=\prod_{p\mid d}p$.
Then $(0,d)\in k_d$ and
$\MC(0,d)=\{k_p:p\mid d\}=\MC(0,r)$,
but $(0,r)\notin k_d$ because $d\nmid r$.
\end{proof}

The canonical $g$-closure on principal ideals of $\N$ has an
explicit arithmetic description: it replaces the generating integer
by its square-free radical.

\begin{proposition}\label{prop:N-gclosure-principal}
For $n\geqslant2$, $(n\N)^g=\mathrm{rad}(n)\N$,
where $\mathrm{rad}(n)$ is the product of the distinct prime divisors
of $n$.
Moreover $\{0\}^g=\{0\}$, while $(\N\setminus\{1\})^g=\N$.
\end{proposition}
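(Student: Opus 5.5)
The plan is to compute everything directly from Definition~\ref{def:g-closed-ideals}, using the list of maximal congruences of $\N$ from Example~\ref{ex:N-congruences}: they are $\beta$ and the $k_p$ for $p$ prime. We have $\zero\beta=\{0\}$, since the class of $0$ under $\beta$ is $\{0\}$, and $\zero{k_p}=p\N$. So for an ideal $I$, the hull $\MC(I)$ consists of $\beta$ if $I=\{0\}$, together with those $k_p$ for which $I\subseteq p\N$. Moreover, $a\in I^g$ holds exactly when $a\in\zero\theta$ for every $\theta\in\MC(I)$. Hence
\[
I^g=\bigcap_{\theta\in\MC(I)}\zero\theta ,
\]
with the empty intersection equal to $\N$. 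Each assertion then reduces to identifying $\MC(I)$.

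\textbf{The case $I=n\N$ with $n\geqslant2$.} Since $n\in I$ and $n\neq0$, $I\not\subseteq\{0\}$, so $\beta\notin\MC(n\N)$. Also $n\N\subseteq p\N$ if and only if $p\mid n$. Therefore $\MC(n\N)=\{k_p: p\mid n\}$, which is nonempty because $n\geqslant2$. It follows that
\[
(n\N)^g=\bigcap_{p\mid n}p\N=\mathrm{rad}(n)\N ,
\]
the last equality being elementary arithmetic: an integer is divisible by each of a set of distinct primes exactly when it is divisible by their product.

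\textbf{The case $I=\{0\}$.} Here $\beta\in\MC(\{0\})$, so $\{0\}^g\subseteq\zero\beta=\{0\}$, and extensivity (Proposition~\ref{prop:g-closure-operator}) gives equality. This agrees with Example~\ref{ex:zero-class-distinction}.

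\textbf{The case $I=\N\setminus\{1\}$.} This ideal contains $2$ and $3$, so it lies in no $p\N$. It is also not contained in $\{0\}$. Hence $\MC(I)=\varnothing$, and by the empty-intersection convention $I^g=\N$, again as in Example~\ref{ex:zero-class-distinction}.

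\textbf{Main obstacle.} The only step needing care is the completeness of the list of maximal congruences, which we take from Example~\ref{ex:N-congruences}, together with the zero-class computation $\zero{k_p}=p\N$ (as opposed to some other class). Once those are fixed, the proof is bookkeeping.
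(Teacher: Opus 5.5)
Your proposal is correct and follows essentially the same route as the paper. Both arguments read off $\MC(I)$ from the list of maximal congruences $\beta$ and $k_p$, whose zero-classes are $\{0\}$ and $p\N$, and then compute the closure in each of the three cases. The only cosmetic difference is that you intersect the zero-classes $\bigcap_{p\mid n}p\N$ directly, whereas the paper first identifies $\bigcap_{p\mid n}k_p$ as congruence modulo $\mathrm{rad}(n)$ and then takes its zero-class.
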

\begin{proof}
The zero-class of $\beta$ is $\{0\}$, so $\beta\notin\MC(n\N)$.
For a prime $p$, $\zero{k_p}=p\N$ and $n\N\subseteq p\N
\Leftrightarrow p\mid n$; hence $\MC(n\N)=\{k_p:p\mid n\}$.
The intersection of these congruences is congruence modulo
$\mathrm{rad}(n)$, with zero-class $\mathrm{rad}(n)\N$.

For $\{0\}$: the presence of $\beta$ in $\MC(\{0\})$ forces every
positive integer out of the zero-class, giving $\{0\}^g=\{0\}$.
For $\N\setminus\{1\}$: no maximal congruence has zero-class
containing this ideal (zero-classes are $\{0\}$ and $p\N$),
so $\MC(\N\setminus\{1\})=\varnothing$, whence
$\wp_{\N\setminus\{1\}}=\N\times\N$ and the $g$-closure is $\N$.
\end{proof}

This arithmetic calculation illustrates the separation between
ordinary $z$-ideals and $g$-closed ideals in a transparent setting.

\begin{corollary}\label{cor:N-gclosed-many}
The semiring $\N$ has many proper $g$-closed ideals that are not
ordinary $z$-ideals.
For instance, $6\N$ is $g$-closed (since $6$ is square-free)
but is not a $z$-ideal.
\end{corollary}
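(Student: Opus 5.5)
The corollary has two claims: $6\N$ is $g$-closed, and $6\N$ is not a $z$-ideal. The word ``many'' then follows by running the same argument for every square-free $n\geqslant 2$.

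For $g$-closedness we apply Proposition~\ref{prop:N-gclosure-principal} with $n=6$. It gives $(6\N)^g=\mathrm{rad}(6)\N=6\N$, since $6=2\cdot 3$ is square-free. Hence $6\N$ is a fixed point of $g$, that is, it lies in $\GCl(\N)$. It is useful to recall why this holds. By the proof of Proposition~\ref{prop:N-gclosure-principal}, $\MC(6\N)=\{k_2,k_3\}$, because $\beta$ has zero-class $\{0\}\not\supseteq 6\N$ and $p\N\supseteq 6\N$ only for $p\in\{2,3\}$. So $\wp_{6\N}=k_2\cap k_3=k_6$, and its zero-class is $6\N$. Properness is clear, since $1\notin 6\N$.

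For the failure of the $z$-property we use Example~\ref{ex:N-zideals}. There $\Max(\N)=\{\N\setminus\{1\}\}$, so every non-unit $m$ satisfies $\M(m)=\{\N\setminus\{1\}\}$. Take $a=6\in 6\N$ and $b=2$. Then $\M(6)=\M(2)$, but $2\notin 6\N$. By Definition~\ref{def:zideal}, $6\N$ is not a $z$-ideal. Equivalently, the only proper $z$-ideal of $\N$ is $\N\setminus\{1\}\neq 6\N$.

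For ``many'', the same two steps show that $n\N$, for every square-free $n\geqslant2$, is a proper $g$-closed ideal. None of these is a $z$-ideal: $n\N\neq\N\setminus\{1\}$, because for instance $n+1\notin n\N$. This gives infinitely many such ideals. There is no real obstacle here. The only point needing care is making sure $\beta\notin\MC(n\N)$, which is what keeps the $g$-closure from collapsing, and that check is already built into Proposition~\ref{prop:N-gclosure-principal}.
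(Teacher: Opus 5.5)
Your proposal is correct and follows the paper's proof. The paper also gets $g$-closedness of $n\N$ for square-free $n$ from Proposition~\ref{prop:N-gclosure-principal}, and the failure of the $z$-property from Example~\ref{ex:N-zideals}, namely that $\N\setminus\{1\}$ is the only proper $z$-ideal of $\N$. Your explicit witness $\M(6)=\M(2)$ with $2\notin 6\N$, and your check $n+1\notin n\N$ for the ``many'' clause, simply spell out those two citations.
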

\begin{proof}
By Proposition~\ref{prop:N-gclosure-principal},
$n\N$ is $g$-closed whenever $n$ is square-free.
Example~\ref{ex:N-zideals} shows that the only proper ordinary
$z$-ideal of $\N$ is $\N\setminus\{1\}$.
\end{proof}

The same square-free condition governs $z_k$-ideals, confirming
that the $k$-maximal perspective differs from both the ordinary
and congruence-theoretic ones.

\begin{proposition}\label{prop:N-zk-principal}
For $n\geqslant2$, the ideal $n\N$ is a $z_k$-ideal of $\N$ if and
only if $n$ is square-free.  The ideal $\{0\}$ is also a $z_k$-ideal.
\end{proposition}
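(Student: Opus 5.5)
The whole argument rests on describing $\Maxk(\N)$ precisely and then reading off the maximal $k$-hull of each element. The $k$-ideals of $\N$ are $\{0\}$ and the ideals $d\N$ with $d\geqslant1$. Indeed, if $I\neq\{0\}$ is a $k$-ideal, let $d$ be its least positive element. For any $x\in I$ write $x=qd+r$ with $0\leqslant r<d$. Since $qd\in I$ and $x\in I$, subtractivity gives $r\in I$, so $r=0$. Hence $I=d\N$. Among these, the maximal proper ones are exactly $p\N$ with $p$ prime, as recorded in Example~\ref{ex:zk-N}. It follows that for $a\geqslant1$,
\[
\Mk(a)=\{p\N\mid p\text{ prime},\ p\mid a\},
\qquad
\Mk(0)=\Maxk(\N).
\]
So $\Mk(a)=\Mk(b)$ with $a,b>0$ means that $a$ and $b$ have the same prime divisors, that is, $\mathrm{rad}(a)=\mathrm{rad}(b)$. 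Also $\Mk(0)=\Mk(b)$ forces $b=0$, because a positive $b$ has only finitely many prime divisors while there are infinitely many primes.

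With this in hand, the claim for $\{0\}$ is immediate: the only $b$ with $\Mk(b)=\Mk(0)$ is $b=0$.

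Now let $n\geqslant2$ be square-free. Take $a\in n\N$ with $\Mk(a)=\Mk(b)$. If $a=0$ then $b=0\in n\N$. If $a>0$, then every prime $p\mid n$ divides $a$, so $p\mid b$. We may take $b>0$, since otherwise $b=0\in n\N$. The product of distinct primes dividing $b$ divides $b$, so $n=\prod_{p\mid n}p$ divides $b$, and $b\in n\N$.

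Conversely, suppose $n$ is not square-free. Take $a=n$ and $b=\mathrm{rad}(n)$. Then $\Mk(a)=\Mk(b)$, but $n\nmid\mathrm{rad}(n)$ because $\mathrm{rad}(n)<n$, so $b\notin n\N$. This mirrors the argument of Proposition~\ref{prop:N-zcong-squarefree}.

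The main obstacle is the first step: the classification of $k$-ideals of $\N$ and the fact that $0$ lies in every maximal $k$-ideal while no positive integer does. Everything after that is elementary arithmetic on radicals. The division-with-remainder argument inside a subtractive ideal is the point that needs care.
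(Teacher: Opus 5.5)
Your proof is correct and follows the same route as the paper: identify $\Maxk(\N)$ as the ideals $p\N$, read off $\Mk(a)$ as the prime divisors of $a$ (with $\Mk(0)=\Maxk(\N)$), and then use $\mathrm{rad}(n)$ for both directions. You also fill in two points the paper leaves implicit: the division-with-remainder classification of $k$-ideals of $\N$, and the case $a=0$ in the square-free direction.
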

\begin{proof}
The maximal $k$-ideals of $\N$ are $p\N$ for primes $p$, so
$\Mk(a)=\{p\N:p\mid a\}$ for $a>0$ and $\Mk(0)$ comprises
all maximal $k$-ideals.
If $n$ is square-free and $a\in n\N$ with $\Mk(a)=\Mk(b)$,
the same primes divide $b$, so $b\in n\N$.
If $n$ is not square-free and $r=\mathrm{rad}(n)$,
then $n\in n\N$ with $\Mk(n)=\Mk(r)$ but $r\notin n\N$.
Finally, $\Mk(0)=\Mk(b)$ forces every prime to divide $b$,
giving $b=0$; hence $\{0\}$ is $z_k$-closed.
\end{proof}

These computations confirm that the ordinary $z$-closure,
the $z_k$-closure, and the $g$-closure are three genuinely distinct
operations in $\N$:
the ordinary maximal-ideal hull sees only the distinction between
units and non-units; the $k$-maximal hull records prime divisors
through subtractive ideals; the $g$-closure records prime divisors
through congruences.

\subsection{Power-set semirings and Boolean behaviour}

Let $X$ be a set and let $\mathcal{P}(X)$ carry its Boolean semiring
structure $(\cup,\cap,\varnothing,X)$.
For an ideal $\mathcal{I}$ of the Boolean algebra $\mathcal{P}(X)$,
define $(A\equiv_{\mathcal{I}} B)\Leftrightarrow(A\triangle B\in\mathcal{I})$,
where $\triangle$ denotes symmetric difference.

Power-set semirings provide the contrasting Boolean model,
one in which congruences, ideals, and maximal-hull arguments align
particularly cleanly.
The next lemma makes the alignment explicit.

\begin{lemma}\label{lem:boolean-congruence-ideals}
The assignment $\mathcal{I}\mapsto{\equiv_{\mathcal{I}}}$ is an
inclusion-preserving bijection between ideals of the Boolean algebra
$\mathcal{P}(X)$ and semiring congruences on $\mathcal{P}(X)$.
The zero-class of $\equiv_{\mathcal{I}}$ is $\mathcal{I}$.
\end{lemma}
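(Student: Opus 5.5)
The plan is to prove the four assertions in the following order: $\equiv_{\mathcal I}$ is a congruence, its zero-class is $\mathcal I$, every congruence arises this way, and the assignment is injective and order-preserving. Throughout, I use the identification of the Boolean semiring $(\mathcal P(X),\cup,\cap)$ with the Boolean ring $(\mathcal P(X),\triangle,\cap)$. Here an ideal of the Boolean algebra means a nonempty down-closed subset closed under finite unions. Such a set is exactly a semiring ideal of $(\mathcal P(X),\cup,\cap)$, and also exactly a ring ideal of $(\mathcal P(X),\triangle,\cap)$.

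First, $\equiv_{\mathcal I}$ is a congruence. It is an equivalence relation because $A\triangle A=\varnothing\in\mathcal I$, $\triangle$ is symmetric, and $A\triangle C\subseteq (A\triangle B)\cup(B\triangle C)$. For compatibility, suppose $A\equiv B$ and $C\equiv D$. The elementwise inclusions
\[
(A\cup C)\triangle(B\cup D)\subseteq (A\triangle B)\cup(C\triangle D),
\qquad
(A\cap C)\triangle(B\cap D)\subseteq (A\triangle B)\cup(C\triangle D)
\]
hold, and down-closure together with closure under unions puts both left-hand sides in $\mathcal I$. The zero-class is $\{A: A\triangle\varnothing\in\mathcal I\}=\mathcal I$.

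Second, every congruence $\Theta$ equals $\equiv_{\mathcal I}$ with $\mathcal I=\zero\Theta$. The zero-class is a semiring ideal, hence a Boolean-algebra ideal. The key step is to show $(A,B)\in\Theta\iff (A\triangle B,\varnothing)\in\Theta$ using only $\cup$ and $\cap$ with constants, since there is no subtraction.

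For the forward direction, assume $(A,B)\in\Theta$. Meeting with $X\setminus B$ gives $(A\setminus B,\varnothing)\in\Theta$, and symmetrically $(B\setminus A,\varnothing)\in\Theta$. Taking the union gives $(A\triangle B,\varnothing)\in\Theta$.

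For the converse, assume $(A\triangle B,\varnothing)\in\Theta$. Join both sides with $A\cap B$ to get $(A\cup B,A\cap B)\in\Theta$. Meeting that pair with $A$ yields $(A,A\cap B)\in\Theta$, and meeting it with $B$ yields $(B,A\cap B)\in\Theta$. Transitivity then gives $(A,B)\in\Theta$.

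This identifies $\Theta$ with $\equiv_{\zero\Theta}$ and proves surjectivity. Injectivity follows from the zero-class computation in the first step. Inclusion preservation is immediate in both directions: $\mathcal I\subseteq\mathcal J$ iff $\equiv_{\mathcal I}\subseteq\equiv_{\mathcal J}$, since the ideals are recovered as zero-classes.

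The main obstacle is the converse in the key step, i.e.\ recovering $(A,B)\in\Theta$ from the zero-class alone. This is exactly where semirings usually fail, as the Bourne congruence shows. Here it works because every element has a complement, so the meets and joins with constants above replace subtraction. I note that the argument uses nothing about $X$ being finite.
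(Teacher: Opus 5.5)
Your proof is correct and follows the same route as the paper: the two symmetric-difference inclusions for $\cup$ and $\cap$, the zero-class computation, and meeting a pair $(A,B)\in\Theta$ with $X\setminus B$ (and symmetrically with $X\setminus A$) to land $A\triangle B$ in the zero-class. Your explicit converse also supplies the step the paper compresses into ``the reverse direction shows $\theta=\equiv_{\mathcal I}$'': join with $A\cap B$ to get $(A\cup B,A\cap B)\in\Theta$, then meet with $A$ and with $B$ and use transitivity.
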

\begin{proof}
If $A\triangle B\in\mathcal{I}$ and $C\triangle D\in\mathcal{I}$,
then $(A\cup C)\triangle(B\cup D)\subseteq(A\triangle B)\cup(C\triangle D)$
and $(A\cap C)\triangle(B\cap D)\subseteq(A\triangle B)\cup(C\triangle D)$;
since $\mathcal{I}$ is closed under finite unions and downward closed,
$\equiv_{\mathcal{I}}$ is a semiring congruence.
Its zero-class is $\{A:A\triangle\varnothing\in\mathcal{I}\}=\mathcal{I}$.

Conversely, if $\theta$ is a congruence with zero-class $\mathcal{I}$,
multiply any pair $(A,B)\in\theta$ by $X\setminus B$ to get
$(A\setminus B,\varnothing)\in\theta$, so $A\setminus B\in\mathcal{I}$;
symmetrically $B\setminus A\in\mathcal{I}$, so $A\triangle B\in\mathcal{I}$.
The reverse direction shows $\theta=\equiv_{\mathcal{I}}$.
\end{proof}

In the Boolean setting, every congruence is already a $z$-congruence.
This proposition explains why Boolean semirings are a particularly
simple class to which Theorem~C applies unconditionally.

\begin{proposition}\label{prop:boolean-all-zcong}
Every congruence on $\mathcal{P}(X)$ is a $z$-congruence.
Moreover every ideal of $\mathcal{P}(X)$ is $g$-closed.
\end{proposition}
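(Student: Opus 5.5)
The plan is to reduce both assertions to ideal theory of the Boolean algebra $\mathcal P(X)$ via Lemma~\ref{lem:boolean-congruence-ideals}, and then apply the Boolean prime ideal theorem. I will not assume $X$ finite.

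\emph{Step 1 (dictionary).} A semiring ideal of $(\mathcal P(X),\cup,\cap)$ is closed under finite unions and under intersection with arbitrary sets. Hence it is downward closed, i.e.\ a Boolean-algebra ideal, and conversely. By Lemma~\ref{lem:boolean-congruence-ideals}, $\mathcal I\mapsto{\equiv_{\mathcal I}}$ is an order isomorphism from ideals to congruences, with inverse the zero-class map. It therefore restricts to a bijection between maximal proper ideals $\mathcal M$ and maximal congruences $\equiv_{\mathcal M}$. Consequently:
\begin{itemize}
\item $\MC(A,B)$ corresponds to the set of maximal ideals $\mathcal M$ with $A\triangle B\in\mathcal M$;
\item $\MC(\mathcal I)$ corresponds to the set of maximal ideals $\mathcal M\supseteq\mathcal I$.
\end{itemize}

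\emph{Step 2 (separation).} The key fact is that for $E,F\subseteq X$, every maximal ideal containing $E$ contains $F$ if and only if $F\subseteq E$. Sufficiency is downward closure. For necessity, suppose $F\not\subseteq E$ and put $G=F\setminus E\neq\varnothing$. Choose an ultrafilter $\mathcal U$ on $X$ containing $G$, and let $\mathcal M=\{Y\subseteq X: X\setminus Y\in\mathcal U\}$ be the associated maximal ideal. Since $X\setminus E\supseteq G$, we have $X\setminus E\in\mathcal U$, so $E\in\mathcal M$. On the other hand $F\supseteq G\in\mathcal U$ gives $F\in\mathcal U$, so $F\notin\mathcal M$.

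\emph{Step 3 (every congruence is a $z$-congruence).} Let $\rho={\equiv_{\mathcal I}}$, and suppose $(A,B)\in\rho$ and $\MC(A,B)\subseteq\MC(C,D)$. By Step~1, every maximal ideal containing $A\triangle B$ contains $C\triangle D$. By Step~2 this gives $C\triangle D\subseteq A\triangle B$. Since $A\triangle B\in\mathcal I$ and $\mathcal I$ is downward closed, $C\triangle D\in\mathcal I$, that is, $(C,D)\in\rho$.

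\emph{Step 4 (every ideal is $g$-closed).} By Step~1, $\wp_{\mathcal I}=\bigcap\{\equiv_{\mathcal M}: \mathcal M\supseteq\mathcal I \text{ maximal}\}$. Its zero-class is therefore $\mathcal I^g=\bigcap\{\mathcal M:\mathcal M\supseteq\mathcal I\}$, with the empty intersection equal to $\mathcal P(X)$, which covers the case $\mathcal I=\mathcal P(X)$.

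It remains to show $\mathcal I=\mathcal I^g$ for a proper ideal $\mathcal I$, and it suffices to show $\mathcal I^g\subseteq\mathcal I$. Let $A\notin\mathcal I$. The set $\{X\setminus A\}\cup\{X\setminus Y: Y\in\mathcal I\}$ generates a proper filter: if $(X\setminus A)\cap(X\setminus Y)=\varnothing$ for some $Y\in\mathcal I$, then $A\subseteq Y$ would force $A\in\mathcal I$. Extend this filter to an ultrafilter. The dual maximal ideal contains $\mathcal I$ but not $A$, so $A\notin\mathcal I^g$.

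\emph{Main obstacle.} The only nonelementary ingredient is the existence of ultrafilters (Zorn's lemma). It is needed because for infinite $X$ the maximal congruences are not just the $\Theta_x$ of Example~\ref{ex:powerset}. When $X$ is finite, Steps~2 and~4 can instead use the principal maximal ideals $\{Y: x\notin Y\}$ directly, taking $x\in G$ in Step~2 and $x\in A\setminus\bigcup\mathcal I$ in Step~4.
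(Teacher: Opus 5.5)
Your plan is sound and Steps~1--3 are correct, but Step~4 as written fails because the complement is on the wrong side of the filter base $\{X\setminus A\}\cup\{X\setminus Y: Y\in\mathcal I\}$.

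Since $(X\setminus A)\cap(X\setminus Y)=X\setminus(A\cup Y)$, emptiness means $A\cup Y=X$, not $A\subseteq Y$. For instance, take $X=\{1,2\}$, $\mathcal I=\{\varnothing,\{2\}\}$ and $A=\{1\}$. Then the base contains both $\{1\}$ and $\{2\}$, so it generates no proper filter. Even when that filter is proper, an ultrafilter $\mathcal U\ni X\setminus A$ has dual maximal ideal $\{Y: X\setminus Y\in\mathcal U\}$ that \emph{contains} $A$, which is the opposite of what you need. (In Step~2 you handled this duality correctly.)

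The repair is to use the base $\{A\}\cup\{X\setminus Y: Y\in\mathcal I\}$. Because $\mathcal I$ is closed under finite unions, every finite intersection from this base contains $A\setminus Y$ for a single $Y\in\mathcal I$. If $A\setminus Y=\varnothing$, then $A\subseteq Y$, hence $A\in\mathcal I$, a contradiction. Extend the base to an ultrafilter $\mathcal U$. Then $\mathcal M=\{Y: X\setminus Y\in\mathcal U\}$ is a maximal ideal containing $\mathcal I$, and it omits $A$ because $A\in\mathcal U$. With this fix, Step~4 is exactly the paper's appeal to the Boolean prime ideal theorem ($\mathcal I$ is the intersection of the maximal ideals containing it). You read it off on zero-classes, whereas the paper phrases it as $\wp_{\mathcal I}={\equiv_{\mathcal I}}$.

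For the first assertion your route genuinely differs from the paper's. The paper applies the prime ideal theorem to $\mathcal I$ itself. Every maximal $\mathcal M\supseteq\mathcal I$ contains $A\triangle B$, hence $C\triangle D$, and $\mathcal I=\bigcap\mathcal M$. You instead compare the two hulls directly to extract $C\triangle D\subseteq A\triangle B$, and then you need only downward closure of $\mathcal I$. This is the arbitrary-$X$ version of the paper's appendix argument for finite $X$.

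Your route is in fact more elementary than your closing remark suggests. In Step~2 the principal maximal ideal $\{Y: x\notin Y\}$ for any $x\in G$ already separates $E$ from $F$, so the $z$-congruence assertion needs no choice at all. Ultrafilters are genuinely needed only in Step~4. An example is the ideal of finite subsets of an infinite $X$, which lies in no principal maximal ideal.
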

\begin{proof}
Maximal congruences correspond by Lemma~\ref{lem:boolean-congruence-ideals}
to maximal ideals of the Boolean algebra, equivalently to ultrafilters.
By the Boolean prime ideal theorem, every Boolean ideal is the
intersection of all maximal ideals containing it.

Let $\theta=\equiv_{\mathcal{I}}$ and suppose $(A,B)\in\theta$ with
$\MC(A,B)\subseteq\MC(C,D)$.
Then $A\triangle B\in\mathcal{I}$.
Every maximal ideal containing $\mathcal{I}$ contains $A\triangle B$,
and the hull inclusion then forces $C\triangle D$ into that same
maximal ideal.
Since $\mathcal{I}$ is the intersection of the maximal ideals
containing it, $C\triangle D\in\mathcal{I}$, so $(C,D)\in\theta$.

For $g$-closedness: $\wp_{\mathcal{I}}=\equiv_{\mathcal{I}}$
by the same Boolean argument, and $\zero{\equiv_{\mathcal{I}}}=\mathcal{I}$.
\end{proof}

\begin{corollary}\label{cor:boolean-g-hypothesis}
For every set $X$, the semiring $\mathcal{P}(X)$ satisfies
Hypothesis~\ref{hyp:g-closure}.
The frame $\GCl(\mathcal{P}(X))$ is the ordinary ideal frame of
the Boolean algebra $\mathcal{P}(X)$.
\end{corollary}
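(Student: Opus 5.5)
The plan is to reduce everything to the observation that, on $\mathcal P(X)$, the canonical $g$-closure is the identity, so that $\GCl(\mathcal P(X))=\Id(\mathcal P(X))$. I would then verify (G1)--(G4) directly for the ideal lattice of a Boolean algebra.

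The first step is to identify semiring ideals of $(\mathcal P(X),\cup,\cap,\varnothing,X)$ with Boolean (lattice) ideals. A semiring ideal $I$ is non-empty and closed under $\cup$. It is also closed under $A\mapsto A\cap C$ for arbitrary $C$, which is exactly downward closure, since $B\subseteq A$ gives $B=A\cap B$. Conversely, a Boolean ideal is such a set. In particular the principal ideal is $\ideal{A}=\{A\cap C: C\in\mathcal P(X)\}={\downarrow}A$. By Proposition~\ref{prop:boolean-all-zcong}, every ideal is $g$-closed, so $I^g=I$ for all $I$ and $\ideal{A}_g={\downarrow}A$. This already proves the second sentence of the corollary, modulo checking that the frame operations of (G2) are the ordinary ones: since $g$ is the identity, the $g$-join of a family is simply its semiring sum.

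Next I would check the four conditions in turn.

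\textbf{(G1).} The identity closure is trivially of finite type: every ideal is the union of the (closures of) its finitely generated subideals.

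\textbf{(G2).} The semiring sum $\sum_\lambda J_\lambda$ is the set of finite unions $B_1\cup\dots\cup B_n$ with $B_i\in J_{\lambda_i}$, and this set is already downward closed. For the infinite distributive law, take $A\in I\cap\sum_\lambda J_\lambda$ and write $A=B_1\cup\dots\cup B_n$. Each $B_i\subseteq A$ lies in $I$ by downward closure, so $B_i\in I\cap J_{\lambda_i}$, and hence $A\in\sum_\lambda(I\cap J_\lambda)$. The reverse inclusion is clear.

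\textbf{(G3).} Every ideal $I$ is the directed union of the principal ideals ${\downarrow}A$ with $A\in I$. Hence a compact ideal lies in finitely many of them, and so in ${\downarrow}(A_1\cup\dots\cup A_n)$; that is, it is principal. Conversely, ${\downarrow}A$ is compact, because $A\in\sum J_\lambda$ involves only finitely many $J_\lambda$. Thus the compact elements are exactly the finite joins ${\downarrow}A_1\vee\dots\vee{\downarrow}A_n={\downarrow}(A_1\cup\dots\cup A_n)$.

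\textbf{(G4).} We have ${\downarrow}A\cap{\downarrow}B={\downarrow}(A\cap B)=\ideal{AB}_g$. Since compact elements are principal, this gives closure of compact elements under finite meets, and the top element $\mathcal P(X)={\downarrow}X$ is compact.

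The only non-formal input is the $g$-closedness of arbitrary ideals, which rests on Proposition~\ref{prop:boolean-all-zcong} and hence on the Boolean prime ideal theorem when $X$ is infinite. I would simply cite that proposition. I would, however, double-check that its argument produces $\MC(\mathcal I)$ as exactly the maximal congruences $\equiv_{\mathcal M}$ with $\mathcal M\supseteq\mathcal I$ a maximal Boolean ideal, so that $\zero{\wp_{\mathcal I}}=\bigcap\mathcal M=\mathcal I$. The remaining steps are routine lattice calculations.
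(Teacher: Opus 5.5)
Your proposal is correct and follows the paper's route. Both deduce from Proposition~\ref{prop:boolean-all-zcong} that $g$ is the identity on ideals of $\mathcal P(X)$, then obtain (G1)--(G4) from the standard facts that the principal ideals ${\downarrow}A$ are compact, that ${\downarrow}A\cap{\downarrow}B={\downarrow}(A\cap B)$, and that every ideal is a directed union of principal ones; you simply spell out details the paper leaves implicit (semiring ideals are exactly Boolean ideals, the infinite distributive law, and the identification $\MC(\mathcal I)=\{\equiv_{\mathcal M}:\mathcal M\supseteq\mathcal I\text{ maximal}\}$, which does hold by Lemma~\ref{lem:boolean-congruence-ideals} and the Boolean prime ideal theorem).
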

\begin{proof}
By Proposition~\ref{prop:boolean-all-zcong}, $g$ is the identity on
ideals.
The ideal lattice of a Boolean algebra is algebraic and distributive;
principal ideals $\downarrow A=\{B:B\subseteq A\}$ are compact,
$\downarrow A\cap\downarrow B=\downarrow(A\cap B)$, and every ideal
is a directed union of principal ideals.
Hence the identity closure satisfies all parts of
Hypothesis~\ref{hyp:g-closure}.
\end{proof}

\subsection{Two boundary examples}

The following examples delimit the terminology in
Theorem~\ref{thm:regularity} and the hypotheses in the
congruence spectrum theorem.

The first prevents a terminological misunderstanding about the notion
of strong $z$-ideal used in Theorem~A.

\begin{example}\label{ex:additive-strongness}
The phrase ``strong $z$-ideal'' in Theorem~\ref{thm:regularity}
cannot mean ``strong ideal'' in the additive sense.
Let $S=\mathds{F}_2$, which is von Neumann regular with
$E(S)=\mathrm{Comp}(S)=\{0,1\}$.
The zero ideal is not additively strong:
$1+1=0\in\{0\}$ while $1\notin\{0\}$.
Nevertheless it is a strong $z$-ideal in the sense of
Definition~\ref{def:strong-z}, since $\{0\}=\bigcap\{M:\{0\}\subseteq M\}$
trivially.
The additive strong-ideal condition is thus strictly stronger than
the maximal-hull condition, and the two notions must not be
conflated.
\end{example}

The second example is the basic congruence-theoretic test case.
It shows concretely that maximal congruences do not automatically
have maximal zero-classes.

\begin{example}\label{ex:max-cong-zero-class-not-max}
The zero-class of a maximal congruence need not be maximal as an
ideal.
In $\N$, the maximal congruence $\beta$ has zero-class $\{0\}$,
which is properly contained in $p\N$ for every prime $p$.
By Example~\ref{ex:N-zideals}, $\{0\}$ is not an ordinary $z$-ideal.
Hence one cannot pass from maximal congruences to maximal ideals
by taking zero-classes without additional hypotheses.
\end{example}

\begin{remark}\label{rem:example-conclusion}
The examples in this section reveal three structurally independent
layers.
Ordinary $z$-ideals are controlled by ordinary maximal ideals.
The $z_k$-variant is controlled by maximal subtractive ideals.
The $g$-closed ideals are controlled by maximal congruences.
These three controls coincide in ring-like contexts, but they
separate cleanly in basic semirings such as $\N$.
The hypotheses appearing in Sections~\ref{sec:quotient},
\ref{sec:local}, and~\ref{sec:frame} are precisely the conditions
needed to move information between these three layers.
\end{remark}

\section{Two functors}\label{sec:functors}

The coherent-frame theorems of Section~\ref{sec:frame} have a
natural functorial content.
This section makes that content precise,
constructing the two coherent-frame-valued functors $\ZId$ and
$\GCl$ and the natural transformation that connects them.

Let $\mathsf{CRig}_{z}$ be the category whose objects are commutative
semirings and whose morphisms $\varphi:S\to T$ satisfy the
contraction condition: for every $z$-ideal $J$ of $T$,
the inverse image $\varphi^{-1}(J)$ is a $z$-ideal of $S$.

Functoriality is controlled by compact generators.
The following lemma shows that the formula on finite joins of
principal $z$-closures is well-defined and preserves the
lattice operations.

\begin{lemma}\label{lem:compact-map-z}
Let $\varphi:S\to T$ be a morphism in $\mathsf{CRig}_{z}$.  The formula
\[
\overline\varphi\!\left(\ideal{a_1}_z\vee_z\cdots\vee_z\ideal{a_n}_z\right)
=\ideal{\varphi(a_1)}_z\vee_z\cdots\vee_z\ideal{\varphi(a_n)}_z
\]
defines a homomorphism from the compact elements of $\ZId(S)$ to
those of $\ZId(T)$.
\end{lemma}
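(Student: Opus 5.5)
The plan is to reduce well-definedness to a Galois-type characterization of $\overline\varphi$ by the $z$-ideals of $T$. The contraction condition defining $\mathsf{CRig}_z$ is exactly what makes this work.

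\textbf{Step 1: a universal property for principal generators.} Fix $a\in S$ and a $z$-ideal $J$ of $T$. Since $J$ is a $z$-ideal, $\ideal{\varphi(a)}_z\subseteq J$ if and only if $\varphi(a)\in J$, that is, if and only if $a\in\varphi^{-1}(J)$. By the defining property of morphisms in $\mathsf{CRig}_z$, $\varphi^{-1}(J)$ is a $z$-ideal of $S$. Hence $a\in\varphi^{-1}(J)$ if and only if $\ideal{a}_z\subseteq\varphi^{-1}(J)$.

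Now take a compact element $C=\ideal{a_1}_z\vee_z\cdots\vee_z\ideal{a_n}_z$ (Lemma~\ref{loc25}). A join in $\ZId$ lies below a $z$-ideal exactly when each joinand does. So for every $J\in\ZId(T)$,
\[
\ideal{\varphi(a_1)}_z\vee_z\cdots\vee_z\ideal{\varphi(a_n)}_z\subseteq J
\iff C\subseteq\varphi^{-1}(J).
\]
The right-hand side depends only on $C$, not on the chosen generators. So the left-hand $z$-ideal is characterized as the least $z$-ideal $J$ of $T$ with $C\subseteq\varphi^{-1}(J)$, because an element of a complete lattice is determined by its set of upper bounds. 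Two presentations of the same $C$ therefore have the same image. This settles well-definedness, which I expect to be the only real obstacle. The same characterization shows that $\overline\varphi$ is monotone.

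\textbf{Step 2: lattice operations.} Finite joins are preserved by construction, since concatenating presentations gives a presentation of the join. The top $S=\ideal{1}_z$ goes to $\ideal{1}_z=T$, and the bottom $\clz(0)=\ideal{0}_z$ goes to $\ideal{0}_z$.

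For binary meets, take $C=\bigvee_i\ideal{a_i}_z$ and $D=\bigvee_j\ideal{b_j}_z$. Use the frame law (Theorem~\ref{loc22}) together with Lemma~\ref{loc23} to get $C\cap D=\bigvee_{i,j}\ideal{a_ib_j}_z$. Then
\[
\overline\varphi(C\cap D)=\bigvee_{i,j}\ideal{\varphi(a_i)\varphi(b_j)}_z.
\]
The same computation in $\ZId(T)$ shows this equals $\overline\varphi(C)\cap\overline\varphi(D)$, using that $\varphi$ is multiplicative. Each image is again a finite join of principal $z$-closures, so it is compact in $\ZId(T)$. Hence $\overline\varphi$ is a bounded lattice homomorphism between the lattices of compact elements.
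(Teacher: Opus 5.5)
Your proof is correct and follows essentially the paper's argument. The paper also gets well-definedness (in fact monotonicity) from the observation that $\varphi^{-1}(J)$ is a $z$-ideal containing the generators and hence the whole join, which your characterization as the least $J$ with $C\subseteq\varphi^{-1}(J)$ simply repackages; it likewise derives meet preservation from $\ideal{a}_z\cap\ideal{b}_z=\ideal{ab}_z$ and multiplicativity of $\varphi$, where your explicit use of the frame law to write $C\cap D=\bigvee_{i,j}\ideal{a_ib_j}_z$ fills in a step the paper leaves implicit.
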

\begin{proof}
For well-definedness, suppose a finite join generated by the $a_i$
is contained in one generated by the $b_j$.
Let $J$ be the corresponding join of $\ideal{\varphi(b_j)}_z$ in $T$.
Since $\varphi^{-1}(J)$ is a $z$-ideal containing each $b_j$,
it contains the left-hand join, so $\varphi(a_i)\in J$ for every $i$.
Preservation of finite joins is immediate;
finite meets follow from $\ideal{a}_z\cap\ideal{b}_z=\ideal{ab}_z$
and $\varphi(ab)=\varphi(a)\varphi(b)$.
\end{proof}

\begin{proposition}\label{loc35}
The assignment $\ZId(\varphi)(I)=\bigvee_{a\in I}^{z}
\ideal{\varphi(a)}_z$ defines a coherent frame homomorphism
$\ZId(S)\to\ZId(T)$.  Thus
\[
\ZId:\mathsf{CRig}_{z}\longrightarrow\mathsf{CohFrm}
\]
is a functor.
\end{proposition}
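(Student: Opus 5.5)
The plan is to use the standard extension of a lattice homomorphism between the compact parts of two coherent frames. By Lemma~\ref{loc25}, the compact elements of $\ZId(S)$ are the finite joins $\ideal{a_1}_z\vee_z\cdots\vee_z\ideal{a_n}_z$. By Theorem~\ref{lem:z-finite-type}(3), every $z$-ideal $I$ is the directed join of those compact elements that lie below it. Lemma~\ref{lem:compact-map-z} supplies a bounded-lattice homomorphism $\overline\varphi$ between the compact parts, and the task is to show that $\ZId(\varphi)$ is its unique join-preserving extension.

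\textbf{Step 1: $\ZId(\varphi)$ agrees with $\overline\varphi$ on compact elements.} Take $C=\ideal{a_1}_z\vee_z\cdots\vee_z\ideal{a_n}_z$. The inequality $\overline\varphi(C)\leqslant\ZId(\varphi)(C)$ is clear, since each $a_i\in C$. For the reverse inequality, let $a\in C$. Then $\ideal{a}_z\leqslant C$, and monotonicity of $\overline\varphi$ (from Lemma~\ref{lem:compact-map-z}) gives $\ideal{\varphi(a)}_z\leqslant\overline\varphi(C)$.

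\textbf{Step 2: $\ZId(\varphi)$ preserves arbitrary joins.} First, $\ZId(\varphi)$ is monotone. Second, $\ZId(\varphi)(I)=\bigvee\{\overline\varphi(C): C\leqslant I \text{ compact}\}$; this holds because each $\ideal{a}_z$ with $a\in I$ is such a $C$, and by Step 1 each $\overline\varphi(C)$ is a join of the $\ideal{\varphi(a)}_z$ with $a\in C\subseteq I$. Now take a family $I_\lambda$ and an element $a\in\bigvee^z_\lambda I_\lambda$. The element $\ideal{a}_z$ is compact and lies below $\bigvee^z_\lambda I_\lambda$, so compactness yields finitely many $I_{\lambda_j}$ and compact $C_j\leqslant I_{\lambda_j}$ with $\ideal{a}_z\leqslant\bigvee_j C_j$. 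Applying $\overline\varphi$, which preserves finite joins, gives $\ideal{\varphi(a)}_z\leqslant\bigvee_\lambda\ZId(\varphi)(I_\lambda)$. The reverse inequality follows from monotonicity.

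\textbf{Step 3: $\ZId(\varphi)$ preserves finite meets.} For the top element, $\ZId(\varphi)(S)\supseteq\ideal{\varphi(1)}_z=\ideal{1}_z=T$. For binary meets, write $I$ and $J$ as directed joins of compacts and use the frame distributive law (Theorem~\ref{loc22}) in $\ZId(S)$ and in $\ZId(T)$:
\[
\ZId(\varphi)(I)\cap\ZId(\varphi)(J)=\bigvee_{C\leqslant I,\,D\leqslant J}\overline\varphi(C)\cap\overline\varphi(D)=\bigvee\overline\varphi(C\cap D)\leqslant\ZId(\varphi)(I\cap J).
\]
Here $C\cap D$ is compact by Lemma~\ref{loc23}, and $\overline\varphi$ preserves meets by Lemma~\ref{lem:compact-map-z}. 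The reverse inequality follows from monotonicity.

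\textbf{Step 4: compact elements and functoriality.} Compact elements are sent to compact elements by Step 1, so $\ZId(\varphi)$ is a morphism of $\mathsf{CohFrm}$. For the identity, $\ZId(\mathrm{id})(I)=\bigvee_{a\in I}\ideal{a}_z=I$. For composition, both $\ZId(\psi\varphi)$ and $\ZId(\psi)\ZId(\varphi)$ are join-preserving, and on generators they agree: $\ideal{a}_z\mapsto\ideal{\psi\varphi(a)}_z$. Hence they coincide.

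The main obstacle is Step 1, namely identifying $\ZId(\varphi)$ on a compact element with the formula $\overline\varphi$. It depends on the well-definedness and monotonicity argument of Lemma~\ref{lem:compact-map-z}, and that argument in turn relies on the contraction condition defining $\mathsf{CRig}_z$. Once Step 1 is in place, the remaining steps are the routine compact-generation arguments.
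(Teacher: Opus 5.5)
Your proposal is correct and follows the paper's route. Both arguments extend the compact-level lattice homomorphism of Lemma~\ref{lem:compact-map-z} to all of $\ZId(S)$ and check functoriality on the principal generators $\ideal{a}_z$. The paper cites as standard that a finite-meet- and finite-join-preserving map on compact elements extends uniquely to a coherent frame homomorphism; you verify this by hand, showing agreement of the given formula with $\overline\varphi$ on compact elements, preservation of arbitrary joins via compactness, and preservation of finite meets via distributivity.
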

\begin{proof}
A coherent frame homomorphism from an algebraic coherent frame is
determined by its action on compact generators, provided that action
preserves finite meets and joins.
Lemma~\ref{lem:compact-map-z} supplies this.
Functoriality is checked on compact generators $\ideal{a}_z$:
$\ZId(\psi\circ\varphi)(\ideal{a}_z)=\ideal{\psi(\varphi(a))}_z
=(\ZId(\psi)\circ\ZId(\varphi))(\ideal{a}_z)$.
\end{proof}

Let $\mathsf{CRig}_{g}$ be the subcategory of $\mathsf{CRig}_{z}$
whose objects satisfy Hypothesis~\ref{hyp:g-closure}
and whose morphisms also satisfy:
inverse images of $g$-closed ideals of $T$ are $g$-closed in $S$.

The $g$-closure functor has the same formal shape but requires the
contraction and $g$-closure hypothesis on both source and target.

\begin{proposition}\label{loc36}
For $\varphi:S\to T$ in $\mathsf{CRig}_{g}$, the formula
$\GCl(\varphi)(I)=\bigvee_{a\in I}^{g}\ideal{\varphi(a)}_g$
defines a coherent frame homomorphism $\GCl(S)\to\GCl(T)$.  Hence
\[
\GCl:\mathsf{CRig}_{g}\longrightarrow\mathsf{CohFrm}
\]
is a functor.
\end{proposition}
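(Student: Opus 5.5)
The plan is to copy the argument for Proposition~\ref{loc35}, replacing the $z$-closure by the $g$-closure and invoking Hypothesis~\ref{hyp:g-closure} on both $S$ and $T$ wherever the $z$-case used Theorem~\ref{lem:z-finite-type}. By Theorem~\ref{GCl-coherent-frame}, $\GCl(S)$ and $\GCl(T)$ are coherent frames. Their compact elements are exactly the finite $g$-joins $\ideal{a_1}_g\vee_g\cdots\vee_g\ideal{a_n}_g$ (G3), and these are closed under finite meets by the formula $\ideal{a}_g\cap\ideal{b}_g=\ideal{ab}_g$ (G4). The construction has three steps.

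\textbf{Step 1: a $g$-version of Lemma~\ref{lem:compact-map-z}.} Define
\[
\overline\varphi\bigl(\ideal{a_1}_g\vee_g\cdots\vee_g\ideal{a_n}_g\bigr)=\ideal{\varphi(a_1)}_g\vee_g\cdots\vee_g\ideal{\varphi(a_n)}_g .
\]
Well-definedness and monotonicity come from the contraction condition defining $\mathsf{CRig}_{g}$. Suppose $\bigvee_i\ideal{a_i}_g\subseteq\bigvee_j\ideal{b_j}_g$, and let $J=\bigvee_j\ideal{\varphi(b_j)}_g$ in $T$. Then $\varphi^{-1}(J)$ is a $g$-closed ideal of $S$ containing every $b_j$. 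Hence it contains each $\ideal{b_j}_g$, so it contains their $g$-join, and therefore each $a_i$. This gives $\varphi(a_i)\in J$, so the image of the left-hand side lies in $J$. Finite joins are preserved by construction. The top element is preserved because $\varphi(1)=1$ gives $\ideal{1}_g=T$. Binary meets of generators are preserved by (G4) together with $\varphi(ab)=\varphi(a)\varphi(b)$.

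\textbf{Step 2: meets of general compact elements.} Take compact elements $c=\bigvee_i\ideal{a_i}_g$ and $d=\bigvee_j\ideal{b_j}_g$. Since $\GCl(S)$ is a frame (G2), distributivity gives
\[
c\wedge d=\bigvee_{i,j}\bigl(\ideal{a_i}_g\cap\ideal{b_j}_g\bigr)=\bigvee_{i,j}\ideal{a_ib_j}_g .
\]
The same computation in $\GCl(T)$ shows that $\overline\varphi(c)\wedge\overline\varphi(d)=\overline\varphi(c\wedge d)$.

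\textbf{Step 3: extension and functoriality.} Extend $\overline\varphi$ to all of $\GCl(S)$ by $I\mapsto\bigvee^g\{\overline\varphi(c): c\le I\text{ compact}\}$. Every $g$-closed $I$ is the directed $g$-join of the compact elements below it. It is also the $g$-join of the $\ideal{a}_g$ with $a\in I$, so this extension agrees with the stated formula $\bigvee^g_{a\in I}\ideal{\varphi(a)}_g$. The extension preserves arbitrary joins, because compact elements below a join lie below finite subjoins. It preserves finite meets, using Step 2 and the frame law in $\GCl(T)$. It sends compact elements to compact elements by construction. This is the standard fact that a lattice homomorphism between the compact parts of coherent frames extends uniquely to a coherent frame homomorphism, as in Proposition~\ref{loc35}. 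Functoriality is then checked on generators: $\GCl(\psi\circ\varphi)(\ideal{a}_g)=\ideal{\psi\varphi(a)}_g=\GCl(\psi)\bigl(\GCl(\varphi)(\ideal{a}_g)\bigr)$, and identities are clearly preserved.

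The main obstacle is Step 1. The delicate point is that $\varphi^{-1}(J)$ contains the $g$-join of the $\ideal{b_j}_g$ and not merely their sum. This rests on $\varphi^{-1}(J)$ being an ideal that is itself $g$-closed, which is exactly the extra morphism condition in $\mathsf{CRig}_{g}$. Without that condition, $\overline\varphi$ need not be well defined.
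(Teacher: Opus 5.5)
Your proposal is correct and follows essentially the same route as the paper: define the map on the compact generators $\ideal{a}_g\mapsto\ideal{\varphi(a)}_g$, obtain well-definedness from the inverse-image condition in $\mathsf{CRig}_{g}$, get meet compatibility from (G4) applied in both $S$ and $T$, and extend by joins. Your Steps~2 and~3 spell out details the paper's proof only asserts: meets of arbitrary compact elements via distributivity, agreement of the extension with the displayed formula, and the functoriality check on generators.
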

\begin{proof}
Both frames are compactly generated by finite $g$-joins of principal
compact elements.
Define the map on generators by $\ideal{a}_g\mapsto\ideal{\varphi(a)}_g$.
Compatibility with finite meets:
$\ideal{a}_g\cap\ideal{b}_g=\ideal{ab}_g$ by Hypothesis (G4),
and $\ideal{\varphi(a)}_g\cap\ideal{\varphi(b)}_g
=\ideal{\varphi(ab)}_g$ by the same hypothesis in $T$.
The inverse-image condition gives the well-definedness check:
containment relations among finite $g$-joins in $S$ are detected
after pulling back the corresponding $g$-closed ideal of $T$.
The displayed formula is the unique extension to all $g$-closed ideals.
\end{proof}

Let $\mathsf{CRig}_{g}^{\sigma}$ be the subcategory of
$\mathsf{CRig}_{g}$ whose objects $S$ have the property that
\[
\sigma_S:\ZId(S)\longrightarrow\GCl(S),\qquad\sigma_S(I)=I^g,
\]
is a coherent frame homomorphism, and whose morphisms $\varphi:S\to T$
satisfy the generator compatibility condition
$\GCl(\varphi)((\ideal{a}_z)^g)=(\ideal{\varphi(a)}_z)^g$
for every $a\in S$.

The canonical closure from $z$-ideals to $g$-closed ideals
assembles into a natural transformation under this compatibility condition.

\begin{proposition}\label{prop:natural-transformation}
On $\mathsf{CRig}_{g}^{\sigma}$, the maps $\sigma_S$ form a
natural transformation $\sigma:\ZId\Rightarrow\GCl$.
\end{proposition}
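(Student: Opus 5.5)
Naturality means that for every morphism $\varphi:S\to T$ in $\mathsf{CRig}_{g}^{\sigma}$ the square
\[
\GCl(\varphi)\circ\sigma_S=\sigma_T\circ\ZId(\varphi)
\]
commutes as maps $\ZId(S)\to\GCl(T)$. The plan is to reduce this to compact generators and then extend by joins. All four maps in the square are coherent frame homomorphisms. $\ZId(\varphi)$ and $\GCl(\varphi)$ are such by Propositions~\ref{loc35} and~\ref{loc36}. $\sigma_S$ and $\sigma_T$ are such by the definition of the object class of $\mathsf{CRig}_{g}^{\sigma}$. Hence both composites are frame homomorphisms $\ZId(S)\to\GCl(T)$, and in particular they preserve arbitrary joins.

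By Lemma~\ref{loc25} (equivalently Theorem~\ref{lem:z-finite-type}(3)), every $I\in\ZId(S)$ is the $z$-join of the principal closures $\ideal{a}_z$ with $a\in I$. Two join-preserving maps out of $\ZId(S)$ that agree on all $\ideal{a}_z$ therefore agree everywhere. So it suffices to verify
\[
\GCl(\varphi)\bigl((\ideal{a}_z)^g\bigr)=\bigl(\ZId(\varphi)(\ideal{a}_z)\bigr)^g
\qquad(a\in S).
\]

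For the right-hand side, the formula of Proposition~\ref{loc35} (the generator description in Lemma~\ref{lem:compact-map-z} with $n=1$) gives $\ZId(\varphi)(\ideal{a}_z)=\ideal{\varphi(a)}_z$. Applying $\sigma_T$ yields $(\ideal{\varphi(a)}_z)^g$. The left-hand side equals $(\ideal{\varphi(a)}_z)^g$ precisely by the generator compatibility condition imposed on morphisms of $\mathsf{CRig}_{g}^{\sigma}$. The two sides therefore coincide on generators, and naturality follows.

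The only delicate point is checking that $\ZId(\varphi)$ really sends $\ideal{a}_z$ to $\ideal{\varphi(a)}_z$. This requires the defining formula $\bigvee^z_{b\in\ideal{a}_z}\ideal{\varphi(b)}_z$ to collapse to the single term $b=a$. It does, because for $b\in\ideal{a}_z$ we have $\varphi(b)\in\ideal{\varphi(a)}_z$. To see this, note that $\varphi^{-1}(\ideal{\varphi(a)}_z)$ is a $z$-ideal of $S$ containing $a$, by the contraction condition defining $\mathsf{CRig}_z$, so it contains $\ideal{a}_z$. This is the same argument as the well-definedness step of Lemma~\ref{lem:compact-map-z}. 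Once this is in place, the remaining work is formal: join-preservation of the composites together with compact generation.
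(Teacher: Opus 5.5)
Your proposal is correct and takes the same route as the paper: verify the square on the compact generators $\ideal{a}_z$ using the generator-compatibility condition on morphisms of $\mathsf{CRig}_{g}^{\sigma}$, then extend to all of $\ZId(S)$ because both composites preserve joins and the $\ideal{a}_z$ join-generate. Your explicit check, via the contraction condition, that $\ZId(\varphi)(\ideal{a}_z)=\ideal{\varphi(a)}_z$ is a step the paper uses without comment (it is the $n=1$ case of Lemma~\ref{lem:compact-map-z}), so including it is a useful addition rather than a change of approach.
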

\begin{proof}
The object condition ensures $\sigma_S\in\mathsf{CohFrm}$.
For a compact generator $\ideal{a}_z$,
the morphism condition gives
\[
\GCl(\varphi)(\sigma_S(\ideal{a}_z))
=\GCl(\varphi)((\ideal{a}_z)^g)
=(\ideal{\varphi(a)}_z)^g
=\sigma_T(\ideal{\varphi(a)}_z)
=\sigma_T(\ZId(\varphi)(\ideal{a}_z)).
\]
Since compact generators join-generate $\ZId(S)$ and all maps
preserve joins, the naturality square commutes everywhere.
\end{proof}

\begin{remark}
The restriction to $\mathsf{CRig}_{g}^{\sigma}$ is essential:
without the frame-homomorphism and generator-compatibility conditions,
the closure map $I\mapsto I^g$ need not define a natural
transformation from the $z$-ideal functor to the $g$-closed ideal
functor.
\end{remark}

\appendix

\section{Auxiliary structural lemmas}\label{app:auxiliary}

This appendix collects technical verifications used in the body of
the paper.
They concern principal $z$-closures, the natural-number semiring,
power-set semirings, associated congruences, quotients,
localizations, coherence, and functoriality.
They are gathered here so that the main text can proceed without
repeating routine verifications, while all hypotheses in the
semiring arguments remain explicit.

\subsection{Maximal hulls and principal
\texorpdfstring{$z$}{z}-closures}

For $a\in S$, recall $\M(a)=\{M\in\Max(S):a\in M\}$
and $\mfrak(a)=\bigcap_{M\in\M(a)}M$ (with $\mfrak(a)=S$
when $a$ is a unit).

\begin{lemma}\label{app:maximal-hull-calculus}
For all $a,b\in S$:
\begin{enumerate}
\item $\M(ab)=\M(a)\cup\M(b)$;
\item $\M(a)\cap\M(b)\subseteq\M(a+b)$;
\item $\ideal{a}_z=\mfrak(a)$.
\end{enumerate}
\end{lemma}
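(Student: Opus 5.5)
Parts (1) and (2) are already Lemma~\ref{loc6}, and I would simply repeat those arguments. For (1): by Lemma~\ref{lem:max-prime} every maximal ideal $M$ is prime, so $ab\in M$ holds exactly when $a\in M$ or $b\in M$. For (2): if $a,b\in M$ then $a+b\in M$ because $M$ is closed under addition. All the real work is in (3), where I will prove the two inclusions $\ideal{a}_z\subseteq\mfrak(a)$ and $\mfrak(a)\subseteq\ideal{a}_z$ separately.

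\emph{Step 1: $\mfrak(a)$ is a $z$-ideal containing $a$.} An intersection of ideals is an ideal, and the empty intersection is $S$ by the convention adopted for units. Clearly $a\in\mfrak(a)$. Now let $x\in\mfrak(a)$ and suppose $\M(x)=\M(y)$. For each $M\in\M(a)$ we have $x\in M$, so $M\in\M(x)=\M(y)$, hence $y\in M$. Therefore $y\in\mfrak(a)$. This is the same computation that shows maximal ideals are $z$-ideals in the proof of Proposition~\ref{prop:idempotent-z-k}. Since $\clz$ is defined as the intersection of all $z$-ideals containing $\ideal{a}$, this step gives $\ideal{a}_z\subseteq\mfrak(a)$.

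\emph{Step 2: every $z$-ideal $J\ni a$ contains $\mfrak(a)$.} Take $x\in\mfrak(a)$. Then $\M(a)\subseteq\M(x)$, but equality of hulls need not hold, so I will replace $x$ by the product $ax$. By part (1), $\M(ax)=\M(a)\cup\M(x)=\M(x)$. Since $ax\in J$ and $J$ is a $z$-ideal, this forces $x\in J$. Hence $\mfrak(a)\subseteq J$ for every such $J$, and in particular $\mfrak(a)\subseteq\ideal{a}_z$.

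The main obstacle is Step 2: the definition of a $z$-ideal only allows transfer between elements with \emph{equal} maximal hulls, whereas membership in $\mfrak(a)$ only gives an inclusion of hulls. The product trick via part (1) is exactly what converts the inclusion into an equality. It needs only that maximal ideals are prime, so no subtractivity or other hypothesis on $S$ is used. The unit case is automatic, since both sides equal $S$.
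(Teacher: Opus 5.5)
Your proof is correct and follows the paper's argument essentially verbatim. Parts (1) and (2) come from primality of maximal ideals and additive closure. For (3), you show that $\mfrak(a)$ is a $z$-ideal containing $a$, then use the product $ax$, with $\M(ax)=\M(a)\cup\M(x)=\M(x)$, to force each $x\in\mfrak(a)$ into every $z$-ideal containing $a$.
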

\begin{proof}
Part (1) is Lemma~\ref{lem:max-prime}.
Part (2) follows from closure of ideals under addition.

For (3), $\mfrak(a)$ is a $z$-ideal: if $x\in\mfrak(a)$ and
$\M(x)=\M(y)$, then every maximal ideal containing $a$ contains $x$,
hence $y$, so $y\in\mfrak(a)$.
Thus $\ideal{a}_z\subseteq\mfrak(a)$.
Conversely, let $I$ be any $z$-ideal containing $a$ and
$x\in\mfrak(a)$.
Put $c=ax$; then $c\in I$ and $\M(c)=\M(a)\cup\M(x)=\M(x)$
(using (1) and $\M(a)\subseteq\M(x)$),
so the $z$-ideal property gives $x\in I$.
Intersecting over all such $I$ gives $\mfrak(a)\subseteq\ideal{a}_z$.
\end{proof}

\begin{lemma}\label{app:idempotent-sum-expanded}
Assume $S$ is additively idempotent.  Every $z$-ideal is a $k$-ideal
if and only if $\M(a+b)=\M(a)\cap\M(b)$ for all $a,b\in S$.
\end{lemma}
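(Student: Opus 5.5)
This is the appendix version of Proposition~\ref{prop:idempotent-z-k}, and I will prove it by expanding that argument, checking each order-theoretic step in an additively idempotent semiring. Throughout I write $x\leqslant y$ for $x+y=y$. Since $S$ is additively idempotent, this is a partial order, and addition and multiplication are monotone with respect to it.

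First I record the basic fact: \emph{in an additively idempotent semiring, an ideal $I$ is a $k$-ideal if and only if it is a lower set.}
\begin{enumerate}
\item Suppose $I$ is a $k$-ideal, $b\leqslant a$ and $a\in I$. Then $b+a=a\in I$ and $a\in I$, so subtractivity gives $b\in I$.
\item Suppose $I$ is a lower set, $a+b\in I$ and $b\in I$. Since $a\leqslant a+b$, we get $a\in I$.
\end{enumerate}

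For the forward direction, assume every $z$-ideal is a $k$-ideal. I first show that each maximal ideal $M$ is a $z$-ideal: if $a\in M$ and $\M(a)=\M(b)$, then $M\in\M(b)$, so $b\in M$. Hence $M$ is a $k$-ideal, and by the fact above a lower set. Now take $M\in\M(a+b)$. Since $a\leqslant a+b$ and $b\leqslant a+b$, we get $a,b\in M$. This proves $\M(a+b)\subseteq\M(a)\cap\M(b)$, and Lemma~\ref{app:maximal-hull-calculus}(2) supplies the reverse inclusion.

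For the converse, assume $\M(a+b)=\M(a)\cap\M(b)$ for all $a,b$, and let $I$ be a $z$-ideal. By the fact above it suffices to show $I$ is a lower set, so let $b\leqslant a$ with $a\in I$.
\begin{enumerate}
\item From $a=a+b$ we get $\M(a)=\M(a)\cap\M(b)$, hence $\M(a)\subseteq\M(b)$.
\item Put $c=ab\in I$. By Lemma~\ref{app:maximal-hull-calculus}(1), $\M(c)=\M(a)\cup\M(b)=\M(b)$.
\item The $z$-ideal property of $I$, applied to $c\in I$, then gives $b\in I$.
\end{enumerate}

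There is no real obstacle here. The only point needing care is the step that passes from ``$k$-ideal'' to ``lower set'', and it relies on additive idempotence so that $\leqslant$ is a genuine partial order with $a\leqslant a+b$. The product trick $c=ab$ in the converse is what avoids any need for subtraction.
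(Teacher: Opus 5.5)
Your proof is correct and follows the paper's argument (the same one as in Proposition~\ref{prop:idempotent-z-k}) step for step: for the forward direction, maximal ideals are $z$-ideals, hence $k$-ideals, hence lower sets; for the converse, you use the product $c=ab$ with $\M(c)=\M(b)$. Your explicit check that $k$-ideals are exactly the lower sets fills in a step the paper only asserts, and your computation $\M(c)=\M(b)$ avoids the paper's superfluous and generally false intermediate claim $\M(d)=\M(a)$.
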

\begin{proof}
Assume every $z$-ideal is a $k$-ideal.
Each maximal ideal is a $z$-ideal; hence a $k$-ideal.
If $a+b\in M$, idempotence gives $a,b\leqslant a+b$;
subtractivity gives $a,b\in M$.
Hence $\M(a+b)\subseteq\M(a)\cap\M(b)$;
the reverse inclusion is Lemma~\ref{app:maximal-hull-calculus}(2).

Conversely, assume the hull equality.
Let $I$ be a $z$-ideal and $b\leqslant a$ with $a\in I$.
Then $a+b=a$, so $\M(a)=\M(a+b)=\M(a)\cap\M(b)$,
giving $\M(a)\subseteq\M(b)$.
With $d=ab\in I$, one has $\M(d)=\M(a)$ and hence $a\in I$,
and $\M(d)=\M(a)\cup\M(b)=\M(b)$, so $b\in I$.
\end{proof}

\subsection{The semiring \texorpdfstring{$\N$}{N}}

\begin{proposition}\label{app:N-z-ideal-expanded}
The unique maximal ideal of $\N$ is $\mathfrak{m}=\N\setminus\{1\}$,
and the only proper $z$-ideal of $\N$ is $\mathfrak{m}$.
\end{proposition}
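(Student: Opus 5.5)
The proof has two parts: first that $\mathfrak m=\N\setminus\{1\}$ is the unique maximal ideal of $\N$, and then a classification of the proper $z$-ideals using the maximal hulls $\M(a)$.

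\emph{Unique maximal ideal.} I would first check that $\mathfrak m$ is an ideal. If $a,b\neq1$, then $a+b\neq1$: this is clear if both are $0$, and if one is positive and the other is not $1$, the sum is either $\geqslant2$ or equal to the positive one (which is not $1$). Also $sa\neq1$ when $a\neq1$, because $sa=1$ in $\N$ forces $s=a=1$. Since the only unit of $\N$ is $1$, every proper ideal avoids $1$ and so lies in $\mathfrak m$. Hence $\mathfrak m$ is maximal and is the only maximal ideal.

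\emph{The maximal hulls.} It follows that $\M(a)=\{\mathfrak m\}$ for every $a\neq1$, while $\M(1)=\varnothing$.

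\emph{$\mathfrak m$ is a $z$-ideal.} Suppose $a\in\mathfrak m$ and $\M(a)=\M(b)$. Then $\M(b)=\{\mathfrak m\}\neq\varnothing$, so $b\neq1$ and $b\in\mathfrak m$. Alternatively, this is the general fact, noted in the proof of Proposition~\ref{prop:idempotent-z-k}, that every maximal ideal is a $z$-ideal.

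\emph{Uniqueness among proper $z$-ideals.} Let $I$ be a proper $z$-ideal. It is nonempty and closed under multiplication by $0$, so $0\in I$. For any $n\neq1$ we have $\M(0)=\{\mathfrak m\}=\M(n)$, so $n\in I$; thus $\mathfrak m\subseteq I$. Since $I$ is proper, $1\notin I$, and therefore $I=\mathfrak m$.

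None of these steps is a real obstacle. The only points needing care are that the ideal axioms for $\mathfrak m$ rest on the arithmetic of $\N$ (no additive inverses, and $1$ the only unit), and that $0$ lies in every ideal.
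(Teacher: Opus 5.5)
Your proof is correct and takes the same approach as the paper. In both, $\mathfrak m$ is the set of non-units and hence the unique maximal ideal, and every non-unit has hull $\{\mathfrak m\}$, so a proper $z$-ideal must contain all non-units and therefore equals $\mathfrak m$. You also supply details the paper leaves implicit: the ideal axioms for $\mathfrak m$, that $\mathfrak m$ is itself a $z$-ideal, and that $0\in I$ guarantees $I$ contains a non-unit, which makes the paper's separate remark about $\{0\}$ unnecessary.
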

\begin{proof}
The set $\mathfrak{m}$ is an ideal containing every non-unit,
hence maximal and unique.
A proper $z$-ideal containing any non-unit $a$ must contain every
non-unit (since $\M(a)=\{\mathfrak{m}\}=\M(b)$ for all $b\neq1$),
so it equals $\mathfrak{m}$.
The zero ideal is not a $z$-ideal: $\M(0)=\M(2)$ but $2\neq0$.
\end{proof}

\begin{proposition}\label{app:N-congruences-expanded}
For $d\geqslant2$, the congruence $k_d$ on $\N$ is a $z$-congruence
if and only if $d$ is square-free.  The Rees congruence
$\beta=(\N_{>0}\times\N_{>0})\cup\{(0,0)\}$ is maximal with
zero-class $\{0\}$, which is not a $z$-ideal.
\end{proposition}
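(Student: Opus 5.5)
The plan is to reduce everything to an explicit description of the maximal-congruence hulls $\MC(a,b)$ in $\N$, following Example~\ref{ex:N-congruences} and Proposition~\ref{prop:N-zcong-squarefree}. The two assertions about $\beta$ are handled directly. For $\beta$ itself:
\begin{enumerate}
\item $\beta$ is a congruence, since $\N_{>0}$ is closed under addition and multiplication and $0$ is absorbing for multiplication.
\item $\beta$ is maximal. If $\theta\supsetneq\beta$, then $\theta$ contains some pair $(0,n)$ with $n>0$. Since $(n,1)\in\beta\subseteq\theta$, transitivity gives $(0,1)\in\theta$, and multiplying by $x$ gives $(0,x)\in\theta$ for every $x$, so $\theta=\N\times\N$.
\item The zero-class of $\beta$ is visibly $\{0\}$. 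It is not a $z$-ideal by Proposition~\ref{app:N-z-ideal-expanded}, because $\M(0)=\M(2)=\{\N\setminus\{1\}\}$ but $2\neq0$.
\end{enumerate}

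For the $k_d$ statement, I will first record the hulls. By the classification in Example~\ref{ex:N-congruences}, the maximal congruences are $\beta$ and the $k_p$ with $p$ prime. Hence $k_p\in\MC(a,b)$ if and only if $p\mid a-b$ (read as $p\mid |a-b|$), and $\beta\in\MC(a,b)$ if and only if $a,b$ are both zero or both positive. For the ``if'' direction, let $d$ be square-free, $(a,b)\in k_d$, and $\MC(a,b)\subseteq\MC(c,e)$. Each prime $p\mid d$ divides $a-b$, so $k_p\in\MC(a,b)\subseteq\MC(c,e)$, and therefore $p\mid c-e$. Since $d$ is the product of these distinct primes, $d\mid c-e$, that is, $(c,e)\in k_d$. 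This direction uses only that each $k_p$ is a maximal congruence, which I would justify by noting $\N/k_p\cong\mathds F_p$: any nontrivial identification in a field identifies $0$ with a unit, and hence collapses everything.

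For the ``only if'' direction, let $d$ be non-square-free and set $r=\mathrm{rad}(d)<d$. Then $(0,d)\in k_d$ but $(0,r)\notin k_d$. I will show $\MC(0,d)=\MC(0,r)=\{k_p: p\mid d\}$. Here $\beta$ lies in neither hull, because it separates $0$ from the positive integers, and $k_p$ contains $(0,d)$ if and only if $p\mid d$, if and only if $p\mid r$. This violates Definition~\ref{def:zcong}.

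The main obstacle is that this direction needs the full list of maximal congruences, not just that the listed ones are maximal; otherwise some unlisted maximal congruence might contain $(0,d)$ but not $(0,r)$. I would cite Example~\ref{ex:N-congruences}, and if a self-contained argument is wanted I would split a maximal $\theta$ into two cases:
\begin{enumerate}
\item If $\theta$ identifies $0$ with no positive integer, then $\theta\subseteq\beta$, and maximality gives $\theta=\beta$.
\item Otherwise the zero-class of $\theta$ is a nonzero $k$-ideal, and hence equals $m\N$ for its least positive element $m$, by subtractivity and the division algorithm. Then $\theta\supseteq k_m$: for $a\equiv b\pmod m$ with $a\le b$, write $b=a+qm$ and use $(qm,0)\in\theta$. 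Maximality then forces $\theta=k_p$ for a prime $p\mid m$.
\end{enumerate}
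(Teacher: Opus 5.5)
Your proof is correct and takes the same route as the paper. Both use the two-case classification of maximal congruences on $\N$ (zero-class $\{0\}$ forcing $\theta\subseteq\beta$, versus a least positive $m$ with $k_m\subseteq\theta$), the same hull description, the square-free argument via $(0,d)$ and $(0,\mathrm{rad}(d))$, and $\M(0)=\M(2)$ for the zero-class of $\beta$; beyond the paper, you also check directly that $\beta$ and the $k_p$ are maximal. The one compressed step, which the paper shares, is ``maximality forces $\theta=k_p$'': first note that $\theta\supseteq k_m$ with zero-class $m\N$ already gives $\theta=k_m$ (congruences on the ring $\mathds{Z}/m\mathds{Z}$ are determined by their zero-classes), and then $k_m\subseteq k_p$ for a prime $p\mid m$ together with maximality gives $m=p$.
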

\begin{proof}
Maximal congruences on $\N$ are classified as follows.
If $\theta$ is maximal proper and $0\sim_\theta d$ for the least
positive $d$, then $k_d\subseteq\theta$; maximality forces $d$ prime
and $\theta=k_p$.
If no positive integer is congruent to $0$, then $\theta\subseteq\beta$;
maximality gives $\theta=\beta$.

For $a,b\in\N$: $k_p\in\MC(a,b)\Leftrightarrow p\mid a-b$
and $\beta\in\MC(a,b)\Leftrightarrow a,b$ are both positive or
both zero.
The square-free criterion is Proposition~\ref{prop:N-zcong-squarefree}.
Finally, $0_\beta=\{0\}$, which is not a $z$-ideal by
Proposition~\ref{app:N-z-ideal-expanded}.
\end{proof}

\subsection{Finite power-set semirings}

\begin{proposition}\label{app:powerset-expanded}
Let $X$ be finite and $S=(\mathcal{P}(X),\cup,\cap,\varnothing,X)$.
For each $x\in X$, define
$(A,B)\in\Theta_x\Leftrightarrow[x\in A\text{ iff }x\in B]$.
The maximal congruences of $S$ are exactly $\{\Theta_x:x\in X\}$,
and every congruence on $S$ is a $z$-congruence.
\end{proposition}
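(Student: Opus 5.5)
Lemma~\ref{lem:boolean-congruence-ideals} reduces the whole statement to Boolean ideals: every congruence on $S=\mathcal P(X)$ has the form ${\equiv_{\mathcal I}}$ for an ideal $\mathcal I$ of the Boolean algebra, and $\mathcal I\mapsto{\equiv_{\mathcal I}}$ is an inclusion-preserving bijection. So maximal (proper) congruences correspond to maximal proper Boolean ideals. The plan is to identify those ideals, and then use the fact that ideals of a finite Boolean algebra are principal and are intersections of maximal ideals.

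\emph{Step 1: ideals are principal.} Since $X$ is finite, every ideal $\mathcal I$ of $\mathcal P(X)$ is finite and closed under finite unions and subsets. Hence $\mathcal I={\downarrow}U$, where $U=\bigcup\mathcal I$.

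\emph{Step 2: maximal ideals.} The ideal ${\downarrow}U$ is proper if and only if $U\neq X$, and ${\downarrow}U\subseteq{\downarrow}U'$ if and only if $U\subseteq U'$. So the maximal proper ideals are exactly ${\downarrow}(X\setminus\{x\})$ for $x\in X$. The congruence ${\equiv}_{{\downarrow}(X\setminus\{x\})}$ relates $A$ and $B$ exactly when $x\notin A\triangle B$, that is, when $x\in A\iff x\in B$. This is $\Theta_x$, so the maximal congruences are precisely the $\Theta_x$. Distinct $x$ give distinct $\Theta_x$, since $(\{x\},\varnothing)$ separates them.

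\emph{Step 3: every congruence is a $z$-congruence.} By Step~2, $\MC(A,B)=\{\Theta_x: x\notin A\triangle B\}$. Let $\theta={\equiv_{{\downarrow}U}}$, and suppose $(A,B)\in\theta$ and $\MC(A,B)\subseteq\MC(C,D)$. Take $x\notin U$. Then $x\notin A\triangle B$ because $A\triangle B\subseteq U$. So $\Theta_x\in\MC(A,B)\subseteq\MC(C,D)$, which gives $x\notin C\triangle D$. Hence $C\triangle D\subseteq U$, and $(C,D)\in\theta$. This is the finite case of Proposition~\ref{prop:boolean-all-zcong}, here with no appeal to the prime ideal theorem.

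The only step needing care is the classification of maximal congruences. It relies on the bijection in Lemma~\ref{lem:boolean-congruence-ideals} being order-preserving in both directions, so that maximality transfers between congruences and ideals. It also relies on properness transferring: $\equiv_{\mathcal I}$ is the total relation exactly when $X\in\mathcal I$, since $X\triangle\varnothing=X$. Both facts follow directly from the lemma as stated.
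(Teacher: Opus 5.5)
Your proof is correct and follows essentially the paper's route: you use the zero-class correspondence of Lemma~\ref{lem:boolean-congruence-ideals} to reduce to Boolean ideals, identify the maximal ones as $\{A : x\notin A\}$ (hence the $\Theta_x$), and check the $z$-condition via $\MC(A,B)=\{\Theta_x : x\notin A\triangle B\}$. The only difference is cosmetic and lies in the last step. The paper reads off $C\triangle D\subseteq A\triangle B$ from the hull inclusion and then uses downward closure of the zero-class. You instead test only the $\Theta_x$ with $x\notin U$, that is, the maximal congruences containing $\theta$.
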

\begin{proof}
Each $\Theta_x$ is maximal (its quotient has two elements).
Every congruence is determined by its zero-class (a downward-closed
ideal of the finite Boolean algebra), and maximality forces the
zero-class to be $\mathfrak{T}_x=\{A:x\notin A\}$,
hence $\rho=\Theta_x$.
If $(A,B)\in\rho$ and $\MC(A,B)\subseteq\MC(C,D)$,
then $A\triangle B\in I_\rho$ and $C\triangle D\subseteq A\triangle B$;
since $I_\rho$ is downward closed, $C\triangle D\in I_\rho$,
so $(C,D)\in\rho$.
\end{proof}

\subsection{Associated congruences and \texorpdfstring{$g$}{g}-closures}

\begin{lemma}\label{app:g-closure-operator}
The assignment $I\mapsto I^g:=0_{\wp_I}$ is an extensive,
order-preserving, idempotent closure operation on ideals.
\end{lemma}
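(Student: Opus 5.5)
The plan is to verify the three closure axioms directly from the description $I^g=\zero{\wp_I}=\{a\in S\mid \MC(I)\subseteq\MC(a,0)\}$, working throughout with the hull $\MC(I)$. This is the appendix version of Proposition~\ref{prop:g-closure-operator}. First I will check that $I^g$ is an ideal. By \eqref{eq:wpI}, $\wp_I$ is an intersection of congruences (or all of $S\times S$ when $\MC(I)=\varnothing$), hence a congruence. Its zero-class is then closed under addition, since $(a,0),(b,0)\in\wp_I$ gives $(a+b,0)\in\wp_I$. It is closed under multiplication by arbitrary $s$, since $(s,s),(a,0)\in\wp_I$ gives $(sa,0)\in\wp_I$. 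It is nonempty because it contains $0$.

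\textbf{Extensivity.} If $a\in I$ and $\theta\in\MC(I)$, then $I\subseteq\zero\theta$, so $(a,0)\in\theta$. As $\theta$ is arbitrary, $\MC(I)\subseteq\MC(a,0)$ and hence $a\in I^g$. When $\MC(I)=\varnothing$ the condition holds vacuously and $I^g=S$, which is consistent.

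\textbf{Monotonicity.} If $I\subseteq J$, then Proposition~\ref{loc24}(1) gives $\MC(J)\subseteq\MC(I)$. Every $a$ with $\MC(I)\subseteq\MC(a,0)$ therefore also satisfies $\MC(J)\subseteq\MC(a,0)$, so $I^g\subseteq J^g$.

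\textbf{Idempotence.} This is the only step needing an actual argument, and the key is the identity $\MC(I^g)=\MC(I)$. The inclusion $\MC(I^g)\subseteq\MC(I)$ follows from extensivity together with the antitonicity just used. For the reverse inclusion, take $\theta\in\MC(I)$. Then $\wp_I\subseteq\theta$, since $\theta$ is one of the congruences being intersected. Taking zero-classes gives $I^g=\zero{\wp_I}\subseteq\zero\theta$, so $\theta\in\MC(I^g)$.

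Since $\wp_J$ depends on $J$ only through $\MC(J)$, the identity gives $\wp_{I^g}=\wp_I$, and hence $(I^g)^g=\zero{\wp_{I^g}}=\zero{\wp_I}=I^g$. Finally, the fixed points are the $g$-closed ideals by Definition~\ref{def:g-closed-ideals}. I expect no real obstacle. The only point requiring care is the empty-hull convention $\wp_I=S\times S$, which I will check is compatible with each step.
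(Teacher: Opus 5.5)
Your proposal is correct and takes essentially the same route as the paper, which proves this lemma by citing Proposition~\ref{prop:g-closure-operator}: extensivity from $I\subseteq\zero{\wp_I}$, monotonicity from the antitonicity of $\MC(\cdot)$, and idempotence through the identity $\MC(I^g)=\MC(I)$, which gives $\wp_{I^g}=\wp_I$. Your explicit check that $I^g$ is an ideal and your remark on the empty-hull convention are harmless additions; the paper leaves the first implicit, since zero-classes of congruences are always ideals.
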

\begin{proof}
This is Proposition~\ref{prop:g-closure-operator}.
\end{proof}

\begin{lemma}\label{app:MC-product-prime-zero}
Let $I,J$ be ideals of $S$.  Then $\MC(I+J)=\MC(I)\cap\MC(J)$.
If zero-classes of maximal congruences are prime ideals, then
\[
\MC(IJ)=\MC(I\cap J)=\MC(I)\cup\MC(J),
\]
and consequently $\wp_{I\cap J}=\wp_I\cap\wp_J=\wp_{IJ}$.
\end{lemma}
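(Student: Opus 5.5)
The plan is to prove the three assertions in order, working directly from the definitions of $\MC(\cdot)$ and $\wp_I$ in \eqref{eq:wpI}. Essentially this repeats Proposition~\ref{loc24} and Proposition~\ref{prop:wp-meets} at the level of the appendix.

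For $\MC(I+J)=\MC(I)\cap\MC(J)$ we use only that $\zero\theta$ is an ideal for every congruence $\theta$; indeed it is a $k$-ideal. If $I+J\subseteq\zero\theta$, then both $I$ and $J$ lie in $\zero\theta$, since $I,J\subseteq I+J$ (each contains $0$). Conversely, if $I,J\subseteq\zero\theta$, closure of $\zero\theta$ under addition gives $I+J\subseteq\zero\theta$. No hypothesis is needed here.

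For the second assertion, assume every maximal congruence $\theta$ has prime zero-class, and fix such a $\theta$. We compare the three conditions $IJ\subseteq\zero\theta$, $I\cap J\subseteq\zero\theta$, and ($I\subseteq\zero\theta$ or $J\subseteq\zero\theta$).
\begin{enumerate}
\item If $I\subseteq\zero\theta$ or $J\subseteq\zero\theta$, then $IJ\subseteq I\cap J\subseteq\zero\theta$, because $\zero\theta$ is an ideal and $IJ\subseteq I\cap J$.
\item It remains to show that $IJ\subseteq\zero\theta$ forces $I$ or $J$ into $\zero\theta$; this covers the $I\cap J$ case too, since $IJ\subseteq I\cap J$. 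Suppose not, and pick $a\in I\setminus\zero\theta$ and $b\in J\setminus\zero\theta$. Then $ab\in IJ\subseteq\zero\theta$, which contradicts primeness of $\zero\theta$.
\end{enumerate}
Together these give $\MC(IJ)=\MC(I\cap J)=\MC(I)\cup\MC(J)$.

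For the consequence, use $\wp_K=\bigcap_{\theta\in\MC(K)}\theta$. An intersection indexed by a union of index sets is the intersection of the two partial intersections, so
\[
\wp_{I\cap J}=\bigcap_{\theta\in\MC(I)\cup\MC(J)}\theta=\wp_I\cap\wp_J=\wp_{IJ}.
\]
The empty-family convention (an empty intersection is $S\times S$) is consistent with this identity. The main point needing care is that primeness is used only for zero-classes of maximal congruences, not for maximal ideals. This matters because of Example~\ref{ex:max-cong-zero-class-not-max}: zero-classes of maximal congruences need not be maximal ideals, so Lemma~\ref{lem:max-prime} cannot be invoked and the primeness hypothesis is genuinely needed. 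Otherwise the argument is routine.
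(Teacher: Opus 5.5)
Your proof is correct and follows the paper's argument essentially step for step. The sum identity comes from closure of zero-classes under addition. The product and intersection identities come from choosing $a\in I\setminus\zero\theta$ and $b\in J\setminus\zero\theta$ and contradicting primeness of $\zero\theta$, with $IJ\subseteq I\cap J$ covering the intersection case. The formula for $\wp$ comes from intersecting over $\MC(I)\cup\MC(J)$.

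One caveat concerns only your closing remark, not the proof. Lemma~\ref{lem:max-prime} indeed does not apply, but the primeness hypothesis is automatic, so it is not ``genuinely needed''. For maximal $\theta$ and $[x]\neq[0]$ in $S/\theta$, the relation $u\sim v\iff u[x]=v[x]$ is a proper congruence on the congruence-simple quotient (because $[1][x]\neq[0][x]$). It is therefore the identity congruence, so $S/\theta$ has no zero-divisors.
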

\begin{proof}
The equality for sums is immediate.
Let $\theta$ be a maximal congruence with $0_\theta$ prime.
If $I\subseteq0_\theta$ or $J\subseteq0_\theta$,
then both $IJ$ and $I\cap J$ are contained in $0_\theta$.
Conversely, if $IJ\subseteq0_\theta$ and neither ideal is contained
in $0_\theta$, choose $a\in I\setminus0_\theta$ and
$b\in J\setminus0_\theta$;
then $ab\in0_\theta$ contradicts primeness.
The same argument applies with $I\cap J$ in place of $IJ$.
Hence $\MC(IJ)=\MC(I\cap J)=\MC(I)\cup\MC(J)$,
and intersecting these families gives the formula for $\wp$.
\end{proof}

This lemma identifies the point at which primeness of zero-classes
enters the congruence theory.
In settings where maximal congruences are prime and prime congruences
have prime zero-classes, the prime-zero-class hypothesis is
automatic; in general semirings it is an additional condition.

\subsection{Quotients by associated congruences}

The quotient section is governed by representative-independence.
Without saturation, the expression $J/\wp_I$ may depend on the
chosen representatives and need not define an ideal in the quotient.

\begin{lemma}\label{app:quotient-saturation-expanded}
Let $J$ be an ideal of $S$.  The subset
$J/\wp_I=\{[a]\in S/\wp_I\mid a\in J\}$
is well-defined and an ideal of $S/\wp_I$ if and only if $J$
is $I$-saturated.
\end{lemma}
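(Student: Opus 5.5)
The plan is to first fix what ``well-defined'' means here. As a set of classes, $J/\wp_I=\pi_I(J)$ always exists, and by Proposition~\ref{prop:quotient-basic}(4) it is always an ideal of $S/\wp_I$. So the content of the lemma is \emph{representative-independence}: membership of a class $[a]$ in $J/\wp_I$ should be decidable from any representative. Precisely, for all $a\in S$ we want $[a]\in J/\wp_I\iff a\in J$. Equivalently, $J=\pi_I^{-1}(\pi_I(J))$. I will take this as the meaning of ``well-defined'' and prove that it, together with the ideal property, is equivalent to $I$-saturation in the sense of Definition~\ref{def:I-saturated}.

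For ($\Leftarrow$), assume $J$ is $I$-saturated. Suppose $[b]\in J/\wp_I$, so $[b]=[a]$ for some $a\in J$. Then $(a,b)\in\wp_I$, and saturation gives $b\in J$. Hence membership does not depend on the representative, i.e.\ $J=\pi_I^{-1}(\pi_I(J))$. The ideal property is checked directly on representatives: $[a]+[a']=[a+a']$ and $[s][a]=[sa]$ with $a+a',sa\in J$. The class $[0]$ lies in $J/\wp_I$ since $0\in J$. Alternatively, one simply cites Proposition~\ref{prop:quotient-basic}(4).

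For ($\Rightarrow$), assume representative-independence: $[a]\in J/\wp_I$ implies $a\in J$. There are two clauses of saturation to verify. First, let $(a,b)\in\wp_I$ with $a\in J$. Then $[b]=[a]\in J/\wp_I$, so $b\in J$. Second, I must show $I^g\subseteq J$; this is the only step that is not purely formal, and the key observation is to use the representative $0$. If $x\in I^g$, then by Proposition~\ref{prop:quotient-basic}(1) we have $[x]=[0]$. Since $0\in J$ (ideals are nonempty and closed under multiplication by $0$), we get $[x]\in J/\wp_I$, hence $x\in J$. This yields both conditions of Definition~\ref{def:I-saturated}.

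Finally, I will remark that this argument also confirms the ``equivalently $J=\pi_I^{-1}(\pi_I(J))$'' clause of Definition~\ref{def:I-saturated}. I do not expect a real obstacle. The main care point is making the meaning of ``well-defined'' explicit, since the naive image is always an ideal. The lemma's content is exactly the equivalence between representative-independence and saturation, with the inclusion $I^g\subseteq J$ falling out from the class of $0$.
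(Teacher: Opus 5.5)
Your proof is correct and follows essentially the same route as the paper. In one direction saturation gives representative-independence and the ideal property is inherited. In the other, the class $[0]$ yields $I^g\subseteq J$ and $[b]=[a]$ yields the saturation clause, while your explicit remark that $\pi_I(J)$ is always an ideal (so the real content is representative-independence) is a useful clarification the paper leaves implicit.
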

\begin{proof}
If the subset is a well-defined ideal:
$[0]\in J/\wp_I$ gives $I^g\subseteq J$,
and $(a,b)\in\wp_I$ with $a\in J$ gives $[b]=[a]\in J/\wp_I$,
so $b\in J$.
Conversely, under $I$-saturation:
if $[a]=[b]$ and $a\in J$, saturation gives $b\in J$,
so the subset is representative-independent;
$I^g\subseteq J$ ensures the zero class lies in $J$;
and closure under addition and scalar multiplication is inherited.
\end{proof}

\begin{proposition}\label{app:quotient-intersection-expanded}
Let $J,K,L$ be $I$-saturated ideals containing $I$.  Then
$(J/\wp_I)\cap(K/\wp_I)=L/\wp_I$ if and only if $J\cap K=L$.
\end{proposition}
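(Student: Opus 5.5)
The plan is to deduce the statement from the ideal correspondence of Proposition~\ref{prop:quotient-ideal-correspondence} (equivalently Lemma~\ref{app:quotient-saturation-expanded}), exactly as in Lemma~\ref{loc10}. The correspondence says that for $I$-saturated $J$ the set $J/\wp_I$ coincides with $\pi_I(J)$ and $\pi_I^{-1}(J/\wp_I)=J$. So the whole argument comes down to a preimage computation. The extra hypothesis that $J,K,L$ contain $I$ is harmless: saturation already forces $I\subseteq I^g\subseteq J$.

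First, I will record two facts from saturation. For an $I$-saturated ideal $J$ and any $a\in S$, we have $[a]\in J/\wp_I$ if and only if $a\in J$. Indeed, if $[a]=[b]$ with $b\in J$, then $(b,a)\in\wp_I$, and saturation gives $a\in J$. It follows that $\pi_I^{-1}(J/\wp_I)=J$. Taking preimages commutes with intersections, so
\[
\pi_I^{-1}\bigl((J/\wp_I)\cap(K/\wp_I)\bigr)=J\cap K .
\]

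For the forward direction, suppose $(J/\wp_I)\cap(K/\wp_I)=L/\wp_I$. Pulling both sides back along $\pi_I$ gives $J\cap K=L$ by the displayed identity and the first fact applied to $L$.

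For the converse, suppose $J\cap K=L$. A class $[a]$ lies in $(J/\wp_I)\cap(K/\wp_I)$ if and only if $a\in J$ and $a\in K$, by the first fact applied to $J$ and to $K$. That is, $a\in L$, which holds if and only if $[a]\in L/\wp_I$. Surjectivity of $\pi_I$ then gives equality of the two subsets of $S/\wp_I$.

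The only real subtlety is representative-independence: membership of a class must not depend on the representative chosen. That is exactly what saturation provides. Without it, $[a]\in J/\wp_I$ would only say that some $\wp_I$-equivalent element lies in $J$, and the intersection could be strictly larger than $(J\cap K)/\wp_I$. Since the first fact settles this, I expect no genuine obstacle.
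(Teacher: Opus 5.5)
Your proof is correct and follows essentially the same route as the paper. Saturation makes membership $[a]\in J/\wp_I$ independent of the representative, so $(J/\wp_I)\cap(K/\wp_I)$ matches $J\cap K$, and the saturation correspondence finishes the argument; you make that last step explicit through $\pi_I^{-1}(L/\wp_I)=L$ rather than citing the bijection.
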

\begin{proof}
If $[x]$ lies in both quotients, saturation of $J$ and $K$ gives
$x\in J\cap K$; if $x\in J\cap K$, then $[x]$ lies in both.
The correspondence with $L$ follows from the saturation bijection.
\end{proof}

\begin{proposition}\label{app:relative-z-quotient-expanded}
Assume $I$ is $g$-closed and satisfies
Hypothesis~\ref{hyp:quotient-lifting}.
Let $J$ be an $I$-saturated ideal containing $I$.
Then $J/\wp_I$ is a $z$-ideal of $S/\wp_I$ if and only if $J$ is
an $I$-relative $z$-ideal.
\end{proposition}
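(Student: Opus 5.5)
This is the appendix restatement of Theorem~\ref{loc11}, written in the notation $J/\wp_I$ in place of $\pi_I(J)$. The plan is to reduce to that theorem and check the two steps where Hypothesis~\ref{hyp:quotient-lifting} and saturation are actually used.

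\emph{Step 1: identify $J/\wp_I$ with $\pi_I(J)$.} By Lemma~\ref{app:quotient-saturation-expanded}, since $J$ is $I$-saturated, $J/\wp_I$ is a well-defined ideal of $S/\wp_I=S_I$ and equals $\pi_I(J)$. Moreover, $[a]_I\in J/\wp_I$ if and only if $a\in J$, because saturation makes membership independent of the representative.

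\emph{Step 2: transport maximal hulls.} For $a\in S$, Hypothesis~\ref{hyp:quotient-lifting} gives a bijection $M\mapsto\pi_I(M)$ from $\mathcal V_{\max}(I)$ onto $\Max(S_I)$, with $[a]_I\in\pi_I(M)\Leftrightarrow a\in M$. Consequently
\[
\M_{S_I}([a]_I)=\{\pi_I(M)\mid M\in\M(a)\cap\mathcal V_{\max}(I)\}.
\]
Since the map $M\mapsto\pi_I(M)$ is injective, $\M_{S_I}([a]_I)=\M_{S_I}([b]_I)$ holds exactly when $\M(a)\cap\mathcal V_{\max}(I)=\M(b)\cap\mathcal V_{\max}(I)$. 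This is the key translation, and it is the only place the lifting hypothesis enters. Its uniqueness clause is what makes the translation an equivalence rather than a one-way implication.

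\emph{Step 3: both directions.} Suppose $J/\wp_I$ is a $z$-ideal. Take $a\in J$ and $b\in S$ with equal relative hulls. By Step~2, $[a]_I$ and $[b]_I$ have the same maximal hull in $S_I$, so $[b]_I\in J/\wp_I$, and Step~1 gives $b\in J$. Conversely, suppose $J$ is $I$-relative. Take $[a]_I\in J/\wp_I$ and $[b]_I$ with the same maximal hull in $S_I$. Step~1 lets us choose the representative $a\in J$, Step~2 gives equality of the relative hulls in $S$, the $I$-relative property gives $b\in J$, and hence $[b]_I\in J/\wp_I$.

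\emph{Main obstacle.} The only delicate point is Step~1: representative-independence of membership. Without saturation, $[b]_I\in J/\wp_I$ would only give some $b'\in J$ with $(b,b')\in\wp_I$. Lemma~\ref{app:quotient-saturation-expanded} closes exactly this gap. The hypothesis that $J\supseteq I$ is automatic from $I^g\subseteq J$ and is not otherwise needed.
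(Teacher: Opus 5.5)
Your proposal is correct and follows the paper's own argument. The lifting hypothesis translates equality of $I$-relative maximal hulls into equality of the maximal hulls of $[a]$ and $[b]$ in $S/\wp_I$ and back, and $I$-saturation makes membership in $J/\wp_I$ independent of the representative. Like the paper, you read Hypothesis~\ref{hyp:quotient-lifting} as a bijection $\mathcal V_{\max}(I)\to\Max(S_I)$ with membership detected. Note, though, that the converse direction really rests on every $M\in\mathcal V_{\max}(I)$ giving a maximal ideal $\pi_I(M)$, not on the ``uniqueness clause'', since injectivity follows automatically from membership detection.
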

\begin{proof}
If $J/\wp_I$ is a $z$-ideal and $a\in J$ with
$\M(a)\cap\mathcal{V}_{\max}(I)=\M(b)\cap\mathcal{V}_{\max}(I)$,
Hypothesis~\ref{hyp:quotient-lifting} translates this into equal
maximal hulls of $[a]$ and $[b]$ in the quotient;
the $z$-ideal property gives $[b]\in J/\wp_I$,
and saturation gives $b\in J$.
The converse translates the quotient-side hull equality back to the
relative equality in $S$.
\end{proof}

\subsection{Localization}

The localization results require the maximal-disjointness hypothesis.
The standard ring-theoretic correspondence between maximal ideals of
$T^{-1}S$ and maximal ideals of $S$ disjoint from $T$ is used here
only under the explicit form stated in the body of the paper.

\begin{lemma}\label{app:local-hull-expanded}
Assume Hypothesis~\ref{hyp:local}.
For $a\in S$ and $s\in T$,
\[
\M_{T^{-1}S}(a/s)=\{T^{-1}M\mid M\in\mathcal{V}_{\max}(I)
\text{ and }a\in M\}.
\]
Consequently equality of $I$-relative maximal hulls in $S$ is
equivalent to equality of maximal hulls after localization.
\end{lemma}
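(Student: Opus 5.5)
The plan is to compute $\M_{T^{-1}S}(a/s)$ directly from Hypothesis~\ref{hyp:local}, and then read off the consequence.

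First, by the hypothesis every maximal ideal of $T^{-1}S$ has the form $T^{-1}M$ for a unique $M\in\mathcal{V}_{\max}(I)$. So a maximal ideal $N$ of $T^{-1}S$ contains $a/s$ exactly when $N=T^{-1}M$ with $M\in\mathcal{V}_{\max}(I)$ and $a/s\in T^{-1}M$.

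Second, the membership clause of Hypothesis~\ref{hyp:local}, namely $a/s\in T^{-1}M\Leftrightarrow a\in M$, turns this into the condition $a\in M$. This gives the displayed formula
\[
\M_{T^{-1}S}(a/s)=\{T^{-1}M\mid M\in\M(a)\cap\mathcal{V}_{\max}(I)\}.
\]
The membership clause is used here as stated; we do not derive it from $M\cap T=\varnothing$ and primeness of $M$. That derivation would need cancellation, which is unavailable in a general semiring, and this is precisely why the hypothesis records the clause explicitly.

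For the consequence, consider the map $M\mapsto T^{-1}M$ on $\mathcal{V}_{\max}(I)$. By hypothesis it is a bijection onto $\Max(T^{-1}S)$, so it is in particular injective. Applying the formula to $a/s$ and to $b/t$ then shows
\[
\M_{T^{-1}S}(a/s)=\M_{T^{-1}S}(b/t)
\iff
\M(a)\cap\mathcal{V}_{\max}(I)=\M(b)\cap\mathcal{V}_{\max}(I).
\]
The direction ``$\Leftarrow$'' is immediate: equal sets of maximal ideals in $S$ map to equal sets of extensions. The direction ``$\Rightarrow$'' uses injectivity to pull the equality of images back to an equality of the index sets. The denominators $s,t$ play no role, since the formula depends only on the numerators.

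There is no real obstacle. The one point needing care is to invoke injectivity of extension on $\mathcal{V}_{\max}(I)$ explicitly, rather than only surjectivity, so that distinct maximal ideals of $S$ above $I$ cannot collapse after localization.
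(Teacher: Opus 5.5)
Your proof is correct and is essentially the paper's argument: Hypothesis~\ref{hyp:local} identifies $\Max(T^{-1}S)$ with the extensions $T^{-1}M$, $M\in\mathcal{V}_{\max}(I)$, and its membership clause turns $a/s\in T^{-1}M$ into $a\in M$ (the paper's extra remark that $s/1$ is a unit is redundant). Your explicit use of injectivity of $M\mapsto T^{-1}M$ also supplies the ``consequently'' part, which the paper leaves implicit. One aside is inaccurate, though it does not affect the proof: the membership clause needs no cancellation, since $a/s=m/t$ with $m\in M$ means $uta=usm\in M$ for some $u\in T$, and then primeness of $M$ together with $ut\notin M$ gives $a\in M$. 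This is the same primeness-and-disjointness argument the paper uses in the proof of Theorem~\ref{loc15}(1).
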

\begin{proof}
Since $s/1$ is a unit, membership of $a/s$ in a maximal ideal equals
membership of $a/1$.
Hypothesis~\ref{hyp:local} identifies maximal ideals of $T^{-1}S$
with ideals $T^{-1}M$ for $M\in\mathcal{V}_{\max}(I)$,
and gives $a/s\in T^{-1}M\Leftrightarrow a\in M$.
\end{proof}

\begin{proposition}\label{app:local-z-expanded}
Assume Hypothesis~\ref{hyp:local}.
Let $J$ be an $I$-relative $z$-ideal.
Then $T^{-1}J$ is a $z$-ideal of $T^{-1}S$.
\end{proposition}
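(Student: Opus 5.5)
The plan is to follow the proof of Theorem~\ref{loc15}(1), now organized around Lemma~\ref{app:local-hull-expanded}. The lemma converts equalities of maximal hulls in $T^{-1}S$ into equalities of $I$-relative maximal hulls in $S$. First, $T^{-1}J$ is an ideal of $T^{-1}S$, being an extension. So the only thing to check is the $z$-condition. Take $x/s\in T^{-1}J$ and $y/t\in T^{-1}S$ with $\M_{T^{-1}S}(x/s)=\M_{T^{-1}S}(y/t)$; we must show $y/t\in T^{-1}J$.

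The first step fixes a representative in $J$. Membership $x/s\in T^{-1}J$ means $x/s=j/r$ for some $j\in J$ and $r\in T$. Unwinding equality in the semiring localization (which holds up to a multiplier in $T$) gives $u\in T$ with $ux\in J$; this is exactly the step taken in the proof of Theorem~\ref{loc15}(1). Next we compare hulls. Lemma~\ref{app:local-hull-expanded} gives
\[
\M_{T^{-1}S}(x/s)=\{T^{-1}M\mid M\in\mathcal V_{\max}(I),\ x\in M\},
\]
and similarly for $y/t$. By the bijectivity in Hypothesis~\ref{hyp:local}, the assumed equality yields $\M(x)\cap\mathcal V_{\max}(I)=\M(y)\cap\mathcal V_{\max}(I)$.

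The second step passes from $x$ to $ux$. Every $M\in\mathcal V_{\max}(I)$ misses $T$, so $u\notin M$. Since $M$ is prime (Lemma~\ref{lem:max-prime}), $ux\in M$ holds if and only if $x\in M$. Hence $\M(ux)\cap\mathcal V_{\max}(I)=\M(y)\cap\mathcal V_{\max}(I)$. Because $ux\in J$ and $J$ is $I$-relative, we get $y\in J$, and therefore $y/t\in T^{-1}J$.

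The main obstacle is the first step, since semiring localization equality involves a cancelling multiplier from $T$. I will write it out explicitly: $x/s=j/r$ means $wxr=wjs$ for some $w\in T$. Then $u:=wr$ satisfies $ux=wjs\in J$, because $J$ is an ideal. Everything else is a direct application of Lemma~\ref{app:local-hull-expanded} together with primeness of maximal ideals.
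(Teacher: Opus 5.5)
Your proof is correct and follows essentially the same route as the paper. Both arguments extract $u\in T$ with $ux\in J$, use $M\cap T=\varnothing$ and primeness of $M$ to identify the $I$-relative hulls of $ux$ and $x$, transfer the localized hull equality back to $S$ via Hypothesis~\ref{hyp:local}, and conclude by the $I$-relative property of $J$. The only differences are cosmetic: you invoke Lemma~\ref{app:local-hull-expanded} to get the hull equality for $x$ and $y$ before passing to $ux$, and you spell out the multiplier $u=wr$ from the localization equality, which the paper leaves implicit.
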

\begin{proof}
Let $x/s\in T^{-1}J$, so $ux\in J$ for some $u\in T$.
Suppose $\M_{T^{-1}S}(x/s)=\M_{T^{-1}S}(y/t)$.
If $M\in\M_S(ux)\cap\mathcal{V}_{\max}(I)$,
then $u\notin M$ by Hypothesis~\ref{hyp:local};
primeness of $M$ and $ux\in M$ give $x\in M$,
so $x/s\in T^{-1}M$, and the localized hull equality gives
$y/t\in T^{-1}M$, hence $y\in M$.
The reverse inclusion is analogous, giving
$\M_S(ux)\cap\mathcal{V}_{\max}(I)=\M_S(y)\cap\mathcal{V}_{\max}(I)$.
Since $J$ is $I$-relative and $ux\in J$, $y\in J$.
\end{proof}

\subsection{Coherence and spectra}

The frame theorem for ordinary $z$-ideals is a finite-type closure
theorem requiring no hypothesis beyond the semiring axioms.
The same conclusion for $g$-closed ideals requires
Hypothesis~\ref{hyp:g-closure}, and
Example~\ref{ex:zero-class-distinction} is one reason the
congruence-generated closure is treated separately from the ordinary
maximal-ideal closure.

\begin{lemma}\label{app:coherent-expanded}
In $\ZId(S)$, $\clz(I)=\bigvee_{a\in I}\ideal{a}_z$ and
$\ideal{a}_z\cap\ideal{b}_z=\ideal{ab}_z$ for all $a,b\in S$.
\end{lemma}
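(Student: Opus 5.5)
The two claims are proved separately, and both rest on the identity $\ideal{a}_z=\mfrak(a)$ of Lemma~\ref{app:maximal-hull-calculus}(3) together with the product formula $\M(ab)=\M(a)\cup\M(b)$ of Lemma~\ref{app:maximal-hull-calculus}(1).

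\emph{Meet identity.} By Lemma~\ref{app:maximal-hull-calculus}(3),
\[
\ideal{a}_z\cap\ideal{b}_z=\mfrak(a)\cap\mfrak(b).
\]
Proposition~\ref{prop:mfrak-basic}(1), which uses Lemma~\ref{lem:max-prime}, gives $\mfrak(a)\cap\mfrak(b)=\mfrak(ab)$. A second application of Lemma~\ref{app:maximal-hull-calculus}(3) turns $\mfrak(ab)$ into $\ideal{ab}_z$. The empty-family convention handles units: if $a$ is a unit, then $\mfrak(a)=S$ and $\M(ab)=\M(b)$, so the identity still holds. This part is routine.

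\emph{Join decomposition.} Here the join is $\bigvee_{a\in I}\ideal{a}_z=\clz\bigl(\sum_{a\in I}\ideal{a}_z\bigr)$.

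For the inclusion $\supseteq$: each $\ideal{a}_z$ with $a\in I$ is contained in $\clz(I)$, because $\clz(I)$ is a $z$-ideal containing $a$. Hence the sum lies in $\clz(I)$, and so does its $z$-closure.

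For the inclusion $\subseteq$: $I\subseteq\sum_{a\in I}\ideal{a}\subseteq\sum_{a\in I}\ideal{a}_z$. Applying the monotone closure $\clz$ gives $\clz(I)\subseteq\bigvee_{a\in I}\ideal{a}_z$.

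I do not need a finite-type property to prove the equality itself. Finite type is needed only later, to identify the compact elements, and that step is not part of this statement.

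\emph{Main obstacle.} The only delicate point is that the join must be read in $\ZId(S)$, that is, with the outer $\clz$. A bare sum of $z$-ideals need not be a $z$-ideal in a semiring. Once the join is read with the closure, each inclusion reduces to monotonicity and extensivity of $\clz$, which follow from its definition as an intersection of $z$-ideals.
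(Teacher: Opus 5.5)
Your proposal is correct and follows essentially the same route as the paper. The paper proves the meet identity exactly as you do, combining $\ideal{a}_z=\mfrak(a)$ with $\mfrak(ab)=\mfrak(a)\cap\mfrak(b)$. For the join, the paper observes that a $z$-ideal contains $I$ if and only if it contains every $\ideal{a}_z$ with $a\in I$; your two inclusions via extensivity and monotonicity of $\clz$ simply spell that observation out.
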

\begin{proof}
The join formula holds because a $z$-ideal contains $I$ if and only
if it contains every $\ideal{a}_z$ with $a\in I$.
For the meet:
$\ideal{a}_z=\mfrak(a)$ by Lemma~\ref{app:maximal-hull-calculus}(3),
and $\mfrak(ab)=\mfrak(a)\cap\mfrak(b)$ by the product formula.
\end{proof}

\begin{proposition}\label{app:compact-expanded}
The compact elements of $\ZId(S)$ are the finite joins
$\ideal{a_1}_z\vee\cdots\vee\ideal{a_n}_z$,
and they are closed under finite meets.
\end{proposition}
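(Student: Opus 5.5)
The plan is to derive compactness of the finite joins from the fact that $\clz$ is of finite type, and closure under finite meets from distributivity together with Lemma~\ref{app:coherent-expanded}.

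First I need an explicit description of joins. For a family $\{I_\lambda\}$ in $\ZId(S)$, the join is $\clz(\sum_\lambda I_\lambda)$. By Lemma~\ref{app:coherent-expanded}, this equals $\bigvee_{a\in\sum I_\lambda}\ideal{a}_z$. The key finite-type claim is that $x\in\clz(J)$ for an ideal $J$ implies $x\in\clz(J_0)$ for some finitely generated $J_0\subseteq J$; this is Theorem~\ref{lem:z-finite-type}(1). Given it, compactness of $\ideal{a_1}_z\vee_z\cdots\vee_z\ideal{a_n}_z$ follows directly. If this join lies below $\bigvee^z I_\lambda$, then each $a_i$ lies in $\clz(\sum_\lambda I_\lambda)$. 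Hence each $a_i$ lies in $\clz$ of a finite subsum, and finitely many indices suffice for all $i$ together. The $z$-ideal generated by those finitely many $I_\lambda$ then contains every $a_i$, and therefore every $\ideal{a_i}_z$.

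Conversely, let $C$ be a compact element. Since $C=\bigvee_{a\in C}\ideal{a}_z$ by the join formula, compactness gives $C\le\ideal{a_1}_z\vee_z\cdots\vee_z\ideal{a_n}_z$ for finitely many $a_i\in C$. The reverse inequality is clear, so $C$ has the required form.

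For meets, take $c=\bigvee_i\ideal{a_i}_z$ and $d=\bigvee_j\ideal{b_j}_z$. Since $\ZId(S)$ is a frame (Theorem~\ref{loc22}), finite distributivity gives
\[
c\cap d=\bigvee_{i,j}\bigl(\ideal{a_i}_z\cap\ideal{b_j}_z\bigr)=\bigvee_{i,j}\ideal{a_ib_j}_z,
\]
using Lemma~\ref{app:coherent-expanded}. This is again a finite join of principal closures. The empty meet is $S=\ideal{1}_z$, which is compact.

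The main obstacle is the finite-type step, if one does not simply cite it. To prove it directly, I would show that $\bigcup\{\clz(J_0): J_0\subseteq J \text{ finitely generated}\}$ is itself a $z$-ideal. This union is directed, so it is an ideal. It is also $z$-closed, because the $z$-condition involves only one element $a$ at a time: if $a$ lies in some $\clz(J_0)$ and $\M(a)=\M(b)$, then $b\in\clz(J_0)$. The union contains $J$, so it contains $\clz(J)$, which is exactly the finite-type statement.
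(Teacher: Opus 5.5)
Your proposal is correct and follows essentially the same route as the paper: finite type of $\clz$ makes the finite joins compact, writing a compact $C=\bigvee_{a\in C}\ideal{a}_z$ yields a finite subjoin, and $\ideal{a}_z\cap\ideal{b}_z=\ideal{ab}_z$ together with frame distributivity gives closure under finite meets. Your directed-union proof of the finite-type property is valid and self-contained, whereas the paper simply cites that property.
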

\begin{proof}
Each $\ideal{a}_z$ is compact by the finite-type property of the
maximal-hull closure.
Finite joins of compact elements are compact.
Conversely, if $K\in\ZId(S)$ is compact, then
$K=\bigvee_{a\in K}\ideal{a}_z$; compactness gives a finite subjoin.
Closure under finite meets follows from
$\ideal{a}_z\cap\ideal{b}_z=\ideal{ab}_z$ and the distributive law.
\end{proof}

The spectrum of a coherent frame is spectral.
Applying this to $\ZId(S)$ gives the spectrality of $\Spec_z(S)$.
Applying it to $\GCl(S)$ under Hypothesis~\ref{hyp:g-closure} gives
Theorem~C; this is why the latter result is stated conditionally.

\subsection{Functoriality}

The functorial formulas are determined on compact generators.
The following well-definedness lemma is included because
it is a common source of error in this setting.

\begin{lemma}\label{app:functor-expanded}
Let $\varphi:S\to T$ be a morphism in $\mathsf{CRig}_z$.
If $\ideal{a_1}_z\vee\cdots\vee\ideal{a_n}_z
=\ideal{b_1}_z\vee\cdots\vee\ideal{b_m}_z$ in $\ZId(S)$, then
$\ideal{\varphi(a_1)}_z\vee\cdots\vee\ideal{\varphi(a_n)}_z
=\ideal{\varphi(b_1)}_z\vee\cdots\vee\ideal{\varphi(b_m)}_z$
in $\ZId(T)$.
\end{lemma}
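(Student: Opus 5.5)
The plan is to reduce equality of the two finite joins in $\ZId(T)$ to a pair of containments, and to obtain each containment by pulling back along $\varphi$. This uses the defining property of morphisms in $\mathsf{CRig}_z$, namely that inverse images of $z$-ideals are $z$-ideals. By symmetry it suffices to prove
\[
\ideal{\varphi(a_1)}_z\vee\cdots\vee\ideal{\varphi(a_n)}_z\subseteq
\ideal{\varphi(b_1)}_z\vee\cdots\vee\ideal{\varphi(b_m)}_z ,
\]
and it is enough to assume only the one-sided hypothesis $\ideal{a_1}_z\vee\cdots\vee\ideal{a_n}_z\subseteq\ideal{b_1}_z\vee\cdots\vee\ideal{b_m}_z$ in $\ZId(S)$.

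First I will put $J=\ideal{\varphi(b_1)}_z\vee\cdots\vee\ideal{\varphi(b_m)}_z=\clz\bigl(\sum_j\ideal{\varphi(b_j)}_z\bigr)$, which is a $z$-ideal of $T$. Since $\varphi$ is a morphism in $\mathsf{CRig}_z$, the preimage $\varphi^{-1}(J)$ is a $z$-ideal of $S$. It contains each $b_j$ because $\varphi(b_j)\in\ideal{\varphi(b_j)}_z\subseteq J$. A $z$-ideal containing $b_j$ contains $\ideal{b_j}_z=\clz(\ideal{b_j})$ by minimality of the $z$-closure. Therefore $\varphi^{-1}(J)$ contains the sum of the $\ideal{b_j}_z$, and being $z$-closed it contains their $z$-join. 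Using the hypothesis, it then contains $\ideal{a_i}_z$ for each $i$, and in particular $a_i\in\varphi^{-1}(J)$, so $\varphi(a_i)\in J$.

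Next, since $J$ is a $z$-ideal containing $\varphi(a_i)$, it contains $\ideal{\varphi(a_i)}_z$ for every $i$. Hence it contains their sum, and since $J$ is $z$-closed it contains the $z$-closure of that sum, which is exactly the left-hand join. This gives the desired containment, and exchanging the roles of the $a$'s and $b$'s gives equality.

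I do not expect a serious obstacle. The only point needing care is the bookkeeping between joins in $\ZId$ and sums of ideals: the $z$-join is $\clz$ of the sum, so one must use at each step that a $z$-ideal containing finitely many elements contains the $z$-closure of the ideal they generate. The description $\ideal{a}_z=\mfrak(a)$ from Lemma~\ref{app:maximal-hull-calculus} is not needed here, since the whole argument uses only minimality of $\clz$ and the contraction property of $\varphi$.
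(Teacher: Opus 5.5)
Your proposal is correct and matches the paper's proof: pull the right-hand join $J$ back along $\varphi$, use that $\varphi^{-1}(J)$ is a $z$-ideal containing every $b_j$ (hence the common join, hence every $a_i$), conclude $\varphi(a_i)\in J$, and finish by symmetry. Your extra bookkeeping, that $J$ then contains $\clz\bigl(\sum_i\ideal{\varphi(a_i)}_z\bigr)$, is exactly the step the paper leaves implicit.
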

\begin{proof}
Let $L$ be the right-hand join in $\ZId(T)$.
Since $\varphi$ contracts $z$-ideals, $\varphi^{-1}(L)$ is a
$z$-ideal of $S$ containing each $b_j$,
hence containing the common join, hence containing each $a_i$.
Thus $\varphi(a_i)\in L$.
The reverse inclusion is symmetric.
\end{proof}

\begin{proposition}\label{app:naturality-expanded}
For a morphism $\varphi:S\to T$ in $\mathsf{CRig}_{g}^{\sigma}$,
the diagram
\[
\begin{tikzcd}
\ZId(S) \arrow[r,"\ZId(\varphi)"] \arrow[d,"\sigma_S"'] &
\ZId(T) \arrow[d,"\sigma_T"] \\
\GCl(S) \arrow[r,"\GCl(\varphi)"'] & \GCl(T)
\end{tikzcd}
\]
commutes.
\end{proposition}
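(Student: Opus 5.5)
The naturality square for a morphism $\varphi\colon S\to T$ in $\mathsf{CRig}_{g}^{\sigma}$ should be checked on compact generators and then extended to all of $\ZId(S)$ by join-preservation, following the proof of Proposition~\ref{prop:natural-transformation}.

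By the object condition defining $\mathsf{CRig}_{g}^{\sigma}$, both $\sigma_S$ and $\sigma_T$ are coherent frame homomorphisms, so they preserve arbitrary joins. By Propositions~\ref{loc35} and~\ref{loc36}, $\ZId(\varphi)$ and $\GCl(\varphi)$ are frame homomorphisms as well. Consequently, both composites $\GCl(\varphi)\circ\sigma_S$ and $\sigma_T\circ\ZId(\varphi)$ preserve arbitrary joins.

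By Lemma~\ref{app:coherent-expanded}, every $I\in\ZId(S)$ satisfies $I=\bigvee^z_{a\in I}\ideal{a}_z$. It therefore suffices to show that the two composites agree on each principal $z$-closure $\ideal{a}_z$. For the generator computation, the definition of $\ZId(\varphi)$ gives $\ZId(\varphi)(\ideal{a}_z)=\ideal{\varphi(a)}_z$, since the join formula in Proposition~\ref{loc35} reduces on a principal generator to the single term by Lemma~\ref{app:functor-expanded}. Hence
\[
\sigma_T(\ZId(\varphi)(\ideal{a}_z))=(\ideal{\varphi(a)}_z)^g .
\]
On the other side, $\GCl(\varphi)(\sigma_S(\ideal{a}_z))=\GCl(\varphi)((\ideal{a}_z)^g)$. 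The generator compatibility condition imposed on morphisms of $\mathsf{CRig}_{g}^{\sigma}$ identifies this with $(\ideal{\varphi(a)}_z)^g$, so the two composites agree on generators.

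To extend from generators, apply join-preservation on both sides:
\[
\GCl(\varphi)\sigma_S(I)=\bigvee^g_{a\in I}\GCl(\varphi)\sigma_S(\ideal{a}_z)
=\bigvee^g_{a\in I}\sigma_T\ZId(\varphi)(\ideal{a}_z)=\sigma_T\ZId(\varphi)(I).
\]

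The main obstacle is the first step, namely confirming that $\ZId(\varphi)(\ideal{a}_z)$ really equals $\ideal{\varphi(a)}_z$. Every $b\in\ideal{a}_z$ contributes the term $\ideal{\varphi(b)}_z$ to the defining join, so I need $\varphi(b)\in\ideal{\varphi(a)}_z$. I would obtain this from the contraction property: $\varphi^{-1}(\ideal{\varphi(a)}_z)$ is a $z$-ideal of $S$ containing $a$, hence it contains $\ideal{a}_z$, and in particular contains $b$.
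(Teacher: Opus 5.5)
Your proof is correct and follows the same approach as the paper: both composites preserve joins, they agree on the generators $\ideal{a}_z$ by the generator-compatibility condition defining morphisms of $\mathsf{CRig}_{g}^{\sigma}$, and these generators join-generate $\ZId(S)$. Your contraction-property argument that $\ZId(\varphi)(\ideal{a}_z)=\ideal{\varphi(a)}_z$ correctly supplies a step the paper's proof uses without comment.
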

\begin{proof}
Both composites preserve joins.
On a compact generator $\ideal{a}_z$,
the generator-compatibility condition for morphisms in
$\mathsf{CRig}_{g}^{\sigma}$ gives
\[
\GCl(\varphi)(\sigma_S(\ideal{a}_z))
=\GCl(\varphi)((\ideal{a}_z)^g)
=(\ideal{\varphi(a)}_z)^g
=\sigma_T(\ZId(\varphi)(\ideal{a}_z)).
\]
Since compact generators join-generate $\ZId(S)$,
the two composites agree everywhere.
\end{proof}

\section*{Acknowledgement}
The first author expresses sincere thanks to the Council of Scientific and Industrial Research (CSIR), Government of India, for the financial support (File No. 09/0096(17468)/2024-EMR-I). The third author is grateful to the University Grants Commission (India) for providing a Senior Research Fellowship (ID:
211610013222/ Joint CSIR-UGC NET JUNE 2021). The first, third, and fourth authors are also thankful to the DST-FIST Purse Programme (Programme no. SR/FST/MS-II/2021/101(C)) of the Department of Mathematics, Jadavpur University, Kolkata, India.

\end{document}